\documentclass[12pt]{amsart}
\usepackage[margin=1in]{geometry}
\usepackage[all]{xy}
\usepackage{amsmath,amsfonts,amssymb,mathdots,mathrsfs,esint,mathtools,enumitem,mnsymbol,amsthm}
\usepackage{verbatim}
\usepackage[table]{xcolor}

\newtheorem{theorem}{Theorem}
\newtheorem{lemma}{Lemma}[section]
\newtheorem{proposition}[lemma]{Proposition}
\newtheorem{corollary}[lemma]{Corollary}
\newtheorem{claim}[lemma]{Claim}
\newtheorem{remark}{Remark}[section]

\numberwithin{equation}{section}

\newcommand{\gtorncw}{\succnsim}
\newcommand{\UnipotentOrbit}{\ensuremath{\mathcal{O}}}
\newcommand{\RingOfIntegers}{\ensuremath{\mathfrak{O}}}
\DeclareMathOperator*{\Res}{Res}
\newcommand{\Fsquares}{\ensuremath{F{^{*2}}}}
\newcommand{\Frs}{\ensuremath{F{^{*r}}}}
\newcommand{\Repart}{\ensuremath{\text{Re}}}
\newcommand{\setdifference}{\ensuremath{\mathrm{-}}}
\newcommand{\matha}{\ensuremath{\mathrm{a}}}
\newcommand{\mathc}{\ensuremath{\mathrm{c}}}
\newcommand{\mathe}{\ensuremath{\mathrm{e}}}

\newcommand{\mathm}{\ensuremath{\mathrm{m}}}

\newcommand{\quasisplit}{quasi-split} 
\newcommand{\nonarchimedean}{non-Archimedean} 
\newcommand{\archimedean}{Archimedean} 
\newcommand{\GL}[1]{GL_{#1}} 
\newcommand{\GLF}[2]{GL_{#1}(#2)} 
\newcommand{\CGLF}[2]{\cover{GL}_{#1}(#2)} 
\newcommand{\rmodulo}[2]{#1 / #2} 
\newcommand{\glnidx}{\ensuremath{n}}

\newcommand{\Induced}[3]{\ensuremath{Ind_{#1}^{#2}(#3)}}
\newcommand{\absdet}[1]{\ensuremath{|\det{#1}|}}
\newcommand{\half}{\ensuremath{\frac12}}

\newcommand{\lmodulo}[2]{\ensuremath{#1 \backslash #2}}
\newcommand{\rconj}[1]{\ensuremath{{}^{#1}}}
\newcommand{\isomorphic}{\ensuremath{\cong}}
\newcommand{\transpose}[1]{\ensuremath{\rconj{t}#1}}
\newcommand{\setof}[2]{\ensuremath{\{ #1 : #2 \}}}
\newcommand{\Z}{\ensuremath{\mathbb{Z}}}
\newcommand{\C}{\ensuremath{\mathbb{C}}}
\newcommand{\R}{\ensuremath{\mathbb{R}}}
\newcommand{\Adele}{\ensuremath{\mathbb{A}}}
\newcommand{\rhs}{right-hand side} 
\newcommand{\glnrep}{\ensuremath{\tau}}

\newcommand{\induced}[3]{\ensuremath{Ind_{#1}^{#2}(#3)}}
\newcommand{\cinduced}[3]{\ensuremath{ind_{#1}^{#2}(#3)}}
\newcommand{\cover}[1]{\ensuremath{\widetilde{#1}}}

\begin{document}
\title[The double cover of $GSpin(2\glnidx+1)$ and small representations]{The double cover of odd general spin groups, small representations and applications}
\author{Eyal Kaplan}
\email{kaplaney@gmail.com}

{\let\thefootnote\relax\footnote{Author partially supported by the ISF Center of Excellence grant \#1691/10.}}
\begin{abstract}
We construct local and global metaplectic double covers of odd general spin groups, using the cover of Matsumoto of spin groups. Following Kazhdan and Patterson, a local exceptional representation is the unique irreducible quotient of a principal series representation, induced from a certain exceptional character. The global exceptional representation is obtained as the multi-residue of an Eisenstein series, it is an automorphic representation and decomposes as the restricted tensor product of local exceptional representations. As in the case of the small representation of $SO_{2\glnidx+1}$ of Bump, Friedberg and Ginzburg, exceptional representations enjoy the vanishing of a large class of twisted Jacquet modules (locally), or Fourier coefficients (globally). Consequently they are useful in many settings, including lifting problems and Rankin-Selberg integrals. We describe one application, to a calculation of a co-period integral.
\end{abstract}

\maketitle

Let $G_{\glnidx}=GSpin_{2\glnidx+1}$ be the split odd general spin group of rank $\glnidx+1$. Its derived group is $G'_{\glnidx}=Spin_{2\glnidx+1}$, the simple split simply-connected algebraic group of type $B_{\glnidx}$. The group $G_{\glnidx}$ occurs as a Levi subgroup of $G'_{\glnidx+1}$. For a local field $F$ of characteristic $0$, let
$\cover{G'}_{\glnidx+1}(F)$ be the metaplectic double cover of $G'_{\glnidx+1}(F)$ defined by Matsumoto \cite{Mats}. We can obtain a double cover $\cover{G}_{\glnidx}(F)$ of $G_{\glnidx}(F)$ by restriction.

Following Banks, Levi and Sepanski \cite{BLS} we define a section $\mathfrak{s}$ and a $2$-cocycle $\sigma$ of $G'_{\glnidx+1}(F)$, representing the cohomology class in $H^2(G'_{\glnidx+1}(F),\{\pm1\})$ of $\cover{G'}_{\glnidx+1}(F)$. We show that the restriction of $\sigma$ to $G_{\glnidx}(F)\times G_{\glnidx}(F)$ satisfies a block-compatibility relation, with respect to standard Levi subgroups. This is a useful condition for studying parabolically induced representations.

Fix a Borel subgroup in $G_{\glnidx}(F)$. The preimage $\cover{T}_{\glnidx+1}(F)$ of the maximal torus $T_{\glnidx+1}(F)$ in the cover is a two step nilpotent subgroup, its irreducible genuine representations are parameterized by genuine characters of its center $C_{\cover{T}_{\glnidx+1}(F)}$. The analogous theory for covers of $\GL{\glnidx}$ was developed by Kazhdan and Patterson \cite{KP}, who studied a special class of genuine characters which they called ``exceptional". In our setting these are characters $\chi$ of $C_{\cover{T}_{\glnidx+1}(F)}$ satisfying $\chi({\alpha^{\vee}}^*(x^{\mathfrak{l}(\alpha)}))=|x|$ for all simple roots $\alpha$ of $G_{\glnidx}$ and $x\in F^*$, where
${\alpha^{\vee}}^*$ is a certain lift of the coroot $\alpha^{\vee}$ to the cover and $\mathfrak{l}(\alpha)$ is the length of $\alpha$.

We use an exceptional character $\chi$ to construct a genuine principal series representation of $\cover{G}_{\glnidx}(F)$, which has a unique irreducible quotient denoted $\Theta=\Theta_{G_{\glnidx},\chi}$. The quotient $\Theta$ is an exceptional representation, or a small representation in the terminology of Bump, Friedberg and Ginzburg \cite{BFG}. Our purpose is to develop a theory of these representations.

Let $\mathcal{O}$ be a unipotent class of $G_{\glnidx}$, it corresponds to a partition of $2\glnidx+1$ for which an even number appears with an even multiplicity. Let $V_{\UnipotentOrbit}$ be the corresponding unipotent subgroup. We consider certain characters of $V_{\UnipotentOrbit}(F)$, called ``generic". 
Roughly, a character $\psi$ of $V_{\UnipotentOrbit}(F)$ is generic if it is in general position.
The definitions are similar to those of Bump, Friedberg and Ginzburg \cite{BFG} (see also \cite{CM,Cr}) for $SO_{2\glnidx+1}$ and are given in Section~\ref{subsubsection:vanishing and results}. The following result characterizes the sense of ``smallness" of $\Theta$.
\begin{theorem}\label{theorem:local vanishing result}
Assume that $F$ is a $p$-adic field with an odd residual characteristic.
Let $\UnipotentOrbit_{0}=(2^{\glnidx}1)$ if $\glnidx$ is even, otherwise $\UnipotentOrbit_{0}=(2^{\glnidx-1}1^3)$. Let $\UnipotentOrbit$ be any class greater than or non comparable with $\UnipotentOrbit_0$ and $\psi$ be a generic character of
$V_{\UnipotentOrbit}(F)$. The twisted Jacquet module of $\Theta$ with respect to $V_{\UnipotentOrbit}(F)$ and $\psi$ is zero.
\end{theorem}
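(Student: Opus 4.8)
The plan is to adapt the argument of Bump, Friedberg and Ginzburg \cite{BFG} for $SO_{2\glnidx+1}$ to the present metaplectic setting, using the block-compatibility of $\sigma$ and the realization of $\Theta$ as the unique irreducible quotient of a genuine exceptional principal series $I(\chi)$.

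\textbf{Reduction to one class.} By the root-exchange and unipotent-integration techniques of \cite{BFG} (see also \cite{CM,Cr} for the combinatorics of the groups $V_{\UnipotentOrbit}$), the vanishing of generic twisted Jacquet modules propagates upward along the dominance order, so it suffices to treat the minimal elements of $\{\UnipotentOrbit : \UnipotentOrbit \not\le \UnipotentOrbit_0\}$. A short combinatorial remark pins these down: among the orthogonal partitions of $2\glnidx+1$, those with all parts $\le 2$ are exactly the $(2^a1^{2\glnidx+1-2a})$ with $a$ even, and their maximum is precisely $\UnipotentOrbit_0$ (this is the source of the even/odd dichotomy in its definition); hence $\{\UnipotentOrbit : \UnipotentOrbit \not\le \UnipotentOrbit_0\}$ consists exactly of the orthogonal partitions having a part $\ge 3$, and every such partition dominates $(3\,1^{2\glnidx-2})$. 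So the theorem reduces to the single class $\UnipotentOrbit = (3\,1^{2\glnidx-2})$, whose weighted Dynkin diagram in type $B_{\glnidx}$ is $(2,0,\dots,0)$; thus $V_{\UnipotentOrbit}$ is the \emph{abelian} unipotent radical $U_1$ of the maximal parabolic $P_1 \subset G_{\glnidx}$ stabilizing an isotropic line, with Levi $\GL{1}\times GSpin_{2\glnidx-1}$, and under the action of the $SO_{2\glnidx-1}$-factor $U_1$ is the standard representation, so a generic $\psi$ corresponds to a non-isotropic vector, whose stabilizer in the Levi is essentially $O_{2\glnidx-2}$.

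\textbf{Vanishing for $U_1$.} Since $\Theta$ is a quotient of $I(\chi)$, the module $J_{V_{\UnipotentOrbit}(F),\psi}(\Theta)$ is a quotient of the twisted Jacquet module $J_{U_1,\psi}(I(\chi))$, and I would compute the latter by the covering-group form of the Bernstein--Zelevinsky geometric lemma: a Bruhat-type filtration indexed by $B\backslash G_{\glnidx}/P_1$, whose graded pieces are twisted Jacquet modules of genuine characters of $\cover{T}_{\glnidx+1}(F)$ restricted to unipotent subgroups. The point would be to show that for each double coset the exceptional relations $\chi({\alpha^{\vee}}^*(x^{\mathfrak{l}(\alpha)})) = |x|$ are incompatible with the non-isotropy of the vector defining $\psi$, so that every graded piece vanishes. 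Alternatively, an induction on $\glnidx$: by block-compatibility of $\sigma$ and a Kazhdan--Patterson type computation \cite{KP}, the constant term of $\Theta$ along $\cover{P}_1$ is, up to twist, a genuine character of $\cover{GL}_1(F)$ tensored with the exceptional representation of $\cover{GSpin}_{2\glnidx-1}(F)$; combining this with a further root exchange inside a larger parabolic transfers the computation to a generic twisted Jacquet module of the exceptional representation of a smaller odd general spin group, along a class that again has a part $\ge 3$ and hence vanishes by the inductive hypothesis, the base cases of small rank being settled by direct computation with the principal series.

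\textbf{Main obstacle.} I expect the bulk of the work to be the covering-group bookkeeping in the preceding step: because $\cover{T}_{\glnidx+1}(F)$ is non-abelian, the geometric lemma and the root-exchange lemmas must be re-derived in terms of genuine characters of $C_{\cover{T}_{\glnidx+1}(F)}$ and of the section and cocycle of \cite{BLS} (built on \cite{Mats}), which is where the hypothesis of odd residual characteristic is used (the cover splits over a maximal compact, and the unramified theory is available). A close second is the sharpness of the threshold: one must check that the reduction genuinely terminates at $(3\,1^{2\glnidx-2})$ with no intermediate orthogonal class overlooked — this rests on the precise structure of the dominance order on orthogonal partitions, following \cite{CM} — and this is complemented by the non-vanishing of the Fourier coefficient of $\Theta$ along $\UnipotentOrbit_0$ itself, showing the bound is optimal.
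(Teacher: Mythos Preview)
Your reduction to the single class $\UnipotentOrbit_1=(3\,1^{2\glnidx-2})$ with $V_{\UnipotentOrbit_1}=U_1$ matches the paper exactly. However, both of your proposed routes to the vanishing of $j_{U_1,\psi_b}(\Theta)$ have genuine gaps.

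Approach (a) cannot work as stated: the full principal series $I(\chi)$ is generic (for regular $\chi$), so $J_{U_1,\psi_b}(I(\chi))\neq0$. The vanishing is a property of the Langlands quotient $\Theta$, not of $I(\chi)$, and cannot be read off from a Bruhat filtration of the induced representation alone. The exceptional relations on $\chi$ do not force the graded pieces to vanish.

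Approach (b) also misses the actual mechanism. First, the constant term along $\cover{P}_1$ is \emph{not} a simple tensor: for $k<\glnidx$ the factors $\CGLF{k}{F}$ and $\cover{G}_{\glnidx-k}(F)$ do not commute in the cover, and the paper only obtains an inclusion $j_{U_k}(\Theta)\subset I(\Theta_{\GL{k},\chi_1},\Theta_{G_{\glnidx-k},\chi_2})^{\star}$ into an induced object, never a tensor product (except for $k=\glnidx$). Second, and more importantly, the paper's argument does not reduce to a twisted Jacquet module on a smaller odd spin group. Its structure is: (i) prove an auxiliary vanishing $j_{N_{\GL{k}}U_k,\psi}(\Theta)=0$ for certain characters, by induction on $\glnidx$ and a filtration argument on $\induced{\cover{Q}_1(F)}{\cover{G}_{\glnidx}(F)}{\cdots}$; (ii) use root exchange and (i) to show $j_{U_1,\psi_b}(\Theta)$, as a representation of the stabilizer $\cover{GSpin}_{2(\glnidx-1)}(F)$, is supercuspidal or zero; (iii) embed $\Theta$ into $\induced{\cover{Q}_{\glnidx}(F)}{\cover{G}_{\glnidx}(F)}{\Theta_{\GL{\glnidx},\chi_1}\otimes\Theta_{G_0,\chi_2}}$ via the \emph{Siegel} parabolic $Q_{\glnidx}$ (where the tensor is legitimate) and use Bruhat theory to show no supercuspidal can occur in $j_{U_1,\psi_b}$ of this induced representation. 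The key input in (iii) is that $\Theta_{\GL{\glnidx},\chi_1}$ has no Whittaker model for $\glnidx\geq3$ --- and \emph{this} is where odd residual characteristic enters, not the splitting over $K$. The base cases $\glnidx=1,2$ are handled separately, the latter via the exceptional isomorphism $G_2'\cong Sp_2$ and the Weil representation, not by a principal-series computation.
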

Theorem~\ref{theorem:local vanishing result} is proved in Section~\ref{subsubsection:vanishing and results}. To remove the restriction on the residual characteristic, we only need to know that exceptional representations of the double cover of $\GLF{\glnidx}{F}$ (with the \cite{KP} parameter $\mathc=0$) do not have Whittaker models, for $\glnidx\geq3$ and even residual characteristic, this is an expected result (\cite{BG} p.~145, see \cite{FKS} Lemma~6).

The local theory has a global counterpart. Let $F$ be a number field with an ad\`{e}les ring $\Adele$. Let $\Theta=\Theta_{G_{\glnidx},\chi}$ be the exceptional representation of $\cover{G}_{\glnidx}(\Adele)$ defined with respect to a global exceptional character $\chi$. It has an automorphic realization as the multi-residue of an Eisenstein series with respect to the Borel subgroup. Also $\Theta\isomorphic\prod'_{\nu}\Theta_{G_{\glnidx},\chi_{\nu}}$ (restricted tensor product). The analogs of the Jacquet modules are Fourier coefficients, over the quotient $\lmodulo{V_{\UnipotentOrbit}(F)}{V_{\UnipotentOrbit}(\Adele)}$, with respect to generic characters of $V_{\UnipotentOrbit}(\Adele)$ trivial on $V_{\UnipotentOrbit}(F)$. See Section~\ref{subsubsection:global vanishing results} for the definitions.
The local-global principle (see e.g. \cite{JR} Proposition~1) immediately implies the following global
corollary of Theorem~\ref{theorem:local vanishing result}.
\begin{theorem}\label{theorem:global vanishing result}
Let $\UnipotentOrbit$ be any orbit greater than or non comparable with $\UnipotentOrbit_0$. Any Fourier coefficient with respect to $\UnipotentOrbit$ and a generic character vanishes identically on the space of $\Theta$.
\end{theorem}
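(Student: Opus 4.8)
The plan is to deduce Theorem~\ref{theorem:global vanishing result} from the local Theorem~\ref{theorem:local vanishing result} by a standard local-global argument for the vanishing of Fourier coefficients. Fix an orbit $\UnipotentOrbit$ greater than or non comparable with $\UnipotentOrbit_0$ and a generic character $\psi$ of $V_{\UnipotentOrbit}(\Adele)$ that is trivial on $V_{\UnipotentOrbit}(F)$. For $\varphi$ in the space of $\Theta$, the associated Fourier coefficient is the function
\[
g\mapsto \int_{\lmodulo{V_{\UnipotentOrbit}(F)}{V_{\UnipotentOrbit}(\Adele)}} \varphi(vg)\,\psi^{-1}(v)\,dv,
\]
and the claim is that this integral vanishes for all $\varphi$ and all $g\in\cover{G}_{\glnidx}(\Adele)$. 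First I would record that it suffices to prove vanishing at $g=e$, since replacing $\varphi$ by the right translate $\Theta(g)\varphi$ (which again lies in the space of $\Theta$) reduces the general case to that one; here one uses that right translation preserves automorphy and that $V_{\UnipotentOrbit}$ is stable under the relevant conjugations up to a twist of $\psi$ that remains generic.

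Next I would invoke the tensor product decomposition $\Theta\isomorphic\prod'_{\nu}\Theta_{G_{\glnidx},\chi_{\nu}}$, together with the fact that the global generic character $\psi$ factors as a restricted product $\prod_\nu\psi_\nu$ of local characters, each $\psi_\nu$ generic for $V_{\UnipotentOrbit}(F_\nu)$ (and unramified for almost all $\nu$). A vector $\varphi$ in the space of $\Theta$ is a finite linear combination of pure tensors $\otimes_\nu\xi_\nu$, so by linearity it is enough to treat a single pure tensor. The point of the local-global principle (see e.g.\ \cite{JR} Proposition~1) is that the global period functional $\varphi\mapsto\int_{\lmodulo{V_{\UnipotentOrbit}(F)}{V_{\UnipotentOrbit}(\Adele)}}\varphi(v)\psi^{-1}(v)\,dv$, if it were nonzero on the space of $\Theta$, would factor through the local twisted Jacquet (co-invariant) functionals $\xi_\nu\mapsto$ image of $\xi_\nu$ in the twisted Jacquet module $\Theta_{G_{\glnidx},\chi_\nu,V_{\UnipotentOrbit}(F_\nu),\psi_\nu}$ at every place $\nu$. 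Since $F$ has at least one non-Archimedean place $\nu_0$ with odd residual characteristic (in fact infinitely many), Theorem~\ref{theorem:local vanishing result} applies at $\nu_0$ and gives that this local Jacquet module is zero; hence the local functional at $\nu_0$ is identically zero, forcing the global functional to vanish.

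The routine-but-necessary steps I would spell out are: (i) the convergence and well-definedness of the Fourier-coefficient integral, which follows because $V_{\UnipotentOrbit}\backslash$-unipotent integrals over $\lmodulo{V_{\UnipotentOrbit}(F)}{V_{\UnipotentOrbit}(\Adele)}$ are over a compact quotient and $\varphi$ is a smooth automorphic form on the cover (here one must check the cover splits over the unipotent $V_{\UnipotentOrbit}(\Adele)$ and over $V_{\UnipotentOrbit}(F)$, so that $\psi$ and the integral make sense — this is standard for unipotent subgroups of metaplectic covers and should be quoted from the construction of $\cover{G}_{\glnidx}(\Adele)$); (ii) the identification of the global Fourier functional with the restricted tensor product of the local twisted Jacquet functionals under the decomposition $\Theta\isomorphic\prod'_\nu\Theta_{G_{\glnidx},\chi_\nu}$, which is exactly the content of the cited local-global principle; and (iii) the observation that ``$\UnipotentOrbit$ greater than or non comparable with $\UnipotentOrbit_0$'' is a condition on the orbit alone, independent of the place, so the hypothesis of Theorem~\ref{theorem:local vanishing result} is met at $\nu_0$ for the same $\UnipotentOrbit$.

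I do not expect any serious obstacle here: the substance of the result is entirely in the local Theorem~\ref{theorem:local vanishing result}, and the global statement is a formal consequence. The only point requiring mild care is making sure the local-global factorization of the period is legitimate for \emph{genuine} automorphic forms on the double cover — i.e.\ that the abstract multiplicity-one/uniqueness input behind \cite{JR} Proposition~1 is insensitive to the metaplectic nature of $\Theta$. This is handled by working with the fixed section/cocycle and the block-compatibility discussed earlier in the paper, which let one treat the cover place-by-place in the usual way; once that is in place, the argument closes immediately.
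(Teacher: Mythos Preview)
Your proposal is correct and follows exactly the paper's approach: the paper states that Theorem~\ref{theorem:global vanishing result} ``follows immediately from Theorem~\ref{theorem:local vanishing result} using a local-global principle (see e.g.\ \cite{JR} Proposition~1),'' and you have spelled out precisely this argument, invoking the restricted tensor product decomposition $\Theta\isomorphic\prod'_{\nu}\Theta_{G_{\glnidx},\chi_{\nu}}$ and applying the local vanishing at a non-Archimedean place of odd residual characteristic. Your additional care about the splitting of the cover over $V_{\UnipotentOrbit}(\Adele)$ and $V_{\UnipotentOrbit}(F)$ is justified by Claim~\ref{claim:good ol section splits G(F) and N(A)}.
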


A minimal representation is a representation supported on the minimal coadjoint orbit. If $\glnidx\geq4$,
Vogan \cite{V} (Theorem~2.13) proved that $SO_{2\glnidx+1}$ (or its cover groups) do not afford such representations.
Bump, Friedberg and Ginzburg \cite{BFG} constructed a local and global ``small" representation $\Theta_{SO_{2\glnidx+1}}$ of
$SO_{2\glnidx+1}$. It is a representation of a cover $\cover{SO}_{2\glnidx+1}(F)$, this cover was obtained by restriction of the $4$-fold cover of $SL_{2\glnidx+1}(F)$ of Matsumoto \cite{Mats}. In the cases $\glnidx=2,3$ it is in fact the minimal representation. It is small in the sense that it is supported on the orbit $\UnipotentOrbit_0$ \cite{BFG,BFG2}.
The use of the $4$-fold cover implies a minor technical restriction on the field, namely that $-1$ is a square.

The arguments of Vogan \cite{V} apply also to $G_{\glnidx}$, that is, for $\glnidx\geq4$ there is no minimal representation. It is reasonable to call $\Theta$ a small representation of $G_{\glnidx}$.

Our local and global results are parallel to those of \cite{BFG,BFG2} and are obtained using similar methods. For example, because the unipotent subgroups of $G_{\glnidx}$ are in bijection with those of $SO_{2\glnidx+1}$, manipulations on Jacquet modules are similar. One notable difference, is in the restriction of the cover to Levi subgroups. In contrast with the cover of $SO_{2\glnidx+1}$, here direct factors of Levi subgroups do not commute in the cover. This implies that representations of Levi subgroups cannot be studied using the usual tensor product. In this property, as well as in other details, the cover of $G_{\glnidx}$ is more related to the cover of $\GL{\glnidx}$, than to that of $SO_{2\glnidx+1}$.

Let $Q_{k}=M_k\ltimes U_k$ be a maximal parabolic subgroup with a Levi part $M_k$ isomorphic to $\GL{k}\times G_{\glnidx-k}$. In the particular case of $k=\glnidx$, $\CGLF{\glnidx}{F}$ and $\cover{G}_{0}(F)$ do commute and we can define a tensor product. Let $\chi^{(1)}$ be an exceptional character in the sense of Kazhdan and Patterson \cite{KP} and let $\Theta_{\GL{\glnidx},\chi^{(1)}}$ be the corresponding global exceptional representation of the double cover $\CGLF{\glnidx}{\Adele}$ of \cite{KP}. Also, let $\chi^{(2)}$ be a genuine character of $\cover{G}_0(\Adele)$ (this cover is split). Assume $\chi=\chi^{(1)}\otimes\chi^{(2)}$, for the precise meaning of this equality see
Section~\ref{subsubsection:global induction transitivity}. We compute the constant term of an automorphic form $\theta$ in the space of $\Theta$ along the unipotent radical $U_{\glnidx}$ and prove the following result.
\begin{theorem}\label{theorem:the constant term}
The function $m\mapsto\theta^{U_{\glnidx}}(m)$ on $\cover{M}_{\glnidx}(\Adele)$ belongs to the space of
\begin{align*}
\Theta_{\GL{\glnidx},\absdet{}^{-1/2}\chi^{(1)}}\otimes\Theta_{G_{0},\chi^{(2)}}.
\end{align*}
\end{theorem}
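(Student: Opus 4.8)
The plan is to push the computation down to the Borel subgroup, where $\Theta$ is visible as a residue, apply the Langlands constant-term formula, and then take the multi-residue term by term. Concretely, realize $\theta$ as the iterated residue, at a point $\underline{s}_{0}$, of a Borel Eisenstein series $E(\cdot,f_{\underline{s}})$ built from a standard section $f_{\underline{s}}$ of the genuine principal series $\induced{\cover{B}_{\glnidx}(\Adele)}{\cover{G}_{\glnidx}(\Adele)}{\chi_{\underline{s}}}$, where $\underline{s}\mapsto\chi_{\underline{s}}$ is the unramified deformation of the exceptional character and $\chi_{\underline{s}_{0}}=\chi$. Since the inducing datum sits on the Borel, the constant term along $U_{\glnidx}$ is a finite sum
\begin{align*}
E^{U_{\glnidx}}(\cdot,f_{\underline{s}})=\sum_{w}E^{\cover{M}_{\glnidx}}_{\cover{B}_{\glnidx}\cap\cover{M}_{\glnidx}}\bigl(\cdot,\ \intertwining{w}{\underline{s}}f_{\underline{s}}|_{\cover{M}_{\glnidx}(\Adele)}\bigr),
\end{align*}
with $w$ running over a set of representatives of $\lmodulo{W_{M_{\glnidx}}}{W_{G_{\glnidx}}}$. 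What makes the intertwining operator, its restriction to $\cover{M}_{\glnidx}$, and the $\cover{M}_{\glnidx}$-Eisenstein series meaningful in the cover is the block-compatibility of the cocycle $\sigma$ established earlier; for $k=\glnidx$ one moreover uses that $\CGLF{\glnidx}{\Adele}$ and $\cover{G}_{0}(\Adele)$ commute in the cover, so that each summand factors as an outer tensor product of a $\CGLF{\glnidx}{\Adele}$-Eisenstein series with a $\cover{G}_{0}(\Adele)$-one.

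Next, take the multi-residue at $\underline{s}_{0}$ summand by summand. For a fixed $w$, the restricted section lies in a principal series of $\cover{M}_{\glnidx}$ whose character is the restriction of $w(\chi_{\underline{s}})$ to the torus, and $\intertwining{w}{\underline{s}}$ contributes, via the Gindikin--Karpelevich computation for the cover, a product of completed zeta factors whose poles and zeros are governed entirely by the linear root data, the metaplectic Gauss-sum factors being entire and nonvanishing near $\underline{s}_{0}$. Comparing these factors with the order of the iterated residue, only the coset $w$ for which $w(\chi)$ restricted to the $\GL{\glnidx}$-block is a Kazhdan--Patterson exceptional character twisted by $\absdet{}^{-1/2}$, and restricted to $G_{0}$ is $\chi^{(2)}$, survives; the twist $\absdet{}^{-1/2}$ is forced by comparing the half-sum of the positive roots of $B_{\glnidx}$ with those of the two blocks, i.e. by $\delta_{Q_{\glnidx}}^{1/2}$ together with the shift built into the exceptional point. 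For this $w$, transitivity of residues identifies the multi-residue of the $\cover{M}_{\glnidx}$-Eisenstein series with the iterated residues separately defining the exceptional representations of $\CGLF{\glnidx}{\Adele}$ and $\cover{G}_{0}(\Adele)$, which gives precisely $\Theta_{\GL{\glnidx},\absdet{}^{-1/2}\chi^{(1)}}\otimes\Theta_{G_{0},\chi^{(2)}}$.

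The main obstacle is the term-by-term vanishing. One has to pin down the exact pole and zero structure of the cover's global intertwining operators $\intertwining{w}{\underline{s}}$ near $\underline{s}_{0}$ --- in particular that the metaplectic normalizing factors neither produce nor cancel poles --- and match it against the order of the iterated residue, so that exactly one coset contributes. A second, more delicate point is the precise value of the twist $\absdet{}^{-1/2}$, which depends on the chosen lift of $w$ to the cover and on how that lift moves the exceptional character; here I would use the explicit section $\mathfrak{s}$ and cocycle $\sigma$ of Banks--Levi--Sepanski \cite{BLS} together with the $\GL{\glnidx}$ bookkeeping of Kazhdan--Patterson \cite{KP}. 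The remaining point --- that the surviving residue is genuinely nonzero and is literally the asserted tensor product rather than merely an automorphic form with the correct central data --- follows from the realization of $\Theta$, $\Theta_{\GL{\glnidx}}$ and $\Theta_{G_{0}}$ as multi-residues and from the uniqueness of the irreducible quotient of the relevant principal series.
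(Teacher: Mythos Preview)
Your proposal is correct and follows essentially the same route as the paper: realize $\theta$ as the multi-residue of the Borel Eisenstein series, apply the Langlands constant-term formula along $U_{\glnidx}$ to get a sum over $\lmodulo{W_{M_{\glnidx}}}{W_{G_{\glnidx}}}$ of $\cover{M}_{\glnidx}$-Eisenstein series built from intertwined sections, and then show by a pole count that exactly one term survives the residue. The paper makes this concrete by parametrizing the $2^{\glnidx}$ representatives by $\underline{b}\in\{0,1\}^{\glnidx}$ and splitting the pole analysis into two pieces (the intertwining operator $M'(w,\chi_{\underline{s}})$ has a simple pole at $s_{\glnidx+1}=0$ iff $\underline{b}$ has at least one $1$, while the resulting $\GL_{\glnidx}$-Eisenstein series has at most $\glnidx-2$ poles unless $\underline{b}$ is all $0$'s or all $1$'s), so only $\underline{b}=(1,\dots,1)$ yields the full $\glnidx$ poles; your worry about ``metaplectic Gauss-sum factors'' is unnecessary here, since for the double cover the Gindikin--Karpelevich computation already gives pure zeta ratios with no extra entire factors to track.
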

The definition of the constant term $\theta^{U_{\glnidx}}$ and the proof of the theorem occupy Section~\ref{subsubsection:the constant term}. Note that $\absdet{}^{-1/2}\chi^{(1)}$ is also an exceptional character.

For $SO_{2\glnidx+1}$ and any $k$, the mapping
$m\mapsto\theta^{U_{k}}(m)$ belongs to the space of the tensor product $\Theta_{\GL{k}}\otimes\Theta_{SO_{2(\glnidx-k)+1}}$, for a uniquely determined exceptional representation $\Theta_{\GL{\glnidx}}$ ($\Theta_{SO_{2(\glnidx-k)+1}}$ is unique). This was conjectured in \cite{BFG} and proved for $k=1$, and in general proved in \cite{me7}. We mention that for the current applications the case of $k=\glnidx$ is sufficient. For the lifting results of \cite{BFG2} (see below) the constant term was not used.

We will study representations of $M_k$ using a ``larger" induced representation, similar to the construction of Kable \cite{Kable}, see Section~\ref{subsubsection:tensor product}. A metaplectic tensor product for cover groups of $\GL{\glnidx}$ has been studied in \cite{FK,Su2,Kable,Mezo,Tk2}, but will not be used here. Refer to the discussion in Section~\ref{subsubsection:tensor discussion}.

There are several applications to our work. Essentially, one can simply replace $\Theta_{SO_{2\glnidx+1}}$ with $\Theta$. This has the benefit of removing the restriction on the field with respect to the $4$-th roots of unity. 

We describe one application, whose details are given in Section~\ref{section:application 1 coperiod}. In general,
let $G$ be a split reductive algebraic $F$-group, where $F$ is a number field. Let $Q=M\ltimes U$ be a maximal
parabolic subgroup of $G$ with a Levi part $M$ and let $\tau$ be an irreducible unitary cuspidal globally generic automorphic representation of $M(\Adele)$. Denote the central character of $\tau$ by $\omega_{\tau}$.
Denote by $E(g;\rho,s)$ the Eisenstein series corresponding to an element $\rho$ in the space of the representation of $G(\Adele)$ induced from $\tau$; $g\in G(\Adele)$ and $s\in\C$. For $s_0\in\C$, let $E_{s_0}(g;\rho)$ denote the residue of $E(g;\rho,s)$ at $s_0$.
The space spanned by the residues $E_{s_0}(\cdot;\rho)$ is called the residual representation $E_{\glnrep}$.

Periods of automorphic forms are often related to poles of $L$-functions and to questions of functoriality. Ginzburg, Jiang and Soudry \cite{GJS} described such relations in a general setup
and considered several examples. They conjectured (\cite{GJS} Conjecture~1.4) that the pole at $s=1$ of a partial $L$-function corresponding to $\tau$ is related to the non-triviality of certain period integrals, the existence of a residual representation and the existence of a representation $\tau_0$ such that $\tau$ is the Langlands functorial transfer of $\tau_0$.

Among the examples given in \cite{GJS} is the case of $G=SO_{2\glnidx+1}$ and $M=\GL{\glnidx}$. Let $A^+$ be the subgroup of id\`{e}les of $F$ whose finite components are trivial, and \archimedean\ components are equal, real and positive. Assume that $\omega_{\tau}$ is trivial on $A^+$. The pole of the Eisenstein series at $s=1/2$ is determined by the presence of a pole of the partial symmetric square $L$-function at $s=1$.
In \cite{me7} we elaborated on this case and proved a result relating a co-period integral
\begin{align*}
\int_{\lmodulo{SO_{2\glnidx+1}(F)}{SO_{2\glnidx+1}(\Adele)}}E_{1/2}(g;\rho)\theta(g)\theta'(g)dg
\end{align*}
to the ``theta period" integral
\begin{align*}
\int_{\lmodulo{\GL{\glnidx}(F)}{\GL{\glnidx}(\Adele)^1}}\rho(b)\theta^{U_{\glnidx}}(b){\theta'}^{U_{\glnidx}}(b)db.
\end{align*}
Here $\theta$ (resp. $\theta'$) is an automorphic form in the space of $\Theta_{SO_{2\glnidx+1},\vartheta}$
(resp. $\Theta_{SO_{2\glnidx+1},\vartheta^{-1}}$), where $\vartheta$ is a character of order $4$ of the group of $4$-th roots of unity ($\vartheta$ is implicit in the notation $\Theta_{SO_{2\glnidx+1}}$); $\GLF{\glnidx}{\Adele}^1$ is the kernel of $\absdet{}$ on $\GLF{\glnidx}{\Adele}$.

Bump and Ginzburg \cite{BG} constructed a Rankin-Selberg integral representing the partial symmetric square $L$-function. In particular, they showed that if this function has a pole at $s=1$, a certain period integral does not vanish. Their period integral was related in \cite{me7}, under an additional assumption on $\omega_{\glnrep}$, to the theta period above.

Using the exceptional representation $\Theta$, this result can be put in a more general setting of $G_{\glnidx}$ and in particular, will hold for any number field. Let $\tau$ be as above (but without the assumption $\omega_{\tau}|_{A^+}=1$) and $\eta$ be a unitary Hecke character.
The Eisenstein series $E(g;\rho,s)$ is now defined with respect to $G_{\glnidx}$ and the parabolic subgroup $Q_{\glnidx}$. According to Hundley and Sayag \cite{HS},
the series $E(g;\rho,s)$ is holomorphic at $\Re(s)>0$ except perhaps for a simple pole at $s=1/2$. The existence of this pole is determined by the presence of a pole of the partial $L$-function $L^S(s,\tau,Sym^2\otimes\eta)$ at $s=1$ 
(in \cite{HS} the twisting is with respect to $\eta^{-1}$ but their conventions are different, see Remark~\ref{remark:eta inverse in HS}). Takeda \cite{Tk} constructed a Rankin-Selberg integral for this function and proved that if $S$ is large enough and
$\omega_{\tau}^2\eta^{\glnidx}\ne1$, then $L^S(s,\tau,Sym^2\otimes\eta)$ is holomorphic at $s=1$. In particular, the series is holomorphic at $s=1/2$ unless $\omega_{\tau}^2\eta^{\glnidx}$ is trivial on $A^+$.

Denote the center of $G_{\glnidx}(\Adele)$ by $C_{G_{\glnidx}(\Adele)}$. We select global exceptional characters $\chi$ and $\chi'$ such that $\chi\cdot\chi'\cdot\eta=1$ on $C_{G_{\glnidx}(\Adele)}$. Let $\theta$ (resp. $\theta'$) belong to the space of $\Theta_{G_{\glnidx},\chi}$ (resp. $\Theta_{G_{\glnidx},\chi'}$). Here is our result, which follows by a minor modification to \cite{me7}.
\begin{theorem}\label{theorem:co period GSpin}
Consider the co-period integral
\begin{align*}
\mathcal{I}(E_{1/2}(\cdot;\rho),\theta,\theta')=\int_{\lmodulo{C_{G_{\glnidx}(\Adele)}G_{\glnidx}(F)}{G_{\glnidx}(\Adele)}}E_{1/2}(g;\rho)\theta(g)\theta'(g)dg.
\end{align*}
Assume that $\omega_{\tau}^2\eta^{\glnidx}$ is trivial on $A^+$. Then the following holds.
\begin{enumerate}[leftmargin=*]
  \item\label{thm:gspin part 2} There is a normalization of measures (explicitly given in the proof) such that
\begin{align*}
\mathcal{I}(E_{1/2}(\cdot;\rho),\theta,\theta')=\int_{K}\int_{\lmodulo{\GLF{\glnidx}{F}}{\GLF{\glnidx}{\Adele}^1}}\rho(bk)\theta^{U_{\glnidx}}(bk){\theta'}^{U_{\glnidx}}(bk)dbdk.
\end{align*}
Here $K$ is the product of local maximal compact subgroups. 
\item\label{thm:gspin part 3}
The co-period $\mathcal{I}(E_{1/2}(\cdot;\rho),\theta,\theta')$ is nonzero for some $(\rho,\theta,\theta')$ if and only if \begin{align*}
\int_{\lmodulo{\GL{\glnidx}(F)}{\GL{\glnidx}(\Adele)^1}}\rho_1(m)\theta_1(b)\theta_1'(b)db\ne0
\end{align*}
for some cusp form $\rho_1$ in the space of $\tau$, and $\theta_1$ (resp. $\theta_1'$) in the space of $\Theta_{\GL{\glnidx},\absdet{}^{-1/2}\chi^{(1)}}$ (resp. $\Theta_{\GL{\glnidx},\absdet{}^{-1/2}{\chi'}^{(1)}}$).
\end{enumerate}
\end{theorem}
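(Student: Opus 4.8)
The plan is to follow the strategy of \cite{me7} almost verbatim, tracking the three modifications needed for the $G_{\glnidx}$ setting: the cover is a genuine double cover (not a $4$-fold cover), the center $C_{G_{\glnidx}(\Adele)}$ is larger than in the $SO_{2\glnidx+1}$ case so we must quotient by it, and there is an auxiliary Hecke character $\eta$. First I would unfold the Eisenstein series $E_{1/2}(g;\rho)$: since it is the residue at $s=1/2$ of the series attached to $\rho\in\mathrm{Ind}_{Q_{\glnidx}(\Adele)}^{G_{\glnidx}(\Adele)}(\tau\absdet{}^s)$, the basic identity rewrites the co-period integral, after collapsing the sum over $\lmodulo{Q_{\glnidx}(F)}{G_{\glnidx}(F)}$ against the outer integration, as an integral over $\lmodulo{C_{G_{\glnidx}(\Adele)}M_{\glnidx}(F)U_{\glnidx}(\Adele)}{G_{\glnidx}(\Adele)}$ of $\rho$ against $\theta(g)\theta'(g)$. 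The $U_{\glnidx}(\Adele)$-integration of $\theta(g)\theta'(g)$ is exactly the product of constant terms, and here Theorem~\ref{theorem:the constant term} is the key input: it identifies $\theta^{U_{\glnidx}}$ with a vector in $\Theta_{\GL{\glnidx},\absdet{}^{-1/2}\chi^{(1)}}\otimes\Theta_{G_0,\chi^{(2)}}$, and similarly for $\theta'$. The vanishing theorems (Theorems~\ref{theorem:local vanishing result} and~\ref{theorem:global vanishing result}) are what guarantee that the cross-terms coming from other Fourier coefficients drop out, so that only the constant term survives; this is the mechanism that makes the co-period collapse to a $\GL{\glnidx}$-integral rather than something more complicated.

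Next I would perform the Iwasawa decomposition $g=buk$ with $b\in\GL{\glnidx}(\Adele)$ (inside $M_{\glnidx}(\Adele)$), $u\in U_{\glnidx}(\Adele)$, $k\in K$, and carry out the modulus-character bookkeeping. The factor $\absdet{}^{-1/2}$ appearing on $\chi^{(1)}$ in Theorem~\ref{theorem:the constant term} is precisely what is needed to match the half-integral shift $s=1/2$ and the modulus character $\delta_{Q_{\glnidx}}$; after this cancellation the remaining $b$-integral is over $\lmodulo{\GLF{\glnidx}{F}}{\GLF{\glnidx}{\Adele}^1}$, with the idele-norm part of $\GL{\glnidx}(\Adele)$ absorbed by the residue computation and by the assumption that $\chi\cdot\chi'\cdot\eta=1$ on $C_{G_{\glnidx}(\Adele)}$ (this is what makes the integrand genuinely a function on the quotient by the center and ensures convergence of the $\GL{\glnidx}(\Adele)^1$-integral). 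The role of the hypothesis ``$\omega_{\tau}^2\eta^{\glnidx}$ trivial on $A^+$'' is to guarantee, via Takeda's result quoted above, that the pole at $s=1/2$ is actually present so that $E_{1/2}$ is nonzero, and to make the central characters compatible along $A^+$. This yields part~\eqref{thm:gspin part 2}, with the explicit measure normalization recorded as one carries out each change of variables.

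For part~\eqref{thm:gspin part 3}, I would first observe that the right-hand side of~\eqref{thm:gspin part 2}, being an integral over the compact $K$ of a continuous family of $\GL{\glnidx}$-period integrals, is nonzero for some $(\rho,\theta,\theta')$ if and only if the inner $\GL{\glnidx}$-integral $\int_{\lmodulo{\GL{\glnidx}(F)}{\GL{\glnidx}(\Adele)^1}}\rho(bk)\theta^{U_{\glnidx}}(bk){\theta'}^{U_{\glnidx}}(bk)\,db$ is nonzero for some choice of data and some $k$; by right $K$-translation we may take $k=1$. Then, using Theorem~\ref{theorem:the constant term} again together with the surjectivity of the constant-term map onto the relevant exceptional representation of $\CGLF{\glnidx}{\Adele}$ (the $G_0$-factor $\Theta_{G_0,\chi^{(2)}}$ is a character and contributes only a nonzero scalar on the diagonal, which one checks does not cause cancellation), I would identify this with the theta-period integral $\int_{\lmodulo{\GL{\glnidx}(F)}{\GL{\glnidx}(\Adele)^1}}\rho_1(m)\theta_1(b)\theta_1'(b)\,db$ against $\Theta_{\GL{\glnidx},\absdet{}^{-1/2}\chi^{(1)}}$ and $\Theta_{\GL{\glnidx},\absdet{}^{-1/2}{\chi'}^{(1)}}$. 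Here one needs that the span of the functions $b\mapsto\rho(bk)$ as $\rho$ and $k$ vary is exactly the space of $\tau$ (restricted to $\GL{\glnidx}(\Adele)^1$), which follows from cuspidality and irreducibility of $\tau$.

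The main obstacle I anticipate is not any single deep step but the cumulative bookkeeping in the non-split double cover: one must verify that every unfolding, every application of Fubini, and every Iwasawa decomposition is compatible with the cocycle $\sigma$ and its block-compatibility (established earlier in the paper), so that genuineness is preserved and the tensor-product decomposition of Theorem~\ref{theorem:the constant term} can be legitimately invoked inside the integral; in particular the fact, emphasized in the introduction, that direct factors of Levi subgroups of $G_{\glnidx}$ do \emph{not} commute in the cover means one cannot naively factor the integrand, and one must stay within the case $k=\glnidx$ where $\CGLF{\glnidx}{F}$ and $\cover{G}_0(F)$ do commute. A secondary subtlety is the treatment of the center: ensuring that after quotienting by $C_{G_{\glnidx}(\Adele)}$ the relation $\chi\cdot\chi'\cdot\eta=1$ makes all integrands well defined on the stated quotients and that the convergence arguments of \cite{me7}, originally written for $SO_{2\glnidx+1}$ without a central quotient, go through after this modification. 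Beyond these points, the argument is a faithful transcription of \cite{me7}, which is why the theorem ``follows by a minor modification''.
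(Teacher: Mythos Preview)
Your overall strategy matches the paper's in spirit, but there are two genuine gaps that the paper handles and your sketch does not.

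\textbf{Truncation.} You propose to ``collapse the sum over $\lmodulo{Q_{\glnidx}(F)}{G_{\glnidx}(F)}$'' directly in $\mathcal{I}(E_{1/2}(\cdot;\rho),\theta,\theta')$. But $E_{1/2}$ is a \emph{residue}, not an absolutely convergent sum, so there is nothing to collapse. The paper (following \cite{JR,Jng,GRS7,GJR2,me7}) applies Arthur's truncation operator $\Lambda_d$ to the full Eisenstein series $E_{Q_{\glnidx}}(\cdot;\rho,s)$, writes $\Lambda_d E_{Q_{\glnidx}}=\mathcal{E}_1^d-\mathcal{E}_2^d$ with the two pieces given by sums of $\rho_s$ and $M(w,s)\rho_s$ cut off by $ch_{\leq d}(H(\delta g))$ and $ch_{>d}(H(\delta g))$, and shows (Proposition~\ref{proposition:gspin computation of the co-periods}) that each $\mathcal{I}(\mathcal{E}_i^d(\cdot;s),\theta,\theta')$ has meromorphic continuation with an explicit formula. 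Only then does one take $\Res_{s=1/2}$ and let $d\to\infty$. Without this device the formal unfolding you describe is not justified; in particular the $A^+$-integral that produces the factor $(s-1/2)^{-1}$ diverges unless the cutoff $ch_{\leq d}$ is present.

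\textbf{The $U_{\glnidx}$-step.} You assert that the $U_{\glnidx}(\Adele)$-integration of $\theta(g)\theta'(g)$ yields $\theta^{U_{\glnidx}}(g){\theta'}^{U_{\glnidx}}(g)$, with ``cross-terms'' killed by Theorems~\ref{theorem:local vanishing result} and~\ref{theorem:global vanishing result}. This is where most of the work lies, and smallness of $\theta,\theta'$ alone is not enough. The paper Fourier-expands $\theta'$ along the derived group $C_{U_{\glnidx}}$; the $M_{\glnidx}(F)$-orbits of characters are indexed by $0\leq j\leq\lfloor\glnidx/2\rfloor$ with stabilizers containing $Sp_j(F)$. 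The resulting integrals $\mathcal{I}_j$ for $j>0$ vanish (Lemma~\ref{lemma:I_k vanishes for k > 0}) by a combination of (i) Proposition~\ref{proposition:global basic Fourier vanishing result} and the local results Claim~\ref{claim:twisted Jacquet modules vanish on small representations claim 1} and Proposition~\ref{proposition:vanishing along GLn of the Whittaker Jacquet for small representations}, (ii) the \emph{cuspidality of $\tau$} (to kill the $j<\glnidx/2$ terms after introducing a unipotent inner integral), and (iii) for $j=\glnidx/2$ the symplectic-period vanishing of Jacquet--Rallis \cite{JR} together with a lemma of Ikeda \cite{Ik3}. Your sketch omits (ii) and (iii) entirely; in particular the argument would break down at $j=\glnidx/2$ without the $Sp_{\glnidx/2}$-period input, which has nothing to do with the smallness of the exceptional representation.

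For part~\eqref{thm:gspin part 3}, the paper simply invokes \cite{GJR2} Theorem~3.2 (and \cite{JR,Jng}); your density/surjectivity argument is in the same spirit and is essentially what that citation encapsulates.
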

The proof is given in Section~\ref{section:application 1 coperiod}.
Note that according to Theorem~\ref{theorem:the constant term}, the function $\theta^{U_{\glnidx}}$ belongs to the space
of an exceptional representation on $\CGLF{\glnidx}{\Adele}$.


Theorem~\ref{theorem:co period GSpin} motivates a local counterpart, which 
will be used as an ingredient in a proof of a conjecture of Lapid and Mao on Whittaker-Fourier
coefficients \cite{LM2}, for even orthogonal groups. Let $\varepsilon$ be Arthur's elliptic tempered parameter for $SO_{2\glnidx}$ and $\mathcal{S}_{\varepsilon}$ be the corresponding group \cite{A3}. Assume we have an irreducible automorphic cuspidal ($\psi$-)generic representation $\pi$ of $SO_{2\glnidx}(\Adele)$ in the $A$-packet associated to $\varepsilon$ ($\pi$ is expected to be unique). Further let $W$ and $W^{\wedge}$ be two global Whittaker-Fourier coefficients on the spaces of $\pi$ and $\pi^{\wedge}$ ($\pi^{\wedge}$ - the contragradient representation). The conjecture of Lapid and Mao relates the product $W(e)W^{\wedge}(e)$ to the size of $\mathcal{S}_{\varepsilon}$. The exceptional representation of $G_{\glnidx}$ can perhaps be also used to prove the conjecture for $GSpin_{2n}$.

The Minimal representation for the group $SO_7$, which is a representation of $\cover{SO}_7$, was constructed and studied by Roskies \cite{Roskies}, Sabourin \cite{Sabourin} and Torasso \cite{Torasso}. It was used by Bump, Friedberg and Ginzburg \cite{BFG3} to construct a Rankin-Selberg integral for the $14$-dimensional irreducible representation of the $L$-group of $SO_7$, corresponding to the third fundamental weight.
Bump, Friedberg and Ginzburg \cite{BFG2} used $\Theta_{SO_{2\glnidx+1}}$ to construct a lift, with certain functorial properties, from genuine automorphic representations of $\cover{SO}_{k}(\Adele)$ to $\cover{SO}_{m}(\Adele)$, for integers $k$ and $m$ of different parity.
Both results can be extended to the context of $G_{\glnidx}$. In particular, our results lead to a lift from
genuine automorphic representations of $\cover{GSpin_{k}}(\Adele)$ to $\cover{GSpin_{m}}(\Adele)$.


Loke and Savin \cite{LokeSavin2} constructed local and global exceptional representations for simply connected Chevalley groups. Their approach was different from \cite{KP}. They started with defining a global automorphic representation $\pi$, which was invariant under the action of the Weyl group. Their local and global exceptional representations were obtained by unramified twists of $\pi_{\nu}$ or $\pi$. The global representation was also realized as a multi-residue of an Eisenstein series. Their exposition is elegant and applicable to a wide range of groups.

Our results have some overlap with an ongoing work of Loke and Savin\footnote{private communication.}, we thank them for informing us about their work. They and the author were working independently. The approach and techniques are different. For example, they do not use a cocycle. 

Minimal representations have been studied and used by many authors. The fundamental example is the Weil representation of $\cover{Sp}_{\glnidx}$, which was used in the theta correspondence between a pair of dual reductive groups, to lift representations from one group to another \cite{Pd}. The Weil representation also played an important role in the descent method, in the construction of Fourier-Jacobi coefficients \cite{GRS3,JSd1,Soudry4,RGS}. Among the works on minimal representations are \cite{Savin2,Savin,GanSavin,LokeSavin}.
Works on minimal representations for simply laced groups include
\cite{V,KZ,KS,BK,GRS,KP3}. Minimal representations enjoy the vanishing of a large class of Fourier coefficients, which makes them valuable for applications involving lifts and Rankin-Selberg integrals \cite{GRS6,G2,GJS2}.

Early works on the metaplectic groups include the work of Weil \cite{We} on $\cover{Sp}_n$, Kubota \cite{Kubota,Kubota2} who studied the $r$-fold cover of $\GL{2}$ and Moore \cite{Moore} and Steinberg \cite{Stein}, who studied central extensions of simple Chevelley groups and also considered metaplectic groups. Matsumoto \cite{Mats} constructed the metaplectic $r$-fold cover of any simple simply connected split group $G$ over a local field. For $GL_{n}$, the metaplectic groups were constructed and studied by Kazhdan and Patterson \cite{KP}. Over a $p$-adic field containing $2r$ different $2r$-th roots of unity and such that $2r$ is coprime to the residue characteristic, McNamara \cite{McNamara} constructed $\cover{G}$ for any split reductive group $G$, using the results of Brylinski and Deligne \cite{BD} and Finkelberg and Lysenko \cite{FL}. Sun \cite{Su} studied 
metaplectic covers of $G$, defined using covers of its derived group $G'$, assuming that $G$ is also connected and $G'$ is a simple simply connected Chevalley group.

Banks, Levy and Sepanski \cite{BLS} elaborated on the work of Matsumoto \cite{Mats}, by describing an explicit section and a $2$-cocycle $\sigma_G$ representing the corresponding cohomology class in $H^2(G(F),\mu_r)$ of the cover of \cite{Mats}, where $F$ is a local field and $\mu_r$ is the subgroup of $r$ $r$-th roots of unity. They proved several compatibility results, which make their cocycle a convenient choice. For example, if $H$ is a ``standard" subgroup of $G$, which means that $H$ is a simple simply connected split group generated in $G$ by certain data, the restriction of $\sigma$ to $H(F)\times H(F)$ is $\sigma_H$. The cocycle $\sigma_{SL_{\glnidx+1}}$ for $SL_{\glnidx+1}(F)$ was used in \cite{BLS} to define a cocycle on $\GLF{\glnidx}{F}$, which (in contrast with the cocycle of \cite{KP}) is block-compatible. 

The group $GSpin_m$ has been the focus of study of a few recent works. Asgari \cite{Asg2,Asg} studied its local $L$-functions, Asgari and Shahidi \cite{AsgSha,AsgSha2} proved functoriality results and Hundley and Sayag \cite{HS} extended the descent construction to $GSpin_m$. 


\addtocontents{toc}{\protect\setcounter{tocdepth}{1}}

\subsection*{Acknowledgments} I wish to express my gratitude to Erez Lapid for suggesting this project to me and for helpful and inspiring conversations. I thank Daniel Bump, Solomon Friedberg, David Ginzburg, Anthony Kable, Paul Mezo, Gordan Savin and Shuichiro Takeda for helpful conversations. Part of this work was done while the author was participating in the workshop ``Advances in the theory of automorphic forms and their L-functions" in the Erwin Schr\"{o}dinger Institute in Vienna. I would like to thank Joachim Schwermer and the institute for their kind hospitality. 

\addtocontents{toc}{\protect\setcounter{tocdepth}{3}}

\tableofcontents

\section{Preliminaries}\label{section:preliminaries}

\subsection{The groups}\label{subsection:the groups}
Let $F$ be a field of characteristic $0$. 
For any $r\geq1$, let $\mu_{r}=\mu_{r}(F)$ be the subgroup of the $r$-th roots of unity in $F$.
Put $F^{*r}=(F^*)^r$. If $F$ is any local field, let $(,)_{r}$ be the Hilbert symbol of order $r$ of $F$, and we usually denote by $\psi$ a fixed nontrivial additive character of $F$. Then $\gamma_{\psi}$ is the normalized Weil factor associated to $\psi$ (\cite{We} Section~14, $\gamma_{\psi}(a)$ is $\gamma_F(a,\psi)$ in the notation of \cite{Rao}, $\gamma_{\psi}(\cdot)^4=1$). If $F$ is a local $p$-adic field, its ring of integers is $\RingOfIntegers$, the maximal ideal is $\mathcal{P}=\varpi\RingOfIntegers$ and $|\varpi|^{-1}=q=|\rmodulo{\RingOfIntegers}{\mathcal{P}}|$.

In the group $\GL{\glnidx}$, fix  the Borel subgroup $B_{\GL{\glnidx}}=T_{\GL{\glnidx}}\ltimes N_{\GL{\glnidx}}$ of upper triangular invertible matrices, where $T_{\GL{\glnidx}}$ is
the diagonal torus. 
Denote by $I_{\glnidx}$ the identity matrix of $\GL{\glnidx}(F)$.

We define the special odd orthogonal group
\begin{align*}
SO_{2\glnidx+1}(F)=\setof{g\in SL_{2\glnidx+1}(F)}{\transpose{g}J_{2\glnidx+1}g=J_{2\glnidx+1}},
\end{align*}
where $\transpose{g}$ is the transpose of $g$ and for any $k\geq1$, $J_{k}\in \GLF{k}{F}$ is the matrix with $1$ on the anti-diagonal and $0$ elsewhere.
Fix the Borel subgroup $B_{SO_{2\glnidx+1}}=T_{SO_{2\glnidx+1}}\ltimes N_{SO_{2\glnidx+1}}$ where $B_{SO_{2\glnidx+1}}=
B_{\GL{2\glnidx+1}}\cap SO_{2\glnidx+1}$ and $T_{SO_{2\glnidx+1}}$ is the torus. If $t\in T_{SO_{2\glnidx+1}}(F)$, $t=diag(t_1,\ldots,t_{\glnidx},1,t_{\glnidx}^{-1},\ldots,t_1^{-1})$, where $diag(\cdots)$ denotes a diagonal or block diagonal matrix. Denote by $\epsilon_i$, $1\leq i\leq\glnidx$, the $i$-th coordinate function, $\epsilon_i(t)=t_i$.

Let $Spin_{2\glnidx+1}$ be the
simple split simply-connected algebraic group of type $B_{\glnidx}$. It is the algebraic double cover of $SO_{2\glnidx+1}$. 
The standard Borel subgroup $B_{Spin_{2\glnidx+1}}$ of $Spin_{2\glnidx+1}$ is the preimage of $B_{SO_{2\glnidx+1}}$, $B_{Spin_{2\glnidx+1}}=T_{\glnidx}\ltimes N_{Spin_{2\glnidx+1}}$.
Each $\epsilon_i$ can be pulled back to $T_{\glnidx}$, 
this pull back will still be denoted $\epsilon_i$. Denote the set of roots of $Spin_{2\glnidx+1}$ by ${\Sigma}_{Spin_{2\glnidx+1}}$ and the positive roots by ${\Sigma}_{Spin_{2\glnidx+1}}^+$. The set of simple roots of $Spin_{2\glnidx+1}$ is
$\Delta_{Spin_{2\glnidx+1}}=\setof{\alpha_i}{1\leq i\leq\glnidx}$, where $\alpha_i=\epsilon_i-\epsilon_{i+1}$ for $1\leq i\leq\glnidx-1$ and $\alpha_{\glnidx}=\epsilon_{\glnidx}$.

Define $\epsilon_i^{\vee}$ with respect to the standard $\Z$-pairing $(,)$, i.e., $(\epsilon_i,\epsilon_j^{\vee})=\delta_{i,j}$.
The set $\Delta_{\glnidx}^{\vee}=\setof{\alpha_i}{1\leq i\leq\glnidx}$ of simple coroots is given by $\alpha_i^{\vee}=\epsilon_i^{\vee}-\epsilon_{i+1}^{\vee}$ for $1\leq i\leq\glnidx-1$ and $\alpha_{\glnidx}^{\vee}=\epsilon_{\glnidx}^{\vee}$.
Because
$Spin_{2\glnidx+1}$ is simply connected, any $t\in T_{\glnidx}(F)$ can be written uniquely as $t=\prod_{i=1}^{\glnidx}\alpha_i^{\vee}(t_i)$ for $t_i\in F^*$. For a description of the Levi subgroups of $Spin_{2\glnidx+1}$ see Mati\'{c} \cite{Mt}.

The group $GSpin_{2\glnidx+1}$ is an $F$-split connected reductive algebraic group, which can be defined using a based root
datum as in \cite{Asg,AsgSha,HS}. It is also embedded in $Spin_{2\glnidx+3}$ as the Levi part of the parabolic subgroup
corresponding to $\Delta_{Spin_{2\glnidx+3}}\setdifference\{{\alpha_1}\}$ (see \cite{Mt}). Since we will be constructing
a cover of $GSpin_{2\glnidx+1}(F)$ using a cover of $Spin_{2\glnidx+3}(F)$, it is natural for us to view
$GSpin_{2\glnidx+1}(F)$ as  this subgroup. Henceforth we adapt this identification. The simple roots of
$GSpin_{2\glnidx+1}$ are $\{\alpha_2,\ldots,\alpha_{\glnidx+1}\}$. In the degenerate case $\glnidx=0$,
$GSpin_{2\glnidx+1}=\GL{1}$.

The group $Spin_{2\glnidx+1}$ is the derived group of $GSpin_{2\glnidx+1}$. Denote $G_{\glnidx}=GSpin_{2\glnidx+1}$ and $G_{\glnidx}'=Spin_{2\glnidx+1}$. Additionally, let $\Sigma_{G_{\glnidx}}$ (resp. $\Sigma_{G_{\glnidx}}^+$) denote the set of roots (resp. positive roots) of
$G_{\glnidx}$, determined according to the embedding $G_{\glnidx}<G'_{\glnidx+1}$. The set of simple roots of $G_{\glnidx}$ is $\Delta_{G_{\glnidx}}=\Delta_{G'_{\glnidx+1}}\setdifference\{{\alpha_1}\}$.
The corresponding Borel subgroup of $G_{\glnidx}$ is $B_{\glnidx}=T_{\glnidx+1}\ltimes N_{\glnidx}$. For $0\leq k\leq\glnidx$, denote by $Q_k=M_k\ltimes U_k$ the standard maximal parabolic subgroup of $G_{\glnidx}$ with a Levi part $M_k$
isomorphic to $\GL{k}\times G_{\glnidx-k}$. The modulus character of $Q(F)$ for a parabolic subgroup $Q<G_{\glnidx}$ is denoted by $\delta_{Q(F)}$. 
Let $W_{\glnidx}$ be the Weyl group
of $G_{\glnidx}$. 
The longest element of $W_{\glnidx}$ is denoted $\mathbf{w_0}$. If $F$ is $p$-adic, let $K=G_{\glnidx}(\RingOfIntegers)$ be the hyperspecial subgroup.

We will also encounter the \quasisplit\ group $GSpin_{2\glnidx}$.
Let $SO_{2\glnidx}$ be a \quasisplit\ even orthogonal group, split over $F$ or a quadratic extension of $F$. The group $Spin_{2\glnidx}$ is the simply connected algebraic double cover of $SO_{2\glnidx}$. In the split case, it is the simple simply-connected algebraic group of type $D_{\glnidx}$. In the non-split case, its relative root system is of type $B_{\glnidx-1}$. Regarding $SO_{2\glnidx}$ as a subgroup of $\GL{2\glnidx}$, define
the Borel subgroup $B_{SO_{2\glnidx}}=SO_{2\glnidx}\cap B_{\GL{2\glnidx}}$. Then $B_{Spin_{2\glnidx}}$ is the preimage of $B_{SO_{2\glnidx}}$. This fixes a set of simple roots. The group $GSpin_{2\glnidx}$ can be defined as the Levi subgroup of the maximal parabolic subgroup of $Spin_{2(\glnidx+1)}$ corresponding to the subset of simple roots obtained by removing the first root (\cite{KimKim} Section~2.3). For a definition using a based root datum see \cite{Asg,AsgSha,HS}.

In general if $G$ is a group, denote by $C_G$ the center of $G$. If $H<G$, $C(G,H)$ is the centralizer of $H$ in $G$.
For any two elements $x,y\in G$, $[x,y]=xyx^{-1}y^{-1}$ denotes their commutator. Also put $\rconj{x}y=xyx^{-1}$ and $\rconj{x}H=\setof{\rconj{x}h}{h\in H}$.

\subsection{Properties of $GSpin_{2\glnidx+1}$}\label{subsection:properties of GSpin}
We collect a few structure properties of $G_{\glnidx}$ that will be used throughout. The center of $G_{\glnidx}$ is
\begin{align}\label{eq:center of the group}
C_{G_{\glnidx}(F)}=\setof{\prod_{i=1}^{\glnidx}\alpha_i^{\vee}(t_1^2)\alpha_{\glnidx+1}^{\vee}(t_1)}{t_1\in F^*}.
\end{align}
If $t=\prod_{i=1}^{\glnidx+1}\alpha_i^{\vee}(t_i)\in T_{\glnidx+1}(F)$,
\begin{align}\label{eq:conjugation of torus by w_0}
\rconj{\mathbf{w_0}}t
=\prod_{i=1}^{\glnidx+1}\alpha_i^{\vee}(t_i^{-1})\cdot
\prod_{i=1}^{\glnidx}\alpha_i^{\vee}(t_1^2)\alpha_{\glnidx+1}^{\vee}(t_1).
\end{align}
To see this one may work over the algebraic closure of $F$, where $t$ is a product of elements $\alpha_i^{\vee}(y_i)$ with
$2\leq i\leq\glnidx+1$, which are inverted by $\mathbf{w_0}$, and an element of the center.

Let $M$ be a Levi subgroups of $G_{\glnidx}$. As in classical groups, $M$ is isomorphic
to $\GL{k_1}\times\ldots\times\GL{k_l}\times G_{\glnidx-k}$, $k=k_1+\ldots+k_l$, with $0\leq k\leq\glnidx$ (\cite{Asg}). 
We define an isomorphism of $\GL{k}\times G_{\glnidx-k}$ with $M_k$. This isomorphism can be used to define an isomorphism between $\GL{k_1}\times\ldots\times\GL{k_l}\times G_{\glnidx-k}$ and $M$ using the standard embedding of
$\GL{k_1}\times\ldots\times\GL{k_l}$ in $\GL{k}$.

The derived group $SL_k$ of $\GL{k}$ is generated by the root subgroups of $\setof{\alpha_i}{2\leq i\leq k}$. To complete
the description of the embedding of $\GL{k}$, let $\eta_1^{\vee},\ldots,\eta_{k}^{\vee}$ be the standard cocharacters of
$T_{\GL{k}}$ and map $\eta_i^{\vee}\mapsto \epsilon_{i+1}^{\vee}-\epsilon_{1}^{\vee}$ for $1\leq i\leq k$.
Under this embedding, the image of a torus element $\prod_{i=1}^k\eta_i^{\vee}(a_i)$ of $\GLF{k}{F}$ is
\begin{align}\label{eq:image of standard A_k in GSpin}
\prod_{i=1}^k\alpha_i^{\vee}(\prod_{j=i}^ka_j^{-1}).
\end{align}

Regarding $G_{\glnidx-k}$, the set $\setof{\alpha_i}{k+2\leq i\leq \glnidx+1}$ identifies $G'_{\glnidx-k}$ and if $\theta_1,\ldots,\theta_{\glnidx-k+1}$ are the characters of $T_{\glnidx-k+1}$, define $\theta_1^{\vee}\mapsto\epsilon_1^{\vee}$ and for $2\leq i\leq \glnidx-k+1$, $\theta_i^{\vee}\mapsto \epsilon_{k+i}^{\vee}$.
Denote by $\beta_1^{\vee},\ldots,\beta_{\glnidx-k+1}^{\vee}$ the simple coroots of $T_{\glnidx-k+1}$. We have $\beta_i^{\vee}=\theta_i^{\vee}-\theta_{i+1}^{\vee}$ for $1\leq i\leq\glnidx-k$ and
$\beta_{\glnidx-k+1}^{\vee}=2\theta_{\glnidx-k+1}^{\vee}$. When $k=0$, $\beta_i^{\vee}=\alpha_i^{\vee}$ for all $1\leq i\leq\glnidx+1$. The image of $\prod_{i=1}^{\glnidx-k+1}\beta_i^{\vee}(t_i)$ in $G_{\glnidx}(F)$ is
\begin{align} \label{eq:image of standard T_n-k+1 in GSpin}
\begin{dcases}
\prod_{i=1}^{k}\alpha_i^{\vee}(t_1)\prod_{i=1}^{\glnidx-k+1}\alpha_{k+i}^{\vee}(t_i)&\qquad k<\glnidx,\\
\prod_{i=1}^{\glnidx}\alpha_i^{\vee}(t_1^2)\alpha_{\glnidx+1}^{\vee}(t_1)&\qquad k=\glnidx.\end{dcases}
\end{align}
For $k=\glnidx$ the image of $\beta_1^{\vee}$ is $C_{G_{\glnidx}(F)}$.

The restriction of the projection $G'_{\glnidx}\rightarrow SO_{2\glnidx+1}$ to unipotent subgroups is an isomorphism. In particular, the unipotent radical $U_k$ corresponds to the unipotent radical $U'_k$ of the standard parabolic subgroup $Q'_k$ of $SO_{2\glnidx+1}$, whose Levi part $M'_k$ is isomorphic to $\GL{k}\times SO_{2(\glnidx-k)+1}$. In coordinates,
\begin{align*}
U'_k(F)=\left\{\left(\begin{array}{ccc}I_k&u_1&u_2\\&I_{2(\glnidx-k)+1}&u_1'\\&&I_k\end{array}\right)\in SO_{2\glnidx+1}(F)\right\}\qquad(u_1'=-J_{2(\glnidx-k)+1}\transpose{u_1}J_k).
\end{align*}
If $b_0\in\GLF{k}{F}$, $b$ is the image of $b_0$ in $M_k(F)$ and $b'$ is the image of $b_0$ in $M'_k(F)$ ($b'=diag(b,I_{2(\glnidx-k)+1},J_k\transpose{b}^{-1}J_k)$), the following diagram commutes
\begin{displaymath}
    \xymatrix{
        U_k \ar[d]\ar[r]^{\mapsto\rconj{b}} &
        U_k \\
        U'_k \ar[r]^{\mapsto\rconj{b'}}       & U'_k.\ar[u] }
\end{displaymath}
Here the horizontal arrows denote conjugation.

For calculations it will sometimes be more convenient to use the cocharacters of $T_{\GL{\glnidx}}$ and the coroot of $T_1$ to write an element in $T_{\glnidx+1}$, instead of the coroots of $T_{\glnidx+1}$. Then we write
\begin{align}\label{eq:convenient coordinates of torus}
t=\prod_{i=1}^{\glnidx}\eta_i^{\vee}(a_i)\beta_1^{\vee}(t_1)
\end{align}
and the image of $t$ in $G_{\glnidx}(F)$ is
\begin{align}\label{eq:image of convenient coordinates of the torus in GSpin}
\prod_{i=1}^{\glnidx}\alpha_i^{\vee}((\prod_{j=i}^{\glnidx}a_j^{-1})t_1^2)\alpha_{\glnidx+1}(t_1).
\end{align}

In these coordinates,
\begin{align}\label{eq:formula for conjugation by w_0 with convenient coordinates}
&\rconj{\mathbf{w_0}}t=\prod_{i=1}^{\glnidx}\eta_i^{\vee}(a_i^{-1})\beta_1^{\vee}((\prod_{i=1}^{\glnidx}a_i)^{-1}t_1),\\
\label{eq:formula for delta Borel with convenient coordinates}
&\delta_{B_{\glnidx}(F)}(t)=\delta_{B_{SO_{2\glnidx+1}}(F)}(diag(a_1,\ldots,a_{\glnidx},1,a_{\glnidx}^{-1},\ldots,a_1^{-1}))=\prod_{i=1}^{\glnidx}|a_i|^{2(\glnidx-i)+1}.
\end{align}

We use the character $\epsilon_1$ to define the following ``canonical" character $\Upsilon$ of $G_{\glnidx}$, which is the extension of $-\epsilon_1$ to $G_{\glnidx}$. The extension exists because $G_{\glnidx}(F)=\setof{\alpha_1^{\vee}(x)}{x\in F^*}\ltimes G'_{\glnidx}(F)$ and $\epsilon_1|_{T_{\glnidx+1}(F)\cap G'_{\glnidx}(F)}$ is trivial. 
We call $\Upsilon$ canonical because its restriction to $\GL{k}$ is $\det$, as is evident from \eqref{eq:image of standard A_k in GSpin}. For example if $t$ is given by \eqref{eq:convenient coordinates of torus},
$\Upsilon(t)=t_1^{-2}\prod_{i=1}^{\glnidx}a_i$.
 The restriction of $\Upsilon$ to ${G_{\glnidx-k}}$ is the corresponding character of $G_{\glnidx-k}$. For $\glnidx=0$, 
 $\Upsilon(x)=x^{-2}$.

\subsection{Further definitions and notation}\label{subsection:Further definitions and notation}

\subsubsection{Central extensions}\label{subsection:central extensions}
Let $G$ be a group. We recall a few notions of abstract central extensions (i.e., ignoring topologies). A central extension of $G$ by a group $A$ is a short exact sequence of groups
\begin{align*}
1\rightarrow A\xrightarrow{\iota} \cover{G}\xrightarrow{p} G\rightarrow 1
\end{align*}
such that $\iota(A)<C_{\cover{G}}$. For any subset $X\subset G$, denote $\cover{X}=p^{-1}(X)$.
Assume we have a fixed faithful character $\vartheta:A\rightarrow\C^*$. Let $H<G$.
A representation $\pi$ of $\cover{H}$ is called $\vartheta$-genuine if $\pi(\iota(a)h)=\vartheta(\iota(a))\pi(h)$ for all
$a\in A$ and $h\in \cover{H}$. In particular if $A=\mu_2$, such a representation is simply called genuine. Any representation $\pi'$ of $G$ can be pulled back to
a nongenuine representation of $\cover{G}$ by composing it with $p$.

A section of $H$ is a mapping $\varphi:H\rightarrow \cover{G}$ such that
$\varphi(1)=1$ and $p\circ\varphi=id_H$. If $H_1<H$, we say that $\varphi$ splits $H_1$, or $H_1$ is split under $\varphi$,
if the restriction $\varphi|_{H_1}$ is a homomorphism. In this case, because $\varphi(H_1)\cap\iota(A)=\{1\}$, $\cover{H}_1=\iota(A)\cdot\varphi(H_1)$ (an inner direct product). Now any $\vartheta$-genuine representation of $\cover{H}_1$ is uniquely determined by its restriction to $\varphi(H_1)$ and conversely, any representation of $\varphi(H_1)$ can be extended uniquely to a $\vartheta$-genuine representation of $\cover{H}_1$. Of course, the splitting $\varphi$ might not be unique, but once fixed,
a representation of $H_1$ has a unique extension to a $\vartheta$-genuine representation of $\cover{H}_1$.

A $2$-cocycle defined on $G$ is a mapping $\sigma:G\times G\rightarrow A$ such that $\sigma(1,1)=1$ and for all $g,g',g''\in G$,
\begin{align}\label{eq:2-cocycle property}
&\sigma(g,g')\sigma(gg',g'')=\sigma(g,g'g'')\sigma(g',g'').
\end{align}
If $\varphi:G\rightarrow\cover{G}$ is a section, then the function $(g,g')\mapsto \iota^{-1}(\varphi(g)\varphi(g')\varphi(gg')^{-1})$ is a $2$-cocycle.
Once we fix $\sigma$, the elements of $\cover{G}$ can be written as pairs $(g,a)$ where $g\in G$ and $a\in A$. In this realization, the multiplication in $\cover{G}$ is defined by
\begin{align*}
(g,a)\cdot(g',a')=(gg',\sigma(g,g')aa').
\end{align*}
\begin{remark}
Note one subtlety of the notation. For clarity reasons, we
write $\cover{X}(F)$ instead of $\widetilde{X(F)}$, but always mean $p^{-1}(X(F))$. 
\end{remark}

\subsubsection{Representations}\label{subsection:representations}
Let $G$ be an $\ell$-group (see \cite{BZ1}~1.1). Throughout, unless mentioned otherwise, representations of $G$ will be complex, smooth and admissible. If $\pi$ is a representation of $G$,
the representation contragradient to $\pi$ is denoted $\pi^{\wedge}$.
If $\pi$ is a representation of $H<G$ and $g\in G$, $\rconj{g}\tau$ is the representation of $\rconj{g}H$ defined on the
space of $\tau$ by $\rconj{g}\tau(x)=\tau(\rconj{g^{-1}}x)$. 

Parabolically induced representations will always be normalized as in \cite{BZ2} (1.8). We use $Ind$ to denote regular induction, $ind$ signifies compact induction. The (normalized) Jacquet functor $j_{U,\psi}$ is defined as in \cite{BZ2} (1.8), where
$U$ is a unipotent 
subgroup 
and $\psi$ is a character of $U$. If $\psi$ is trivial, we
write $j_{U}=j_{U,\psi}$. 


\subsubsection{The Hilbert symbol}\label{subsection:The Hilbert symbol}
For an integer $r>1$, let $c=(,)_r^{-1}$. We recall that $c$ is an anti-symmetric bi-character, i.e., $c(1,1)=1$, $c(xy,z)=c(x,z)c(y,z)$ and
$c(x,y)=c(y,x)^{-1}$. 
Also, $c(x,y)=1$ for all $y\in F^*$ if and only if
$x\in {\Frs}$. If $|r|=1$ in $F^*$, $c$ is trivial on
$\RingOfIntegers^*\times\RingOfIntegers^*$ and $c(x,y)=1$ for all $y\in\RingOfIntegers^*$
if and only if $x\in\RingOfIntegers^*F^{*r}$ (
\cite{We2} Section~XIII.5 Proposition~6).

\subsubsection{The Weil factor}\label{subsection:The Weil symbol}
Let $\psi$ be given and $\gamma_{\psi}$ be the Weil factor (see Section~\ref{subsection:the groups}). We recall that
$\gamma_{\psi}(xy)=\gamma_{\psi}(x)\gamma_{\psi}(y)(x,y)_2$, and if $a\in F^*$ and $\psi_a(x)=\psi(ax)$, $\gamma_{\psi_a}(x)=(a,x)_2 \gamma_{\psi}(x)$. Moreover, $\gamma_{\psi_{a}}=\gamma_{\psi_{b}}$ if and only if $ab^{-1}\in{\Fsquares}$. Also $\gamma_{\psi}(x^2)=1$ and $\gamma_{\psi}(x^{-1})=\gamma_{\psi}(x)$. See the appendix of Rao \cite{Rao}.

\section{Local theory}\label{section:local nonarchimedean theory}

\subsection{The double cover of $G_{\glnidx}(F)$}\label{subsection:the double cover of G_n}
\subsubsection{Definition of the $r$-fold cover}\label{subsubsection:Definition of the cover}
Let $F$ be a local field of characteristic $0$ and let $r>1$ be an integer. Assume that $F$ contains all the $r$-th roots of unity, i.e. $|\mu_r|=r$. Let $\cover{G'}_{\glnidx+1}(F)$ be the $r$-fold cover of $G'_{\glnidx+1}(F)$, constructed by Matsumoto \cite{Mats} using $c=(,)_{r}^{-1}$ as the Steinberg symbol. The group $\cover{G'}_{\glnidx+1}(F)$ fits into an exact sequence
\begin{align*}
1\rightarrow{\mu_r}\xrightarrow{\iota} \cover{G'}_{\glnidx+1}(F)\xrightarrow{p} G'_{\glnidx+1}(F)\rightarrow 1.
\end{align*}
Let $\cover{G}_{\glnidx}(F)=p^{-1}(G_{\glnidx}(F))$, we have an exact sequence
\begin{align*}
1\rightarrow{\mu_r}\xrightarrow{\iota} \cover{G}_{\glnidx}(F)\xrightarrow{p} G_{\glnidx}(F)\rightarrow 1
\end{align*}
and $\cover{G}_{\glnidx}(F)$ is an $r$-fold cover of $G_{\glnidx}(F)$.

Although most of this work will focus on the double cover, we start with the more general setting. From Section~\ref{subsection:Subgroups of the torus} onward, except for Section~\ref{subsection:the double cover of G_n Adele}, we will assume $r=2$.

Banks, Levy and Sepanski \cite{BLS} gave an explicit description of a section $\mathfrak{s}$ of $G'_{\glnidx+1}(F)$ and used it to construct a $2$-cocycle $\sigma_{G'_{\glnidx+1}}$, representing the cohomology class of $\cover{G'}_{\glnidx+1}(F)$ in $H^2(G'_{\glnidx+1}(F),\mu_r)$. We obtain a section of $G_{\glnidx}(F)$ and a $2$-cocycle by restricting $\mathfrak{s}$ and $\sigma_{G'_{\glnidx+1}}$.

We briefly describe the construction and results of \cite{BLS}.
For $\alpha\in{\Sigma}_{G'_{\glnidx+1}}$, let $\mathcal{U}_{\alpha}$ be the corresponding root subgroup. Fix an isomorphism
$n_{\alpha}:F\rightarrow \mathcal{U}_{\alpha}$ (based on an explicit decomposition of the Chevalley algebra corresponding to
$G'_{\glnidx+1}(F)$). For $x\in F^*$, put $w_{\alpha}(x)=n_{\alpha}(x)n_{-\alpha}(-x^{-1})n_{\alpha}(x)$.
Note that $\alpha^{\vee}(x)=h_{\alpha}(x)$ in the notation of \cite{BLS} ($h_{\alpha}(x)=w_{\alpha}(x)w_{\alpha}(1)^{-1}$).
If $\alpha'\in{\Sigma}_{G'_{\glnidx+1}}$, let $\langle\alpha,{\alpha'}^{\vee}\rangle$ be the standard pairing between characters and cocharacters, where ${\alpha'}^{\vee}$ is the coroot corresponding to $\alpha'$.

Given $\alpha\in{\Sigma}_{G'_{\glnidx+1}}$, let $n_{\alpha}^*:F\rightarrow \cover{G'}_{\glnidx+1}(F)$ be the canonical lift of Steinberg \cite{Steinberg}. Using these lifts one defines $w_{\alpha}^*(x)$ and ${\alpha^{\vee}}^*(x)$: $w_{\alpha}^*(x)=n_{\alpha}^*(x)n_{-\alpha}^*(-x^{-1})n_{\alpha}^*(x)$ and ${\alpha^{\vee}}^*(x)=w_{\alpha}^*(x)w_{\alpha}^*(1)$.

Let $\alpha\in\Delta_{G'_{\glnidx+1}}$. For any $x,y\in F^*$, define $c_{\alpha}(x,y)=c(x,y)$ if $\glnidx>0$ and $\alpha$ is a long root, otherwise ($\glnidx=0$ or $\alpha=\alpha_{\glnidx+1}$) put $c_{\alpha}(x,y)=c(x^2,y)$.
Also set $w_{\alpha}=w_{\alpha}(-1)$ and
$w_{\alpha}^*=w_{\alpha}^*(-1)$.

\begin{remark}\label{remark:minor deviation from BLS in case n=0}
In the case $\glnidx=0$ there is only one root $\alpha_1$, which is by definition a long root. In this case in \cite{BLS} it was defined that $c_{\alpha}(x,y)=c(x,y)$. The present definition seems to be correct because it preserves the compatibility with restriction to ``standard" subgroups, see Section~\ref{subsubsection:Restriction to standard subgroups}.
\end{remark}

Following Matsumoto \cite{Mats}, Banks, Levi and Sepanski described a list of identities in the cover group describing the multiplication laws between the elements. The following partial list of identities in $\cover{G'}_{\glnidx+1}(F)$ (\cite{BLS} Section~1) will be used repeatedly in computations:
\begin{align}
&{\alpha^{\vee}}^*(x){\alpha^{\vee}}^*(y)=\iota(c_{\alpha}(x,y)){\alpha^{\vee}}^*(xy),\label{eq:torus alpha and alpha}\\
&{\alpha^{\vee}}^*(x){{\alpha'}^{\vee}}^*(y)=\iota(c_{\alpha}(x,y^{\langle\alpha,{\alpha'}^{\vee}\rangle})){{\alpha'}^{\vee}}^*(y){\alpha^{\vee}}^*(x),\label{eq:torus alpha and alpha'}\\ \label{eq:weyl element and torus}
&w_{\alpha}^*{{\alpha'}^{\vee}}^*(x){w_{\alpha}^*}^{-1}={\alpha^{\vee}}^*(x^{-\langle\alpha,{\alpha'}^{\vee}\rangle}){{\alpha'}^{\vee}}^*(x),
\\\label{eq:weyl element inverse}
&{w_{\alpha}^*}^{-1}=w_{\alpha}^*{\alpha^{\vee}}^*(-1)\iota(c_{\alpha}(-1,-1))^{r-1},
\\ \label{eq:weyl element and unipotent of the same root}
&{w_{\alpha''}^*}(x)n_{\alpha''}^*(z){w_{\alpha''}^*}(x)^{-1}=n_{-\alpha''}^*(-x^{-2}z),
\\ \label{eq:weyl element and unipotent}
&w_{\alpha}^*n_{\alpha}^*(x)=n_{\alpha}^*(-x^{-1}){\alpha^{\vee}}^*(x^{-1})
n_{-\alpha}^*(x^{-1}),\\
\label{eq:torus and unipotent}
&n_{\alpha'''}^*(z){{\alpha}^{\vee}}^*(y)={{\alpha}^{\vee}}^*(y)n_{\alpha'''}^*(zy^{-\langle\alpha''',{\alpha'}^{\vee}\rangle}),\\
\label{eq:weyl simple and unipotent positive}
&{w_{\alpha}^*}^{-1}n_{\alpha'''}^*(z)w_{\alpha}^*=n_{\mathbf{s}_{\alpha}\alpha'''}(d_{\alpha,\alpha'''}z).
\end{align}
Here $\alpha,\alpha'\in \Delta_{G'_{\glnidx+1}}$, $x,y\in F^*$, $z\in F$, $\alpha''\in{\Sigma}_{G'_{\glnidx+1}}$, $\alpha'''\in{\Sigma}_{G'_{\glnidx+1}}^+$; In \eqref{eq:weyl simple and unipotent positive} $\alpha\ne\alpha'''$, $\mathbf{s}_{\alpha}$ is the reflection along $\alpha$ in the Weyl group of $G'_{\glnidx+1}$ and $d_{\alpha,\alpha'''}=\pm1$ depending only on $G'_{\glnidx+1}(F)$. 

Banks, Levy and Sepanski \cite{BLS} defined the following section $\mathfrak{s}$ of $G'_{\glnidx+1}(F)$. First, it is the only splitting of $N_{\glnidx+1}(F)$ and satisfies $\mathfrak{s}(n_{\alpha}(x))=n_{\alpha}^*(x)$ for all $\alpha\in{\Sigma}_{G_{\glnidx+1}'}^+$. 
On $T_{\glnidx+1}(F)$ it satisfies 
\begin{align}\label{eq:section of BLS on torus}
\mathfrak{s}(\prod_{i=1}^{\glnidx+1}\alpha_i^{\vee}(t_i))={\alpha_{\glnidx+1}^{\vee}}^{*}(t_{\glnidx+1}){\alpha_{\glnidx}^{\vee}}^{*}(t_{\glnidx})\cdot\ldots\cdot{\alpha_{1}^{\vee}}^{*}(t_{1})\prod_{i=1}^{\glnidx+1}\iota(c_{\alpha_i}(t_i,t_i)).
\end{align}
Let $\mathfrak{W}_{\glnidx+1}$ be the set of elements $w=w_{\alpha_{i_1}}w_{\alpha_{i_2}}\ldots w_{\alpha_{i_k}}$, where $\alpha_{i_1},\ldots,\alpha_{i_k}\in\Delta_{G'_{\glnidx+1}}$ and $w$ is assumed to be a reduced expression. For such $w$, put $l(w)=k$. As sets, $\mathfrak{W}_{\glnidx+1}\isomorphic W_{\glnidx+1}$. 
We use boldface to denote the elements of $W_{\glnidx+1}$. If $\mathbf{w}\in W_{\glnidx+1}$, its representative in $\mathfrak{W}_{\glnidx+1}$ is $w$.

The section $\mathfrak{s}$ is extended to $G'_{\glnidx+1}(F)$ with the following properties (\cite{BLS} Lemmas~2.2 and 2.3):
\begin{align}\label{eq:props of BLS section image on a simple reflection}
&\mathfrak{s}(w_{\alpha})=w_{\alpha}^*,\\ \label{eq:props of BLS section separate W if length is additive}
&\mathfrak{s}(ww')=\mathfrak{s}(w)\mathfrak{s}(w'),\\ \label{eq:props of BLS section separate N T and W}
&\mathfrak{s}(utwu')=\mathfrak{s}(u)\mathfrak{s}(t)\mathfrak{s}(w)\mathfrak{s}(u').
\end{align}
Here $\alpha\in\Delta_{G'_{\glnidx+1}}$, $w,w'\in\mathfrak{W}_{\glnidx+1}$ satisfy $l(ww')=l(w)+l(w')$, $u,u'\in N_{G'_{\glnidx+1}}(F)$ and $t\in T_{\glnidx+1}(F)$.

Finally, the $2$-cocycle $\sigma=\sigma_{G'_{\glnidx+1}}$ is defined by $\sigma(g,g')=\mathfrak{s}(g)\mathfrak{s}(g')\mathfrak{s}(gg')^{-1}$. The image of $\sigma$ belongs to
 $\iota(\mu_r)$ but we implicitly compose it with $\iota^{-1}$. 
The following formulas hold (\cite{BLS} Proposition~2.4):
\begin{align}
&\sigma(ugu'',g'u')=\sigma(g,u''g'),\label{eq:cocycle move N around}\\
&\sigma(t,w)=1,\label{eq:cocycle T and W}
\end{align}
where $u,u',u''\in N_{G'_{\glnidx+1}}(F)$, $t\in T_{\glnidx+1}(F)$ and $w\in\mathfrak{W}_{\glnidx+1}$.

We pull back the character $\Upsilon$ to a nongenuine character of $\cover{G}_{\glnidx}(F)$, still denoted $\Upsilon$.


For any $g,g'\in G'_{\glnidx+1}(F)$ denote $[g,g']_{\sigma}=\sigma(g,g')\sigma(g',g)^{-1}$. Let
$x,x'\in \cover{G'}_{\glnidx+1}(F)$ be with $p(x)=g$ and $p(x')=g'$. If $[g,g']=1$, then $\mathfrak{s}(gg')=\mathfrak{s}(g'g)$ and
\begin{align}\label{eq:cocycle trick}
[x,x']=\mathfrak{s}(g)\mathfrak{s}(g')\mathfrak{s}(g)^{-1}\mathfrak{s}(g')^{-1}=[g,g']_{\sigma}.
\end{align}
In particular $x$ and $x'$ commute if and only if $[g,g']=1$ and $[g,g']_{\sigma}=1$.

The following claim describes the value of the cocycle on the torus.
\begin{claim}\label{claim:cocycle of spin on the torus}
Let $t=\prod_{i=1}^{\glnidx+1}\alpha_i^{\vee}(t_i)$, $t'=\prod_{i=1}^{\glnidx+1}\alpha_i^{\vee}(t'_i)$. Then
\begin{align*}
\sigma(t,t')=c(t_{\glnidx+1}^2,t_{\glnidx+1}')c(t_{\glnidx},{t'}_{\glnidx+1}^{-2})\prod_{i=1}^{\glnidx}c(t_i,t'_i)\prod_{i=1}^{\glnidx-1}c(t_i,{t'}_{i+1}^{-1}).
\end{align*}
In particular if $\glnidx=0$, $\sigma(t,t')=c(t_{1}^2,t_{1}')$ and for $\glnidx=1$,
$\sigma(t,t')=c(t_{2}^2,t_{2}')c(t_{1},{t'}_{2}^{-2})c(t_1,t_1')$.
\end{claim}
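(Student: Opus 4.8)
The plan is to compute $\sigma(t,t')$ directly from its definition $\sigma(t,t')=\mathfrak{s}(t)\mathfrak{s}(t')\mathfrak{s}(tt')^{-1}$ using the explicit formula \eqref{eq:section of BLS on torus} for $\mathfrak{s}$ on the torus, together with the commutation identities \eqref{eq:torus alpha and alpha} and \eqref{eq:torus alpha and alpha'}. First I would write $\mathfrak{s}(t)={\alpha_{\glnidx+1}^{\vee}}^{*}(t_{\glnidx+1})\cdots{\alpha_{1}^{\vee}}^{*}(t_{1})\prod_i\iota(c_{\alpha_i}(t_i,t_i))$ and similarly for $t'$ and $tt'$. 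Since the correction factors $\iota(c_{\alpha_i}(\cdot,\cdot))$ are central, the whole computation reduces to: (a) moving the block ${\alpha_{\glnidx+1}^{\vee}}^{*}(t_{\glnidx+1})\cdots{\alpha_{1}^{\vee}}^{*}(t_{1})$ of $\mathfrak{s}(t)$ past the block ${\alpha_{\glnidx+1}^{\vee}}^{*}(t'_{\glnidx+1})\cdots{\alpha_{1}^{\vee}}^{*}(t'_{1})$ of $\mathfrak{s}(t')$ so as to interleave the factors with matching coroots, picking up Hilbert-symbol commutators via \eqref{eq:torus alpha and alpha'}, then (b) combining the matched pairs ${\alpha_{i}^{\vee}}^{*}(t_i){\alpha_{i}^{\vee}}^{*}(t'_i)=\iota(c_{\alpha_i}(t_i,t'_i)){\alpha_{i}^{\vee}}^{*}(t_it'_i)$ via \eqref{eq:torus alpha and alpha}, and (c) checking that the resulting product of ${\alpha_{i}^{\vee}}^{*}(t_it'_i)$ in the same (decreasing) order is exactly the coroot part of $\mathfrak{s}(tt')$, so that everything that survives after cancelling against $\mathfrak{s}(tt')^{-1}$ is the collected scalar.

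The key arithmetic input is the Cartan matrix of type $B_{\glnidx+1}$ embedded in our labeling: for the coroots $\alpha_i^\vee$ of $G'_{\glnidx+1}$ one has $\langle\alpha_i,\alpha_j^\vee\rangle=0$ unless $|i-j|\le1$, $\langle\alpha_i,\alpha_{i+1}^\vee\rangle=-1$ for $i\le\glnidx-1$, $\langle\alpha_{\glnidx},\alpha_{\glnidx+1}^\vee\rangle=-1$ but $\langle\alpha_{\glnidx+1},\alpha_{\glnidx}^\vee\rangle=-2$ (the short root $\alpha_{\glnidx+1}$), and $\langle\alpha_i,\alpha_{i-1}^\vee\rangle=-1$ otherwise. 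So when ${\alpha_i^\vee}^{*}(t_i)$ from $\mathfrak{s}(t)$ is moved past ${\alpha_j^\vee}^{*}(t'_j)$ from $\mathfrak{s}(t')$, by \eqref{eq:torus alpha and alpha'} it contributes the scalar $c_{\alpha_i}(t_i,{t'_j}^{\langle\alpha_i,\alpha_j^\vee\rangle})$, which is trivial unless $j\in\{i-1,i,i+1\}$. I would organize the bookkeeping by noting that in $\mathfrak{s}(t)\mathfrak{s}(t')$ the factors appear in the order $\alpha_{\glnidx+1}^\vee,\alpha_{\glnidx}^\vee,\dots,\alpha_1^\vee,\alpha_{\glnidx+1}^\vee,\alpha_{\glnidx}^\vee,\dots,\alpha_1^\vee$, and to bring the second $\alpha_i^\vee$-factor leftward next to the first $\alpha_i^\vee$-factor one must commute it past $\alpha_{\glnidx+1}^\vee(t'_{\glnidx+1}),\dots,\alpha_{i+1}^\vee(t'_{i+1})$ — but only the neighbor $\alpha_{i+1}^\vee$ gives a nontrivial symbol; however, each ${\alpha_i^\vee}^*(t_i)$ must also be commuted past the already-moved ${\alpha_{i+1}^\vee}^*(t'_{i+1})$ sitting to its left, again a single neighbor. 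Carefully tracking which coroot $c_{\alpha}$ is used (recall $c_{\alpha_i}=c$ for $i\le\glnidx$ long, $c_{\alpha_{\glnidx+1}}(x,y)=c(x^2,y)$ short) yields exactly the claimed terms $c(t_{\glnidx+1}^2,t'_{\glnidx+1})$ from combining the two short-coroot factors, $c(t_{\glnidx},{t'_{\glnidx+1}}^{-2})$ from commuting $\alpha_{\glnidx}^\vee$ past the short $\alpha_{\glnidx+1}^\vee$ (using $\langle\alpha_\glnidx,\alpha_{\glnidx+1}^\vee\rangle=-1$ but $c_{\alpha_\glnidx}=c$ and $\langle\alpha_{\glnidx+1},\alpha_\glnidx^\vee\rangle=-2$ the other way — one of the two orientations produces the exponent $-2$), $\prod_{i=1}^\glnidx c(t_i,t'_i)$ from the diagonal combinations, and $\prod_{i=1}^{\glnidx-1}c(t_i,{t'_{i+1}}^{-1})$ from the remaining long-root neighbor commutations.

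The main obstacle I expect is the careful sign/exponent accounting in step (a): one has to be meticulous about the \emph{direction} of each commutation (left factor past right factor versus right past left changes the symbol to its inverse by anti-symmetry, and changes which of $\langle\alpha_i,\alpha_j^\vee\rangle$ or $\langle\alpha_j,\alpha_i^\vee\rangle$ appears), and about the asymmetry $\langle\alpha_{\glnidx},\alpha_{\glnidx+1}^\vee\rangle=-1$ versus $\langle\alpha_{\glnidx+1},\alpha_{\glnidx}^\vee\rangle=-2$ that is responsible for the single ``squared'' cross term $c(t_{\glnidx},{t'_{\glnidx+1}}^{-2})$ not having a symmetric partner. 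I would also double-check that all the self-correction factors $\prod_i c_{\alpha_i}(t_i,t_i)$, $\prod_i c_{\alpha_i}(t'_i,t'_i)$, and $\prod_i c_{\alpha_i}(t_it'_i,t_it'_i)^{-1}$ coming from $\mathfrak{s}(t)$, $\mathfrak{s}(t')$ and $\mathfrak{s}(tt')^{-1}$ cancel against the $c_{\alpha_i}(t_i,t'_i)$ produced in step (b): indeed since $c$ is bimultiplicative, $c_{\alpha_i}(t_it'_i,t_it'_i)=c_{\alpha_i}(t_i,t_i)c_{\alpha_i}(t'_i,t'_i)c_{\alpha_i}(t_i,t'_i)c_{\alpha_i}(t'_i,t_i)$, and using anti-symmetry of $c$ these collapse exactly, leaving only the $c(t_i,t'_i)$ terms (for the short root this is where the $c(t_{\glnidx+1}^2,t'_{\glnidx+1})$ rather than $c(t_{\glnidx+1},t'_{\glnidx+1})$ comes from). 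Once this is verified the specializations to $\glnidx=0$ and $\glnidx=1$ are immediate by dropping the empty products.
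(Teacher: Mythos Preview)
Your approach is exactly the paper's: expand $\mathfrak{s}(t)\mathfrak{s}(t')\mathfrak{s}(tt')^{-1}$ via \eqref{eq:section of BLS on torus}, reorder using \eqref{eq:torus alpha and alpha'}, combine matching coroots via \eqref{eq:torus alpha and alpha}, and read off the Cartan integers. The paper's proof is a one-sentence pointer to precisely these ingredients.

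There is, however, a genuine factual slip that would derail the computation if carried out as you wrote it. You state $\langle\alpha_{\glnidx},\alpha_{\glnidx+1}^{\vee}\rangle=-1$ and $\langle\alpha_{\glnidx+1},\alpha_{\glnidx}^{\vee}\rangle=-2$; these are reversed. In type $B_{\glnidx+1}$ the short root is $\alpha_{\glnidx+1}$, so its coroot is long and $\langle\alpha_{\glnidx},\alpha_{\glnidx+1}^{\vee}\rangle=-2$, while $\langle\alpha_{\glnidx+1},\alpha_{\glnidx}^{\vee}\rangle=-1$ (this is what the paper records). The only cross-commutation involving the short coroot in the reordering is moving ${\alpha_{\glnidx}^{\vee}}^*(t_{\glnidx})$ past ${\alpha_{\glnidx+1}^{\vee}}^*(t'_{\glnidx+1})$, and \eqref{eq:torus alpha and alpha'} with $\alpha=\alpha_{\glnidx}$, $\alpha'=\alpha_{\glnidx+1}$ gives directly $c_{\alpha_{\glnidx}}(t_{\glnidx},{t'_{\glnidx+1}}^{-2})=c(t_{\glnidx},{t'_{\glnidx+1}}^{-2})$. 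So the exponent $-2$ comes straight from the correct Cartan entry, not from any orientation ambiguity or from $c_{\alpha_{\glnidx+1}}$; with your stated values you would get $c(t_{\glnidx},{t'_{\glnidx+1}}^{-1})$ instead.

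A minor cleanup: the self-correction factors $\prod_i c_{\alpha_i}(t_i,t_i)\cdot\prod_i c_{\alpha_i}(t'_i,t'_i)\cdot\prod_i c_{\alpha_i}(t_it'_i,t_it'_i)^{-1}$ cancel \emph{completely}, since bimultiplicativity plus anti-symmetry give $c_{\alpha_i}(t_it'_i,t_it'_i)=c_{\alpha_i}(t_i,t_i)c_{\alpha_i}(t'_i,t'_i)$. The diagonal terms $\prod_{i=1}^{\glnidx}c(t_i,t'_i)$ and $c(t_{\glnidx+1}^2,t'_{\glnidx+1})$ arise solely from the combination step \eqref{eq:torus alpha and alpha}, the latter because $c_{\alpha_{\glnidx+1}}(x,y)=c(x^2,y)$.
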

\begin{proof}
The proof follows from \eqref{eq:section of BLS on torus}, \eqref{eq:torus alpha and alpha}, \eqref{eq:torus alpha and alpha'} and the fact that for $1\leq i<i'\leq\glnidx+1$, if $i<i'-1$ then $\langle\alpha_i,\alpha_{i'}^{\vee}\rangle=0$; if $i'\leq\glnidx$,
$\langle\alpha_{i'-1},\alpha_{i'}^{\vee}\rangle=-1$; and $\langle\alpha_{\glnidx},\alpha_{\glnidx+1}^{\vee}\rangle=-2$.
\end{proof}

Note that a similar construction works for $GSpin_{2\glnidx}$. As explained in Section~\ref{subsection:the groups} this group can be regarded as a subgroup of $Spin_{2(\glnidx+1)}$, then the cover $\cover{GSpin_{2\glnidx}}(F)$ can be obtained by restriction from $\cover{Spin_{2(\glnidx+1)}}(F)$. Here $GSpin_{2\glnidx}$ will appear a priori as a subgroup of some $G_{m}$, so the cover is obtained by restricting $\cover{G}_{m}(F)$.

\subsubsection{Restriction to ``standard" subgroups}\label{subsubsection:Restriction to standard subgroups}
Let $H$ be a simple simply-connected algebraic $F$-group, which is $F$-split. Let $\Delta\subset\Delta_{G'_{\glnidx+1}}$. Further assume that $H(F)$ is the subgroup of $G'_{\glnidx+1}(F)$ generated by the elements $\alpha^{\vee}(x)$, $w_{\alpha}$ and $n_{\alpha'}(y)$, where $\alpha\in\Delta$; $x\in F^*$; $\alpha'\in {\Sigma}_{G'_{\glnidx+1}}^+$ is spanned by the roots in $\Delta$; and $y\in F$. Then $H(F)$ was called a ``standard" subgroup of $G'_{\glnidx+1}(F)$ in \cite{BLS}. 
According to \cite{BLS} (Lemma~2.5 and its proof), the restriction of $\mathfrak{s}$ to $H(F)$ gives the section defined by \cite{BLS} on $H(F)$ and $\sigma|_{H(F)\times H(F)}$ is the $2$-cocycle of \cite{BLS} on $H(F)$. Moreover, if $H_1(F),\ldots,H_k(F)$ is a collection of standard subgroups that are mutually commuting and $\sigma_i$ is the $2$-cocycle on $H_i(F)$, by Theorem~2.7 of \cite{BLS},
\begin{align*}
\sigma(g_1\ldots g_k,g_1'\ldots g_k')=\prod_{i=1}^k\sigma_i(g_i,g_i').
\end{align*}
This property was used in \cite{BLS} to define their block-compatible cocycle for $\GLF{\glnidx}{F}$, see Section~\ref{subsubsection:Block-compatibility}.

For example if $\glnidx>0$, according to the embedding $M_{\glnidx-1}<G_{\glnidx}$, the group $G_1'(F)$ is the standard subgroup generated by $\{\alpha_{\glnidx+1}^{\vee}(x),w_{\alpha_{\glnidx+1}},n_{\alpha_{\glnidx+1}}(y)\}$. 
If $\sigma_{G'_1}$ is the cocycle on $G'_1(F)$, 
\begin{align*}
\sigma_{G'_1}(\alpha_{\glnidx+1}^{\vee}(t_1),\alpha_{\glnidx+1}^{\vee}(t_1'))=
\sigma(\alpha_{\glnidx+1}^{\vee}(t_1),\alpha_{\glnidx+1}^{\vee}(t_1'))=c_{\alpha_{\glnidx+1}}(t_1,t_1').
\end{align*}
Since $\glnidx>0$,
$c_{\alpha_{\glnidx+1}}(t_1,t_1')=c(t_1^2,t_1')$. Therefore, when $\glnidx=0$ we must define
$c_{\alpha_1}(x,y)=c(x^2,y)$ (see Remark~\ref{remark:minor deviation from BLS in case n=0}). 
\subsubsection{Splitting of the cover for $r=2$ and $\glnidx\leq1$}\label{subsubsection:Splitting of the double cover for n<=1}
We have the following minimal cases.
\begin{claim}\label{claim:cover minimal cases}
Assume $r=2$. For $\glnidx\leq1$, the cover $\cover{G}_{\glnidx}(F)$ splits under $\mathfrak{s}$. Moreover, for $\glnidx=0$ the restriction of $\sigma$ to $G_0(F)\times G_0(F)$ is trivial.
\end{claim}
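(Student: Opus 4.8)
The claim to prove is that for $r=2$ and $\glnidx\leq 1$, the cover $\cover{G}_{\glnidx}(F)$ splits under $\mathfrak{s}$, and moreover for $\glnidx=0$ the restriction of $\sigma$ to $G_0(F)\times G_0(F)$ is trivial. The plan is to reduce everything to explicit computations with the $2$-cocycle $\sigma$, using the formulas \eqref{eq:cocycle move N around}, \eqref{eq:cocycle T and W}, \eqref{eq:props of BLS section separate N T and W}, and Claim~\ref{claim:cocycle of spin on the torus}. Since $\mathfrak{s}$ is a section, it splits $\cover{G}_{\glnidx}(F)$ precisely when $\sigma|_{G_{\glnidx}(F)\times G_{\glnidx}(F)}$ is a coboundary; and because $\mathfrak{s}$ already splits $N_{\glnidx}(F)$ and satisfies the factorization \eqref{eq:props of BLS section separate N T and W}, the real content is confined to the torus and the Weyl element. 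So the strategy is: first handle $\glnidx=0$ directly, then handle $\glnidx=1$ by writing a correcting character.

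First I would treat $\glnidx=0$. Here $G_0=\GL{1}$ and, by \eqref{eq:center of the group} (or directly from the construction), an element of $G_0(F)$ is $\alpha_1^{\vee}(t_1^2)$ with $t_1\in F^*$ — equivalently, via \eqref{eq:image of standard T_n-k+1 in GSpin} with $k=\glnidx=0$ the image of $\beta_1^{\vee}(t_1)$. Using Claim~\ref{claim:cocycle of spin on the torus} for $\glnidx=0$ we have $\sigma(t,t')=c(t_1^2,t_1')$; but evaluating on the actual elements of $G_0(F)$, which are the images of $\beta_1^{\vee}$, and keeping track of the fact that $c=(,)_2^{-1}$ is bilinear and $c(x^2,y)=c(x,y)^2=1$ when $r=2$, the cocycle value is trivial. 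Concretely: on $G_0(F)$ the relevant arguments to $c$ always carry a square (coming from $t_1^2$ in the parametrization of the center), and squares are trivial for the Hilbert symbol of order $2$; hence $\sigma|_{G_0(F)\times G_0(F)}=1$, which simultaneously gives the splitting and the "moreover" statement.

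Next, for $\glnidx=1$, I would produce an explicit character $\eta\colon G_1(F)\to\mu_2$ (or a section modification) such that $\mathfrak{s}\cdot\eta$ is a homomorphism, equivalently that $\sigma|_{G_1(F)\times G_1(F)}$ equals the coboundary $\partial\eta$. The group $G_1=GSpin_3\cong\GL{2}$ in a suitable sense, but concretely it is generated inside $G'_2(F)=Spin_5(F)$ by the data attached to the simple root $\alpha_2$ together with the torus element $\alpha_1^{\vee}$. From Claim~\ref{claim:cocycle of spin on the torus} with $\glnidx=1$, $\sigma(t,t')=c(t_2^2,t_2')c(t_1,{t'_2}^{-2})c(t_1,t_1')$; for $r=2$ the first two factors vanish (they carry squares), leaving $\sigma(t,t')=c(t_1,t_1')$ on the torus. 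This is antisymmetric, hence a coboundary on the abelian torus: writing a bilinear symmetric square root (or invoking that an antisymmetric bicharacter valued in $\mu_2$ on a group whose relevant part is $F^*$ is a coboundary of a quadratic form) gives a correcting character on $T_2(F)\cap G_1(F)$. I would then extend this correction across the Weyl element using \eqref{eq:cocycle T and W} and \eqref{eq:props of BLS section separate N T and W}, and across unipotents using \eqref{eq:cocycle move N around}; the Bruhat decomposition of $G_1(F)$ then forces the corrected section to be a homomorphism on all of $G_1(F)$.

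The main obstacle is the $\glnidx=1$ case: one must check that the correcting character defined on the torus is actually consistent with the Weyl-group and unipotent relations — i.e., that after correcting on $T$, the remaining cocycle obstruction supported on cells involving $w_{\alpha_2}$ genuinely vanishes and does not require a further incompatible adjustment. This is where the identities \eqref{eq:weyl element and torus}, \eqref{eq:weyl element inverse} and the antisymmetry trick \eqref{eq:cocycle trick} must be combined carefully; the key simplification, again, is that every "problematic" Hilbert-symbol factor in Claim~\ref{claim:cocycle of spin on the torus} carries an even power (a $t_i^2$), so it dies for $r=2$, and what survives is a genuine coboundary. Once this consistency is verified, the splitting of $\cover{G}_1(F)$ under $\mathfrak{s}$ (composed with the correcting character) follows, completing the claim.
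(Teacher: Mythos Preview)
Your treatment of $\glnidx=0$ matches the paper: since $c_{\alpha_1}(x,y)=c(x^2,y)=1$ when $r=2$, the cocycle is identically trivial on $G_0(F)$.

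For $\glnidx=1$ your approach has a genuine gap. You correctly compute that on $T_2(F)$ the cocycle reduces to $\sigma(t,t')=c(t_1,t_1')=(t_1,t_1')_2$, but the step ``antisymmetric, hence a coboundary on the abelian torus'' is wrong. For $\mu_2$-valued pairings antisymmetry is automatic (it just says the pairing is symmetric), and a symmetric $\mu_2$-valued bicharacter on $F^*$ need not be a $\mu_2$-coboundary: over $F=\R$, any $\eta:\R^*\to\mu_2$ with $\partial\eta=(\cdot,\cdot)_2$ would give $1=\eta(1)=(-1,-1)_2\cdot\eta(-1)^2=-1$. The ``quadratic form'' idea does not rescue this, since extracting a quadratic refinement from a symmetric bilinear form requires dividing by $2$, which is unavailable in $\mu_2$. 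Thus no correcting character on $T_2(F)$ exists in general, and the Bruhat-cell extension you outline cannot get started.

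The paper proceeds by an entirely different route and never attempts an explicit trivialization on the torus. It identifies $G_1\cong\GL{2}$, invokes the Kazhdan--Patterson description of the double covers of $\GL{2}(F)$ (all of which restrict nontrivially to $SL_2(F)$), and then checks that the restriction $\sigma|_{G_1'(F)\times G_1'(F)}$ is identically $1$. That last point is immediate because $G_1'$ is the copy of $SL_2$ attached to the single short root $\alpha_2$, so $c_{\alpha_2}(x,y)=c(x^2,y)=1$ for $r=2$, and the relations \eqref{eq:torus alpha and alpha}--\eqref{eq:torus and unipotent} force the cocycle on all of $G_1'(F)$ to vanish. What you are missing is precisely this reduction to the derived group $G_1'$---where the short-root Steinberg symbol dies for free---combined with the external classification input; a direct coboundary on the full torus is not available.
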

\begin{proof}
In the case $\glnidx=0$, $\sigma|_{G_0(F)\times G_0(F)}=1$ because $c_{\alpha_1}(x,y)=c(x^2,y)=1$.
Assume $\glnidx=1$. Since $G_1=\GL{2}$, if the cocycle is nontrivial, then it is equal up to a coboundary to one of the cocycles 
defined by Kazhdan and Patterson \cite{KP}, all of which are nontrivial on $SL_2(F)$.
However, $\sigma|_{G_1'\times G_1'}=\sigma_{G_1'}$ (by Section~\ref{subsubsection:Restriction to standard subgroups})
and $\sigma_{G_1'}=1$, as can be seen 
using \eqref{eq:torus alpha and alpha}-\eqref{eq:torus and unipotent}, note that we have only one simple root $\alpha_2$ and in this case $c_{\alpha_2}=1$.
\end{proof}
\subsubsection{A splitting of the hyperspecial subgroup}\label{subsubsection:A splitting of the hyperspecial subgroup}
Assume $|r|=1$ in $F$ (in particular, $F$ is $p$-adic) and $q>3$. By Moore \cite{Moore} (p.~54-56), there is a unique splitting $\kappa$ of $G_{\glnidx+1}'(\RingOfIntegers)$, which is in particular a splitting of $K(=G_{\glnidx}(\RingOfIntegers))$. Denote $K^*=\kappa(K)$.
As in \cite{KP} (Proposition~0.1.3), we have the following relation between $\kappa$ and $\mathfrak{s}$.
\begin{claim}\label{claim:relations section s and section kappa}
The sections $\mathfrak{s}$ and $\kappa$ agree on $K\cap N_{\glnidx}(F)$, $K\cap T_{\glnidx+1}(F)$ and $\mathfrak{W}_{\glnidx}$ ($\mathfrak{W}_{\glnidx}\subset K$).
\end{claim}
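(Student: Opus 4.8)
The plan is to follow the strategy of \cite{KP} (Proposition~0.1.3). The point is that $\kappa$ and $\mathfrak{s}$ are two sections of the cover over $K$, and we want to show they coincide on three particular subsets. First I would reduce the problem to a uniqueness statement: on $K\cap N_{\glnidx}(F)$ this is immediate, since $\mathfrak{s}$ is by construction \emph{the} unique splitting of $N_{\glnidx+1}(F)$ (via $\mathfrak{s}(n_\alpha(x))=n^*_\alpha(x)$) and $\kappa$ restricted to $K\cap N_{\glnidx}(F)$ is also a splitting of a unipotent group, which lifts uniquely because unipotent subgroups of $G'_{\glnidx+1}(F)$ split uniquely in the cover (there are no nontrivial homomorphisms from a pro-$p$ unipotent group, $p$ odd, to $\mu_2$). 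So $\mathfrak{s}|_{K\cap N_{\glnidx}(F)}$ and $\kappa|_{K\cap N_{\glnidx}(F)}$ must agree, being two splittings.

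For $\mathfrak{W}_{\glnidx}$, which sits inside $K$, I would use that each Weyl representative $w_\alpha$ is a product of $n_{\pm\alpha}(\pm1)$, all of which lie in $K$, and that $\mathfrak{s}(w_\alpha)=w^*_\alpha=n^*_\alpha(-1)n^*_{-\alpha}(1)n^*_\alpha(-1)$ is exactly Steinberg's canonical lift built from the $n^*_{\pm\alpha}$; since $\kappa$ is a homomorphism agreeing with $\mathfrak{s}$ on the unipotent elements $n_{\pm\alpha}(\pm1)$ (by the previous paragraph, these being in $K\cap\mathcal{U}_{\pm\alpha}$ — or more carefully, conjugates thereof landing in $K$), it follows that $\kappa(w_\alpha)=\mathfrak{s}(w_\alpha)$ on the generators, hence on all of $\mathfrak{W}_{\glnidx}$ using \eqref{eq:props of BLS section separate W if length is additive} and the fact that $\kappa$ is multiplicative. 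One subtlety: $n_{-\alpha}(y)$ for $\alpha$ simple need not itself lie in $K$ for all $y$, but $n_{-\alpha}(\pm1)$ does, so the argument applies to the specific elements entering $w^*_\alpha$.

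The torus case, $K\cap T_{\glnidx+1}(F)$, is where the real work lies. An element of $K\cap T_{\glnidx+1}(F)$ is $t=\prod_{i=1}^{\glnidx+1}\alpha_i^\vee(u_i)$ with $u_i\in\RingOfIntegers^*$ (using $q>3$, in fact the precise description from \cite{Moore}). On such elements, by \eqref{eq:section of BLS on torus}, $\mathfrak{s}(t)=\prod{\alpha_i^\vee}^*(u_i)\cdot\prod\iota(c_{\alpha_i}(u_i,u_i))$, and since $|r|=1$ in $F$ and the $u_i$ are units, $c_{\alpha_i}(u_i,u_i)=1$ (the Hilbert symbol is trivial on $\RingOfIntegers^*\times\RingOfIntegers^*$ when $r$ is prime to the residue characteristic, as recalled in Section~\ref{subsection:The Hilbert symbol}). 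So $\mathfrak{s}(t)=\prod{\alpha_i^\vee}^*(u_i)$, a product of Steinberg lifts. Now I would exploit that each $\alpha_i^\vee(u_i)=w_{\alpha_i}(u_i)w_{\alpha_i}(1)^{-1}$ with $w_{\alpha_i}(u_i)=n_{\alpha_i}(u_i)n_{-\alpha_i}(-u_i^{-1})n_{\alpha_i}(u_i)$ lies in $K$ (all entries are units), so $\kappa$ is determined on these by its behavior on the unipotent elements, and ${\alpha_i^\vee}^*(u_i)=w^*_{\alpha_i}(u_i)w^*_{\alpha_i}(1)$ is the corresponding expression with $n^*$'s. Hence $\kappa(\alpha_i^\vee(u_i))={\alpha_i^\vee}^*(u_i)=\mathfrak{s}(\alpha_i^\vee(u_i))$, and multiplicativity of $\kappa$ gives $\kappa(t)=\prod{\alpha_i^\vee}^*(u_i)=\mathfrak{s}(t)$, using also \eqref{eq:torus alpha and alpha'} together with the vanishing of Hilbert symbols of units to commute the factors into the order of \eqref{eq:section of BLS on torus} without picking up cocycle contributions.

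The main obstacle I anticipate is bookkeeping around the precise image of $K\cap T_{\glnidx+1}(F)$ under the coordinates $\prod\alpha_i^\vee(u_i)$ and making sure that every elementary factor ($n_{\pm\alpha}(\pm1)$, $w_{\alpha_i}(u_i)$, etc.) used to express a torus element genuinely lies in $K=G_{\glnidx}(\RingOfIntegers)$ — this requires the hypothesis $q>3$ from \cite{Moore} and a careful check that the identification $G_{\glnidx}<G'_{\glnidx+1}$ restricts correctly to integral points. The rest is essentially formal: any two splittings of $K$ that agree on the generating set $\{$unipotents, Weyl elements, torus units$\}$ agree everywhere, because these generate $K$ and $\kappa$, $\mathfrak{s}$ are both (or, in $\mathfrak{s}$'s case, restrict to) group homomorphisms there. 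I would close by noting this mirrors \cite{KP} Proposition~0.1.3 verbatim in structure, the only new input being Claim~\ref{claim:cocycle of spin on the torus} to handle the $B_\glnidx$-type coroot relations.
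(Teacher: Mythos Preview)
Your route is genuinely different from the paper's. The paper works inside each $SL_2(\RingOfIntegers)$ embedded along a simple root, invokes Kubota's explicit coboundary $\gamma$ on $SL_2(\RingOfIntegers)$ to turn $\mathfrak{s}$ into a splitting $\mathfrak{s}/\iota(\gamma)$, and then applies Moore's uniqueness (\cite{Moore} Lemma~11.1) to identify this with $\kappa$; since $\gamma$ vanishes on $n_\alpha(\RingOfIntegers)$, $\alpha^{\vee}(\RingOfIntegers^*)$ and $w_\alpha$, this yields $\mathfrak{s}=\kappa$ there, and the rest is bootstrapped. You bypass Kubota entirely by a pro-$p$ uniqueness argument on $K\cap N_{\glnidx}(F)$, which is cleaner in spirit.

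However, there is a real gap. Your expressions for $w_\alpha$ and $\alpha_i^{\vee}(u_i)$ involve the \emph{negative} root elements $n_{-\alpha}(\pm1)$ and $n_{-\alpha}(-u_i^{-1})$, which lie in $K$ but not in $K\cap N_{\glnidx}(F)$. Your first paragraph only establishes $\mathfrak{s}=\kappa$ on $K\cap N_{\glnidx}(F)$ (positive roots), so it does not apply to $n_{-\alpha}$. Your parenthetical fix ``or more carefully, conjugates thereof landing in $K$'' is circular: conjugating $\mathcal{U}_\alpha$ to $\mathcal{U}_{-\alpha}$ requires $w_\alpha$, which is exactly what you are trying to pin down. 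The correct patch is to run the same pro-$p$ uniqueness argument on $K\cap\mathcal{U}_{-\alpha}\cong\RingOfIntegers$: the Steinberg lift $y\mapsto n^*_{-\alpha}(y)$ is one splitting, $\kappa$ is another, and since $\RingOfIntegers$ is pro-$p$ with $p\nmid r$ they coincide, giving $\kappa(n_{-\alpha}(y))=n^*_{-\alpha}(y)$ for $y\in\RingOfIntegers$. With this in hand your computation $\kappa(w_\alpha)=n^*_\alpha(-1)n^*_{-\alpha}(1)n^*_\alpha(-1)=w^*_\alpha=\mathfrak{s}(w_\alpha)$ goes through, and similarly for the torus.

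Two smaller points. Your closing sentence asserts that $\mathfrak{s}$ ``restrict[s] to'' a group homomorphism on the generating set inside $K$; but $\mathfrak{s}$ is \emph{not} a homomorphism on $K$ (that is the whole content of the claim), so this framing is misleading---you are only using that $\mathfrak{s}$ and the Steinberg lifts agree on the specific elements in question. And the reference to Claim~\ref{claim:cocycle of spin on the torus} at the end is unnecessary: what you actually use for the torus is just that $c$ is trivial on $\RingOfIntegers^*\times\RingOfIntegers^*$, which you already invoked.
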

\begin{proof}
Let $\alpha\in\Delta_{G'_{\glnidx+1}}$. First assume $\alpha\ne\alpha_{\glnidx+1}$ and let $SL_2$ be embedded in $G_{\glnidx+1}'$ along $\alpha$. Let $\sigma_{SL_2}$ be the $2$-cocycle of \cite{BLS} on $SL_2(F)$. According to \cite{BLS} (Corollary~3.8 and Lemma~2.5), $\sigma_{SL_2}$ is the cocycle of Kubota \cite{Kubota} on $SL_2(F)$. Kubota \cite{Kubota2} (Theorem~2) proved that the mapping $\gamma:SL_2(\RingOfIntegers)\rightarrow\mu_r$, which is $1$ on
$\left(\begin{smallmatrix}*&*\\x&y\end{smallmatrix}\right)$ if $|x|\in \{0,1\}$ and otherwise equals $c(x,y)$, satisfies $\sigma_{SL_2}(k,k')=\gamma(k)\gamma(k')\gamma(kk')^{-1}$.

Again by \cite{BLS} (Lemma~2.5), $\mathfrak{s}(k)\mathfrak{s}(k')\mathfrak{s}(kk')^{-1}=\sigma_{SL_2}(k,k')$ whence
$\varphi(k)=\mathfrak{s}(k)/\iota(\gamma(k))$ is a splitting of $SL_2(\RingOfIntegers)$. Hence $\varphi$ coincides with $\kappa$ on $SL_2(\RingOfIntegers)$ (\cite{Moore} Lemma~11.1). Whenever $\gamma(k)=1$, $\mathfrak{s}(k)=\kappa(k)$ and in particular $\mathfrak{s}$ and $\kappa$ agree on $n_{\alpha}(\RingOfIntegers)$, $\alpha^{\vee}(\RingOfIntegers^*)$, and $w_{\alpha}$.

If $\alpha=\alpha_{\glnidx+1}$ and $r>2$, restriction to $SL_2(F)$ gives a nontrivial cover (of order $r/\gcd(2,r)$) and the proceeding discussion applies (with $\gamma$ defined using $c(,)^2$ instead of $c(,)$). If $r=2$, $\mathfrak{s}|_{SL_2(F)}$ is a homomorphism ($\sigma_{G_1'}=1$, see the proof of Claim~\ref{claim:cover minimal cases}),
hence $\mathfrak{s}|_{SL_2(\RingOfIntegers)}=\kappa|_{SL_2(\RingOfIntegers)}$.

The section $\kappa$ is in particular a splitting of $K\cap T_{\glnidx+1}(F)$ and the same holds for $\mathfrak{s}$
(because $c$ is trivial on $\RingOfIntegers^*\times\RingOfIntegers^*$). Writing $t=\prod_{i=1}^{\glnidx+1}\alpha_i^{\vee}(t_i)\in K\cap T_{\glnidx+1}(F)$ ($t_i\in\RingOfIntegers^*$) and noting that
$\mathfrak{s}(\alpha_i^{\vee}(t_i))=\kappa(\alpha_i^{\vee}(t_i))$ for each $i$, since $\alpha_i^{\vee}(t_i)$ belongs to the
copy of $SL_2(\RingOfIntegers)$ along $\alpha_i\in\Delta_{G'_{\glnidx+1}}$, one deduces $\kappa(t)=\mathfrak{s}(t)$.

Regarding $\mathfrak{W}_{\glnidx}$, according to \eqref{eq:props of BLS section separate W if length is additive} it is enough to show $\kappa(w_{\alpha})=\mathfrak{s}(w_{\alpha})$ for $\alpha\in\Delta_{G_{\glnidx}}$, which holds because $w_{\alpha}$ belongs to a suitable copy of $SL_2(\RingOfIntegers)$.

It remains to consider $\mathcal{U}_{\alpha'}$ where $\alpha'\in\Sigma_{G_{\glnidx}}^+$. Let $y\in \RingOfIntegers$ and take $\alpha\in\Delta_{G'_{\glnidx+1}}$, $x\in\RingOfIntegers$ and $w\in \mathfrak{W}_{\glnidx}$ such that $w^{-1}n_{\alpha}(x)w=n_{\alpha'}(y)$. The fact that $\kappa$ is a splitting of $K$ and the assertions already proved imply
$\kappa(w^{-1}n_{\alpha}(x)w)=\mathfrak{s}(w)^{-1}\mathfrak{s}(n_{\alpha}(x))\mathfrak{s}(w)$. Then
\eqref{eq:props of BLS section image on a simple reflection}, \eqref{eq:props of BLS section separate W if length is additive} and \eqref{eq:weyl simple and unipotent positive} give $\mathfrak{s}(w)^{-1}\mathfrak{s}(n_{\alpha}(x))\mathfrak{s}(w)=n_{\alpha'}^*(y)$, whence
$\mathfrak{s}(n_{\alpha'}(y))=\kappa(n_{\alpha'}(y))$. Note that we actually apply \eqref{eq:weyl simple and unipotent positive} repeatedly, according to the decomposition of $w$ into a product of simple reflections $w_{\alpha_i}$ ($\alpha_i\in\Delta_{G_{\glnidx}}$), and after $j$ conjugations, if we have to conjugate $n_{\alpha'''}^*(x_j)$ by $w_{\alpha_i}^*$, then $\alpha'''\in\Sigma_{G_{\glnidx}}^+\setdifference\Delta_{G_{\glnidx}} $ whence $\alpha_i\ne\alpha'''$ (this is needed for \eqref{eq:weyl simple and unipotent positive}).
\end{proof}

\subsubsection{Block-compatibility}\label{subsubsection:Block-compatibility}
As mentioned in the introduction, the $2$-cocycle defined on $SL_{\glnidx+1}(F)$ was used in \cite{BLS} to define a $2$-cocycle
$\sigma_{\glnidx}$ for $\GLF{\glnidx}{F}$. Specifically, they defined
\begin{align*}
\sigma_{\glnidx}(b,b')=c(\det{b},\det{b'})^{-1}\sigma_{SL_{\glnidx+1}}(diag(b,\det{b}^{-1}),diag(b',\det{b'}^{-1}))\qquad (b,b'\in\GLF{\glnidx}{F}).
\end{align*}
If $\glnidx=1$, $\sigma_{\glnidx}=1$.
Their cocycle is block-compatible (\cite{BLS} Theorem~3.11), in the sense that for any $g_i,g_i'\in\GLF{\glnidx_i}{F}$, $1\leq i\leq l$,
\begin{align*}
\sigma_{\glnidx}(diag(g_1,\ldots,g_l),diag(g_1',\ldots,g_l'))=\prod_{i=1}^l\sigma_{\glnidx_i}(g_i,g_i')\prod_{i<j}c(\det{g_i},\det{g_j'})^{-1}.
\end{align*}

We seek similar block-compatibility. We begin with the following lemma, which encapsulates several arguments of \cite{BLS} and summarizes a list of properties which, when satisfied by a pair of subgroups, implies a certain plausible block formula.
\begin{lemma}\label{lemma:block compatibility on commuting pair}
Assume $H_1$ and $H_2$ are two subgroups of $G'_{\glnidx+1}(F)$ with the following properties.
\begin{enumerate}[leftmargin=*]
\item \label{item:Iwasawa item} For each $i$ and $h\in H_i$, there is a Bruhat decomposition $h=utwv$ with $u,v\in N_{G'_{\glnidx+1}}(F)$, $t\in T_{\glnidx+1}(F)$ and $w\in\mathfrak{W}_{\glnidx+1}$, and such that $u,t,w,v\in H_i$.
\item \label{item:commute item} The subgroups $H_1$ and $H_2$ commute. 
\item \label{item:w_1 and w_2 item} If $w_1\in H_1\cap \mathfrak{W}_{\glnidx+1}$ and $w_2\in H_2\cap \mathfrak{W}_{\glnidx+1}$, $l(w_1w_2)=l(w_1)+l(w_2)$.
\item \label{item:w_1 and t_2 item} If $i\in\{1,2\}$, $w\in H_i\cap \mathfrak{W}_{\glnidx+1}$ and $t'\in H_{3-i}\cap T_{\glnidx+1}(F)$, then $\sigma(w,t')=1$.
\item \label{item:t_1 and t_2 item} If $t_1\in H_1\cap T_{\glnidx+1}(F)$ and $t_2\in H_2\cap T_{\glnidx+1}(F)$, then $\sigma(t_2,t_1)=1$.
\end{enumerate}
Then for any $h_1,h_1'\in H_1$ and $h_2,h_2'\in H_2$,
\begin{align*}
\sigma(h_1h_2,h_1'h_2')=\sigma(h_1,h_1')\sigma(h_2,h_2')\sigma(h_1,h_2').
\end{align*}
Moreover, if $h_1=u_1t_1w_1v_1$ and $h_2'=u_2't_2'w_2'v_2'$, with $u_1,v_1,u_2',v_2'\in N_{G'_{\glnidx+1}}(F)$,
$t_1,t_2'\in T_{\glnidx+1}(F)$, $w_1,w_2'\in\mathfrak{W}_{\glnidx+1}$, $u_1,t_1,w_1,v_1\in H_1$ and $u_2',t_2',w_2',v_2'\in H_2$, then
\begin{align*}
\sigma(h_1,h_2')=\sigma(t_1,t_2').
\end{align*}
\end{lemma}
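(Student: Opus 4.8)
The plan is to reduce the general block formula to the special case where each $h_i$ is written in Bruhat form, using property~\eqref{item:Iwasawa item} to write $h_1=u_1t_1w_1v_1$, $h_1'=u_1't_1'w_1'v_1'$ inside $H_1$ and similarly $h_2,h_2'$ inside $H_2$. First I would use the commutativity of $H_1$ and $H_2$ (property~\eqref{item:commute item}) to slide the second-factor Bruhat data past the first-factor data, so that $h_1h_2=u_1t_1w_1v_1\cdot u_2t_2w_2v_2$ and $h_1'h_2'=u_1't_1'w_1'v_1'\cdot u_2't_2'w_2'v_2'$, then exploit the cocycle identity \eqref{eq:cocycle move N around} to absorb and move the unipotent pieces $u,v$ around: since $\sigma(ugu'',g'u')=\sigma(g,u''g')$, the outer $N$-parts on the left of each argument and the outer $N$-parts on the right are irrelevant, and the computation of $\sigma(h_1h_2,h_1'h_2')$ collapses to evaluating $\sigma$ on products of torus and Weyl elements. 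The repeated application of the cocycle identity \eqref{eq:2-cocycle property} is the bookkeeping core here, and I would organize it so that the key quantities to compute are $\sigma(t_1w_1 t_2w_2,\, t_1'w_1' t_2'w_2')$ after the unipotents are stripped.

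Next I would analyze this torus-Weyl expression by commuting the $H_2$-Weyl element $w_2$ leftward past $t_1'$ and $w_1'$. Property~\eqref{item:w_1 and w_2 item} (additivity of lengths, $l(w_1w_2)=l(w_1)+l(w_2)$, and presumably its mirror $l(w_1'w_2')=l(w_1')+l(w_2')$) together with \eqref{eq:props of BLS section separate W if length is additive} guarantees that $\mathfrak{s}$ is multiplicative on such products of Weyl elements from the two commuting subgroups, so no cocycle is picked up when concatenating $w_1$ with $w_2$. Property~\eqref{item:w_1 and t_2 item} handles the cross terms $\sigma(w, t')$ with $w$ and $t'$ coming from opposite subgroups: these vanish, which is what lets me move a Weyl element of one subgroup past a torus element of the other without cost. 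Property~\eqref{item:t_1 and t_2 item}, $\sigma(t_2,t_1)=1$, kills the ``wrong-order'' torus cross term, so that when I finally reduce $\sigma(h_1h_2,h_1'h_2')$ the only surviving interaction between the two factors is $\sigma(t_1,t_2')$ in the correct order — giving both the claimed product formula $\sigma(h_1h_2,h_1'h_2')=\sigma(h_1,h_1')\sigma(h_2,h_2')\sigma(h_1,h_2')$ and the identification $\sigma(h_1,h_2')=\sigma(t_1,t_2')$.

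For the second assertion, I would argue directly: by \eqref{eq:cocycle move N around} and the Bruhat decompositions $h_1=u_1t_1w_1v_1$, $h_2'=u_2't_2'w_2'v_2'$, strip $u_1$ on the left of the first argument and $u_2'$ on the left of the second, and handle the trailing $v_1$, $v_2'$, $w_2'$ similarly; since $H_1$ and $H_2$ commute one may first conjugate $u_2'$ (an element of $H_2$) so that $v_1 u_2' = u_2' v_1$ up to elements that \eqref{eq:cocycle move N around} absorbs. What remains is $\sigma(t_1w_1, t_2'w_2')$, then $\sigma(t_1,w_1t_2'w_2')$ after another application of the $N$-moving identity (viewing $w_1$ as starting a word), and finally using \eqref{eq:cocycle T and W}, i.e. $\sigma(t,w)=1$, together with property~\eqref{item:w_1 and t_2 item} to peel off $w_1$ and $w_2'$, leaving $\sigma(t_1,t_2')$.

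The main obstacle I anticipate is the careful combinatorics of moving Weyl elements past torus and unipotent elements belonging to the \emph{other} subgroup while keeping track of which instances of \eqref{eq:cocycle move N around} are legitimate — in particular verifying that conjugating an $N$-part of $H_2$ by a Weyl or torus element of $H_1$ stays within the unipotent subgroup and can be absorbed, which ultimately rests on $H_1$ and $H_2$ commuting and on the Bruhat data being internal to each $H_i$ (property~\eqref{item:Iwasawa item}). A secondary subtlety is that property~\eqref{item:w_1 and w_2 item} is stated only for the pair $(w_1,w_2)$; I would need the analogous length-additivity for $(w_1',w_2')$, which either follows symmetrically or should be read as implicit, and I would flag this when invoking \eqref{eq:props of BLS section separate W if length is additive}. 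Once the reductions are in place the remaining evaluations are immediate from the listed identities, so the proof is essentially an orchestrated application of \eqref{eq:2-cocycle property}, \eqref{eq:cocycle move N around}, \eqref{eq:cocycle T and W} and hypotheses \eqref{item:commute item}--\eqref{item:t_1 and t_2 item}.
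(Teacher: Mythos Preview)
Your overall strategy matches the paper's: strip unipotents via \eqref{eq:cocycle move N around}, reduce to $\sigma(tw,t'w')$ with $t,w\in H_i$ and $t',w'\in H_{3-i}$, show this equals $\sigma(t,t')$, and deduce both assertions---the second directly, and the first via iterated \eqref{eq:2-cocycle property} exactly as in \cite{BLS} once one knows $\sigma(h_2,h_1')=1$ for $h_2\in H_2$, $h_1'\in H_1$ (which follows from the second assertion and \eqref{item:t_1 and t_2 item}).

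One step in your sketch does not work as written: you cannot invoke \eqref{eq:cocycle move N around} to pass from $\sigma(t_1w_1,t_2'w_2')$ to $\sigma(t_1,w_1t_2'w_2')$, since $w_1$ is a Weyl representative, not a unipotent element. The paper handles this reduction by working directly with the section rather than with cocycle-peeling. From \eqref{item:w_1 and t_2 item} and \eqref{item:commute item} one gets $\mathfrak{s}(w)\mathfrak{s}(t')\mathfrak{s}(w)^{-1}=\mathfrak{s}(t')$; then \eqref{eq:props of BLS section separate N T and W} yields $\sigma(tw,t')=\sigma(t,t')$ and, using \eqref{item:w_1 and w_2 item} with \eqref{eq:props of BLS section separate W if length is additive}, also $\sigma(twt',w')=1$. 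Feeding these and $\sigma(t',w')=1$ into \eqref{eq:2-cocycle property} gives $\sigma(tw,t'w')=\sigma(t,t')$. Finally, your concern about \eqref{item:w_1 and w_2 item} is not an issue: the hypothesis quantifies over all $w_1\in H_1\cap\mathfrak{W}_{\glnidx+1}$ and $w_2\in H_2\cap\mathfrak{W}_{\glnidx+1}$, so it covers every pair you need.
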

\begin{proof}
Let $i\in\{1,2\}$, $t\in H_i\cap T_{\glnidx+1}(F)$, $w\in H_i\cap \mathfrak{W}_{\glnidx+1}$,
$t'\in H_{3-i}\cap T_{\glnidx+1}(F)$ and $w'\in H_{3-i}\cap \mathfrak{W}_{\glnidx+1}$. Note that by \eqref{item:commute item} and \eqref{eq:props of BLS section separate N T and W},
\begin{align*}
\sigma(w,t')=\mathfrak{s}(w)\mathfrak{s}(t')\mathfrak{s}(wt')^{-1}
=\mathfrak{s}(w)\mathfrak{s}(t')\mathfrak{s}(t'w)^{-1}=
\mathfrak{s}(w)\mathfrak{s}(t')\mathfrak{s}(w)^{-1}\mathfrak{s}(t')^{-1},
\end{align*}
whence \eqref{item:w_1 and t_2 item} implies $\mathfrak{s}(w)\mathfrak{s}(t')\mathfrak{s}(w)^{-1}=\mathfrak{s}(t')$.
Thus
\begin{align}\label{eq:lemma block compatibility sigma last comp 1_1}
\sigma(tw,t')=&\mathfrak{s}(tw)\mathfrak{s}(t')\mathfrak{s}(twt')^{-1}=
\mathfrak{s}(t)\mathfrak{s}(w)\mathfrak{s}(t')\mathfrak{s}(tt'w)^{-1}\\\nonumber
=&\mathfrak{s}(t)\mathfrak{s}(w)\mathfrak{s}(t')\mathfrak{s}(w)^{-1}\mathfrak{s}(tt')^{-1}
=\mathfrak{s}(t)\mathfrak{s}(t')\mathfrak{s}(tt')^{-1}=\sigma(t,t'),
\end{align}
where we also used \eqref{eq:props of BLS section separate N T and W} and \eqref{item:commute item}. Similarly,
\begin{align}\label{eq:lemma block compatibility sigma last comp}
\sigma(twt',w')=&\sigma(tt'w,w')=\mathfrak{s}(tt')\mathfrak{s}(w)\mathfrak{s}(w')\mathfrak{s}(tt'ww')^{-1}\\\nonumber
=&\mathfrak{s}(tt')\sigma(w,w')\mathfrak{s}(tt')^{-1}=\mathfrak{s}(tt')\mathfrak{s}(tt')^{-1}=1.
\end{align}
Here we used \eqref{eq:props of BLS section separate W if length is additive}, \eqref{item:commute item} and \eqref{item:w_1 and w_2 item} to deduce $\sigma(w,w')=1$.

Now \eqref{eq:cocycle T and W}, \eqref{eq:2-cocycle property}, \eqref{eq:lemma block compatibility sigma last comp 1_1} and \eqref{eq:lemma block compatibility sigma last comp} imply
\begin{align*}
\sigma(tw,t'w')=\sigma(tw,t'w')\sigma(t',w')=
\sigma(tw,t')\sigma(twt',w')=\sigma(t,t').
\end{align*}
Let $h\in H_i$ and $h'\in H_{3-i}$. Write $h=utwv$ and $h'=u't'w'v'$, with $u,v\in H_{i}\cap N_{G'_{\glnidx+1}}(F)$ and $u',v'\in H_{3-i}\cap N_{G'_{\glnidx+1}}(F)$ (this is possible by \eqref{item:Iwasawa item}). Then \eqref{eq:cocycle move N around} and \eqref{item:commute item} give
\begin{align*}
\sigma(h,h')=\sigma(twv,u't'w')=\sigma(twu',vt'w')=\sigma(u'tw,t'w'v)=\sigma(tw,t'w')=\sigma(t,t').
\end{align*}
Hence for $i=1$ ($h\in H_1$), we deduce the second assertion, i.e., $\sigma(h_1,h_2')=\sigma(t_1,t_2')$. When $i=2$ ($h\in H_2$, $h'\in H_1$), $\sigma(h,h')=1$ because of \eqref{item:t_1 and t_2 item}. Then exactly as in \cite{BLS} (top of p.~157), a repeated application of \eqref{eq:2-cocycle property} implies $\sigma(h_1h_2,h_1'h_2')=\sigma(h_1,h_1')\sigma(h_2,h_2')\sigma(h_1,h_2')$.
\end{proof}

In the case of $M_k(F)$, it is a product of $H_2=\GLF{k}{F}$ and $H_1=G_{\glnidx-k}(F)$. These subgroups clearly satisfy conditions
\eqref{item:Iwasawa item}-\eqref{item:w_1 and w_2 item} of Lemma~\ref{lemma:block compatibility on commuting pair}. The following claim checks the validity of the other conditions.
\begin{claim}\label{claim:standard M_k computations of torus and Weyl elements}
Let $a=\prod_{i=1}^k\eta_i^{\vee}(a_i)\in T_{\GL{k}}(F)$ and $t=\prod_{i=1}^{\glnidx-k+1}\beta_i^{\vee}(t_i)\in T_{\glnidx-k+1}(F)$ and consider their images in $G_{\glnidx}(F)$, given by \eqref{eq:image of standard A_k in GSpin} and \eqref{eq:image of standard T_n-k+1 in GSpin}. Then
\begin{align*}
&\sigma(a,t)=1,\\
&\sigma(t,a)=\begin{dcases}c(t_1,\det{a}^{-1})&k<\glnidx,\\
c(t_1^2,\det{a}^{-1})&k=\glnidx.\end{dcases}
\end{align*}
In particular condition \eqref{item:t_1 and t_2 item} of Lemma~\ref{lemma:block compatibility on commuting pair} holds.
Further let $w_1\in H_1\cap\mathfrak{W}_{\glnidx+1}$ and $w_2\in H_2\cap\mathfrak{W}_{\glnidx+1}$. Then $\sigma(w_1,a)=1$ and if $r=2$, $\sigma(w_2,t)=1$ whence condition \eqref{item:w_1 and t_2 item} also holds.
\end{claim}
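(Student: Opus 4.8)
The plan is to verify the four displayed identities in Claim~\ref{claim:standard M_k computations of torus and Weyl elements} by translating everything into the coroot coordinates of $T_{\glnidx+1}$ and invoking Claim~\ref{claim:cocycle of spin on the torus} together with the identities \eqref{eq:cocycle move N around}--\eqref{eq:cocycle T and W} and the basic bi-character properties of $c$.

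First I would compute $\sigma(a,t)$ and $\sigma(t,a)$. Using \eqref{eq:image of standard A_k in GSpin}, the image of $a=\prod_i\eta_i^{\vee}(a_i)$ in $T_{\glnidx+1}(F)$ is $\prod_{i=1}^k\alpha_i^{\vee}(\prod_{j=i}^k a_j^{-1})$, so in the notation of Claim~\ref{claim:cocycle of spin on the torus} its coordinates are $s_i=\prod_{j=i}^k a_j^{-1}$ for $1\le i\le k$ and $s_i=1$ for $i>k$; note $s_1=\det{a}^{-1}$ and $s_i s_{i+1}^{-1}=a_i^{-1}$ for $1\le i\le k-1$. For $t$, formula \eqref{eq:image of standard T_n-k+1 in GSpin} in the case $k<\glnidx$ gives torus coordinates $t_1$ in slots $1,\dots,k$ and then $\alpha_{k+i}^{\vee}(t_i)$, while for $k=\glnidx$ the element is central with all coordinates a power of $t_1$. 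I would substitute these into the formula of Claim~\ref{claim:cocycle of spin on the torus}; because $a$ is supported on the first $k$ coroots and the non-central part of $t$ on coroots $\ge k+1$ (or $t$ central when $k=\glnidx$), almost all of the factors $c(\cdot,\cdot)$ collapse, and the surviving ones — coming from the ``overlap'' term $\prod_{i<\glnidx}c(s_i, s_{i+1}^{-1})$ type contributions at the boundary index $i=k$, and the central coordinates — should leave exactly $\sigma(a,t)=1$ and $\sigma(t,a)=c(t_1,\det{a}^{-1})$ (resp.\ $c(t_1^2,\det{a}^{-1})$ when $k=\glnidx$, where the central element contributes $c(t_1^2,\cdot)$ via the $\alpha_{\glnidx+1}^{\vee}$ slot). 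This is essentially the same bookkeeping already carried out in the proof of Claim~\ref{claim:cocycle of spin on the torus}, so it is routine; the only point needing care is matching the $k=\glnidx$ case, where one uses \eqref{eq:image of standard T_n-k+1 in GSpin} and the identification of $\beta_1^{\vee}$ with the center. Condition~\eqref{item:t_1 and t_2 item} of Lemma~\ref{lemma:block compatibility on commuting pair} is then immediate: $H_1\cap T_{\glnidx+1}(F)$ consists of images of $t$'s and $H_2\cap T_{\glnidx+1}(F)$ of images of $a$'s, and $\sigma(t_2,t_1)$ in the notation of the lemma is $\sigma(a,t)=1$.

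Next I would handle the Weyl-element statements. For $\sigma(w_1,a)$ with $w_1\in H_1\cap\mathfrak{W}_{\glnidx+1}$: since $H_1=G_{\glnidx-k}(F)$ is generated along the roots $\{\alpha_i:k+2\le i\le\glnidx+1\}$, its Weyl representatives $w_1$ commute with $a$ (which lies in the $\GL{k}$-block along $\{\alpha_i:2\le i\le k\}$ plus the extra coroot), and \eqref{eq:props of BLS section separate N T and W} gives $\mathfrak{s}(w_1 a)=\mathfrak{s}(a w_1)=\mathfrak{s}(a)\mathfrak{s}(w_1)$; combined with $\mathfrak{s}(a)\mathfrak{s}(w_1)=\mathfrak{s}(w_1)\mathfrak{s}(a)$ — which follows because $a$, being a torus element commuting with $w_1$, is conjugated trivially — one gets $\sigma(w_1,a)=1$ by the same manipulation as in \eqref{eq:lemma block compatibility sigma last comp 1_1}. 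Actually the cleanest route is \eqref{eq:cocycle T and W}: $\sigma(t',w)=1$ for any $t'\in T$, $w\in\mathfrak{W}$, combined with the $2$-cocycle relation and \eqref{eq:cocycle move N around}, reduces $\sigma(w_1,a)$ to a product of torus--torus and torus--Weyl terms that vanish. For $\sigma(w_2,t)$ with $w_2\in H_2\cap\mathfrak{W}_{\glnidx+1}$ and $r=2$: the group $H_2=\GLF{k}{F}$ sits inside $G_{\glnidx}$, and one reduces to showing $\sigma(w_{\alpha_j},t)=1$ for the simple reflections $w_{\alpha_j}$, $2\le j\le k$, generating $W(\GL{k})$. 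Here I would conjugate: $w_{\alpha_j}^* \mathfrak{s}(t) (w_{\alpha_j}^*)^{-1}=\mathfrak{s}(\rconj{w_{\alpha_j}}t)\cdot(\text{correction in }\iota(\mu_r))$ by \eqref{eq:weyl element and torus}, and since $\langle\alpha_j,\beta_i^{\vee}\rangle$ (the pairings of the $\GL{k}$-simple roots against the coroots defining $t$) are all zero except possibly at the boundary, and crucially $r=2$ forces the relevant correction factor $c_{\alpha_j}(\cdot,\cdot)=c(\cdot,\cdot)$ evaluated on a value that turns out to be a square (because the $k=\glnidx$ central coordinates carry squares, or because the pairing is even), one gets the correction is trivial, hence $\sigma(w_2,t)=1$.

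The main obstacle I anticipate is the $k=\glnidx$ boundary case and the role of the hypothesis $r=2$ in the last assertion. When $k=\glnidx$ the element $t$ becomes central and is written with $\alpha_i^{\vee}(t_1^2)$ and $\alpha_{\glnidx+1}^{\vee}(t_1)$, so the short coroot $\alpha_{\glnidx+1}^{\vee}$ enters with $t_1$ rather than $t_1^2$; tracking how the factor $c(t_{\glnidx+1}^2,t'_{\glnidx+1})=c(t_1^2,t_1'^2)$ and the cross term $c(t_\glnidx,t'^{-2}_{\glnidx+1})$ from Claim~\ref{claim:cocycle of spin on the torus} interact with $\det a$ requires attention, and it is exactly here that $r=2$ (so that $c$ is order $2$ and kills squares) is used to get $\sigma(w_2,t)=1$ — for general $r$ one would only get an $r/\gcd(2,r)$-torsion statement, paralleling the $\alpha=\alpha_{\glnidx+1}$ discussion in the proof of Claim~\ref{claim:relations section s and section kappa}. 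Everything else is a mechanical substitution into Claim~\ref{claim:cocycle of spin on the torus} and repeated use of \eqref{eq:cocycle move N around}, \eqref{eq:cocycle T and W}, and the bi-multiplicativity and antisymmetry of $c$.
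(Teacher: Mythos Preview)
Your approach to $\sigma(a,t)$ and $\sigma(t,a)$ is correct and identical to the paper's: substitute the images \eqref{eq:image of standard A_k in GSpin} and \eqref{eq:image of standard T_n-k+1 in GSpin} into Claim~\ref{claim:cocycle of spin on the torus}. For $\sigma(w_1,a)=1$ the paper likewise shows that $\mathfrak{s}(w_1)$ commutes with $\mathfrak{s}(a)$, but it justifies this concretely: the image of $a$ lies in the span of ${\alpha_j^\vee}^*$ with $1\le j\le k$, while $w_1$ is a product of $w_{\alpha_i}^*$ with $k+2\le i\le \glnidx+1$; since $\langle\alpha_i,\alpha_j^\vee\rangle=0$ for such $i,j$, \eqref{eq:weyl element and torus} gives $w_{\alpha_i}^*{\alpha_j^\vee}^*(x)(w_{\alpha_i}^*)^{-1}={\alpha_j^\vee}^*(x)$ directly. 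Your ``cleanest route'' via \eqref{eq:cocycle T and W} does not bypass this step: $\sigma(a,w_1)=1$ does not yield $\sigma(w_1,a)=1$ without first knowing that the lifts commute.

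Your argument for $\sigma(w_2,t)=1$ has a gap in the case $k<\glnidx$. Identity \eqref{eq:weyl element and torus} is stated only for simple coroots ${\alpha'}^\vee$ of $G'_{\glnidx+1}$, so one must first decompose $\mathfrak{s}(t)$ via \eqref{eq:section of BLS on torus}; by \eqref{eq:image of standard T_n-k+1 in GSpin} the image of $t$ carries coordinate $t_1$ (not $t_1^2$) at each $\alpha_l^\vee$ for $1\le l\le k+1$. Conjugating by $w_{\alpha_j}^*$ with $2\le j\le k$ therefore acts nontrivially on all three of ${\alpha_{j-1}^\vee}^*(t_1),\,{\alpha_j^\vee}^*(t_1),\,{\alpha_{j+1}^\vee}^*(t_1)$, since $\langle\alpha_j,\alpha_{j\pm1}^\vee\rangle=-1$ is odd and the argument is not a square; neither of your two proposed reasons (``coordinates carry squares'', ``pairing is even'') applies here. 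The paper instead carries out the explicit computation: after applying \eqref{eq:weyl element and torus} and reordering with \eqref{eq:torus alpha and alpha}--\eqref{eq:torus alpha and alpha'}, the net $\mu_r$-correction is $c_{\alpha_j}(t_1,t_1^{-1})^2\,c_{\alpha_j}(t_1^{-2},t_1)$, and for $r=2$ each factor is trivial (as a square of a Hilbert-symbol value, respectively as $c$ evaluated with a squared argument). The cancellation is thus the combined effect of the three adjacent contributions, not the vanishing of any single pairing.
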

\begin{proof}
The computations of $\sigma(a,t)$ and $\sigma(t,a)$ are immediate from \eqref{eq:image of standard A_k in GSpin}, \eqref{eq:image of standard T_n-k+1 in GSpin} and Claim~\ref{claim:cocycle of spin on the torus}. The equality $\sigma(w_1,a)=1$ follows from the fact that for any $k+2\leq i\leq\glnidx+1$ and $1\leq j\leq k$, $\langle\alpha_i,\alpha_j^{\vee}\rangle=0$ and then \eqref{eq:weyl element and torus} implies that $w_{\alpha_i}^*$ commutes with ${\alpha_j^{\vee}}^*(x)$ for any $x\in F^*$.

Regarding $\sigma(w_2,t)$, we show that if $r=2$,
\begin{align}\label{eq:section equality w and t in claim M_k standard computations}
\mathfrak{s}(w_2)\mathfrak{s}(t)\mathfrak{s}(w_2)^{-1}=
\mathfrak{s}(t).
\end{align}
Then the result follows from \eqref{eq:props of BLS section separate N T and W} (note that $w_2t=tw_2$). It is enough to establish this
for $w=w_{\alpha_i}^*$ with $2\leq i\leq k$.

First assume $k<\glnidx$. Using \eqref{eq:weyl element and torus} and then \eqref{eq:torus alpha and alpha} and \eqref{eq:torus alpha and alpha'},
\begin{align*}
w{\alpha_{i+1}^{\vee}}^{*}(t_1){\alpha_{i}^{\vee}}^{*}(t_1){\alpha_{i-1}^{\vee}}^{*}(t_1)w^{-1}=&
{\alpha_{i}^{\vee}}^{*}(t_1){\alpha_{i+1}^{\vee}}^{*}(t_1){\alpha_{i}^{\vee}}^{*}(t_1^{-2}){\alpha_{i}^{\vee}}^{*}(t_1){\alpha_{i}^{\vee}}^{*}(t_1){\alpha_{i-1}^{\vee}}^{*}(t_1)\\
=&{\alpha_{i+1}^{\vee}}^{*}(t_1){\alpha_{i}^{\vee}}^{*}(t_1){\alpha_{i-1}^{\vee}}^{*}(t_1)
\iota(c_{\alpha_i}(t_1,t_1^{-1})^2c_{\alpha_i}(t_1^{-2},t_1))\\
=&{\alpha_{i+1}^{\vee}}^{*}(t_1){\alpha_{i}^{\vee}}^{*}(t_1){\alpha_{i-1}^{\vee}}^{*}(t_1).
\end{align*}
This computation implies \eqref{eq:section equality w and t in claim M_k standard computations}.
For $k=\glnidx$ the computation is similar (one distinguishes between the cases $i<\glnidx$ and $i=\glnidx$).
\end{proof}

\begin{remark}\label{remark:another embedding}
For a general $r$, the cocycle does not seem to satisfy a convenient formula on $M_k(F)\times M_k(F)$. One can remedy this by
defining another isomorphism of $\GL{k}\times G_{\glnidx-k}$ with $M_k$. Let $SL_{k}$ be generated by the simple roots
$\setof{\alpha_i}{1\leq i\leq k-1}$ and $G'_{\glnidx-k}$ be generated by $\setof{\alpha_i}{k+2\leq i\leq \glnidx+1}$.
If $\{\eta_i^{\vee}\}$ are the cocharacters of $T_{\GL{k}}$ and $\{\theta_i\}$ are the characters of $T_{\glnidx-k+1}$,
map $\eta_i^{\vee}\mapsto\epsilon_{i}^{\vee}-\epsilon_{k+1}^{\vee}$ for $1\leq i\leq k$ and
$\theta_i^{\vee}\mapsto\epsilon_{k+i}^{\vee}$ for $1\leq i\leq \glnidx-k+1$.
This is a twist of the embeddings described in Section~\ref{subsection:properties of GSpin}, by a Weyl element of $G'_{\glnidx+1}$.
\end{remark}

The evaluation of $\sigma$ on $T_{\GL{k}}(F)\times T_{\GL{k}}(F)$ and $T_{\glnidx-k+1}(F)\times T_{\glnidx-k+1}(F)$ provides evidence that the cocycle indeed satisfies certain block-compatibility properties. Claim~\ref{claim:cocycle of spin on the torus} along with \eqref{eq:image of standard A_k in GSpin}-\eqref{eq:image of standard T_n-k+1 in GSpin} show that for any $0\leq k\leq\glnidx$,
\begin{align*}
&\sigma(\prod_{i=1}^k\eta_i^{\vee}(a_i),\prod_{i=1}^k\eta_i^{\vee}(a'_i))=c(\det{a},\det{a'})\prod_{1\leq i<j\leq k}c(a_i,a'_j)^{-1},\\
&\sigma(\prod_{i=1}^{\glnidx-k+1}\beta_i^{\vee}(t_i),\prod_{i=1}^{\glnidx-k+1}\beta_i^{\vee}(t'_i))=
c(t_{\glnidx-k+1}^2,t'_{\glnidx-k+1})c(t_{\glnidx-k},{t'}_{\glnidx-k+1}^{-2})\prod_{i=1}^{\glnidx-k}c(t_i,t'_i)\prod_{i=1}^{\glnidx-k-1}c(t_i,{t'}_{i+1}^{-1}).
\end{align*}
We see that $\sigma(a,a')=c(\det{a},\det{a'})\sigma_{k}(a,a')$ for $a,a'\in T_{\GL{k}}(F)$ and for $t,t'\in T_{\glnidx-k+1}(F)$, $\sigma(t,t')=\sigma_{G'_{\glnidx-k+1}}(t,t')$.

Henceforth until the end of Section~\ref{section:local nonarchimedean theory}, $r=2$.
Lemma~\ref{lemma:block compatibility on commuting pair} and Claim~\ref{claim:standard M_k computations of torus and Weyl elements}
imply that for $b,b'\in\GLF{k}{F}$ and $h,h'\in G_{\glnidx-k}(F)$,
\begin{align*}
\sigma(bh,b'h')=\sigma(b,b')\sigma(h,h')c(\Upsilon(h),\det{b'}).
\end{align*}
The subgroup $\GLF{k}{F}$ (of $M_k(F)$) is contained in the subgroup $SL_{k+1}(F)<G'_{\glnidx+1}(F)$ generated by $\setof{\alpha_i}{1\leq i\leq k}$. If we regard $SL_{k+1}(F)$ as a group of matrices in the standard way, i.e., identify $n_{\epsilon_i-\epsilon_j}(x)$ ($1\leq i<j\leq k+1$, $x\in F$) with the matrix having $1$ on the diagonal, $x$ on the $(i,j)$-th place and $0$ elsewhere, $b$ takes the form $diag(\det{b}^{-1},b)$. Since $SL_{k+1}(F)$ is a standard subgroup (in the sense of \cite{BLS}), the restriction of $\sigma$ to
$SL_{k+1}(F)\times SL_{k+1}(F)$ is just $\sigma_{SL_{k+1}}$ of \cite{BLS} (Section~3). We define
a $2$-cocycle $\sigma_{\GL{k}}$ of $\GLF{k}{F}$ via
\begin{align*}
\sigma_{\GL{k}}(b,b')=\sigma_{SL_{k+1}}(diag(\det{b}^{-1},b),diag(\det{b'}^{-1},b')).
\end{align*}
Here with a minor abuse of notation, we regard the arguments $b$ and $b'$ on the \rhs\ as matrices. This cocycle is related to
$\sigma_{k}$ of \cite{BLS} by
\begin{align*}
\sigma_{\GL{k}}(b,b')=c(\det{b},\det{b'})\sigma_{k}(b',b).
\end{align*}
Since $\sigma(b,b')=\sigma_{SL_{k+1}}(b,b')$, we get
\begin{align*}
\sigma(b,b')=\sigma_{\GL{k}}(b,b').
\end{align*}

Regarding $G_{\glnidx-k}(F)$, as a subgroup of $M_k(F)$ its embedding in $G_{\glnidx}(F)$ is not contained in a standard subgroup of type $G_{\glnidx-k+1}'$ (in contrast with the embedding described by Remark~\ref{remark:another embedding}). However, it is still true that
\begin{align}\label{eq:sigma restricted to G(n-k) is the correct sigma}
\sigma(h,h')=\sigma_{G'_{\glnidx-k+1}}(h,h').
\end{align}
To see this, first note that $G_{\glnidx-k}'$ (as a subgroup of $G_{\glnidx-k}$) is generated by the roots $\setof{\alpha_i}{k+2\leq i\leq \glnidx+1}$, and is therefore a standard subgroup whence
\begin{align*}
\sigma|_{G_{\glnidx-k}'(F)\times G_{\glnidx-k}'(F)}=\sigma_{G_{\glnidx-k+1}'}|_{G_{\glnidx-k}'(F)\times G_{\glnidx-k}'(F)}.
\end{align*}
Moreover, according to the computation on $T_{\glnidx-k+1}(F)\times T_{\glnidx-k+1}(F)$ above,
\begin{align*}
\sigma|_{T_{\glnidx-k+1}(F)\times T_{\glnidx-k+1}(F)}=\sigma_{G'_{\glnidx-k+1}}|_{T_{\glnidx-k+1}(F)\times T_{\glnidx-k+1}(F)}.
\end{align*}
According to Sun \cite{Su} (Proposition~1), the restrictions of a cocycle (defined using a bilinear Steinberg symbol) to the derived group and to the torus determine it uniquely. Thus we conclude \eqref{eq:sigma restricted to G(n-k) is the correct sigma}.

Therefore,
\begin{align}\label{eq:block-compatibility on 2 Levi subgroups}
\sigma(bh,b'h')=\sigma_{\GL{k}}(b,b')\sigma_{G'_{\glnidx-k+1}}(h,h')c(\Upsilon(h),\det{b'}).
\end{align}

Let $Q<G_{\glnidx}$ be a standard parabolic subgroup with a Levi part $M$ isomorphic to $\GL{k_1}\times\ldots\times\GL{k_l}\times G_{\glnidx-k}$, where $k=k_1+\ldots+k_l$. Our standard embedding $M<G_{\glnidx}$ was defined in Section~\ref{subsection:properties of GSpin}. According to \eqref{eq:block-compatibility on 2 Levi subgroups}, for $(b_1,\ldots,b_l,h),(b_1',\ldots,b_l',h') \in M(F)$ ($h,h'\in G_{\glnidx-k}(F)$),
\begin{align}\label{eq:block-compatibility on Levi subgroups}
&\sigma((b_1,\ldots,b_l,h),(b_1',\ldots,b_l',h'))\\\nonumber&\qquad=(\prod_{i=1}^l\sigma_{\GL{k_i}}(b_i,b_i')c(\Upsilon(h),\det{b_i'}))(\prod_{i>j}^lc(\det{b_i},\det{b_j'}))\sigma_{G'_{\glnidx-k+1}}(h,h').
\end{align}


We mention one particularly convenient Levi subgroup, $M_{\glnidx}$. Then
$c(\Upsilon(h),\cdot)=1$ for all $h\in G_0(F)$ (see Section~\ref{subsection:properties of GSpin}). Therefore the subgroups $\CGLF{\glnidx}{F}$ and $\cover{G}_0(F)$ of $\cover{M}_{\glnidx}(F)$ are commuting and $\cover{M}_{\glnidx}(F)$ is simply their direct product with amalgamated $\mu_2$. Also note that (since $r=2$) the cover $\cover{G}_0(F)$ is abelian and splits, $\sigma|_{G_0(F)\times G_0(F)}$ is trivial (Claim~\ref{claim:cover minimal cases}). Then
\begin{align}\label{eq:cocycle convenient coordinates r=2}
\sigma(\prod_{i=1}^{\glnidx}\eta_i^{\vee}(a_i)\beta_1^{\vee}(t_1),\prod_{i=1}^{\glnidx}\eta_i^{\vee}(a_i')\beta_1^{\vee}(t_1'))=c(\det{a},\det{a'})\prod_{1\leq i<j\leq k}c(a_i,a'_j)^{-1}.
\end{align}

\subsubsection{Subgroups of the torus}\label{subsection:Subgroups of the torus}
For $\glnidx>1$, $\cover{T}_{\glnidx+1}(F)$ is not abelian. Its irreducible genuine representations are parameterized
by genuine characters of its center, as follows from an analog of the Stone-von Neumann theorem (\cite{KP,McNamara}). In this section we compute $C_{\cover{T}_{\glnidx+1}(F)}$ and several other subgroups, which will be used in Section~\ref{subsection:Representations local stuff} to study these representations.

The following subgroup of $T_{\glnidx+1}(F)$ will play an important role in constructing irreducible representations of $\cover{T}_{\glnidx+1}(F)$:
\begin{align*}
T_{\glnidx+1}(F)^2&=\setof{\prod_{i=1}^{\glnidx+1}\alpha_i^{\vee}(t_i)}{\forall1\leq i\leq\glnidx,t_i\in{\Fsquares},t_{\glnidx+1}\in F^*}.
\end{align*}
Equality~\eqref{eq:image of convenient coordinates of the torus in GSpin} implies
$T_{\glnidx+1}(F)^2=\setof{\prod_{i=1}^{\glnidx}\eta_i^{\vee}(a_i^2)\beta_1^{\vee}(t_1)}{a_i,t_1\in F^*}$.
\begin{claim}\label{claim:T^2 is split by s}
The subgroup $T_{\glnidx+1}(F)^2$
splits under $\mathfrak{s}$. In particular, any character of $T_{\glnidx+1}(F)^2$ can be extended to a genuine character of $\cover{T}_{\glnidx+1}(F)^2$ (uniquely, using the splitting $\mathfrak{s}$).
\end{claim}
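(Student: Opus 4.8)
The plan is to show that the cocycle $\sigma$ restricts trivially to $T_{\glnidx+1}(F)^2\times T_{\glnidx+1}(F)^2$, which immediately gives that $\mathfrak{s}|_{T_{\glnidx+1}(F)^2}$ is a homomorphism. I would work with the convenient description $T_{\glnidx+1}(F)^2=\setof{\prod_{i=1}^{\glnidx}\eta_i^{\vee}(a_i^2)\beta_1^{\vee}(t_1)}{a_i,t_1\in F^*}$ recorded just before the claim, so that computing $\sigma$ on a pair of such elements reduces to evaluating $\sigma$ on products of squared $\GL{\glnidx}$-torus elements, together with a $\beta_1^{\vee}$-factor. Since $r=2$ and $\beta_1^{\vee}$ maps onto the center $C_{G_{\glnidx}(F)}$, which is split with trivial $\sigma$ by Claim~\ref{claim:cover minimal cases}, the $\beta_1^{\vee}$-part will contribute nothing; the essential content is the $\GL{\glnidx}$-torus computation.

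First I would invoke the block-compatibility formula \eqref{eq:cocycle convenient coordinates r=2}, which says that for elements written in the coordinates $\prod\eta_i^{\vee}(a_i)\beta_1^{\vee}(t_1)$ one has
\begin{align*}
\sigma(\prod_{i=1}^{\glnidx}\eta_i^{\vee}(a_i)\beta_1^{\vee}(t_1),\prod_{i=1}^{\glnidx}\eta_i^{\vee}(a_i')\beta_1^{\vee}(t_1'))=c(\det{a},\det{a'})\prod_{1\leq i<j\leq k}c(a_i,a'_j)^{-1}.
\end{align*}
Substituting $a_i\mapsto a_i^2$ and $a_i'\mapsto {a_i'}^2$, every factor becomes $c(\text{square},\text{anything})$ or $c(\text{anything},\text{square})$; since $c=(,)_2^{-1}$ is a bi-character and $c(x^2,y)=c(x,y)^2=1$ (equivalently, squares lie in $F^{*2}=\Frs$ with $r=2$, so $c$ kills them in either slot, cf.\ Section~\ref{subsection:The Hilbert symbol}), each factor is $1$. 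Hence $\sigma\equiv1$ on $T_{\glnidx+1}(F)^2\times T_{\glnidx+1}(F)^2$, so $\mathfrak{s}$ restricted there is a homomorphism, i.e.\ splits $T_{\glnidx+1}(F)^2$. The final sentence of the claim is then the general nonsense recalled in Section~\ref{subsection:central extensions}: once a subgroup $H_1<H$ is split by a section $\varphi$, one has $\cover{H}_1=\iota(\mu_2)\cdot\varphi(H_1)$ as an inner direct product, and every character of $H_1$ extends uniquely to a genuine character of $\cover{H}_1$ via $\varphi$.

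The only real obstacle is making sure \eqref{eq:cocycle convenient coordinates r=2} is actually applicable — i.e.\ that it is valid for all of $T_{\glnidx+1}(F)$, not merely for $T_{\GL{\glnidx}}(F)$; but that formula was derived precisely for torus elements in exactly these coordinates, so no extra work is needed. As an alternative (or a sanity check) one could instead use Claim~\ref{claim:cocycle of spin on the torus} directly: write $\prod_{i=1}^{\glnidx+1}\alpha_i^{\vee}(t_i)$ with $t_i\in F^{*2}$ for $i\leq\glnidx$ and $t_{\glnidx+1}\in F^*$, and observe that in the formula
$\sigma(t,t')=c(t_{\glnidx+1}^2,t_{\glnidx+1}')c(t_{\glnidx},{t'}_{\glnidx+1}^{-2})\prod_{i=1}^{\glnidx}c(t_i,t'_i)\prod_{i=1}^{\glnidx-1}c(t_i,{t'}_{i+1}^{-1})$ every factor again has a square in one of its two arguments, hence equals $1$. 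Either route is a short computation; I would present the one via \eqref{eq:cocycle convenient coordinates r=2} for brevity.
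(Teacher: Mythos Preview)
Your proof is correct and is essentially the same computation as the paper's, just packaged one level higher: the paper works directly with the section formula \eqref{eq:section of BLS on torus} and the lift relations \eqref{eq:torus alpha and alpha}, \eqref{eq:torus alpha and alpha'} (observing that ${\alpha_i^{\vee}}^*(x)$ and ${\alpha_j^{\vee}}^*(y)$ commute whenever both arguments are squares or one index is $\glnidx+1$), whereas you invoke the cocycle formula \eqref{eq:cocycle convenient coordinates r=2} (or Claim~\ref{claim:cocycle of spin on the torus}) that was already derived from those very relations. Either way the content is that every Hilbert-symbol factor has a square in one slot and hence equals $1$.
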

\begin{proof}
This is evident from \eqref{eq:section of BLS on torus}, \eqref{eq:torus alpha and alpha} and \eqref{eq:torus alpha and alpha'}. The point is that ${\alpha_i^{\vee}}^*(x)$ and ${\alpha_j^{\vee}}^*(y)$ commute whenever both $x,y\in {\Fsquares}$ or $\glnidx+1\in\{i,j\}$.
\end{proof}

Next we describe the center of $\cover{G}_{\glnidx}(F)$.
\begin{claim}\label{claim:center of cover}
$C_{\cover{G}_{\glnidx}(F)}=\cover{C}_{G_{\glnidx}(F)}$ and splits under $\mathfrak{s}$.
\end{claim}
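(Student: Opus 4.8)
The plan is to prove the two assertions of Claim~\ref{claim:center of cover} separately: first the identification $C_{\cover{G}_{\glnidx}(F)}=\cover{C}_{G_{\glnidx}(F)}$, then the fact that $\cover{C}_{G_{\glnidx}(F)}$ splits under $\mathfrak{s}$. For the first assertion, note that $\iota(\mu_r)\subset C_{\cover{G}_{\glnidx}(F)}$ always, so an element $x\in\cover{G}_{\glnidx}(F)$ is central if and only if $g=p(x)$ commutes with every $g'\in G_{\glnidx}(F)$ in the cover, i.e. $p(x)\in C_{G_{\glnidx}(F)}$ and, by \eqref{eq:cocycle trick}, $[g,g']_{\sigma}=1$ for all $g'$. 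Thus the inclusion $C_{\cover{G}_{\glnidx}(F)}\subseteq\cover{C}_{G_{\glnidx}(F)}$ is automatic, and what must be checked is that for $g\in C_{G_{\glnidx}(F)}$ one has $[g,g']_{\sigma}=1$ for all $g'\in G_{\glnidx}(F)$. Using the Bruhat/Iwasawa decomposition and the cocycle formulas \eqref{eq:cocycle move N around}, \eqref{eq:cocycle T and W} (together with \eqref{eq:props of BLS section separate N T and W} and the fact that a central torus element is fixed by conjugation), this reduces to evaluating $[g,t']_{\sigma}$ for $t'\in T_{\glnidx+1}(F)$. Here I would write $g=\prod_{i=1}^{\glnidx}\alpha_i^{\vee}(t_1^2)\alpha_{\glnidx+1}^{\vee}(t_1)$ as in \eqref{eq:center of the group} and $t'=\prod\alpha_i^{\vee}(t_i')$, and compute $\sigma(g,t')$ and $\sigma(t',g)$ directly from Claim~\ref{claim:cocycle of spin on the torus}; the antisymmetry of $c$ and the bilinearity should force $[g,t']_{\sigma}=1$. (For $r=2$ one could alternatively invoke \eqref{eq:cocycle convenient coordinates r=2} in the coordinates \eqref{eq:convenient coordinates of torus}, where a central element has all $a_i=1$, making the commutator manifestly trivial.)

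For the splitting assertion, observe that $C_{G_{\glnidx}(F)}\subseteq T_{\glnidx+1}(F)^2$: indeed by \eqref{eq:center of the group} every central element is $\prod_{i=1}^{\glnidx}\alpha_i^{\vee}(t_1^2)\alpha_{\glnidx+1}^{\vee}(t_1)$, which lies in $T_{\glnidx+1}(F)^2$ since the first $\glnidx$ coordinates are squares and the last is arbitrary. By Claim~\ref{claim:T^2 is split by s}, $T_{\glnidx+1}(F)^2$ splits under $\mathfrak{s}$, so its subgroup $C_{G_{\glnidx}(F)}$ splits as well; hence $\cover{C}_{G_{\glnidx}(F)}=\iota(\mu_r)\cdot\mathfrak{s}(C_{G_{\glnidx}(F)})$ and $\mathfrak{s}|_{C_{G_{\glnidx}(F)}}$ is a homomorphism. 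Combining with the first assertion gives the claim. Alternatively, one can verify the splitting hands-on: using \eqref{eq:section of BLS on torus} to write $\mathfrak{s}(g)$ in terms of ${\alpha_i^{\vee}}^*$, and using \eqref{eq:torus alpha and alpha}–\eqref{eq:torus alpha and alpha'} to see that the relevant products of Hilbert symbols $c_{\alpha}(x^2,y^2)$, $c_{\alpha}(x^2,y)$ collapse, as in the proof of Claim~\ref{claim:T^2 is split by s}.

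The main obstacle I anticipate is the first assertion, specifically making sure that $[g,g']_{\sigma}=1$ for \emph{all} $g'\in G_{\glnidx}(F)$ and not merely for $g'\in T_{\glnidx+1}(F)$: one needs the reduction via \eqref{eq:cocycle move N around} and \eqref{eq:cocycle T and W} to be airtight, handling the Weyl and unipotent parts of the Bruhat decomposition of $g'$ correctly and using that $g$ central implies $\rconj{g'}g=g$ so that $\mathfrak{s}(g)$ and $\mathfrak{s}(g')$ interact only through the torus computation. The torus computation itself is a short application of Claim~\ref{claim:cocycle of spin on the torus}, so once the reduction is in place the rest is routine. A subtlety worth a remark is the degenerate case $\glnidx=0$, where $G_0=\GL{1}$ is its own center and $\sigma|_{G_0(F)\times G_0(F)}$ is trivial by Claim~\ref{claim:cover minimal cases}, so the claim holds trivially.
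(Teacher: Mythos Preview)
Your overall strategy matches the paper's: verify $[g,g']_{\sigma}=1$ for $g\in C_{G_{\glnidx}(F)}$ by reducing to simple types of $g'$, then deduce the splitting from $C_{G_{\glnidx}(F)}\subset T_{\glnidx+1}(F)^2$ and Claim~\ref{claim:T^2 is split by s}. The splitting half is exactly what the paper does.

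There is one genuine gap in your reduction. You claim that \eqref{eq:cocycle move N around} and \eqref{eq:cocycle T and W} reduce everything to $[g,t']_{\sigma}$ for $t'\in T_{\glnidx+1}(F)$. But \eqref{eq:cocycle T and W} only gives $\sigma(t,w)=1$; it says nothing about $\sigma(w,t)$, so the Weyl contribution to $[g,g']_{\sigma}$ does not disappear for free. Your remark that ``a central torus element is fixed by conjugation'' is the algebraic statement $\rconj{w}g=g$ in $G_{\glnidx}(F)$; what is actually needed is the lifted statement $\mathfrak{s}(w)\mathfrak{s}(g)\mathfrak{s}(w)^{-1}=\mathfrak{s}(g)$, and that is precisely the point to be proved. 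The paper makes this explicit: it uses that $g'\mapsto[g,g']_{\sigma}$ is multiplicative (valid since $g$ is central in $G_{\glnidx}(F)$, so all the commutators live in $\iota(\mu_2)$) to reduce to the three generator types $t'$, $u$, and $w_{\alpha_i}$ ($2\leq i\leq\glnidx+1$), and then handles the Weyl case by a direct computation with \eqref{eq:weyl element and torus}, using that the long-root coordinates of $g$ are squares and $c_{\alpha_{\glnidx+1}}(\cdot,\cdot)=c(\cdot^2,\cdot)$. The torus case the paper dispatches via \eqref{eq:torus alpha and alpha'} rather than Claim~\ref{claim:cocycle of spin on the torus}, but your route through the explicit torus formula (or through \eqref{eq:cocycle convenient coordinates r=2}) works equally well.

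In short: keep your plan, but replace the Bruhat-decomposition reduction by the bi-multiplicativity reduction to generators, and add the short Weyl-element check using \eqref{eq:weyl element and torus}.
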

\begin{proof}
Let $t=\prod_{i=1}^{\glnidx}\alpha_i^{\vee}(t_1^2)\alpha_{\glnidx+1}^{\vee}(t_1)\in C_{G_{\glnidx}(F)}$ (see \eqref{eq:center of the group}).
Then $[t,g]=1$ for all $g\in G_{\glnidx}(F)$ and we must show $[t,g]_{\sigma}=1$ ($[,]_{\sigma}$ was defined in Section~\ref{subsubsection:Definition of the cover}). Since $[,]_{\sigma}$ is bi-multiplicative, it suffices to consider
$g=t',u,w$ where $t'\in T_{\glnidx+1}(F)$, $u\in N_{\glnidx}(F)$ and $w=w_{\alpha_i}$ with $2\leq i\leq\glnidx+1$. Now $[t,t']_{\sigma}=1$ because of
\eqref{eq:torus alpha and alpha'} and $[t,u]_{\sigma}=1$ by \eqref{eq:cocycle move N around}. Using
\eqref{eq:cocycle trick}, it is enough to
show $\mathfrak{s}(w)\mathfrak{s}(t)\mathfrak{s}(w)^{-1}=\mathfrak{s}(t)$. This follows from
\eqref{eq:torus alpha and alpha}-\eqref{eq:weyl element and torus} and the observation in the proof of Claim~\ref{claim:T^2 is split by s}. Now the splitting follows from Claim~\ref{claim:T^2 is split by s}.
\end{proof}

Recall that by \cite{KP} (Proposition~0.1.1),
\begin{align}\label{eq:the center of GLn}
C_{\CGLF{\glnidx}{F}}=p^{-1}(\setof{\prod_{i=1}^{\glnidx}\eta_i^{\vee}(d)}{d\in (F^*)^{2/\gcd(2,\glnidx+1)}}).
\end{align}
Also set
$T_{\GL{\glnidx}}(F)^2=\setof{\prod_{i=1}^{\glnidx}\eta_i^{\vee}(a_i^2)}{a_i\in F^*}$.
By \cite{KP} (p.~57), $C_{\cover{T}_{\GL{\glnidx}}(F)}=\cover{T}_{\GL{\glnidx}}(F)^2C_{\CGLF{\glnidx}{F}}$.

Note that $C_{\cover{G}_{\glnidx}(F)}<\cover{T}_{\glnidx+1}(F)^2$, in contrast with the case of $\CGLF{\glnidx}{F}$, where
$C_{\CGLF{\glnidx}{F}}$ is contained in $\cover{T}_{\GL{\glnidx}}(F)^2$ only when $\glnidx$ is even. This causes technical difficulties when trying to adapt definitions of metaplectic tensor product from $\GL{\glnidx}$ (\cite{Kable,Mezo,Tk2}) to $G_{\glnidx}$, see Sections~\ref{subsubsection:tensor product}-\ref{subsubsection:tensor discussion} below.

We compute the center of the torus.
\begin{claim}\label{claim:center of the torus}
We have $C_{\cover{T}_{\glnidx+1}(F)}=\cover{T}_{\glnidx+1}(F)^2C_{\CGLF{\glnidx}{F}}$. More specifically,
\begin{align}
\label{claim:constructing maximal abelian subgroup item 1} &C_{\cover{T}_{\glnidx+1}(F)}=p^{-1}(\setof{\prod_{i=1}^{\glnidx}\eta_i^{\vee}(a_i^2d)\beta_1^{\vee}(t_1)}{a_i\in F^*,d\in (F^*)^{2/\gcd(2,\glnidx+1)},t_1\in F^*}).
\end{align}
\end{claim}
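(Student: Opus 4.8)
The plan is to compute $C_{\cover{T}_{\glnidx+1}(F)}$ directly by determining which elements of $\cover{T}_{\glnidx+1}(F)$ commute with all of $\cover{T}_{\glnidx+1}(F)$, using the antisymmetrized cocycle $[t,t']_\sigma=\sigma(t,t')\sigma(t',t)^{-1}$ and the formula of Claim~\ref{claim:cocycle of spin on the torus}. Since $\iota(\mu_2)$ is central, an element $x$ with $p(x)=t$ lies in the center if and only if $[t,t']_\sigma=1$ for all $t'\in T_{\glnidx+1}(F)$; this reduces the whole problem to a computation on the abelian group $T_{\glnidx+1}(F)$, where $[\cdot,\cdot]_\sigma$ is bi-multiplicative in each argument.

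First I would write $t=\prod_{i=1}^{\glnidx}\eta_i^{\vee}(a_i)\beta_1^{\vee}(t_1)$ in the convenient coordinates \eqref{eq:convenient coordinates of torus} and similarly for $t'$. Plugging \eqref{eq:image of convenient coordinates of the torus in GSpin} into Claim~\ref{claim:cocycle of spin on the torus} (or, more cleanly, using \eqref{eq:cocycle convenient coordinates r=2} together with the general-$r$ torus formula — but note the claim is stated for all $r$, so I would use Claim~\ref{claim:cocycle of spin on the torus} itself) gives an explicit expression for $\sigma(t,t')$ as a product of Hilbert symbols in the $a_i,a_i',t_1,t_1'$. Forming $[t,t']_\sigma$, the diagonal terms $c(a_i,a_i')$-type contributions and the $\beta_1^{\vee}$-diagonal cancel by antisymmetry, and what remains is a bilinear pairing; the condition that this pairing vanish identically in $t'$ translates, via the nondegeneracy property of the Hilbert symbol ($c(x,y)=1$ for all $y$ iff $x\in F^{*2}$), into congruence conditions on the $a_i$ modulo squares, together with a condition coupling the $a_i$ to $t_1$. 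Carrying this out should show that $t$ is central exactly when all $a_i$ lie in a single coset $a^2 d$ with $d\in(F^*)^{2/\gcd(2,\glnidx+1)}$, with $t_1\in F^*$ unconstrained — this is precisely \eqref{claim:constructing maximal abelian subgroup item 1}.

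For the first, cleaner assertion $C_{\cover{T}_{\glnidx+1}(F)}=\cover{T}_{\glnidx+1}(F)^2 C_{\CGLF{\glnidx}{F}}$, I would argue by two inclusions rather than redoing the raw computation. The inclusion $\supseteq$ follows from known facts: $\cover{T}_{\glnidx+1}(F)^2$ splits under $\mathfrak{s}$ (Claim~\ref{claim:T^2 is split by s}) and its elements commute with everything in $\cover{T}_{\glnidx+1}(F)$ by \eqref{eq:torus alpha and alpha}, \eqref{eq:torus alpha and alpha'} and the parity observation in that proof; and $C_{\CGLF{\glnidx}{F}}$ is central in $\CGLF{\glnidx}{F}$ by \eqref{eq:the center of GLn}, while by \eqref{eq:cocycle convenient coordinates r=2} the factor $\beta_1^\vee(t_1)$ interacts with $T_{\GL{\glnidx}}(F)$ only through $c(\det a,\det a')$-type terms, so $C_{\CGLF{\glnidx}{F}}$ is still central in $\cover{T}_{\glnidx+1}(F)$ (here I would note $C_{\cover{G}_{\glnidx}(F)}<\cover{T}_{\glnidx+1}(F)^2$, as the excerpt already remarks, so no $\beta_1^\vee$ component is hidden in $C_{\CGLF{\glnidx}{F}}$). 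For $\subseteq$, I would invoke the description of $C_{\cover{T}_{\GL{\glnidx}}(F)}=\cover{T}_{\GL{\glnidx}}(F)^2 C_{\CGLF{\glnidx}{F}}$ from \cite{KP}: since $\beta_1^\vee(t_1)$ is already central, a central element of $\cover{T}_{\glnidx+1}(F)$ must have its $\GL{\glnidx}$-part central in $\cover{T}_{\GL{\glnidx}}(F)$, giving the reverse inclusion and hence equality; combining with the coordinate description of $\cover{T}_{\glnidx+1}(F)^2$ recorded after Claim~\ref{claim:center of cover} yields \eqref{claim:constructing maximal abelian subgroup item 1}.

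**Main obstacle.** The delicate point is the coupling between the $\GL{\glnidx}$-torus and the $\beta_1^\vee$ factor: the cocycle is \emph{not} simply a direct-sum cocycle on $T_{\GL{\glnidx}}(F)\times\{\beta_1^\vee\}$, and for general $r$ the cross terms $c(t_1^{\pm2},\cdot)$ in Claim~\ref{claim:cocycle of spin on the torus} genuinely contribute. One must check carefully that, after antisymmetrization, these cross terms impose no new constraint on $t_1$ and exactly the $\gcd(2,\glnidx+1)$ constraint on the common square-class $d$ — this is where the parity of $\glnidx$ enters (mirroring the $\GL{\glnidx}$ phenomenon), and it is the only place where a genuine case distinction or a careful counting argument is needed. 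Everything else is bookkeeping with bi-multiplicative Hilbert-symbol identities.
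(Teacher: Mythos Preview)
Your approach is essentially the same as the paper's: reduce to the commutator $[t,t']_\sigma$ on $T_{\glnidx+1}(F)$, test against the generators $\eta_j^\vee(z)$, and use nondegeneracy of the Hilbert symbol to force $a_i\equiv d\pmod{\Fsquares}$ with $d^{\glnidx+1}\in\Fsquares$. However, there are a few misstatements worth correcting.

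First, the claim is \emph{not} stated for all $r$: the paper fixes $r=2$ just before this subsection, so \eqref{eq:cocycle convenient coordinates r=2} applies directly and is what the paper uses. In particular, your ``main obstacle'' about cross terms $c(t_1^{\pm2},\cdot)$ coupling $\beta_1^\vee$ to the $\GL{\glnidx}$-torus for general $r$ is a phantom: for $r=2$, \eqref{eq:cocycle convenient coordinates r=2} shows the cocycle is independent of $t_1,t_1'$, so $\beta_1^\vee(t_1)$ is manifestly central and $t_1$ is unconstrained.

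Second, the diagonal terms $c(a_i,a_i')$ do \emph{not} cancel by antisymmetry. Computing from \eqref{eq:cocycle convenient coordinates r=2} one finds (this is the paper's \eqref{eq:general commutator on torus})
\[
[t,t']_\sigma=\prod_{i=1}^{\glnidx}c(a_i,a_i')\cdot c\Bigl(\prod_i a_i,\prod_i a_i'\Bigr),
\]
so the diagonal terms are exactly what survive. Testing against $t'=\eta_j^\vee(z)$ then gives $c(a_j\prod_i a_i,z)=1$ for all $z$, hence $a_j\prod_i a_i\in\Fsquares$; writing $d=(\prod_i a_i)^{-1}$ forces $a_j=a_j'^2 d$ and $d^{\glnidx+1}\in\Fsquares$, as you anticipated. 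Your alternative two-inclusions argument for the first assertion is fine and is essentially how the paper packages the conclusion once \eqref{claim:constructing maximal abelian subgroup item 1} is established.
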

\begin{proof}
In general if $b=\prod_{i=1}^{\glnidx}\eta_i^{\vee}(b_i)\beta_1^{\vee}(t_1)$ and $b'=\prod_{i=1}^{\glnidx}\eta_i^{\vee}(b_i')\beta_1^{\vee}(t_1')$, Equality~\eqref{eq:cocycle convenient coordinates r=2} implies
\begin{align}\label{eq:general commutator on torus}
[b,b']_{\sigma}=\prod_{i=1}^{\glnidx}c(b_i,b_i')c(\prod_{i=1}^{\glnidx}b_i,\prod_{i=1}^{\glnidx}b_i').
\end{align}
Set $d=\prod_{i=1}^{\glnidx}b_i^{-1}$. On the one hand, for any $1\leq j\leq\glnidx$ and $z\in F^*$,
$[b,\eta_j^{\vee}(z)]_{\sigma}=c(b_jd^{-1},z)$. Then if $b\in C_{\cover{T}_{\glnidx+1}(F)}$, $b_jd^{-1}\in{\Fsquares}$ whence $b_j=a_j^2d$ for some $a_j\in F^*$. It follows that $d^{\glnidx-1}\in{\Fsquares}$ or equivalently $d^{\glnidx+1}\in{\Fsquares}$ and $d\in(F^*)^{2/\gcd(2,\glnidx+1)}$. Hence $b$ is of the required form. On the other hand, if $d\in(F^*)^{2/\gcd(2,\glnidx+1)}$,
\begin{align*}
[\prod_{i=1}^{\glnidx}\eta_i^{\vee}(a_i^2d)\beta_1^{\vee}(t_1),\eta_j^{\vee}(z)]_{\sigma}=
c(d^{\glnidx+1},z)c(a_j^2,z)\prod_{i=1}^{\glnidx}c(a_i^2,z).
\end{align*}
Because $c(d^{\glnidx+1},z)=c(a_i^2,z)=1$, we obtain $[\prod_{i=1}^{\glnidx}\eta_i^{\vee}(a_i^2d)\beta_1^{\vee}(t_1),\eta_j^{\vee}(z)]_{\sigma}=1$ for all $1\leq j\leq\glnidx$. Since $[\prod_{i=1}^{\glnidx}\eta_i^{\vee}(a_i^2d)\beta_1^{\vee}(t_1),\eta_{\glnidx+1}^{\vee}(z)]_{\sigma}=1$ clearly holds and $[,]_{\sigma}$ is bi-multiplicative, any element on the \rhs\ of \eqref{claim:constructing maximal abelian subgroup item 1} belongs to the center of $\cover{T}_{\glnidx+1}(F)$.

Now $C_{\cover{T}_{\glnidx+1}(F)}=\cover{T}_{\glnidx+1}(F)^2C_{\CGLF{\glnidx}{F}}$ follows immediately from \eqref{eq:the center of GLn}.
\end{proof}
\begin{remark}\label{remark:center of GLn can be ignored for even n}
If $\glnidx$ is even, $C_{\CGLF{\glnidx}{F}}<\cover{T}_{\glnidx+1}(F)^2$ whence $C_{\cover{T}_{\glnidx+1}(F)}=\cover{T}_{\glnidx+1}(F)^2$.
\end{remark}
\begin{remark}\label{remark:center of the torus of SO2n+1}
In the case of the cover of $SO_{2\glnidx+1}(F)$ obtained by restricting the $4$-fold cover of $SL_{2\glnidx+1}(F)$ of Matsumoto \cite{Mats}, we have (\cite{BFG2} Section~6)
\begin{align*}
C_{\cover{T}_{SO_{2\glnidx+1}}(F)}=p^{-1}(\setof{diag(a_1^2,\ldots,a_{\glnidx}^2,1,a_{\glnidx}^{-2},\ldots,a_1^{-2})}{a_i\in F^*}).
\end{align*}
\end{remark}

We describe certain maximal abelian subgroups of $\cover{T}_{\glnidx+1}(F)$. A representation of $C_{\cover{T}_{\glnidx+1}(F)}$ can always be extended to such a subgroup, then induced to a representation of $\cover{T}_{\glnidx+1}(F)$.
\begin{claim}\label{claim:constructing maximal abelian subgroup unramified case}
Assume $|2|=1$ and $q>3$ in $F$.
The subgroup $C(\cover{T}_{\glnidx+1}(F),\cover{T}_{\glnidx+1}(F)\cap K^*)$ (i.e., the centralizer of $\cover{T}_{\glnidx+1}(F)\cap K^*$ in $\cover{T}_{\glnidx+1}(F)$, see Section~\ref{subsection:the groups})
is a maximal abelian subgroup of $\cover{T}_{\glnidx+1}(F)$ containing $\cover{T}_{\glnidx+1}(F)\cap K^*$. We have
\begin{align}\label{eq:constructing maximal abelian subgroup item 3}
C(\cover{T}_{\glnidx+1}(F),\cover{T}_{\glnidx+1}(F)\cap K^*)=C_{\cover{T}_{\glnidx+1}(F)}\cdot(\cover{T}_{\glnidx+1}(F)\cap K^*).
\end{align}
\end{claim}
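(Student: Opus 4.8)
The plan is to prove the displayed equality \eqref{eq:constructing maximal abelian subgroup item 3} by showing both inclusions, and then to derive the maximality statement from it. For the inclusion $\supseteq$, note that $C_{\cover{T}_{\glnidx+1}(F)}$ lies in the centralizer of all of $\cover{T}_{\glnidx+1}(F)$, hence certainly centralizes $\cover{T}_{\glnidx+1}(F)\cap K^*$; and $\cover{T}_{\glnidx+1}(F)\cap K^*$ is abelian because $\sigma$ restricted to $K\cap T_{\glnidx+1}(F)$ is trivial (as $c$ is trivial on $\RingOfIntegers^*\times\RingOfIntegers^*$ when $|2|=1$, cf. the proof of Claim~\ref{claim:relations section s and section kappa}), so any element of $\cover{T}_{\glnidx+1}(F)\cap K^*$ centralizes $\cover{T}_{\glnidx+1}(F)\cap K^*$. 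Since $[\,,\,]_\sigma$ is bi-multiplicative, the product $C_{\cover{T}_{\glnidx+1}(F)}\cdot(\cover{T}_{\glnidx+1}(F)\cap K^*)$ is contained in the centralizer, giving $\supseteq$.

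For the reverse inclusion, I would take $b\in\cover{T}_{\glnidx+1}(F)$ with $p(b)=\prod_{i=1}^{\glnidx}\eta_i^{\vee}(b_i)\beta_1^{\vee}(t_1)$ centralizing $\cover{T}_{\glnidx+1}(F)\cap K^*$, and compute commutators using \eqref{eq:general commutator on torus}. For $z\in\RingOfIntegers^*$ and any $1\leq j\leq\glnidx$, testing against $\eta_j^{\vee}(z)$ (which lies in $K^*$, by Claim~\ref{claim:relations section s and section kappa} applied to the copy of $SL_2(\RingOfIntegers)$ along $\alpha_j$) gives $[b,\eta_j^{\vee}(z)]_\sigma = c(b_j d^{-1}, z)=1$ where $d=\prod_{i=1}^{\glnidx} b_i^{-1}$. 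Since this holds for all $z\in\RingOfIntegers^*$, the property of the Hilbert symbol recalled in Section~\ref{subsection:The Hilbert symbol} forces $b_j d^{-1}\in\RingOfIntegers^* F^{*2}$, i.e. $b_j = u_j a_j^2 d$ with $u_j\in\RingOfIntegers^*$ and $a_j\in F^*$. Multiplying these relations over $j$ yields $d^{\glnidx}\in\RingOfIntegers^* F^{*2}\cdot d^{\glnidx}$ trivially, but reorganizing as in the proof of Claim~\ref{claim:center of the torus}: the product of the $b_j$ equals $d^{-1}$, so $d^{-1} = (\prod u_j)(\prod a_j^2) d^{\glnidx}$, giving $d^{\glnidx+1}\in\RingOfIntegers^* F^{*2}$, i.e. $d\in\RingOfIntegers^* F^{*2/\gcd(2,\glnidx+1)}$. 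Writing $d = u_0 d_0$ accordingly and absorbing the unit parts $u_j, u_0$ and $t_1$'s unit part into an element of $\cover{T}_{\glnidx+1}(F)\cap K^*$ (using the splitting $\mathfrak{s}=\kappa$ on $K\cap T_{\glnidx+1}(F)$), one sees $b\in C_{\cover{T}_{\glnidx+1}(F)}\cdot(\cover{T}_{\glnidx+1}(F)\cap K^*)$ by Claim~\ref{claim:center of the torus}.

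Finally, for maximality: the subgroup in \eqref{eq:constructing maximal abelian subgroup item 3} is abelian, since $C_{\cover{T}_{\glnidx+1}(F)}$ is central in $\cover{T}_{\glnidx+1}(F)$ and $\cover{T}_{\glnidx+1}(F)\cap K^*$ is abelian, so all commutators among generators vanish. If $A$ were a strictly larger abelian subgroup of $\cover{T}_{\glnidx+1}(F)$, then any $b\in A$ would commute with $\cover{T}_{\glnidx+1}(F)\cap K^*\subseteq A$, forcing $b$ into the centralizer, which by the equality just proved is exactly the subgroup in \eqref{eq:constructing maximal abelian subgroup item 3} — a contradiction. Hence it is maximal abelian, and it visibly contains $\cover{T}_{\glnidx+1}(F)\cap K^*$. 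The main obstacle is the bookkeeping in the reverse inclusion: correctly separating the $\RingOfIntegers^*$-parts from the $F^{*2}$-parts when deducing $d\in\RingOfIntegers^* F^{*2/\gcd(2,\glnidx+1)}$, and checking that the leftover unit factors genuinely lie in $\cover{T}_{\glnidx+1}(F)\cap K^*$ under the section $\mathfrak{s}$ (which requires Claim~\ref{claim:relations section s and section kappa} to identify $\mathfrak{s}$ with $\kappa$ on the relevant integral torus elements, so that no spurious root-of-unity obstruction appears).
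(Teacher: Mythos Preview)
Your proposal is correct and follows essentially the same approach as the paper, which simply refers back to the proof of Claim~\ref{claim:center of the torus} (replacing the condition ``for all $z\in F^*$'' by ``for all $z\in\RingOfIntegers^*$'', so that the Hilbert-symbol criterion yields membership in $\RingOfIntegers^*\Fsquares$ rather than $\Fsquares$) and omits the details you have filled in. One minor inaccuracy: $\eta_j^{\vee}(z)$ is not a single coroot, so it does not lie in a copy of $SL_2(\RingOfIntegers)$ along $\alpha_j$; rather, by \eqref{eq:image of standard A_k in GSpin} it equals $\prod_{i=1}^{j}\alpha_i^{\vee}(z^{-1})\in K\cap T_{\glnidx+1}(F)$, and then Claim~\ref{claim:relations section s and section kappa} gives $\mathfrak{s}(\eta_j^{\vee}(z))\in K^*$ as you need.
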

\begin{proof}
Equality~\eqref{eq:constructing maximal abelian subgroup item 3} follows if we show that
$C(\cover{T}_{\glnidx+1}(F),\cover{T}_{\glnidx+1}(F)\cap K^*)$ is equal to
\begin{align}
&\label{eq:constructing maximal abelian subgroup item 2}
p^{-1}(\setof{\prod_{i=1}^{\glnidx}\eta_i^{\vee}(a_id)\beta_1^{\vee}(t_1)}{a_i\in \RingOfIntegers^*{\Fsquares},d\in \RingOfIntegers^*(F^*)^{2/\gcd(2,\glnidx+1)},t_1\in F^*}).
\end{align}
The proof of this is similar to the proof of Claim~\ref{claim:center of the torus} and omitted.
\end{proof}

It can be useful for applications to have a convenient choice of a maximal abelian subgroup, over any field. We follow the construction of Bump and Ginzburg \cite{BG} (p.~141). Let
\begin{align*}
T_{\GL{\glnidx}}(F)^{\mathm}=\setof{\prod_{i=1}^{\glnidx}\eta_i^{\vee}(a_i)}{a_1,\ldots,a_{\glnidx}\in F^*,a_{\glnidx-2i}^{-1}a_{\glnidx-2i-1}\in {\Fsquares}, 0\leq i<\lfloor\glnidx/2\rfloor}.
\end{align*}
(This is $T^{\mathe}_F$ 
of \cite{BG}.) Then $\cover{T}_{\GL{\glnidx}}(F)^{\mathm}$ is a maximal abelian subgroup. The advantage of this subgroup is that it always contains $\cover{C}_{\GLF{\glnidx}{F}}$, regardless of the parity of $\glnidx$. Similarly,
\begin{claim}\label{claim:constructing a maximal abelian subgroup of the torus in general}
Let
\begin{align*}
T_{\glnidx+1}(F)^{\mathm}=\setof{\prod_{i=1}^{\glnidx}\eta_i^{\vee}(a_i)\beta_1^{\vee}(t_1)}{
a_1,\ldots,a_{\glnidx}\in F^*,a_{\glnidx-2i}^{-1}a_{\glnidx-2i-1}\in {\Fsquares}, 0\leq i<\lfloor\glnidx/2\rfloor,t_1\in F^*}.
\end{align*}
Then $\cover{T}_{\glnidx+1}(F)^{\mathm}$ is a maximal abelian subgroup of $\cover{T}_{\glnidx+1}(F)$ containing $\cover{C}_{\GLF{\glnidx}{F}}$.
\end{claim}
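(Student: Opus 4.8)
The plan is to follow the strategy used for $\GL{\glnidx}$ in \cite{BG} (p.~141), transported through the ``convenient coordinates'' \eqref{eq:convenient coordinates of torus} and the explicit commutator formula \eqref{eq:general commutator on torus}. First I would observe that since $r=2$, the subgroup $C_{\cover{G}_{\glnidx}(F)}$ splits under $\mathfrak{s}$ (Claim~\ref{claim:center of cover}) and, more to the point, the coordinate $t_1$ attached to $\beta_1^{\vee}$ plays no role in any commutator: by \eqref{eq:cocycle convenient coordinates r=2} the cocycle on the $\beta_1^{\vee}$-factor is trivial and $\beta_1^{\vee}(F^*)$ is central in $\cover{T}_{\glnidx+1}(F)$ (indeed its image is $C_{G_{\glnidx}(F)}$ when $k=\glnidx$, cf. \eqref{eq:image of standard T_n-k+1 in GSpin}). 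Hence the entire question reduces to the analogous statement for the $\GL{\glnidx}$-part: $\cover{T}_{\glnidx+1}(F)^{\mathm}$ is abelian (resp. maximal abelian, resp. contains $\cover{C}_{\GLF{\glnidx}{F}}$) if and only if $\cover{T}_{\GL{\glnidx}}(F)^{\mathm}$ is, and the latter is exactly \cite{BG} (p.~141).

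For abelianness: given two elements $b=\prod\eta_i^{\vee}(a_i)\beta_1^{\vee}(t_1)$ and $b'=\prod\eta_i^{\vee}(a_i')\beta_1^{\vee}(t_1')$ of $T_{\glnidx+1}(F)^{\mathm}$, formula \eqref{eq:general commutator on torus} gives $[b,b']_{\sigma}=\prod_{i}c(a_i,a_i')\,c(\prod_i a_i,\prod_i a_i')$. One groups the indices in consecutive pairs $\{\glnidx-2i,\glnidx-2i-1\}$; on each such pair the defining condition $a_{\glnidx-2i}^{-1}a_{\glnidx-2i-1}\in\Fsquares$ (for both $b$ and $b'$) forces the corresponding partial contribution to be $1$, using bi-multiplicativity and anti-symmetry of $c$ together with $c(\Fsquares,\cdot)=1$; the leftover index when $\glnidx$ is odd contributes a square in each slot, hence $1$; and the ``total determinant'' factor $c(\prod a_i,\prod a_i')$ likewise vanishes because $\prod_i a_i$ and $\prod_i a_i'$ are squares modulo the product of the pairwise ratios, i.e. lie in $\Fsquares$ up to the already-killed pieces. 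This is the routine computation I would not spell out in full, and it is essentially identical to the $\GL{\glnidx}$ computation in \cite{BG}.

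For maximality I would argue by contradiction exactly as for the center: suppose $b_0=\prod\eta_i^{\vee}(b_i)\beta_1^{\vee}(s)$ centralizes all of $\cover{T}_{\glnidx+1}(F)^{\mathm}$. Testing against $\eta_j^{\vee}(z)$ for each $j$ and against well-chosen elements $\eta_{\glnidx-2i}^{\vee}(z)\eta_{\glnidx-2i-1}^{\vee}(z)$ (which lie in $T_{\glnidx+1}(F)^{\mathm}$) and using \eqref{eq:general commutator on torus}, one extracts that each $b_j$, modulo the total product $\prod b_i$, is a square and that the pairwise ratios $b_{\glnidx-2i}^{-1}b_{\glnidx-2i-1}$ are squares, i.e. $b_0\in T_{\glnidx+1}(F)^{\mathm}$; here the freedom in $s\in F^*$ is harmless since $\beta_1^{\vee}$ is central. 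Finally, containment of $\cover{C}_{\GLF{\glnidx}{F}}$ follows from \eqref{eq:the center of GLn}: an element $\prod\eta_i^{\vee}(d)$ with $d\in(F^*)^{2/\gcd(2,\glnidx+1)}$ has all pairwise ratios equal to $1\in\Fsquares$, so it lies in $T_{\glnidx+1}(F)^{\mathm}$ regardless of the parity of $\glnidx$ — this is precisely the advantage of this choice over $\cover{T}_{\glnidx+1}(F)^2$, cf. Remark~\ref{remark:center of GLn can be ignored for even n}. The main obstacle, such as it is, is purely bookkeeping: making sure the pairing-up of coordinates in the definition of $T^{\mathm}$ interacts correctly with the extra ``global'' Hilbert-symbol factor $c(\prod a_i,\prod a_i')$ appearing in \eqref{eq:general commutator on torus} — this factor is absent in the naive $SL$-picture and is the one place where the $GSpin$ cocycle differs from the block-diagonal $SL_{\glnidx+1}$ one.
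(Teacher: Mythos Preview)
Your approach is the same as the paper's: reduce to the $\GL{\glnidx}$-part via centrality of $\beta_1^{\vee}(F^*)$, then use the commutator formula \eqref{eq:general commutator on torus} and the pairing of coordinates. The containment argument is fine, and so is maximality, with one caveat: $\eta_j^{\vee}(z)$ by itself need not lie in $T_{\glnidx+1}(F)^{\mathm}$, so you cannot test against it; only your paired test elements $\eta_{\glnidx-2i}^{\vee}(z)\eta_{\glnidx-2i-1}^{\vee}(z)$ are actually available, and they already suffice to force $b_{\glnidx-2i}^{-1}b_{\glnidx-2i-1}\in\Fsquares$.

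There is, however, a genuine slip in your abelianness argument for odd $\glnidx$. You assert that the leftover index ``contributes a square in each slot, hence $1$'' and that the determinant factor $c(\prod_i a_i,\prod_i a_i')$ ``likewise vanishes''. Neither holds individually: the unpaired coordinate $a_1$ is unconstrained, so $\prod_i a_i\equiv a_1\pmod{\Fsquares}$ and the determinant factor equals $c(a_1,a_1')$; the unpaired per-index term is also $c(a_1,a_1')$. What makes the commutator vanish is that these two contributions multiply to $c(a_1,a_1')^2=1$ (we are on the double cover, $c$ takes values in $\mu_2$), not that each is trivial on its own. This cancellation is precisely the interaction with the extra Hilbert-symbol factor that you yourself flagged as the main obstacle; the paper separates the two parities for exactly this reason.
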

\begin{proof}
If $b$ belongs to an abelian subgroup of $\cover{T}_{\glnidx+1}(F)$ containing $\cover{T}_{\glnidx+1}(F)^{\mathm}$, write $p(b)=\prod_{i=1}^{\glnidx}\eta_i^{\vee}(b_i)\beta_1^{\vee}(t_1)$, then for all $0\leq j<\lfloor\glnidx/2\rfloor$ and $x,y\in F^*$, by \eqref{eq:general commutator on torus},
\begin{align*}
[b,\eta_j^{\vee}(x)\eta_{j+1}^{\vee}(xy^2)]_{\sigma}=c(b_j,x)c(b_j,xy^2)=c(b_jb_{j+1},x).
\end{align*}
Because $c(b_jb_{j+1},x)$ must be equal to $1$, we obtain $b_j^{-1}b_{j+1}\in{\Fsquares}$, whence $b\in \cover{T}_{\glnidx+1}(F)^{\mathm}$.

To see that $\cover{T}_{\glnidx+1}(F)^{\mathm}$ is an abelian subgroup let $t=\prod_{i=1}^{\glnidx}\eta_i^{\vee}(a_i)\beta_1^{\vee}(t_1)$,
$t'=\prod_{i=1}^{\glnidx}\eta_i^{\vee}(a'_i)\beta_1^{\vee}(t'_1)$ belong to $T_{\glnidx+1}(F)^{\mathm}$. If $\glnidx$ is even, $\prod_{i=1}^{\glnidx}a_i\in {\Fsquares}$ and \eqref{eq:general commutator on torus} implies $[t,t']_{\sigma}=\prod_{i=1}^{\glnidx}c(a_i,a_i')$. This equals $1$ because if $a_{j+1}^{-1}a_j,{a'_{j+1}}^{-1}a'_j\in{\Fsquares}$.
\begin{align*}
c(a_j,a'_j)c(a_{j+1},a'_{j+1})=c(a_{j+1}a_{j+1}^{-1}a_j,a'_j)c(a_{j+1},a'_{j+1})=c(a_{j+1},a'_ja'_{j+1})=1.
\end{align*}
If $\glnidx$ is odd, $\prod_{i=2}^{\glnidx}a_i\in {\Fsquares}$ and we proceed similarly.
\end{proof}

\subsection{Principal series representations}\label{subsection:Representations local stuff}

\subsubsection{Representations of the torus}\label{subsubsection:representations of the torus}
We describe the representations of $\cover{T}_{\glnidx+1}(F)$.
Since $r=2$, $\cover{T}_{\glnidx+1}(F)$ is abelian for $\glnidx\leq1$,
as may be seen either by a direct verification using \eqref{eq:torus alpha and alpha'} or from
Claim~\ref{claim:cover minimal cases}. For $\glnidx>1$ it is a $2$-step nilpotent group.

We follow the exposition of
McNamara \cite{McNamara} (Sections~13.5-13.7, we only use arguments which do not impose restrictions on the field). See also Kazhdan and Patterson \cite{KP} (Sections~0.3,I.1-I.2) and Bump, Friedberg and Ginzburg \cite{BFG} (Section~2).

Assume $\glnidx>1$. Since $\cover{T}_{\glnidx+1}(F)$ is a $2$-step nilpotent group, the genuine irreducible representations of $\cover{T}_{\glnidx+1}(F)$ are parameterized by the genuine characters of $C_{\cover{T}_{\glnidx+1}(F)}$. Let $\chi$ be a genuine character of
$C_{\cover{T}_{\glnidx+1}(F)}$ and choose a maximal abelian subgroup $X<\cover{T}_{\glnidx+1}(F)$. Of course, $C_{\cover{T}_{\glnidx+1}(F)}<X$. We can extend $\chi$ to a character of $X$, then induce it to $\cover{T}_{\glnidx+1}(F)$. Denote $\rho(\chi)=\cinduced{X}{\cover{T}_{\glnidx+1}(F)}{\chi}$. This is a genuine irreducible representation, which is independent of the actual choices of $X$ and the extension (\cite{McNamara}, Theorem~3).
If $\glnidx\leq1$, we start with a genuine character $\chi$ of $C_{\cover{T}_{\glnidx+1}(F)}=\cover{T}_{\glnidx+1}(F)$ and put $\rho(\chi)=\chi$.

Extend $\rho(\chi)$ to a representation of $\cover{B}_{\glnidx}(F)$, trivially on $\mathfrak{s}(N_{\glnidx}(F))$, 
and induce to a representation $\induced{\cover{B}_{\glnidx}(F)}{\cover{G}_{\glnidx}(F)}{\rho(\chi)}$, whose space
we denote by $V(\chi)$. The elements $f\in V(\chi)$ are smooth complex-valued functions $f$ on $\cover{G}_{\glnidx}(F)\times \cover{T}_{\glnidx+1}(F)$ such that $f(tug,1)=\delta_{B_{\glnidx}(F)}^{1/2}(t)f(g,t)$ and $t\mapsto f(g,t)$ belongs to the space of $\rho(\chi)$ ($t\in \cover{T}_{\glnidx+1}(F)$, $u\in \mathfrak{s}(N_{\glnidx}(F))$ and $g\in\cover{G}_{\glnidx}(F)$). For brevity we will write $f(g)=f(g,1)$.


The Weyl group $W_{\glnidx}$ acts on the representations $\chi$ and $\rho(\chi)$.
If $\mathbf{w}\in W_{\glnidx}$ and $\xi$ is a representation of $C_{\cover{T}_{\glnidx+1}(F)}$ or $\cover{T}_{\glnidx+1}(F)$,
$\rconj{\mathbf{w}}\xi$ denotes the representation on the space of $\xi$ given by $\rconj{\mathbf{w}}\xi(t)=\xi(\rconj{\mathfrak{s}(w)^{-1}}t)$, where
$w\in\mathfrak{W}_{\glnidx}$ is the representative of $\mathbf{w}$. We have
$\rconj{\mathbf{w}}\rho(\chi)=\rho(\rconj{\mathbf{w}}\chi)$.


Let $\chi$ be a genuine character of $C_{\cover{T}_{\glnidx+1}(F)}$. Then ${\rho(\chi)}^{\wedge}=\rho(\chi^{\wedge})=\rho(\chi^{-1})$ (recall from Section~\ref{subsection:representations} that ${\rho(\chi)}^{\wedge}$ is the contragradient representation). 

A genuine character $\chi$ of $C_{\cover{T}_{\glnidx+1}(F)}$ is called regular if $\rconj{\mathbf{w}}\chi\ne\chi$ for all $1\ne \mathbf{w}\in W_{\glnidx}$. For such characters,
$j_{N_{\glnidx}}(\rho(\chi))$ is semi-simple (\cite{McNamara} Proposition~5) and is the direct sum of $\rho(\rconj{\mathbf{w}}\chi)$ where $\mathbf{w}$ varies over $W_{\glnidx}$. If $\chi'$ is another genuine character, the dimension of
\begin{align*}
Hom_{\cover{G}_{\glnidx}}(\induced{\cover{B}_{\glnidx}(F)}{\cover{G}_{\glnidx}(F)}{\rho(\chi)},\induced{\cover{B}_{\glnidx}(F)}{\cover{G}_{\glnidx}(F)}{\rho(\chi')}
\end{align*}
is zero unless $\chi'=\rconj{\mathbf{w}}\chi$ for some $\mathbf{w}$, in which case the dimension is $1$. These statements follow from Bernstein and Zelevinsky \cite{BZ2} (see \cite{McNamara}).

Let $\chi$ be a genuine character of $C_{\cover{T}_{\glnidx+1}(F)}$. There are unique $m_1,\ldots,m_{\glnidx+1}\in\R$ such that for all $t\in C_{\cover{T}_{\glnidx+1}(F)}$, if $p(t)=\prod_{i=1}^{\glnidx+1}{\alpha_i^{\vee}}(t_i)$,
$|\chi(t)|=\prod_{i=1}^{\glnidx+1}|t_i|^{m_i}$. Define $\Repart\chi=\sum_{i=2}^{\glnidx+1}m_i\alpha_i$. We say that $\chi$ belongs to the positive Weyl chamber if $\langle\Repart\chi,\alpha^{\vee}\rangle>0$ for all $\alpha\in\Delta_{G_{\glnidx}}$. 

For a genuine character $\chi$ of $C_{\cover{T}_{\glnidx+1}(F)}$, $\chi|_{\cover{T}_{\glnidx+1}(F)^2}$ is a genuine character and by
Claim~\ref{claim:T^2 is split by s}, it is obtained as the unique extension (via $\mathfrak{s}$) to $\cover{T}_{\glnidx+1}(F)^2$ of a character $\chi_0$ of
$T_{\glnidx+1}(F)^2$. According to Claim~\ref{claim:center of the torus}, one can further extend to $C_{\cover{T}_{\glnidx+1}(F)}$ and obtain $\chi$, according to $\chi|_{C_{\CGLF{\glnidx}{F}}}$. We will see that the ``important" properties of $\chi$ are shared by $\chi_0$. Motivated by this observation, let
$\Pi(\chi)$ be the set of genuine characters $\chi'$ of $C_{\cover{T}_{\glnidx+1}(F)}$ which agree with $\chi$ on
$\cover{T}_{\glnidx+1}(F)^2$. By Claim~\ref{claim:center of the torus}, $|\Pi(\chi)|=1$ or $[F^*:\Fsquares]$ depending on the parity of $\glnidx$.



\subsubsection{Unramified representations}\label{subsubsection:Unramified representations}
In this section assume $|2|=1$ and $q>3$ in $F$. Then $K$ 
is split under $\kappa$ and $K^*=\kappa(K)$ (Section~\ref{subsubsection:A splitting of the hyperspecial subgroup}). An irreducible genuine representation $\pi$ of $\cover{G}_{\glnidx}(F)$ is called unramified if it has a nonzero vector fixed by $K^*$.
Since Claim~\ref{claim:relations section s and section kappa} implies that for any $w\in \mathfrak{W}_{\glnidx}\subset K$,
$\mathfrak{s}(w)=\kappa(w)\in K^*$, $\rconj{\mathbf{w}}\pi$ is also unramified for all $\mathbf{w}\in W_{\glnidx}$.

Let $\chi$ be a genuine character of $C_{\cover{T}_{\glnidx+1}(F)}$. The subgroup $\cover{T}_{\glnidx+1}(F)\cap K^*$ is abelian (Claim~\ref{claim:constructing maximal abelian subgroup unramified case}). Let $X$ be a maximal abelian subgroup of $\cover{T}_{\glnidx+1}(F)$ which contains $\cover{T}_{\glnidx+1}(F)\cap K^*$. Assume that $\chi$ can be extended to a character of $X$ such that this extension is trivial on $\cover{T}_{\glnidx+1}(F)\cap K^*$.

According to \cite{McNamara} (Lemma~2) the subspace of $V(\chi)$ fixed by $K^*$ is one-dimensional. In particular
$\induced{\cover{B}_{\glnidx}(F)}{\cover{G}_{\glnidx}(F)}{\rho(\chi)}$ is 
unramified 
and we also call $\chi$ an unramified character. A function $f\in V(\chi)$ is unramified if it is fixed by $K^*$, and normalized if in addition $f(1)=1$. We will use the following observation from \cite{KP} (Lemma~I.1.3), \cite{McNamara} (Lemma~2): let $f$ be unramified.
If $t\in\cover{T}_{\glnidx+1}(F)$ does not belong to $X$,
$f(t)=0$. This is because for $k\in \cover{T}_{\glnidx+1}(F)\cap K^*$, $tk=[t,k]kt$, $[k,t]\ne1$ 
by the maximality of $X$ and $\chi(k)=1$, therefore
\begin{align}\label{eq:f normalized unramified vanishes}
f(t)&=f(tk,1)=[k,t]\delta_{B_{\glnidx}(F)}^{1/2}(t)\chi(k)f(1,t)=
[k,t]f(t).
\end{align}
We 
always choose $X=C(\cover{T}_{\glnidx+1}(F),\cover{T}_{\glnidx+1}(F)\cap K^*)$ (see 
Claim~\ref{claim:constructing maximal abelian subgroup unramified case}). According to \eqref{eq:constructing maximal abelian subgroup item 3} any genuine character of
$C_{\cover{T}_{\glnidx+1}(F)}$, which is trivial on $C_{\cover{T}_{\glnidx+1}(F)}\cap K^*$, can be extended uniquely
to a character of $X$ which is trivial on $\cover{T}_{\glnidx+1}\cap K^*$. Therefore, any such character is unramified.


Let $\alpha\in\Sigma_{G_{\glnidx}}^+$. If $\alpha$ is a long root and $\glnidx>1$, put $\mathfrak{l}(\alpha)=2$, otherwise $\mathfrak{l}(\alpha)=1$. Set
$a_{\alpha}={\alpha^{\vee}}^*(\varpi^{\mathfrak{l}(\alpha)})$. For any character $\chi$ of $C_{\cover{T}_{\glnidx+1}(F)}$ (not necessarily unramified), $\chi(a_{\alpha})$ is defined.
%

\subsubsection{Intertwining operators}\label{subsubsection:Intertwining operators}
Let $\chi$ be a genuine character of $C_{\cover{T}_{\glnidx+1}(F)}$ and $\rho(\chi)$ be the corresponding representation of $\cover{T}_{\glnidx+1}(F)$ (see Section~\ref{subsubsection:representations of the torus}). Let $\mathbf{w}\in W_{\glnidx}$. 
Put $N_{\glnidx}^w=\rconj{w}N_{\glnidx}
\cap N_{\glnidx}$.
Let $M(w,\chi):V(\chi)\rightarrow V(\rconj{\mathbf{w}}\chi)$ be the intertwining operator defined by
\begin{align*}
M(w,\chi)f(g)=\int_{\lmodulo{N_{\glnidx}^w(F)}{N_{\glnidx}(F)}}f(\mathfrak{s}(w)^{-1}\mathfrak{s}(u)g)du\qquad (g\in\cover{G}_{\glnidx}(F)).
\end{align*}
If $|2|=1$ and $q>3$ in $F$, $\mathfrak{s}(w)=\kappa(w)\in K^*$ 
(Claim~\ref{claim:relations section s and section kappa}) and we follow Casselman \cite{CS1} in the normalization of the measure $du$.
The following claim adapted from \cite{KP} (Sections~I.2 and I.6) provides the basic properties of the intertwining operators.
\begin{claim}\label{claim:props of intertwining and GK formula}
The integral defining $M(w,\chi)$ is absolutely convergent if $\langle\Repart\chi,\alpha^{\vee}\rangle>0$ for all $\alpha\in\Sigma_{G_{\glnidx}}^+$ such that $\mathbf{w}\alpha<0$.
The integral has a meromorphic continuation by which it is defined for all $\chi$. If $\mathbf{w},\mathbf{w'}\in W_{\glnidx}$, 
$ww'\in \mathfrak{W}_{\glnidx}$ and $l(ww')=l(w)+l(w')$, then
$M(ww',\chi)=M(w,\rconj{\mathbf{w'}}\chi)M(w',\chi)$. Finally, if $\chi$ is unramified 
and $f\in V(\chi)$ is the unramified normalized element, $M(w,\chi)f=c(\mathbf{w},\chi)f$, where
\begin{align*}
c(\mathbf{w},\chi)=\prod_{\setof{\alpha\in\Sigma_{G_{\glnidx}}^+}{\mathbf{w}\alpha<0}}\frac{1-q^{-1}\chi(a_{\alpha})}{1-\chi(a_{\alpha})}.
\end{align*}
\end{claim}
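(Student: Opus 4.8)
The plan is to follow the classical reduction to rank one, as in Kazhdan--Patterson \cite{KP} (Sections~I.2 and I.6) and McNamara \cite{McNamara}, adapted to our cover. Fix a reduced decomposition $\mathbf{w}=\mathbf{s}_{i_1}\cdots\mathbf{s}_{i_k}$ into simple reflections, so that $w=w_{\alpha_{i_1}}\cdots w_{\alpha_{i_k}}\in\mathfrak{W}_{\glnidx}$ with $l(w)=k$. The quotient $\lmodulo{N_{\glnidx}^w(F)}{N_{\glnidx}(F)}$ decomposes as a measure space into a product of the root subgroups $\mathcal{U}_{\alpha}(F)$ over the positive roots $\alpha$ with $\mathbf{w}\alpha<0$, grouped according to the decomposition of $w$; using $\mathfrak{s}(ww')=\mathfrak{s}(w)\mathfrak{s}(w')$ when lengths add \eqref{eq:props of BLS section separate W if length is additive}, together with \eqref{eq:cocycle move N around} to rearrange sections of unipotent elements, one realizes the integral defining $M(w,\chi)$ as the iterated composition of the rank-one operators $M(w_{\alpha_{i_j}},\cdot)$. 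The section $\mathfrak{s}$ being merely a section and not a homomorphism is harmless here, since at every stage we only multiply elements whose lengths add, where \eqref{eq:props of BLS section separate W if length is additive} applies.

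Granting this, the assertions follow. For convergence, bound $|f|$ on $\cover{G}_{\glnidx}(F)$ by a suitable $\delta_{B_{\glnidx}(F)}^{1/2}|\chi|$-type majorant on the Iwasawa $\cover{T}_{\glnidx+1}(F)$-coordinate; the integral is then dominated by a product of rank-one Tate integrals, one for each positive $\alpha$ with $\mathbf{w}\alpha<0$, the $\alpha$-th factor converging precisely when $\langle\Repart\chi,\alpha^{\vee}\rangle>0$ (the $\rho_B$-shift being absorbed into $\delta^{1/2}$), which is the stated condition. For the composition relation, when $ww'\in\mathfrak{W}_{\glnidx}$ and $l(ww')=l(w)+l(w')$ the decomposition of $\lmodulo{N_{\glnidx}^{ww'}(F)}{N_{\glnidx}(F)}$ refines through $\lmodulo{N_{\glnidx}^{w'}(F)}{N_{\glnidx}(F)}$, and a Fubini argument together with \eqref{eq:props of BLS section separate W if length is additive} and \eqref{eq:cocycle move N around} yields $M(ww',\chi)=M(w,\rconj{\mathbf{w'}}\chi)M(w',\chi)$ wherever the relevant integrals converge; this is a purely formal manipulation once the section identities are in place. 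For meromorphic continuation, a single simple reflection $\mathbf{s}_{\alpha}$ and an arbitrary genuine $\chi$ reduce, after applying \eqref{eq:weyl element and unipotent}, to a finite combination of geometric series in the $\chi$-values of torus elements ${\alpha^{\vee}}^*(\varpi^{j})$, hence to a ratio of entire functions; for general $\mathbf{w}$ one then continues factor by factor via the composition relation.

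For the Gindikin--Karpelevich formula, the composition relation together with the additivity $\{\alpha>0:\mathbf{w}\mathbf{w'}\alpha<0\}=\{\alpha>0:\mathbf{w'}\alpha<0\}\sqcup{\mathbf{w'}}^{-1}\{\beta>0:\mathbf{w}\beta<0\}$ and the identities $\rconj{\mathbf{w'}}\chi(a_{\alpha})=\chi(a_{{\mathbf{w'}}^{-1}\alpha})$ coming from \eqref{eq:torus alpha and alpha}--\eqref{eq:weyl element and torus} reduce matters to a simple reflection $\mathbf{s}_{\alpha}$, $\alpha\in\Delta_{G_{\glnidx}}$, where we must show $M(w_{\alpha},\chi)f=\frac{1-q^{-1}\chi(a_\alpha)}{1-\chi(a_\alpha)}f$ for the normalized unramified $f$. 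Evaluate $M(w_{\alpha},\chi)f(1)=\int_{F}f(w_{\alpha}^*n_{\alpha}^*(x))\,dx$ and split the domain. On $|x|\leq1$ the argument $w_{\alpha}^*n_{\alpha}^*(x)$ equals $\kappa(w_{\alpha}n_{\alpha}(x))\in K^*$ by \eqref{eq:props of BLS section image on a simple reflection} and Claim~\ref{claim:relations section s and section kappa}, so the contribution is the volume $1$. On $|x|>1$, rewrite $w_{\alpha}^*n_{\alpha}^*(x)=n_{\alpha}^*(-x^{-1}){\alpha^{\vee}}^*(x^{-1})n_{-\alpha}^*(x^{-1})$ by \eqref{eq:weyl element and unipotent}; since $n_{-\alpha}^*(x^{-1})\in K^*$ (as $|x^{-1}|<1$) and $n_{\alpha}^*(-x^{-1})\in\mathfrak{s}(N_{\glnidx}(F))$, the integrand equals $\delta_{B_{\glnidx}(F)}^{1/2}({\alpha^{\vee}}(x^{-1}))$ times $f(1,{\alpha^{\vee}}^*(x^{-1}))$, which by \eqref{eq:f normalized unramified vanishes} vanishes unless ${\alpha^{\vee}}^*(x^{-1})$ lies in the chosen maximal abelian subgroup $X$ --- forcing the valuation of $x^{-1}$ to be a multiple of $\mathfrak{l}(\alpha)$ and producing exactly the powers of $a_{\alpha}={\alpha^{\vee}}^*(\varpi^{\mathfrak{l}(\alpha)})$. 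Summing the resulting geometric series in $q^{-1}\chi(a_\alpha)$ over the shells and adding the $1$ from $|x|\leq1$ gives $\frac{1-q^{-1}\chi(a_\alpha)}{1-\chi(a_\alpha)}$; the normalization of $du$ is Casselman's \cite{CS1}, consistent with $\mathfrak{s}(w_{\alpha})=\kappa(w_{\alpha})\in K^*$.

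The main obstacle is the rank-one computation in the last step for both the long and short simple roots of type $B_{\glnidx}$: one must correctly track how the Steinberg symbol $c_{\alpha}$ (the distinction between $c(x,y)$ and $c(x^2,y)$) and the length $\mathfrak{l}(\alpha)$ enter, so that precisely the torus elements ${\alpha^{\vee}}^*(\varpi^{m})$ with $\mathfrak{l}(\alpha)\mid m$ survive in \eqref{eq:f normalized unramified vanishes} and the exponent in $a_{\alpha}$ comes out right. The reduction-to-rank-one bookkeeping underlying convergence, continuation and the composition relation is routine given the Banks--Levy--Sepanski section identities, but requires care because $\mathfrak{s}$ is only a section.
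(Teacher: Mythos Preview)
Your approach is essentially the same as the paper's: reduce to a simple reflection, split the rank-one integral into the $|x|\le 1$ piece (contributing $1$) and the $|x|>1$ piece, rewrite via the Steinberg relations, and use the unramified vanishing principle \eqref{eq:f normalized unramified vanishes} to see that only valuations divisible by $\mathfrak{l}(\alpha)$ survive, giving the geometric series in $\chi(a_\alpha)$. Two small points of care where the paper is more explicit: (i) the operator is defined with $\mathfrak{s}(w)^{-1}={w_\alpha^*}^{-1}$, not $w_\alpha^*$, so the paper rewrites ${w_\alpha^*}^{-1}n_\alpha^*(x)$ as ${\alpha^\vee}^*(x^{-1})\,n_\alpha^*(-x)\,{\alpha^\vee}^*(-1)\,w_\alpha^*\,n_\alpha^*(-x^{-1})\,{w_\alpha^*}^{-1}$, keeping everything in positive root groups and $\mathfrak{W}_\glnidx$ so that Claim~\ref{claim:relations section s and section kappa} applies directly (your $n_{-\alpha}^*(x^{-1})\in K^*$ is true but needs that extra conjugation step); (ii) the mechanism forcing $\mathfrak{l}(\alpha)\mid\mathrm{val}(x)$ for a long root $\alpha=\alpha_l$ is made concrete by pairing ${\alpha^\vee}^*(x^{-1})$ against ${\alpha_{l'}^\vee}^*(z)$ for a \emph{neighboring} simple root $\alpha_{l'}$ via \eqref{eq:torus alpha and alpha'}, yielding $c(x^{-1},z)=1$ for all $z\in\RingOfIntegers^*$ and hence $x\in\RingOfIntegers^*\Fsquares$---exactly the obstacle you flag in your final paragraph.
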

\begin{remark}\label{remark:why we need to prove GK formula}
The constant $c(\mathbf{w},\chi)$ is given by the Gindikin-Karpelevich formula (\cite{CS1} Section~3). This formula was proved in the context of $\cover{GL}_{\glnidx}$ by \cite{KP} (Section~I.2) and by McNamara \cite{McNamara2,McNamara} for any split reductive algebraic group over a $p$-adic field, as long as $|\mu_{2r}|=2r$ and $q$ is coprime to $2r$. 
\end{remark}

\begin{proof}
We assume $\glnidx>1$, since otherwise the cover splits and the statement is well known.
These results were proved by Kazhdan and Patterson \cite{KP} (Sections~I.2, I.6) for $\CGLF{\glnidx}{F}$, over any local field. We provide the calculation of
the constant $c(\mathbf{w},\chi)$. Over a $p$-adic field, the meromorphic continuation can also be proved using the continuation principle of Bernstein, see Banks \cite{Banks} (see also \cite{KP} p.~67 and \cite{Mu}).

Assume that $\chi$ is unramified and take $f$ as in the claim. It is enough to consider $\mathbf{w}_{\alpha}$ for $\alpha\in\Delta_{G_{\glnidx}}$. Let $2\leq l\leq\glnidx+1$ and put $\alpha=\alpha_l$. The volume of $\RingOfIntegers$ with respect to the additive measure of $F$ is $1$. Hence
\begin{align*}
c(\mathbf{w_{\alpha}},\chi)=M(w_{\alpha},\chi)f(1)=1+\int_{\mathcal{U}_{\alpha}\setdifference K}f({w_{\alpha}^*}^{-1}\mathfrak{s}(u))du.
\end{align*}
Write $u=n_{\alpha}(x)$ for some $x\in F$ with $x\notin \RingOfIntegers$, then $\mathfrak{s}(u)=n_{\alpha}^*(x)$.
Using \eqref{eq:torus alpha and alpha'}-\eqref{eq:torus and unipotent} we see that
\begin{align*}
{w_{\alpha}^*}^{-1}n_{\alpha}^*(x)&={{\alpha^{\vee}}^*}(x^{-1})n_{\alpha}^*(-x){{\alpha^{\vee}}^*}(-1)w_{\alpha}^*n_{\alpha}^*(-x^{-1}){w_{\alpha}^*}^{-1}.
\end{align*}
By virtue of Claim~\ref{claim:relations section s and section kappa} we have $w_{\alpha}^*=\mathfrak{s}(w_{\alpha})\in K^*$, $n_{\alpha}^*(-x^{-1})=\mathfrak{s}(n_{\alpha}(-x^{-1}))\in K^*$ ($|x|>1$) and ${\alpha^{\vee}}^*(-1)=\mathfrak{s}(\alpha^{\vee}(-1))\in K^*$. Therefore
\begin{align*}
f({w_{\alpha}^*}^{-1}\mathfrak{s}(u))=f({\alpha^{\vee}}^*(x^{-1}))=
\delta_{B_{\glnidx}(F)}^{1/2}(\alpha^{\vee}(x^{-1}))f(1,{\alpha^{\vee}}^*(x^{-1}))
=|x|f(1,{\alpha^{\vee}}^*(x^{-1})).
\end{align*}

Assume $l\leq\glnidx$. If $f(1,{\alpha^{\vee}}^*(x^{-1}))\ne0$, by the observation in Section~\ref{subsubsection:Unramified representations} (see \eqref{eq:f normalized unramified vanishes}) we must have ${\alpha^{\vee}}^*(x^{-1})\in C(\cover{T}_{\glnidx+1}(F),\cover{T}_{\glnidx+1}(F)\cap K^*)$. Assume this holds and set $l'=l+1$ if $l\leq\glnidx-1$, otherwise $l'=\glnidx-1$ (recall that $\glnidx>1$). Then Equality~\eqref{eq:torus alpha and alpha'} applied to ${\alpha^{\vee}}^*(x^{-1})$ and ${\alpha_{l'}^{\vee}}^*(z)$ implies $c(x^{-1},z)=1$ for all $z\in\RingOfIntegers^*$, whence $x\in\RingOfIntegers^*{\Fsquares}$. We have shown that the integrand vanishes unless $x\in\RingOfIntegers^*{\Fsquares}$, in which case $f(1,{\alpha^{\vee}}^*(x^{-1}))=\chi({\alpha^{\vee}}^*(x^{-1}))$. When $l=\glnidx+1$, this last equality holds for all $x\in F^*$.

Thus for all $2\leq l\leq\glnidx+1$,
\begin{align*}
\int_{\mathcal{U}_{\alpha}(F)\setdifference K}f({w_{\alpha}^*}^{-1}\mathfrak{s}(u))du=\sum_{v=1}^{\infty}\chi(a_{\alpha})^v=(1-q^{-1})\frac{\chi(a_{\alpha})}{1-\chi(a_{\alpha})}.
\end{align*}
(The change $du\mapsto d^*x$ contributed a factor canceled by $\delta_{B_{\glnidx}(F)}^{1/2}$.)
Therefore
$c(\mathbf{w_{\alpha}},\chi)=(1-q^{-1}\chi(a_{\alpha}))/(1-\chi(a_{\alpha}))$,
completing the proof.
\end{proof}

\subsubsection{Structure of certain induced representations}\label{subsubsection:tensor product}
In this section $F$ is $p$-adic. For any $H<G_{\glnidx}$, set $H(F)^{\star}=\setof{h\in H(F)}{\Upsilon(h)\in {\Fsquares}}$. In particular if $H=\GL{k}$,
$H(F)^{\star}=\setof{h\in \GLF{k}{F}}{\det{h}\in {\Fsquares}}$, and $G_0(F)^{\star}=G_0(F)$. The subgroup $H(F)^{\star}$ is normal in $H(F)$ and $\lmodulo{H(F)^{\star}}{H(F)}$ is abelian. The index of $H(F)^{\star}$ in $H(F)$ is either $1$ or $[F^*:{\Fsquares}]$. These properties hold also for $\cover{H}(F)^{\star}(=p^{-1}(H(F)^{\star}))$ and $\cover{H}(F)$.

The subgroup $H(F)^{\star}$ is open in $H(F)$, and $\cover{H}(F)^{\star}$ is open in $\cover{H}(F)$ %
(\cite{Moore} p.~54-56, see also \cite{KP} Proposition~0.1.2). 

If $\xi$ is a genuine irreducible representation of $\cover{H}(F)$, let $\xi^{\star}=\xi|_{\cover{H}(F)^{\star}}$. The representation $\xi^{\star}$ is a finite sum of (at most $[F^*:{\Fsquares}]$) genuine irreducible representations (\cite{BZ1} 2.9).

We will need the following results of Kable \cite{Kable} (in Propositions~3.1 and 3.2).
\begin{proposition}\label{proposition:Kable summary of restriction results}
Let $H$ be a Levi subgroup of $\GL{\glnidx}$ and let $\tau$ be a genuine irreducible representation of $\cover{H}(F)$. If $\glnidx$ is odd, $\tau^{\star}$ is irreducible and $\cinduced{\cover{H}(F)^{\star}}{\cover{H}(F)}{\tau^{\star}}=\oplus_{\omega}\omega\cdot\tau$, where $\omega$ ranges over the (finite set of) characters of $\lmodulo{\cover{H}(F)^{\star}}{\cover{H}(F)}$. Furthermore,
$Hom_{\cover{H}(F)}(\omega\tau,\tau)=0$ unless $\omega=1$. If $\glnidx$ is even, $\tau^{\star}=\oplus_{a}\rconj{a}\sigma$ where $\sigma$ is any irreducible summand of $\tau^{\star}$ and $a$ ranges over $\lmodulo{\cover{H}(F)^{\star}}{\cover{H}(F)}$. Furthermore, $\rconj{a}\sigma\not\isomorphic\sigma$ for all $a\notin \cover{H}(F)^{\star}$.
\end{proposition}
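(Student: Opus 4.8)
This statement is \cite{Kable}, Propositions~3.1 and 3.2, and the plan is to recover it by the standard Clifford/Mackey machine applied to the normal subgroup $N:=\cover{H}(F)^{\star}$ of $\cover{H}:=\cover{H}(F)$. First I would record the formal input. The subgroup $N$ is open, normal, of finite index in $\cover{H}$, and $\Upsilon$ induces an isomorphism $\lmodulo{N}{\cover{H}}\cong A$, where $A:=\rmodulo{F^*}{\Fsquares}$ is finite abelian. Since $\lmodulo{N}{\cover{H}}$ is finite and abelian, Clifford theory for $\ell$-groups gives $\tau^{\star}=\tau|_{N}\cong\bigoplus(\rconj{a}\sigma)^{\oplus e}$, the sum running over the distinct $\cover{H}$-conjugates of a genuine irreducible $\sigma$ (indexed by $\lmodulo{N}{\cover{H}}$ modulo the inertia group $I$ of $\sigma$), each with one common multiplicity $e$; and the projection formula gives $\cinduced{N}{\cover{H}}{\tau^{\star}}\cong\tau\otimes\cinduced{N}{\cover{H}}{\mathbf{1}}\cong\bigoplus_{\omega}\omega\cdot\tau$, the sum over characters $\omega$ of $A$. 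So the whole question reduces to pinning down $I$ and $e$, and the parity of $\glnidx$ enters only through the location of the center.

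Concretely, $C_{\CGLF{\glnidx}{F}}$ is central in $\CGLF{\glnidx}{F}$, hence central in the subgroup $\cover{H}$, and by \eqref{eq:the center of GLn} it is the preimage of the scalar matrices $dI_{\glnidx}$ with $d\in(F^*)^{2/\gcd(2,\glnidx+1)}$, on which $\Upsilon=\det$ takes the value $d^{\glnidx}$. If $\glnidx$ is odd, $d\mapsto d^{\glnidx}$ is a bijection of $A$, so $C_{\CGLF{\glnidx}{F}}$ maps onto $A$; if $\glnidx$ is even, $d^{\glnidx}\in\Fsquares$ always, so $C_{\CGLF{\glnidx}{F}}\subset N$. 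In the odd case I would choose representatives for $A$ inside $C_{\CGLF{\glnidx}{F}}$: these are central in $\cover{H}$, so conjugation by them fixes $\sigma$, forcing $I=\cover{H}$ and $\tau^{\star}$ to be $\sigma$-isotypic; and since these representatives commute, the class in $H^{2}(A,\C^{*})$ obstructing the extension of $\sigma$ to $\cover{H}$ is represented by a symmetric cocycle, hence is trivial (a symmetric $2$-cocycle of a finite abelian group into the divisible group $\C^{*}$ is a coboundary, $\mathrm{Ext}^{1}_{\Z}(A,\C^{*})=0$). Thus $\sigma$ extends, $e=1$, and $\tau^{\star}=\sigma$ is irreducible; then $\cinduced{N}{\cover{H}}{\tau^{\star}}\cong\bigoplus_{\omega}\omega\tau$, and Frobenius reciprocity identifies $\bigoplus_{\omega}\mathrm{Hom}_{\cover{H}}(\omega\tau,\tau)$ with $\mathrm{End}_{N}(\tau^{\star})=\C$, so $\mathrm{Hom}_{\cover{H}}(\omega\tau,\tau)=0$ unless $\omega=1$.

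In the even case this central device is unavailable, and the real content — the step I expect to be the main obstacle — is to show that the orbit of $\sigma$ is the full $A$, i.e. $I=N$. Granting this, $\tau^{\star}\cong\bigoplus_{a\in\lmodulo{N}{\cover{H}}}(\rconj{a}\sigma)^{\oplus e}$ with the $\rconj{a}\sigma$ pairwise non-isomorphic, so by Frobenius reciprocity $e^{2}|A|=\dim\mathrm{End}_{N}(\tau^{\star})=\dim\mathrm{Hom}_{\cover{H}}\bigl(\bigoplus_{\omega}\omega\tau,\tau\bigr)\leq|A|$, forcing $e=1$, which is exactly the assertion. To get $I=N$ I would first reduce to the cover of the diagonal torus, since every class of $A$ is met by a torus element ($\det\colon T_{\GL{\glnidx}}(F)\to F^{*}$ is onto); then for $g\in\cover{T}_{\GL{\glnidx}}(F)$ with $\det\notin\Fsquares$ I would compare the central characters of $\sigma$ and $\rconj{g}\sigma$ on a suitable central subgroup $c$ of $N$ sitting in the cover of the torus, where $\rconj{g}c=c\cdot[g,c]_{\sigma}$ with $[g,c]_{\sigma}\in\mu_{2}$, so that $\rconj{g}\sigma\cong\sigma$ would force $[g,c]_{\sigma}=1$ for all such $c$. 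Hence it suffices to produce, for each such $g$, a central $c$ with $[g,c]_{\sigma}\neq1$, i.e. to prove non-degeneracy of the commutator pairing $[\,,\,]_{\sigma}$ between $\cover{T}_{\GL{\glnidx}}(F)$ and the relevant central subgroup; this is precisely the Heisenberg/Stone--von Neumann computation of \cite{KP,Kable}, carried out with the explicit commutator formula on the cover of the torus, and it is the one genuinely computational point — the evenness of $\glnidx$ being exactly what makes that pairing non-degenerate. Everything else in both cases is formal.
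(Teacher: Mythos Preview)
The paper does not prove this proposition: it is quoted verbatim from Kable \cite{Kable} (Propositions~3.1 and 3.2) and no argument is given. Your proposal is a correct reconstruction of Kable's proof via Clifford theory, and the two parity cases are handled by exactly the mechanism Kable uses. Indeed, the paper's own Claim~\ref{claim:restriction results analogous to Kable} (the analogue for $\cover{T}_{\glnidx+1}(F)$ when $\glnidx$ is even) is proved by precisely the device you describe in the even case: one checks that $\cover{C}_{\GLF{\glnidx}{F}}$ lies in the center of the $\star$-subgroup, and that conjugation by $t$ with $\Upsilon(t)\notin\Fsquares$ moves a central element $z$ by a nontrivial $[t,z]_{\sigma}\in\mu_2$, so $\rconj{t}\sigma$ and $\sigma$ have distinct central characters.

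One small sharpening of your even-case sketch: you need not ``reduce to the cover of the torus'' in any serious way. The central elements $c$ on which you test $\rconj{g}\sigma$ against $\sigma$ are simply the preimages of scalar matrices $dI_{\glnidx}$; these lie in $\cover{H}(F)^{\star}$ (since $\det(dI_{\glnidx})=d^{\glnidx}\in\Fsquares$) and are central there, while the explicit commutator with any $g\in\cover{H}(F)$ is $[g,c]_{\sigma}=c(\det g,d)$ (cf.\ \eqref{eq:cocycle trick} and \eqref{eq:general commutator on torus}), which is nontrivial for some $d$ exactly when $\det g\notin\Fsquares$. This is the whole computation; the phrase ``Heisenberg/Stone--von~Neumann'' is more than is needed. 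In the odd case your cocycle argument is fine, but a slightly quicker route is to note that since the representatives for $A$ are central in $\cover{H}(F)$, they act on $\tau$ by scalars, and these scalars already furnish the extension of $\sigma$ to $\cover{H}(F)$.
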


Let $H_1$ be a Levi subgroup of $\GL{k}$ and let $H_2$ be a Levi subgroup of $G_{\glnidx-k}$. Then $H=H_1\times H_2$ is a Levi subgroup of $G_{\glnidx}$. We assume that the cocycle defined on $G_{\glnidx}(F)$ satisfies the block-compatibility criterion
\eqref{eq:block-compatibility on Levi subgroups}.

Let $\tau$ be a genuine irreducible representation of $\cover{H}_1(F)$ and let $\pi$ be a genuine irreducible representation of $\cover{H}_2(F)$. The following discussion describes a replacement for the usual tensor ``$\tau\otimes\pi$", which is not defined when the groups are not commuting. 

According to \eqref{eq:block-compatibility on Levi subgroups}, the subgroups $\cover{H}_1(F)^{\star}$ and $\cover{H}_2(F)^{\star}$ are commuting in $\cover{G}_{\glnidx}(F)$, hence 
\begin{align*}
p^{-1}(H_1(F)^{\star}\times H_2(F)^{\star})\isomorphic\lmodulo{\setof{(\zeta,\zeta)}{\zeta\in\mu_2}}
{(\cover{H}_1(F)^{\star}\times\cover{H}_2(F)^{\star})}.
\end{align*}
Here on the \rhs\ we have an outer direct product. In other words, $p^{-1}(H_1(F)^{\star}\times H_2(F)^{\star})$ is the direct product of $\cover{H}_1(F)^{\star}$ and $\cover{H}_2(F)^{\star}$ with amalgamated $\mu_2$.
The representation $\tau^{\star}\otimes\pi^{\star}$ is defined as the usual tensor product and can be regarded as a genuine irreducible
representation of $p^{-1}(H_1(F)^{\star}\times H_2(F)^{\star})$. The representations $\tau^{\star}\otimes\pi$ and
$\tau\otimes\pi^{\star}$ are defined similarly, because the same arguments apply to the pairs $(\cover{H}_1(F)^{\star},\cover{H}_2(F))$ and
$(\cover{H}_1(F),\cover{H}_2(F)^{\star})$. We have the following semisimple representation
\begin{align*}
I(\tau,\pi)^{\star}=\cinduced{p^{-1}(H_1(F)^{\star}\times H_2(F)^{\star})}{\cover{H}(F)}{\tau^{\star}\otimes\pi^{\star}}.
\end{align*}
In the case of $H_2=G_0$, $H_2(F)^{\star}=H_2(F)$ hence $(\cover{H}_1(F),\cover{H}_2(F))$ are commuting and
\begin{align*}
I(\tau,\pi)^{\star}=\cinduced{p^{-1}(H_1(F)^{\star}\times H_2(F))}{\cover{H}(F)}{\tau^{\star}\otimes\pi}
=\cinduced{\cover{H}_1(F)^{\star}}{\cover{H}_1(F)}{\tau^{\star}}\otimes\pi.
\end{align*}
\begin{remark}\label{remark:the Kable induced representation is not needed when k=n}
In this case the tensor $\tau\otimes\pi$ is defined and of course will be preferred over $I(\tau,\pi)^{\star}$. See Claim~\ref{claim:special case k=n} in Section~\ref{subsubsection:defs and basic properties} below.
\end{remark}
\begin{lemma}\label{lemma:induced representation composition factors}
Assume $0<k<\glnidx$. The representation $I(\tau,\pi)^{\star}$ is a direct sum of $[F^*:{\Fsquares}]$ copies of
\begin{align*}
\cinduced{p^{-1}(H_1(F)^{\star}\times H_2(F))}{\cover{H}(F)}{\tau^{\star}\otimes\pi}.
\end{align*}
Similarly, it is a direct sum of $[F^*:{\Fsquares}]$ copies of
\begin{align*}
\cinduced{p^{-1}(H_1(F)\times H_2(F)^{\star})}{\cover{H}(F)}{\tau\otimes\pi^{\star}}.
\end{align*}
\end{lemma}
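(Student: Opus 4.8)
The plan is to induce in stages through the intermediate group $p^{-1}(H_1(F)^{\star}\times H_2(F))$ and then read off the decomposition from Clifford theory for its (finite, abelian) quotient in $\cover{H}(F)$. First I would note that, since $\Upsilon$ takes values in $\Fsquares$ on $H_1(F)^{\star}$, the block formula \eqref{eq:block-compatibility on 2 Levi subgroups} shows that $\cover{H}_1(F)^{\star}$ and $\cover{H}_2(F)$ commute; hence $p^{-1}(H_1(F)^{\star}\times H_2(F))$ is the inner direct product of $\cover{H}_1(F)^{\star}$ and $\cover{H}_2(F)$ with amalgamated $\mu_2$, and likewise $p^{-1}(H_1(F)^{\star}\times H_2(F)^{\star})$ is that of $\cover{H}_1(F)^{\star}$ and $\cover{H}_2(F)^{\star}$ --- exactly the kind of structure already used above for $I(\tau,\pi)^{\star}$. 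Transitivity of compact induction, together with the fact that inducing $\tau^{\star}\otimes\pi^{\star}$ along the second factor leaves $\tau^{\star}$ undisturbed, then gives
\begin{align*}
I(\tau,\pi)^{\star}\isomorphic\cinduced{p^{-1}(H_1(F)^{\star}\times H_2(F))}{\cover{H}(F)}{\tau^{\star}\otimes\bigl(\cinduced{\cover{H}_2(F)^{\star}}{\cover{H}_2(F)}{\pi^{\star}}\bigr)}.
\end{align*}

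Second, I would compute the inner induced representation. Because $\glnidx-k\geq1$, the character $\Upsilon$ is onto $F^*$ on $H_2$, so $\cover{H}_2(F)^{\star}$ is normal in $\cover{H}_2(F)$ with quotient isomorphic to $F^*/\Fsquares$, and $\pi^{\star}=\pi|_{\cover{H}_2(F)^{\star}}$. The tensor identity for induction from an open finite-index subgroup then gives $\cinduced{\cover{H}_2(F)^{\star}}{\cover{H}_2(F)}{\pi^{\star}}\isomorphic\bigoplus_{\omega}\omega\pi$, where $\omega$ runs over the $[F^*:\Fsquares]$ characters of $\cover{H}_2(F)$ trivial on $\cover{H}_2(F)^{\star}$; each $\omega\pi$ is genuine irreducible (the terms need not be pairwise inequivalent, but this is irrelevant to the count). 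Combining with the first step,
\begin{align*}
I(\tau,\pi)^{\star}\isomorphic\bigoplus_{\omega}\cinduced{p^{-1}(H_1(F)^{\star}\times H_2(F))}{\cover{H}(F)}{\tau^{\star}\otimes(\omega\pi)}.
\end{align*}

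The heart of the matter, which I expect to be the main obstacle, is to show that each of these summands is isomorphic to $\cinduced{p^{-1}(H_1(F)^{\star}\times H_2(F))}{\cover{H}(F)}{\tau^{\star}\otimes\pi}$. Each such $\omega$ has the form $\omega(h)=c(\Upsilon(p(h)),d)$ for some $d\in F^*$, whose square class is pinned down by $\omega$ via non-degeneracy of $c$ on $F^*/\Fsquares$, and conversely every class occurs. I would then pick $g\in\cover{H}_1(F)$ with $\Upsilon(p(g))$ in the square class of $d$ (possible since $\Upsilon$ is onto $F^*$ on $H_1$, as $k\geq1$), and verify --- using \eqref{eq:block-compatibility on 2 Levi subgroups} and the commutator formula \eqref{eq:cocycle trick} --- that conjugation by $g$ acts on $\cover{H}_2(F)$ precisely by the character $\omega$, and acts on the normal subgroup $\cover{H}_1(F)^{\star}$ by an inner automorphism of $\cover{H}_1(F)$, so that $\rconj{g}\tau^{\star}\isomorphic\tau^{\star}$ via the intertwiner $\tau(g)$. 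Consequently $\rconj{g}(\tau^{\star}\otimes\pi)\isomorphic\tau^{\star}\otimes(\omega\pi)$; and since $p^{-1}(H_1(F)^{\star}\times H_2(F))$ is normal in $\cover{H}(F)$, conjugation by $g$ preserves it and does not alter the induced representation, which yields the desired isomorphism. This leaves $[F^*:\Fsquares]$ isomorphic summands, proving the first assertion.

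The second assertion would follow by the mirror-image argument: induce in stages through $p^{-1}(H_1(F)\times H_2(F)^{\star})$ --- an amalgamated product because $\Upsilon$ is a square on $H_2(F)^{\star}$ --- use the decomposition $\cinduced{\cover{H}_1(F)^{\star}}{\cover{H}_1(F)}{\tau^{\star}}\isomorphic\bigoplus_{\omega'}\omega'\tau$ (which is also Proposition~\ref{proposition:Kable summary of restriction results}), and conjugate by elements of $\cover{H}_2(F)$, whose action on $\cover{H}_1(F)$ realizes the characters $\omega'$ again via \eqref{eq:block-compatibility on 2 Levi subgroups} and the non-degeneracy of $c$. The bookkeeping of matching each $\omega'$ with a conjugating element is the same subtlety as in the first case.
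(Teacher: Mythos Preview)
Your argument is correct and follows essentially the same route as the paper's proof: induce in stages through the intermediate group, decompose the inner induction as $\bigoplus_{\omega}\omega\pi$ over characters of the finite abelian quotient $\lmodulo{\cover{H}_2(F)^{\star}}{\cover{H}_2(F)}$, and then use conjugation by elements $a\in\cover{H}_1(F)$ (which acts on $\cover{H}_2(F)$ by exactly the character $\omega_a(h)=c(\Upsilon(h),\det a)$ via \eqref{eq:block-compatibility on 2 Levi subgroups} and \eqref{eq:cocycle trick}) to identify all the summands. One small remark: your citation of Proposition~\ref{proposition:Kable summary of restriction results} for the decomposition $\cinduced{\cover{H}_1(F)^{\star}}{\cover{H}_1(F)}{\tau^{\star}}\isomorphic\bigoplus_{\omega'}\omega'\tau$ is slightly off, since that proposition only records this for odd rank; but the decomposition itself is just the tensor identity for induction from an open normal subgroup with finite abelian quotient, which you invoke correctly in step~2 anyway.
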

\begin{proof}
The arguments are similar to those of Kable \cite{Kable} (Theorem~3.1). Since $p^{-1}(H_1(F)^{\star}\times H_2(F)^{\star})$ is a normal subgroup of $\cover{H}(F)$ with a finite index and the quotient of $p^{-1}(H_1(F)^{\star}\times H_2(F))$ by  $p^{-1}(H_1(F)^{\star}\times H_2(F)^{\star})$ is abelian,
\begin{align}\label{eq:claim tensor following Kable 1}
\cinduced{p^{-1}(H_1(F)^{\star}\times H_2(F)^{\star})}{\cover{H}(F)}{\tau^{\star}\otimes\pi^{\star}}
=\bigoplus_{\omega}\cinduced{p^{-1}(H_1(F)^{\star}\times H_2(F))}{\cover{H}(F)}{\tau^{\star}\otimes\omega\pi},
\end{align}
where the summation ranges over the characters of $\lmodulo{\cover{H}_2(F)^{\star}}{\cover{H}_2(F)}$ (these are nongenuine characters). Any such character $\omega$ takes the form $\omega(h)=\omega_a(h)=c(\Upsilon(h),\det{a})$ for some $a\in\cover{H}_1(F)$. By the block-compatibility of the cocycle (and \eqref{eq:cocycle trick}), $ha=c(\Upsilon(h),\det{a})ah$ ($a\in \cover{H}_1(F)$, $h\in \cover{H}_2(F)$) whence $\tau^{\star}\otimes\omega_a\pi=\rconj{a}(\rconj{a^{-1}}(\tau^{\star})\otimes\pi)$. Thus the \rhs\ of \eqref{eq:claim tensor following Kable 1} is equal to a direct sum of $[\cover{H}_1(F):\cover{H}_1(F)^{\star}]=[F^*:{\Fsquares}]$ copies of
\begin{align}\label{eq:claim tensor following Kable 2}
\cinduced{p^{-1}(H_1(F)^{\star}\times H_2(F))}{\cover{H}(F)}{\tau^{\star}\otimes\pi}.
\end{align}
One can similarly consider
$p^{-1}(H_1(F)\times H_2(F)^{\star})$ and repeating the steps above obtain
\begin{align*}
\cinduced{p^{-1}(H_1(F)^{\star}\times H_2(F)^{\star})}{\cover{H}(F)}{\tau^{\star}\otimes\pi^{\star}}
&=\bigoplus_{h\in\lmodulo{\cover{H}_2(F)^{\star}}{\cover{H}_2(F)}}\cinduced{p^{-1}(H_1(F)\times H_2(F)^{\star})}{\cover{H}(F)}{\omega_h\tau\otimes\pi^{\star}}\\
&=[F^*:{\Fsquares}]\cinduced{p^{-1}(H_1(F)\times H_2(F)^{\star})}{\cover{H}(F)}{\tau\otimes\pi^{\star}}.
\end{align*}
Here $\omega_h(a)=c(\Upsilon(h),\det{a})$.
\end{proof}
The aforementioned results of Kable \cite{Kable} (Propositions~3.1, 3.2) do not apply to the group $G_{\glnidx}(F)$, mainly because
its center does not play a role similar to that of the center of $\GLF{\glnidx}{F}$. Namely, $C_{G_{\glnidx}(F)}<G_{\glnidx}(F)^{\star}$ for all $\glnidx$ (see Section~\ref{subsection:properties of GSpin}). However, if $H_2$ is a Levi subgroup of a proper parabolic subgroup of $G_{\glnidx}$, it is possible to extend Proposition~\ref{proposition:Kable summary of restriction results} to $H_2$.
We will only need the following result on $T_{\glnidx+1}$.
\begin{claim}\label{claim:restriction results analogous to Kable}
Let $\pi$ be a genuine irreducible representation of $\cover{T}_{\glnidx+1}(F)$ and assume that $\glnidx$ is even (including zero). Then $\pi^{\star}=\oplus_{h}\rconj{h}\rho$ where $\rho$ is any irreducible summand of $\pi^{\star}$ and $h$ ranges over $\lmodulo{\cover{T}_{\glnidx+1}(F)^{\star}}{\cover{T}_{\glnidx+1}(F)}$.
\end{claim}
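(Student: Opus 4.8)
The plan is to imitate the argument of Kable \cite{Kable} (Proposition~3.2, the even case) with the center of $\GL{\glnidx}$ replaced by a suitable subgroup of $\cover{T}_{\glnidx+1}(F)^{\star}$. First I would observe that $\cover{T}_{\glnidx+1}(F)^{\star}$ is a normal open subgroup of $\cover{T}_{\glnidx+1}(F)$ of finite index equal to $1$ or $[F^*:\Fsquares]$, and that the quotient is abelian; this was recorded above the statement of Proposition~\ref{proposition:Kable summary of restriction results}. Since $\pi$ is genuine irreducible and $\cover{T}_{\glnidx+1}(F)$ is a $2$-step nilpotent group (or abelian when $\glnidx\leq1$), by \cite{BZ1} (2.9) the restriction $\pi^{\star}$ is a finite direct sum of genuine irreducible representations of $\cover{T}_{\glnidx+1}(F)^{\star}$, and $\cover{T}_{\glnidx+1}(F)$ permutes the isotypic components transitively via conjugation. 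So the whole content is to show that no two of the $\rconj{h}\rho$ ($h$ ranging over coset representatives of $\cover{T}_{\glnidx+1}(F)^{\star}$) are isomorphic; equivalently, that the stabilizer in $\cover{T}_{\glnidx+1}(F)$ of the isomorphism class of $\rho$ is exactly $\cover{T}_{\glnidx+1}(F)^{\star}$.

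The key step is to compare central characters. A genuine irreducible $\rho$ of $\cover{T}_{\glnidx+1}(F)^{\star}$ is determined by its central character, i.e.\ by its restriction to $C_{\cover{T}_{\glnidx+1}(F)^{\star}}$, by the Stone--von Neumann theory used in Section~\ref{subsubsection:representations of the torus} (cf.\ \cite{McNamara} Theorem~3, \cite{KP}). For $h\in\cover{T}_{\glnidx+1}(F)$, the representation $\rconj{h}\rho$ has central character $z\mapsto\rho(\rconj{h^{-1}}z)$; since conjugation by $h$ fixes the center of the ambient torus and acts on $\cover{T}_{\glnidx+1}(F)^{\star}$, it suffices to compute, for $z$ in the center of $\cover{T}_{\glnidx+1}(F)^{\star}$, the commutator $[h,z]_\sigma$, which by \eqref{eq:cocycle trick} measures exactly the twist. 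Using the explicit formula \eqref{eq:cocycle convenient coordinates r=2} for $\sigma$ in the convenient coordinates \eqref{eq:convenient coordinates of torus}, together with the description of $C_{\cover{T}_{\glnidx+1}(F)}$ in Claim~\ref{claim:center of the torus}, I would produce for any $h\notin\cover{T}_{\glnidx+1}(F)^{\star}$ — that is, with $\Upsilon(p(h))=t_1^{-2}\prod a_i\notin\Fsquares$ — an element $z$ in $C_{\cover{T}_{\glnidx+1}(F)}$ (which lies in $\cover{T}_{\glnidx+1}(F)^2\subset\cover{T}_{\glnidx+1}(F)^{\star}$ by \eqref{eq:center of the group}) with $[h,z]_\sigma\neq1$, using the nondegeneracy of the Hilbert symbol. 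Concretely, $C_{\cover{G}_{\glnidx}(F)}$ has elements whose $\eta$-coordinates $a_i$ all equal $t_1^2$ up to squares and whose ``$\det$'' is $t_1^{2\glnidx}$; pairing the $\Upsilon$-coordinate of $h$ against a free parameter in such a central element through $c(\cdot,\cdot)$ gives a nontrivial character, since $\glnidx$ is even makes $\gcd(2,\glnidx+1)=1$ so $C_{\cGLF{\glnidx}{F}}$ contributes the full $F^*$ worth of a parameter $d$. This forces $\rho\not\cong\rconj{h}\rho$, hence the stabilizer is $\cover{T}_{\glnidx+1}(F)^{\star}$ and $\pi^{\star}=\bigoplus_h\rconj{h}\rho$ with distinct summands.

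The main obstacle I anticipate is the bookkeeping in the case analysis for the degenerate values $\glnidx=0$ and (if needed) the abelian range $\glnidx\leq1$: when $\glnidx=0$, $G_0(F)^{\star}=G_0(F)$ and the cover splits and is abelian (Claim~\ref{claim:cover minimal cases}), so $\pi^{\star}=\pi$ and the statement is vacuous — I would dispose of this separately at the start. The other delicate point is making sure the element $z$ I choose to detect the twist genuinely lies in the \emph{center} of $\cover{T}_{\glnidx+1}(F)^{\star}$ (not merely in $C_{\cover{T}_{\glnidx+1}(F)}$), which requires re-running the commutator computation of Claim~\ref{claim:center of the torus} with $\eta_j^{\vee}(z)$ replaced by arbitrary elements of $T_{\glnidx+1}(F)^{\star}$; since $C_{\cover{G}_{\glnidx}(F)}\subset\cover{T}_{\glnidx+1}(F)^2$ is central in all of $\cover{T}_{\glnidx+1}(F)$ this is immediate, so in fact $C_{\cover{G}_{\glnidx}(F)}$ itself, being central in the big torus, is the cleanest source of the detecting element $z$, and no extra work beyond invoking \eqref{eq:cocycle convenient coordinates r=2} is needed. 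I would keep that computation short and refer back to Claim~\ref{claim:center of the torus} and Claim~\ref{claim:center of cover}.
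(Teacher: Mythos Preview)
Your overall strategy is the same as the paper's --- follow Kable by comparing central characters and produce a detecting central element --- but the specific detecting subgroup you propose does not work. You write that you will take $z\in C_{\cover{T}_{\glnidx+1}(F)}$ (and later $z\in C_{\cover{G}_{\glnidx}(F)}$) with $[h,z]_\sigma\neq1$. This is impossible: $C_{\cover{T}_{\glnidx+1}(F)}$ is by definition central in $\cover{T}_{\glnidx+1}(F)$, and $C_{\cover{G}_{\glnidx}(F)}=\cover{C}_{G_{\glnidx}(F)}$ (Claim~\ref{claim:center of cover}) is central in the entire cover, so in either case $[h,z]_\sigma=1$ for every $h$ in the torus and nothing is detected. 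You also have the parity backwards in the remark about $C_{\CGLF{\glnidx}{F}}$: when $\glnidx$ is even one has $\gcd(2,\glnidx+1)=1$, hence $d\in(F^*)^{2}=\Fsquares$ in \eqref{eq:the center of GLn}, not the full $F^*$.

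The correct detecting subgroup is $\cover{C}_{\GLF{\glnidx}{F}}=p^{-1}(\{\prod_{i=1}^{\glnidx}\eta_i^{\vee}(d):d\in F^*\})$, the preimage of the center of $\GLF{\glnidx}{F}$ (not the center of the cover). From \eqref{eq:general commutator on torus}, for $z=\prod_i\eta_i^{\vee}(d)$ and $t=\prod_i\eta_i^{\vee}(t_i)\beta_1^{\vee}(t_{\glnidx+1})$ one gets $[t,z]_\sigma=c(\prod_i t_i,d)^{\glnidx+1}=c(\prod_i t_i,d)$ since $\glnidx$ is even. Thus $\cover{C}_{\GLF{\glnidx}{F}}$ lies in the center of $\cover{T}_{\glnidx+1}(F)^{\star}$ (because $\Upsilon(t)\in\Fsquares$ forces $\prod_i t_i\in\Fsquares$), while for $h\notin\cover{T}_{\glnidx+1}(F)^{\star}$ the nondegeneracy of the Hilbert symbol supplies $d$ with $[h,z]_\sigma\neq1$. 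With this replacement your argument goes through exactly as in Kable (Proposition~3.2), which is what the paper does.
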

\begin{proof}
Equality~\eqref{eq:general commutator on torus} implies that $\cover{C}_{\GLF{\glnidx}{F}}$ is contained in the center of $\cover{T}_{\glnidx+1}(F)^{\star}$ and furthermore, if $z\in\cover{C}_{\GLF{\glnidx}{F}}$ with $p(z)=\prod_{i=1}^{\glnidx}\eta_i^{\vee}(d)$ and $t\in\cover{T}_{\glnidx+1}(F)$ with $p(t)=\prod_{i=1}^{\glnidx}\eta_i^{\vee}(t_i)\beta_1^{\vee}(t_{\glnidx+1})$, $tzt^{-1}=c(\prod_{i=1}^{\glnidx}t_i,d)z$ (see \eqref{eq:cocycle trick} and \eqref{eq:general commutator on torus}). Given these observations, the arguments of
Kable \cite{Kable} (Proposition~3.2) readily apply to our case.
\end{proof}

Recall that for a genuine irreducible representation $\xi$ of $\cover{T}_{\glnidx+1}(F)$ we have a corresponding genuine character $\chi_{\xi}$ of $C_{\cover{T}_{\glnidx+1}(F)}$ and $\xi=\rho(\chi_{\xi})$. The same applies to representations of $\cover{T}_{\GL{\glnidx}}(F)$ and we use the same notation $\rho(\cdots)$.

Let $\chi_1'$ and $\chi_2'$ be genuine characters of $\cover{T}_{\GL{k}}(F)^2$ and $\cover{T}_{\glnidx-k+1}(F)^2$. Since
\begin{align}\label{iso:torus splits into GL part and G part}
\cover{T}_{\glnidx+1}(F)^2\isomorphic\lmodulo{\setof{(\zeta,\zeta)}{\zeta\in\mu_2}}
{(\cover{T}_{\GL{k}}(F)^2\times\cover{T}_{\glnidx-k+1}(F)^2)},
\end{align}
the tensor representation $\chi_1'\otimes\chi_2'$ of $\cover{T}_{\glnidx+1}(F)^2$ is defined, it is a genuine character. For
genuine characters $\chi_1$ and $\chi_2$ of $C_{\cover{T}_{\GL{k}}(F)}$ and $C_{\cover{T}_{\glnidx-k+1}(F)}$, put
\begin{align*}
\chi_1\odot\chi_2=\chi_1|_{\cover{T}_{\GL{k}}(F)^2}\otimes\chi_2|_{\cover{T}_{\glnidx-k+1}(F)^2}.
\end{align*}

The last claim enables us to deduce the following result.
\begin{lemma}\label{lemma:induced representation composition factors for torus}
Assume $0<k\leq\glnidx$, $\tau$ and $\pi$ are irreducible genuine representations of $\cover{T}_{\GL{k}}(F)$ and $\cover{T}_{\glnidx-k+1}(F)$ (resp.). Write $\tau=\rho(\chi_{\tau})$ and $\pi=\rho(\chi_{\pi})$, where
$\chi_{\tau}$ and $\chi_{\pi}$ are genuine characters of $C_{\cover{T}_{\GL{k}}(F)}$ and
$C_{\cover{T}_{\glnidx-k+1}(F)}$ (resp.). Also let $\chi$ be a genuine character of $C_{\cover{T}_{\glnidx+1}(F)}$ which agrees with $\chi_{\tau}\odot\chi_{\pi}$ on $\cover{T}_{\glnidx+1}(F)^2$. Then
\begin{align*}
I(\tau,\pi)^{\star}=[F^*:{\Fsquares}]^r\bigoplus_{\chi'\in\Pi(\chi)}\rho(\chi'),
\end{align*}
where $r=2$ if $\glnidx$ and $k$ are even and $k<\glnidx$, $r=0$ if $\glnidx$ is odd and $k=\glnidx$, otherwise $r=0$;
the finite set $\Pi(\chi)$ was defined in Section~\ref{subsubsection:representations of the torus}.
\end{lemma}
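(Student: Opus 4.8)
The plan is to pin down $I(\tau,\pi)^{\star}$ in three moves: first show every irreducible summand is some $\rho(\chi')$ with $\chi'\in\Pi(\chi)$; then that all such $\chi'$ occur with one common multiplicity $m$; and finally compute $m$ by a dimension count, reading off $r$ case by case. For the first move, recall $I(\tau,\pi)^{\star}=\cinduced{p^{-1}(T_{\GL{k}}(F)^{\star}\times T_{\glnidx-k+1}(F)^{\star})}{\cover{T}_{\glnidx+1}(F)}{\tau^{\star}\otimes\pi^{\star}}$ and that it is semisimple. The subgroup $\cover{T}_{\glnidx+1}(F)^{2}$ is central in $\cover{T}_{\glnidx+1}(F)$ by Claim~\ref{claim:center of the torus}, and from the description of $T_{\glnidx+1}(F)^{2}$ in the convenient coordinates one checks it lies inside $p^{-1}(T_{\GL{k}}(F)^{\star}\times T_{\glnidx-k+1}(F)^{\star})$ — its $\GL{k}$-component has square determinant and its $G_{\glnidx-k}$-component has square $\Upsilon$-value. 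Via the isomorphism \eqref{iso:torus splits into GL part and G part} and Claim~\ref{claim:T^2 is split by s}, $\tau^{\star}\otimes\pi^{\star}$ restricts to $\cover{T}_{\glnidx+1}(F)^{2}$ as the character $\chi_{\tau}\odot\chi_{\pi}=\chi|_{\cover{T}_{\glnidx+1}(F)^{2}}$; hence every irreducible summand of $I(\tau,\pi)^{\star}$ has this fixed central restriction and, by the parametrization of genuine irreducibles of $\cover{T}_{\glnidx+1}(F)$ (Section~\ref{subsubsection:representations of the torus}), is $\rho(\chi')$ for some $\chi'\in\Pi(\chi)$. So $I(\tau,\pi)^{\star}=\bigoplus_{\chi'\in\Pi(\chi)}m_{\chi'}\,\rho(\chi')$.

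For the second move: if $\glnidx$ is even then $|\Pi(\chi)|=1$ (Remark~\ref{remark:center of GLn can be ignored for even n}) and there is nothing to prove. For $\glnidx$ odd I would make the decomposition explicit — feed Proposition~\ref{proposition:Kable summary of restriction results} for $\tau^{\star}$ and Claim~\ref{claim:restriction results analogous to Kable} for $\pi^{\star}$ when $\glnidx-k$ is even (and for $k=\glnidx$ use instead that $\CGLF{\glnidx}{F}$ and $\cover{G}_{0}(F)$ commute, so $I(\tau,\pi)^{\star}=\cinduced{\cover{T}_{\GL{\glnidx}}(F)^{\star}}{\cover{T}_{\GL{\glnidx}}(F)}{\tau^{\star}}\otimes\pi$, to which Proposition~\ref{proposition:Kable summary of restriction results} applies directly), and push the outcome through Lemma~\ref{lemma:induced representation composition factors}. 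Equivalently, one can argue by twisting: tensoring $I(\tau,\pi)^{\star}$ by a non-genuine character $\omega$ of $\cover{T}_{\glnidx+1}(F)$ trivial on $p^{-1}(T_{\GL{k}}(F)^{\star}\times T_{\glnidx-k+1}(F)^{\star})$ does not change $I(\tau,\pi)^{\star}$, while it sends $\rho(\chi')$ to $\rho(\chi'\cdot\omega|_{C_{\cover{T}_{\glnidx+1}(F)}})$; since $\Pi(\chi)$ is a torsor under the character group of $C_{\cover{T}_{\glnidx+1}(F)}/\cover{T}_{\glnidx+1}(F)^{2}$, and since $C_{\cover{T}_{\glnidx+1}(F)}\cap p^{-1}(T_{\GL{k}}(F)^{\star}\times T_{\glnidx-k+1}(F)^{\star})=\cover{T}_{\glnidx+1}(F)^{2}$, every such character is realized by some $\omega$, so the action on $\{\rho(\chi'):\chi'\in\Pi(\chi)\}$ is transitive and $m_{\chi'}=m$ for all $\chi'$.

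For the third move, compare dimensions. All representations here are finite-dimensional: using the explicit maximal abelian subgroups $\cover{T}_{\glnidx+1}(F)^{\mathm}$ of Claim~\ref{claim:constructing a maximal abelian subgroup of the torus in general} and its $\GL{}$-analogue, $\dim\rho(\chi')=[F^{*}:\Fsquares]^{\lfloor\glnidx/2\rfloor}$, $\dim\tau=[F^{*}:\Fsquares]^{\lfloor k/2\rfloor}$, $\dim\pi=[F^{*}:\Fsquares]^{\lfloor(\glnidx-k)/2\rfloor}$, and $\dim\tau^{\star}=\dim\tau$, $\dim\pi^{\star}=\dim\pi$. Since $T_{\GL{k}}(F)\times T_{\glnidx-k+1}(F)\cong T_{\glnidx+1}(F)$ with $[T_{\GL{k}}(F):T_{\GL{k}}(F)^{\star}]=[T_{\glnidx-k+1}(F):T_{\glnidx-k+1}(F)^{\star}]=[F^{*}:\Fsquares]$ for $\glnidx-k\geq1$ (while $T_{1}(F)^{\star}=T_{1}(F)$), the index $[\cover{T}_{\glnidx+1}(F):p^{-1}(T_{\GL{k}}(F)^{\star}\times T_{\glnidx-k+1}(F)^{\star})]$ is $[F^{*}:\Fsquares]^{2}$ for $0<k<\glnidx$ and $[F^{*}:\Fsquares]$ for $k=\glnidx$, so $\dim I(\tau,\pi)^{\star}$ equals $[F^{*}:\Fsquares]^{2}\dim\tau\,\dim\pi$ (resp. $[F^{*}:\Fsquares]\dim\tau$). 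Equating this with $m\cdot|\Pi(\chi)|\cdot\dim\rho(\chi')$, where $|\Pi(\chi)|=[F^{*}:\Fsquares]$ for $\glnidx$ odd and $1$ for $\glnidx$ even (Claim~\ref{claim:center of the torus}), and solving for $m$ gives $m=[F^{*}:\Fsquares]^{r}$; running through the parities of $\glnidx$ and $k$ and whether $k<\glnidx$ then reproduces the value of $r$ recorded in the statement.

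The technical heart — and the main obstacle — is the square-class bookkeeping that all three moves rest on: transporting $C_{\cover{T}_{\glnidx+1}(F)}$, $\cover{T}_{\glnidx+1}(F)^{2}$ and $p^{-1}(T_{\GL{k}}(F)^{\star}\times T_{\glnidx-k+1}(F)^{\star})$ into a common coordinate system (the $\eta_{i}$-coordinates of $T_{\GL{\glnidx}}$ versus the $\GL{k}\times G_{\glnidx-k}$-coordinates of the Levi), verifying the containment in the first move and the intersection identity in the second, and keeping all the $[F^{*}:\Fsquares]$-indices straight through the parity distinctions. Once these are checked, the three moves combine essentially formally, the case $k=\glnidx$ being the easiest since there the two Levi factors genuinely commute.
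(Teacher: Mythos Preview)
Your approach is correct and genuinely different from the paper's. The paper proceeds case by case on the parities of $\glnidx$ and $k$, in each case invoking Lemma~\ref{lemma:induced representation composition factors} together with Proposition~\ref{proposition:Kable summary of restriction results} (and Claim~\ref{claim:restriction results analogous to Kable} when $\glnidx-k$ is even) to write $I(\tau,\pi)^{\star}$ explicitly as a sum of induced pieces, then checks Mackey's irreducibility criterion for each piece and identifies its central character. Your argument is more uniform: the first move pins the $\cover{T}_{\glnidx+1}(F)^{2}$-action and hence confines the summands to $\Pi(\chi)$; the second move, via the twist by characters of $\cover{T}_{\glnidx+1}(F)/p^{-1}(T_{\GL{k}}(F)^{\star}\times T_{\glnidx-k+1}(F)^{\star})$, forces equal multiplicities (the intersection identity $C_{\cover{T}_{\glnidx+1}(F)}\cap p^{-1}(T_{\GL{k}}(F)^{\star}\times T_{\glnidx-k+1}(F)^{\star})=\cover{T}_{\glnidx+1}(F)^{2}$ indeed holds --- for $\glnidx$ odd one checks that the scalar $\prod_i\eta_i^{\vee}(d)$ has $\GL{k}$-determinant $d^{k}$ and $G_{\glnidx-k}$-value $\Upsilon=d^{\glnidx-k}$, and one of $k,\glnidx-k$ is odd); the third move reads off $m$ by dimension. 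What the paper's approach buys is an explicit realization of each $\rho(\chi')$ as a concrete induction from $p^{-1}(T_{\GL{k}}(F)^{\star}\times T_{\glnidx-k+1}(F))$ or its partner, which is occasionally useful downstream; what yours buys is economy and a transparent bookkeeping that makes the value of $r$ fall out of a single formula. Incidentally, both your dimension count and the paper's case analysis give $r=1$ in the ``otherwise'' case (e.g.\ $\glnidx$ even, $k$ odd, or $k=\glnidx$ with $\glnidx$ even), so the ``otherwise $r=0$'' in the statement is a typo for $r=1$.
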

\begin{proof}
We follow the arguments of Kable \cite{Kable} (Theorem~3.1). Applying
Lemma~\ref{lemma:induced representation composition factors} to $I(\tau,\pi)^{\star}$ we write it as a direct sum of
$[F^*:{\Fsquares}]$ copies of
\begin{align}\label{eq:claim tensor following Kable 3}
\cinduced{p^{-1}(T_{\GL{k}}(F)^{\star}\times T_{\glnidx-k+1}(F))}{\cover{T}_{\glnidx+1}(F)}{\tau^{\star}\otimes\pi}.
\end{align}
Now we consider the different cases.
\begin{enumerate}[leftmargin=*]
\item\label{item:k odd, n even}$\glnidx$ is even, $k$ is odd: by Proposition~\ref{proposition:Kable summary of restriction results} the representation $\tau^{\star}$ is irreducible. Hence $\tau^{\star}\otimes\pi$ is also irreducible. As explained in the proof of Lemma~\ref{lemma:induced representation composition factors}, for any $a\in \cover{T}_{\GL{k}}(F)$, $\rconj{a}(\tau^{\star}\otimes\pi)=\tau^{\star}\otimes\omega_a\pi$. Because $\pi$ and $\omega_a\pi$ are both irreducible genuine representations of $\cover{T}_{\glnidx-k+1}(F)$, they are isomorphic if and only if their restrictions to
$C_{\cover{T}_{\glnidx-k+1}(F)}$ are identical. Since $\glnidx-k$ is odd, $C_{\CGLF{\glnidx-k}{F}}<C_{\cover{T}_{\glnidx-k+1}(F)}$. The restriction $\omega_a|_{C_{\CGLF{\glnidx-k}{F}}}$ is trivial if and only if $a\in\cover{T}_{\GL{k}}(F)^{\star}$, whence
\begin{align*}
Hom_{\cover{T}_{\glnidx-k+1}(F)}(\omega_a\pi,\pi)=0,\qquad\forall a\notin\cover{T}_{\GL{k}}(F)^{\star}.
\end{align*}
This means that $\tau^{\star}\otimes\pi$ satisfies Mackey's criterion and therefore by Mackey's theory \eqref{eq:claim tensor following Kable 3} is an irreducible representation of $\cover{T}_{\glnidx+1}(F)$. We claim that it is isomorphic to $\rho(\chi)$, which implies
$I(\tau,\pi)^{\star}=[F^*:{\Fsquares}]\rho(\chi)$ (since $\glnidx$ is even, $\Pi(\chi)=\{\chi\}$).

Indeed, $\glnidx$ is even
hence $C_{\cover{T}_{\glnidx+1}(F)}=\cover{T}_{\glnidx+1}(F)^2$ and it suffices to show both representations agree on $\cover{T}_{\glnidx+1}(F)^2$. This holds because of \eqref{iso:torus splits into GL part and G part} and $\cover{T}_{\GL{k}}(F)^2<\cover{T}_{\GL{k}}(F)^{\star}$.

\item\label{item:k and n even}$\glnidx$ and $k$ are even: write $\tau^{\star}=\oplus_{a}\rconj{a}\sigma$ as in Proposition~\ref{proposition:Kable summary of restriction results}. Then
    \eqref{eq:claim tensor following Kable 3} is the direct sum
\begin{align}\label{eq:claim tensor following Kable 4}
\bigoplus_{a\in\lmodulo{\cover{T}_{\GL{k}}(F)^{\star}}{\cover{T}_{\GL{k}}(F)}}\cinduced{
p^{-1}(T_{\GL{k}}(F)^{\star}\times T_{\glnidx-k+1}(F))}{\cover{T}_{\glnidx+1}(F)}{\sigma\otimes\omega_a\pi}.
\end{align}
If $\rconj{b}(\sigma\otimes\omega_a\pi)\isomorphic\sigma\otimes\omega_a\pi$ for some $b\in \cover{T}_{\GL{k}}(F)$, then
$\rconj{b}\sigma\isomorphic \sigma$, which by Proposition~\ref{proposition:Kable summary of restriction results} only happens for
$b\in\cover{T}_{\GL{k}}(F)^{\star}$. Thus $\sigma\otimes\omega_a\pi$ satisfies Mackey's criterion and each summand is irreducible.
Since $\glnidx-k$ is even, $\omega_a|_{C_{\cover{T}_{\glnidx-k+1}(F)}}=1$ whence $\omega_a\pi\isomorphic\pi$ and all summands are equal. Thus if $k<\glnidx$ we get $I(\tau,\pi)^{\star}=[F^*:{\Fsquares}]^2\rho(\chi)$. If $k=\glnidx$, $I(\tau,\pi)^{\star}$ is equal to \eqref{eq:claim tensor following Kable 3} hence $I(\tau,\pi)^{\star}=[F^*:{\Fsquares}]\rho(\chi)$.

\item\label{item:odd n, k even}$\glnidx$ is odd, $k$ is even: as in case~\eqref{item:k and n even} consider \eqref{eq:claim tensor following Kable 4}. Because $k$ is even, as above each summand of \eqref{eq:claim tensor following Kable 4} is irreducible. Since both $\glnidx$ and $\glnidx-k$ are odd, any $z\in C_{\CGLF{\glnidx}{F}}$ can be written as $z=z_1z_2$ with $z_1\in\cover{C}_{\GLF{k}{F}}$ and $z_2\in  C_{\CGLF{\glnidx-k}{F}}$, and furthermore, $z\in C_{\cover{T}_{\glnidx+1}(F)}$ and $z_2\in C_{\cover{T}_{\glnidx-k+1}(F)}$. Additionally, because $k$ is even, $\cover{C}_{\GLF{k}{F}}$ is contained in the center of $\cover{T}_{\GL{k}}(F)^{\star}$, 
    hence $\sigma|_{\cover{C}_{\GLF{k}{F}}}$ is a character. Using these observations we see that the restrictions of
$\sigma\otimes\omega_a\pi$ to $C_{\CGLF{\glnidx}{F}}$ as $a$ varies are different. Therefore the summands in \eqref{eq:claim tensor following Kable 4} are inequivalent.

Each summand takes the form $\rho(\chi')$, where $\chi'$ is some genuine character of $C_{\cover{T}_{\glnidx+1}(F)}$, but $\chi'$ must agree with $\chi$ on $\cover{T}_{\glnidx+1}(F)^2$ because $\sigma|_{\cover{T}_{\GL{k}}(F)^2}=\tau|_{\cover{T}_{\GL{k}}(F)^2}$ ($\cover{T}_{\GL{k}}(F)^2<C_{\cover{T}_{\GL{k}}(F)}$) and $\omega_a|_{\cover{T}_{\glnidx-k+1}(F)^2}=1$. There are $[F^*:\Fsquares]$ such characters $\chi'$ and because there are exactly $[F^*:\Fsquares]$ non isomorphic summands in \eqref{eq:claim tensor following Kable 4}, all possible $\chi'$ appear. It follows that $I(\tau,\pi)^{\star}=[F^*:{\Fsquares}]\oplus_{\chi'\in\Pi(\chi)}\rho(\chi')$.

\item\label{item:n and k odd}$\glnidx$ and $k$ are odd: assume $k<\glnidx$. Using Lemma~\ref{lemma:induced representation composition factors},
\begin{align*}
I(\tau,\pi)^{\star}&=[F^*:{\Fsquares}]\cinduced{p^{-1}(T_{\GL{k}}(F)\times T_{\glnidx-k+1}(F)^{\star})}{\cover{T}_{\glnidx+1}(F)}{\tau\otimes\pi^{\star}}\\\nonumber
&=[F^*:{\Fsquares}]\bigoplus_{h\in\lmodulo{\cover{T}_{\glnidx-k+1}(F)^{\star}}{\cover{T}_{\glnidx-k+1}(F)}}\cinduced{p^{-1}(T_{\GL{k}}(F)\times T_{\glnidx-k+1}(F)^{\star})}{\cover{T}_{\glnidx+1}(F)}{\omega_h\tau\otimes\rho}.
\end{align*}
Here $\omega_h(a)=c(\Upsilon(h),\det{a})$ and $\pi^{\star}=\oplus_{h}\rconj{h}\rho$ as in Claim~\ref{claim:restriction results analogous to Kable}. Since $k$ is odd, by Proposition~\ref{proposition:Kable summary of restriction results} we have $Hom_{\cover{T}_{\GL{k}}(F)}(\omega_{h'}\omega_h\tau,\omega_h\tau)=0$ as long as $h'\notin\cover{T}_{\glnidx-k+1}(F)^{\star}$ and it follows that each of the last summands is irreducible. For $h\ne h'$, the summands are non isomorphic: this follows as in case~\eqref{item:odd n, k even} (the opposite case with respect to $k$ and $\glnidx-k$), because $\cover{C}_{\GLF{\glnidx-k}{F}}$ is contained in the center of $\cover{T}_{\glnidx-k+1}(F)^{\star}$ (see \eqref{eq:general commutator on torus}) and for $z\in C_{\CGLF{\glnidx}{F}}$, $z=z_1z_2$ where $z_1\in C_{\CGLF{k}{F}}$ and $z_2\in \cover{C}_{\GLF{\glnidx-k}{F}}$. Thus $I(\tau,\pi)^{\star}=[F^*:{\Fsquares}]\oplus_{\chi'\in\Pi(\chi)}\rho(\chi')$.
Finally if $k=\glnidx$, by Proposition~\ref{proposition:Kable summary of restriction results},
\[
I(\tau,\pi)^{\star}=\cinduced{\cover{T}_{\GL{\glnidx}}(F)^{\star}}{\cover{T}_{\GL{\glnidx}}(F)}{\tau^{\star}}\otimes\pi=\oplus_{\omega}(\omega\tau\otimes\pi)=\oplus_{\chi'\in\Pi(\chi)}\rho(\chi').\qedhere
\]
\end{enumerate}
\end{proof}
\begin{remark}
One could define
$I(\tau,\pi)^{\star}=\cinduced{p^{-1}(H_1(F)\times H_2(F)^{\star})}{\cover{H}(F)}{\tau\otimes\pi^{\star}}$.
With this definition the case $k=\glnidx$ is simpler, $I(\tau,\pi)^{\star}=\tau\otimes\pi$.
Lemma~\ref{lemma:induced representation composition factors} implies that for $k<\glnidx$,
\begin{align*}
\cinduced{p^{-1}(H_1(F)\times H_2(F)^{\star})}{\cover{H}(F)}{\tau\otimes\pi^{\star}}\isomorphic\cinduced{p^{-1}(H_1(F)^{\star}\times H_2(F))}{\cover{H}(F)}{\tau^{\star}\otimes\pi}.
\end{align*}
Then in Lemma~\ref{lemma:induced representation composition factors for torus} we obtain smaller multiplicities. However, the presentation in Sections~\ref{subsubsection:defs and basic properties}-\ref{subsubsection:vanishing and results} below seems to be simpler with the definition above.
\end{remark}
\subsubsection{Discussion on the metaplectic tensor product}\label{subsubsection:tensor discussion}
Irreducible representations of Levi subgroups of classical groups are usually described in terms of the tensor product.
For metaplectic groups the direct factors of Levi subgroups do not necessarily commute, hence the tensor construction cannot be extended in a straightforward manner.

The metaplectic tensor product in the context of $\GL{\glnidx}$ has been studied by various authors \cite{FK,Su2,Kable,Mezo,Tk2}. Kable \cite{Kable} used a representation similar to $I(\tau,\pi)^{\star}$ (see Section~\ref{subsubsection:tensor product}) to define a metaplectic tensor product, between genuine indecomposable representations of Levi subgroups of the double cover of $\GLF{\glnidx}{F}$. He studied the induced space $I(\tau_1,\tau_2)^{\star}$, where $\tau_i$ are representations of $\cover{L}_i(F)$ and $L_i<\GL{\glnidx_i}$ are Levi subgroups. To any genuine character $\omega$ of $C_{\CGLF{\glnidx}{F}}$, Kable defined $\tau_1\otimes_{\omega}\tau_2$ as an indecomposable summand of $I(\tau_1,\tau_2)^{\star}$, on which $C_{\CGLF{\glnidx}{F}}$ acts by $\omega$. He proved that this construction satisfies many of the useful properties of the tensor product, e.g., associativity, compatibility with taking contragradients and certain compatibility with Jacquet functors.

Essential for his approach was the Mackey theory for the restriction $\tau_i^{\star}=\tau_i|_{\cover{L}_i(F)^{\star}}$, which he developed for the double cover (\cite{Kable} Propositions~3.1,3.2). The main obstacle in trying to extend his construction to our setting, is the lack of an analog of these results 
for $G_{\glnidx}(F)$.

Mezo \cite{Mezo} considered $r$-fold covers. Denote $\GLF{m}{F}^r=\setof{b\in\GLF{m}{F}}{\det{b}\in {F^{*r}}}$. Mezo defined a tensor representation for $L=\GL{\glnidx_1}\times\ldots\times\GL{\glnidx_k}$ using restrictions from $\GLF{\glnidx_i}{F}$ to $\GLF{\glnidx_i}{F}^r$ and a process resembling the construction in Section~\ref{subsubsection:representations of the torus}. Takeda \cite{Tk2} constructed a global metaplectic tensor product, whose local components agree with those of Mezo \cite{Mezo}. He proved several properties for the global (and local) tensor, e.g., compatibility with induction and automorphicity.

For our purposes it will be sufficient to consider the space $I(\tau,\pi)^{\star}$, on which we can compute certain Jacquet functors simply enough. The lack of a universal property for the metaplectic tensor product places a heavy burden on details. At present, we do not attempt to extend the results of \cite{Mezo,Tk2} to our context.
\subsection{Exceptional (small) representations}\label{subsection:small representations}
\subsubsection{Definition and basic properties}\label{subsubsection:defs and basic properties}
We construct exceptional representations, adapting the results of Bump, Friedberg and Ginzburg \cite{BFG} in the context of $SO_{2\glnidx+1}$. These representations were called ``small" in \cite{BFG}. 

Let $\chi$ be a genuine character of $C_{\cover{T}_{\glnidx+1}(F)}$. We say that $\chi$ is exceptional if for all $\alpha\in\Delta_{G_{\glnidx}}$, $\chi({\alpha^{\vee}}^*(x^{\mathfrak{l}(\alpha)}))=|x|$ for all $x\in F^*$. In particular, such a character is regular.

Let $\chi_0$ be a character of $T_{\glnidx+1}(F)^2$ such that $\chi$ is obtained from $\chi_0$ by extension, as explained in Section~\ref{subsubsection:representations of the torus}. Since $\mathfrak{s}(\alpha^{\vee}(x^{\mathfrak{l}(\alpha)}))={\alpha^{\vee}}^*(x^{\mathfrak{l}(\alpha)})$, $\chi_0$ satisfies $\chi_0(\alpha^{\vee}(x^{\mathfrak{l}(\alpha)}))=|x|$ for all $\alpha\in\Delta_{G_{\glnidx}}$ and $x\in F^*$. We see that $\chi$ is exceptional if and only if $\chi_0$ satisfies this property. Refer to Section~\ref{subsubsection:Explicit construction of an exceptional representation} below for a detailed construction of an exceptional character.

In this section, except in Proposition~\ref{proposition:periodicity theorem}, the field is $p$-adic. Recall that $\mathbf{w_0}$ denotes the longest element of $W_{\glnidx}$ and $w_0\in\mathfrak{W}_{\glnidx}$ is the representative.
Additionally, for brevity, when we refer to a Jacquet functor applied to a genuine representation along some unipotent subgroup $U$, we drop $\mathfrak{s}$ from the notation, e.g., write $j_U(\cdots)$ instead of $j_{\mathfrak{s}(U)}(\cdots)$.

As in \cite{BFG} (Theorem~2.2) we have the ``periodicity theorem" (see also \cite{KP} Theorem~I.2.9):
\begin{proposition}\label{proposition:periodicity theorem}
Let $\chi$ be an exceptional character. The representation $\Theta_{G_{\glnidx},\chi}$ on the space $M(w_0,\chi)V(\chi)$ is irreducible, it is the unique irreducible subrepresentation of $V(\rconj{\mathbf{w_0}}\chi)$ and the unique irreducible quotient of $V(\chi)$. Furthermore if $F$ is $p$-adic, $j_{N_{\glnidx}}(\Theta_{G_{\glnidx},\chi})=\rho(\rconj{\mathbf{w_0}}\chi)$.
\end{proposition}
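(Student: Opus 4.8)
The plan is to follow the pattern of Kazhdan--Patterson \cite{KP} (Theorem~I.2.9) and Bump--Friedberg--Ginzburg \cite{BFG} (Theorem~2.2), which works because all the ingredients have already been set up for $\cover{G}_{\glnidx}(F)$: the intertwining operators $M(w,\chi)$ and their factorization through simple reflections (Claim~\ref{claim:props of intertwining and GK formula}), the semisimplicity of $j_{N_{\glnidx}}(\rho(\chi))$ for regular $\chi$ together with the Hom-space computation from \cite{BZ2}, and the fact (noted after Definition of exceptional character) that an exceptional character is automatically regular. First I would analyze, for each simple root $\alpha = \alpha_l$ of $G_{\glnidx}$, the rank-one intertwining operator $M(w_\alpha,\chi)$ restricted to the $\alpha$-generated $SL_2$ (or its metaplectic preimage). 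The key point is that exceptionality $\chi({\alpha^\vee}^*(x^{\mathfrak{l}(\alpha)})) = |x|$ forces the relevant local coefficient / normalizing factor to have a genuine pole, so that the normalized operator $M^*(w_\alpha,\chi)$ — obtained by dividing by the scalar governing its action on the unramified vector (cf. the constant $c(\mathbf{w_\alpha},\chi)$ in Claim~\ref{claim:props of intertwining and GK formula}) — has image strictly smaller than $V(\chi)$; concretely its image is the unique irreducible quotient of the rank-one induced representation, and this quotient is one-dimensional on the $SL_2$. Composing over a reduced word for $\mathbf{w_0}$ via $M(ww',\chi) = M(w,\rconj{\mathbf{w'}}\chi)M(w',\chi)$, one gets that $M(w_0,\chi)V(\chi)$ is a proper submodule of $V(\rconj{\mathbf{w_0}}\chi)$.

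\smallskip
Next I would pin down irreducibility and the uniqueness statements. Because $\chi$ is regular, $j_{N_{\glnidx}}(V(\chi)) = \bigoplus_{\mathbf{w}\in W_{\glnidx}} \rho(\rconj{\mathbf{w}}\chi)$ with all summands distinct, and by Frobenius reciprocity the Jacquet module of any nonzero subquotient of $V(\chi)$ is a nonempty subset of these summands. The operator $M(w_0,\chi)$ is nonzero (it is nonzero on the unramified vector when $\chi$ is unramified, and by meromorphic continuation / the factorization through simple reflections it is nonzero in general at an exceptional $\chi$ — here one checks that each rank-one factor $M^*(w_\alpha,\cdot)$ is nonzero), so $\Theta := M(w_0,\chi)V(\chi)$ is a nonzero submodule of $V(\rconj{\mathbf{w_0}}\chi)$. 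I would show $j_{N_{\glnidx}}(\Theta) = \rho(\rconj{\mathbf{w_0}}\chi)$, a single summand: the inclusion $\supseteq$ comes from the unramified vector plus the explicit action of $M(w_0,\chi)$ through the $SL_2$-quotients forcing all ``higher" components to die, and the inclusion $\subseteq$ comes from the fact that $\Theta$, being a quotient of $V(\chi)$, can only have $\rho(\rconj{\mathbf{w}}\chi)$ in its Jacquet module for those $\mathbf{w}$ surviving the successive rank-one quotients, which is exactly $\{\mathbf{w_0}\}$. Since the Jacquet module is one-dimensional as a $\cover{T}_{\glnidx+1}(F)$-type, $\Theta$ has a unique irreducible quotient and a unique irreducible submodule, and any irreducible subquotient of $\Theta$ has Jacquet module $\rho(\rconj{\mathbf{w_0}}\chi)$; combined with the $\mathrm{Hom}$-dimension statement (the space of intertwinings between $V(\chi)$ and $V(\rconj{\mathbf{w}}\chi)$ is at most one-dimensional) this forces $\Theta$ to be irreducible. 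It is then simultaneously the unique irreducible quotient of $V(\chi)$ (image of the map $M(w_0,\chi)$, whose kernel is the unique maximal submodule) and the unique irreducible submodule of $V(\rconj{\mathbf{w_0}}\chi)$ (the socle, detected by its Jacquet module).

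\smallskip
Finally, for the last sentence, the computation $j_{N_{\glnidx}}(\Theta_{G_{\glnidx},\chi}) = \rho(\rconj{\mathbf{w_0}}\chi)$ over a $p$-adic field is exactly what was established along the way in the previous paragraph, so no separate argument is needed beyond recording it. The main obstacle I anticipate is the rank-one analysis at the bad simple root $\alpha_{\glnidx+1}$ (where $\mathfrak{l}(\alpha) = 1$ versus $2$ for the long roots, and where $c_{\alpha_{\glnidx+1}}$ involves $c(x^2,y)$): one must make sure the definition of exceptionality with the length-dependent exponent $\mathfrak{l}(\alpha)$ is precisely calibrated so that the normalizing scalar from Claim~\ref{claim:props of intertwining and GK formula} acquires a pole at $\chi$ uniformly in all simple roots, and that the image of the rank-one normalized operator is genuinely one-dimensional there as well. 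This is where the metaplectic $SL_2$-computation (using the identities \eqref{eq:torus alpha and alpha}--\eqref{eq:weyl element and unipotent}) has to be done carefully; everything else is a transcription of \cite{KP} and \cite{BFG}. A secondary technical point is justifying non-vanishing and the precise image of $M(w_0,\chi)$ at the special value $\chi$ by a limiting argument from the unramified/generic situation, using the factorization into rank-one operators and the meromorphic continuation asserted in Claim~\ref{claim:props of intertwining and GK formula}.
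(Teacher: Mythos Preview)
Your plan is essentially correct and is the same strategy as the paper's: both follow \cite{KP} and \cite{BFG}, reduce to a rank-one analysis at each simple root, and use the Jacquet-module computation $j_{N_{\glnidx}}(\Theta)=\rho(\rconj{\mathbf{w_0}}\chi)$ (a single irreducible summand, since $\chi$ is regular) to force irreducibility. Your identification of the short root $\alpha_{\glnidx+1}$ as the delicate case is exactly right; the paper records this as ``a computation on $G_1(F)$ and $\cover{SL}_2(F)$ in the remaining cases'' together with the observation that in the unramified situation $1-q^{-1}(\rconj{\mathbf{w_{\alpha}}}\chi)(a_{\alpha})=0$ for every simple $\alpha$.

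There are two points where the paper diverges from your outline, both worth noting. First, for the uniqueness assertions (unique irreducible quotient of $V(\chi)$, unique irreducible sub of $V(\rconj{\mathbf{w_0}}\chi)$) the paper does not extract this from the Jacquet-module analysis but simply invokes the Langlands Quotient Theorem for covering groups, proved by Ban and Jantzen \cite{BJ}, using that an exceptional $\chi$ lies in the positive Weyl chamber. This bypasses the circularity in your sentence ``image of the map $M(w_0,\chi)$, whose kernel is the unique maximal submodule'': that the kernel is the unique maximal submodule is precisely what needs to be shown, and it is cleanest to cite \cite{BJ} rather than rederive it. Second, your outline is silent on the \archimedean\ case, where the Jacquet-module argument is not available as stated; the paper handles this case separately, again via the Langlands Quotient Theorem (in the form of \cite{La3,BW}), noting that the characterization of the Langlands quotient as the image of $M(w_0,\chi)$ is not part of the statement in \cite{BJ} and so requires the classical proof. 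With these two citations inserted, your plan and the paper's proof coincide.
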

\begin{proof}
Consider the $p$-adic case first. The facts that $V(\rconj{\mathbf{w_0}}\chi)$ has a unique irreducible subrepresentation and $V(\chi)$ has a unique irreducible quotient, follow from the Langlands Quotient Theorem proved for metaplectic groups by Ban and Jantzen \cite{BJ}. This is because $\chi$ belongs to the positive Weyl chamber. Now it is left to compute the Jacquet module and establish the irreducibility of $\Theta_{G_{\glnidx},\chi}$.

One argues exactly as in \cite{BFG}, starting with computation of $j_{N_{\glnidx}}(\Theta_{G_{\glnidx},\chi})$, which implies that $\Theta_{G_{\glnidx},\chi}$ is irreducible. 
The calculation follows from the properties of the Jacquet functor described in Section~\ref{subsubsection:representations of the torus}, Claim~\ref{claim:props of intertwining and GK formula}, the observation that when $|2|=1$ and $q>3$, $1-q^{-1}(\rconj{\mathbf{w_{\alpha}}}\chi(a_{\alpha}))=0$ for all $\alpha\in\Delta_{G_{\glnidx}}$, and a computation on $G_1(F)$ and $\cover{SL}_2(F)$ in the remaining cases 
($\cover{G}_1(F)$ is split).

If $F$ is \archimedean, the proposition follows from the Langlands Quotient Theorem \cite{La3}, whose proof by Borel and Wallach \cite{BW} is applicable to cover groups (see \cite{BJ}). Note that the results of \cite{BJ} do not include the characterization of the Langlands quotient in terms of the intertwining operators.
\end{proof}
The representation $\Theta_{G_{\glnidx},\chi}$ is the exceptional representation corresponding to the exceptional character $\chi$.

We describe the contragradient exceptional representation.
\begin{claim}\label{claim:small rep is not self dual}
We have $\Theta_{G_{\glnidx},\chi}^{\wedge}=\Theta_{G_{\glnidx},\rconj{\mathbf{w_0}}\chi^{\wedge}}$. Furthermore,
the restriction of $\rconj{\mathbf{w_0}}\chi^{\wedge}$ to $\cover{T}_{\glnidx+1}(F)^2$ agrees with the restriction of $\chi$ multiplied by some nongenuine character $\lambda$ 
of $\cover{T}_{\glnidx+1}(F)^2$.
\end{claim}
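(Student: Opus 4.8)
The plan is to combine the general duality statement for principal series already recorded in Section~\ref{subsubsection:representations of the torus}, namely $\rho(\chi)^{\wedge}=\rho(\chi^{-1})$ together with $\rconj{\mathbf{w}}\rho(\chi)=\rho(\rconj{\mathbf{w}}\chi)$, with the characterization of $\Theta_{G_{\glnidx},\chi}$ from Proposition~\ref{proposition:periodicity theorem} as the unique irreducible quotient of $V(\chi)$ (equivalently, the unique irreducible subrepresentation of $V(\rconj{\mathbf{w_0}}\chi)$). First I would dualize: since $\Theta_{G_{\glnidx},\chi}$ is the unique irreducible quotient of $V(\chi)=\induced{\cover{B}_{\glnidx}(F)}{\cover{G}_{\glnidx}(F)}{\rho(\chi)}$, its contragredient $\Theta_{G_{\glnidx},\chi}^{\wedge}$ is the unique irreducible subrepresentation of the contragredient induced representation, which by the standard compatibility of smooth contragredient with normalized parabolic induction (for $\ell$-groups, as in \cite{BZ2}) is $\induced{\cover{B}_{\glnidx}(F)}{\cover{G}_{\glnidx}(F)}{\rho(\chi)^{\wedge}}=\induced{\cover{B}_{\glnidx}(F)}{\cover{G}_{\glnidx}(F)}{\rho(\chi^{-1})}=V(\chi^{-1})$. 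Applying Proposition~\ref{proposition:periodicity theorem} to the exceptional character $\rconj{\mathbf{w_0}}\chi^{\wedge}$ — one should first check it is again exceptional, which is routine since $\mathbf{w_0}$ permutes $\Sigma_{G_{\glnidx}}^+$ up to sign and preserves root lengths, and $\chi\mapsto\chi^{\wedge}=\chi^{-1}$ sends the exceptional condition $\chi({\alpha^{\vee}}^*(x^{\mathfrak{l}(\alpha)}))=|x|$ to $|x|^{-1}$, which is restored after applying $\mathbf{w_0}$ — we get that $\Theta_{G_{\glnidx},\rconj{\mathbf{w_0}}\chi^{\wedge}}$ is the unique irreducible subrepresentation of $V(\rconj{\mathbf{w_0}}(\rconj{\mathbf{w_0}}\chi^{\wedge}))=V(\chi^{-1})$. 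Matching the two ``unique irreducible subrepresentation of $V(\chi^{-1})$'' statements yields $\Theta_{G_{\glnidx},\chi}^{\wedge}=\Theta_{G_{\glnidx},\rconj{\mathbf{w_0}}\chi^{\wedge}}$.

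For the second assertion I would compute the restriction of $\rconj{\mathbf{w_0}}\chi^{\wedge}$ to $\cover{T}_{\glnidx+1}(F)^2$ directly, using the section $\mathfrak{s}$ to identify genuine characters of $\cover{T}_{\glnidx+1}(F)^2$ with characters of $T_{\glnidx+1}(F)^2$ (Claim~\ref{claim:T^2 is split by s}). Writing an element of $T_{\glnidx+1}(F)^2$ in the coordinates $\prod_{i=1}^{\glnidx}\eta_i^{\vee}(a_i^2)\beta_1^{\vee}(t_1)$, formula \eqref{eq:formula for conjugation by w_0 with convenient coordinates} gives $\rconj{\mathbf{w_0}}$ sending this to $\prod_{i=1}^{\glnidx}\eta_i^{\vee}(a_i^{-2})\beta_1^{\vee}((\prod_i a_i^2)^{-1}t_1)$, which again lies in $T_{\glnidx+1}(F)^2$; combined with inversion ($\chi\mapsto\chi^{-1}$), the composite $\chi\mapsto\rconj{\mathbf{w_0}}\chi^{-1}$ acts on the restriction $\chi|_{\cover{T}_{\glnidx+1}(F)^2}$ by precomposition with an explicit automorphism $\theta$ of $T_{\glnidx+1}(F)^2$ and then inversion. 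The key point is that $\theta$ differs from the identity only by something absorbed into the center: on the $\GL{\glnidx}$-type coordinates $a_i^2$ the automorphism is $a_i^2\mapsto a_i^{-2}$, and since the exceptional $\chi_0$ is already pinned down on the simple coroots $\alpha_i^{\vee}$ (hence on $\eta_i^\vee$ of squares) by $\chi_0(\alpha^\vee(x^{\mathfrak{l}(\alpha)}))=|x|$, the ratio $(\rconj{\mathbf{w_0}}\chi^{\wedge})/\chi$ on $\cover{T}_{\glnidx+1}(F)^2$ is a character whose absolute value is trivial — so it is a character $\lambda$ which, being a ratio of two genuine characters, is nongenuine. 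I would make this precise by exhibiting $\lambda$ explicitly in the coordinates of \eqref{eq:convenient coordinates of torus}: it is the character sending $\prod_{i}\eta_i^{\vee}(a_i^2)\beta_1^{\vee}(t_1)$ to a product of Hilbert symbols / signs coming from comparing $\mathfrak{s}$-values under $\rconj{w_0}$ and inversion, using \eqref{eq:torus alpha and alpha}, \eqref{eq:torus alpha and alpha'} and Claim~\ref{claim:cocycle of spin on the torus}, with $|\lambda|\equiv 1$.

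The step I expect to be the main obstacle is the bookkeeping in the second assertion: carefully tracking how $\mathfrak{s}$ interacts with $\rconj{\mathbf{w_0}}$ on $\cover{T}_{\glnidx+1}(F)^2$ — i.e.\ verifying $\mathfrak{s}(\rconj{\mathbf{w_0}}t)$ versus $\rconj{\mathfrak{s}(w_0)}\mathfrak{s}(t)$ up to a genuine-valued discrepancy — and then confirming the resulting discrepancy character $\lambda$ is independent of $t_1$ (so it really is a character of $\cover{T}_{\glnidx+1}(F)^2$, not merely a function) and has trivial absolute value. The absolute-value claim should follow cleanly from the exceptional normalization together with the fact, recorded in Section~\ref{subsubsection:representations of the torus}, that $\Repart\chi$ determines $|\chi|$ on the coroots $\alpha_i^\vee$, $2\le i\le \glnidx+1$: both $\chi$ and $\rconj{\mathbf{w_0}}\chi^{\wedge}$ are exceptional, hence have the same $\Repart$ up to the $\mathbf{w_0}$-action, which on $T_{\glnidx+1}(F)^2$ — where the center's contribution drops out — gives matching absolute values. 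The genuineness computation itself is a finite Hilbert-symbol calculation along the lines of Claim~\ref{claim:cocycle of spin on the torus} and should not present conceptual difficulty once the correct automorphism $\theta$ is identified.
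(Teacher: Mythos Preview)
Your plan for the first assertion is correct and is essentially the paper's argument, phrased dually: the paper says $\Theta_{G_{\glnidx},\chi}^{\wedge}$ is an irreducible quotient of $V(\chi')$ with $\chi'=\rconj{\mathbf{w_0}}\chi^{\wedge}$, verifies $\chi'$ is exceptional, and invokes the uniqueness in Proposition~\ref{proposition:periodicity theorem}.

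For the second assertion you overcomplicate things and make one incorrect side claim. The assertion $|\lambda|\equiv 1$ is false in general: the exceptional condition pins down $|\chi|$ only on ${\alpha_i^{\vee}}^*(x^{\mathfrak{l}(\alpha_i)})$ for $2\le i\le\glnidx+1$ and leaves the central direction $\beta_1^{\vee}(F^*)$ unconstrained; since $\lambda$ lives precisely in that direction, its absolute value need not be $1$. (The statement only requires $\lambda$ to be a \emph{nongenuine character}, which, as you correctly observe, is automatic for a ratio of two genuine characters.) More importantly, the ``main obstacle'' you anticipate --- comparing $\mathfrak{s}(\rconj{\mathbf{w_0}}t)$ with $\rconj{\mathfrak{s}(w_0)}\mathfrak{s}(t)$ via Hilbert-symbol bookkeeping --- does not arise. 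Since $\mathfrak{s}$ splits $T_{\glnidx+1}(F)^2$ (Claim~\ref{claim:T^2 is split by s}) and conjugation by each $w_{\alpha_j}^*$ sends ${\alpha_i^{\vee}}^*(y^2)$ to a product of elements ${\alpha_l^{\vee}}^*(\text{square})$ by \eqref{eq:weyl element and torus}, conjugation by $\mathfrak{s}(w_0)$ commutes with $\mathfrak{s}$ on $T_{\glnidx+1}(F)^2$; no sign discrepancy appears. The paper's computation is then a single line: for $t=\prod_i\eta_i^{\vee}(a_i^2)\beta_1^{\vee}(t_1)$, formula~\eqref{eq:formula for conjugation by w_0 with convenient coordinates} gives directly $\chi'_0(t)=\chi_0\bigl(t\cdot\beta_1^{\vee}(\Upsilon(t))\bigr)$, so $\lambda(t)=\chi_0(\beta_1^{\vee}(\Upsilon(t)))$. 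Because $\Upsilon$ is trivial on $\alpha_i^{\vee}$ for $i\ge 2$, this explicit $\lambda$ immediately shows $\chi'$ is exceptional --- in the paper this replaces your separate ``routine'' check rather than following it.
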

\begin{proof}
Put $\chi'=\rconj{\mathbf{w_0}}\chi^{\wedge}$. Proposition~\ref{proposition:periodicity theorem} implies that $\Theta_{G_{\glnidx},\chi}^{\wedge}$ is an irreducible quotient of
$V(\chi')$. Let $\chi_0$ (resp. $\chi'_0$) be a character of $T_{\glnidx+1}(F)^2$ such that $\chi|_{\cover{T}_{\glnidx+1}(F)^2}$ (resp. $\chi'|_{\cover{T}_{\glnidx+1}(F)^2}$) is the extension of $\chi_0$ (resp. $\chi'_0$). If $t\in T_{\glnidx+1}(F)^2$, Equality~\eqref{eq:formula for conjugation by w_0 with convenient coordinates} implies
$\chi'_0=\chi_0(t\cdot \beta_1^{\vee}(\Upsilon(t)))$. Denote $\lambda(t)=\chi_0(\beta_1^{\vee}(\Upsilon(t)))$. Since $\beta_1^{\vee}(x)\in T_{\glnidx+1}(F)^2$ for all $x\in F^*$, $\lambda$ is a character of $T_{\glnidx+1}(F)^2$ and
$\chi'_0=\lambda\cdot\chi_0$. It follows that $\chi'|_{\cover{T}_{\glnidx+1}(F)^2}=\lambda\cdot(\chi|_{\cover{T}_{\glnidx+1}(F)^2})$. In particular,
$\chi'$ is an exceptional character. Then by Proposition~\ref{proposition:periodicity theorem}, $\Theta_{G_{\glnidx},\chi'}$ is the unique irreducible quotient of $V(\chi')$. Hence
$\Theta_{G_{\glnidx},\chi}^{\wedge}=\Theta_{G_{\glnidx},\chi'}$.
\end{proof}
\begin{remark}\label{remark:small rep is not self dual}
In contrast with $\Theta_{SO_{2\glnidx+1}}$ of \cite{BFG}, the representation $\Theta_{G_{\glnidx},\chi}$ is not self-dual.
\end{remark}

The next proposition describes an exceptional representation in terms of exceptional representations of $\GL{k}$ and $G_{\glnidx-k}$.
We briefly recall the construction of exceptional representations of Kazhdan and Patterson \cite{KP} (Section~I.1 and Theorem~I.2.9). Let $\chi_1$ be a genuine character of $C_{\cover{T}_{\GL{k}}(F)}=\cover{T}_{\GL{k}}(F)^2C_{\CGLF{k}{F}}$. Call $\chi_1$ exceptional if $\chi_1(\mathfrak{s}(\eta_i^{\vee}(x^2)\eta_{i+1}^{\vee}(x^{-2})))=|x|$ for all $1\leq i<k$ and $x\in F^*$. In this case
the exceptional representation $\Theta_{\GL{k},\chi_1}$ is the unique irreducible quotient of
$\induced{\cover{B}_{\GL{k}}(F)}{\CGLF{k}{F}}{\rho(\chi_1)}$. It is also the unique irreducible subrepresentation of $\induced{\cover{B}_{\GL{k}}(F)}{\CGLF{k}{F}}{\rho(\rconj{\mathbf{w_0'}}\chi_1)}$,
where $\mathbf{w_0'}$ is the longest element of the Weyl group of $\GL{k}$. Note that for $1\leq i<\glnidx$,
$\mathfrak{s}(\eta_i^{\vee}(x^2)\eta_{i+1}^{\vee}(x^{-2}))={\alpha_{i+1}^{\vee}}^*(x^{\mathfrak{l}(\alpha_{i+1})})$.

Let $\mathbf{w_0''}$ be the longest element of $W_{\glnidx-k}$ (the Weyl group of $G_{\glnidx-k}$).
Also recall the representation $\chi_1\odot\chi_2$ defined in Section~\ref{subsubsection:tensor product}.

The following claim relates an exceptional character of $C_{\cover{T}_{\glnidx+1}(F)}$ to a pair of such characters of
$C_{\cover{T}_{\GL{k}}(F)}$ and $C_{\cover{T}_{\glnidx-k+1}(F)}$.
\begin{claim}\label{claim:relating a character to two characters}
Let $\chi$ be an exceptional character and let $0\leq k\leq\glnidx$. There are exceptional characters
$\chi_1$ and $\chi_2$ of $C_{\cover{T}_{\GL{k}}(F)}$ and $C_{\cover{T}_{\glnidx-k+1}(F)}$ such that
\begin{align}\label{eq:good condition for relating an exceptional character to two such}
\rconj{\mathbf{w_0}}\chi|_{\cover{T}_{\glnidx+1}(F)^2}=\rconj{\mathbf{w_0'}}\chi_1\odot\rconj{\mathbf{w_0''}}\chi_2.
\end{align}
\end{claim}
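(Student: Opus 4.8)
The plan is to construct $\chi_1$ and $\chi_2$ explicitly and then verify the matching condition \eqref{eq:good condition for relating an exceptional character to two such} by a direct torus computation using the coordinate formulas from Section~\ref{subsection:properties of GSpin}. First I would work on the level of the characters $\chi_0$, $\chi_{1,0}$, $\chi_{2,0}$ of the groups of squares $T_{\glnidx+1}(F)^2$, $T_{\GL{k}}(F)^2$, $T_{\glnidx-k+1}(F)^2$, since by Claim~\ref{claim:T^2 is split by s} a genuine character on the cover of each such group corresponds, via the splitting $\mathfrak{s}$, to an ordinary character of the group itself, and by Section~\ref{subsubsection:defs and basic properties} exceptionality of $\chi$ is equivalent to exceptionality of $\chi_0$ (and similarly for the $\GL{k}$- and $G_{\glnidx-k}$-factors, using the \cite{KP} notion recalled before the claim).

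The key step is to pin down $\chi_{1,0}$ and $\chi_{2,0}$ from the equations forced on them. On $T_{\GL{k}}(F)^2 = \setof{\prod\eta_i^{\vee}(a_i^2)}{a_i\in F^*}$, the condition $\chi_{1,0}(\eta_i^{\vee}(x^2)\eta_{i+1}^{\vee}(x^{-2}))=|x|$ for $1\le i<k$ fixes $\chi_{1,0}$ up to one free parameter (its value on the ``center'' direction $\prod\eta_i^{\vee}(d)$); since $\chi_1$ is being built from scratch this is no obstruction, and one simply declares $\chi_{1,0}(\prod_{i=1}^k\eta_i^{\vee}(a_i))$ via the evident exceptional formula $\prod_i|a_i|^{(k+1-2i)/2}$ (or the \cite{KP} $\mathtt{c}=0$ normalization). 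Similarly $\chi_{2,0}$ on $T_{\glnidx-k+1}(F)^2$ is the exceptional character for $G_{\glnidx-k}'$, i.e.\ $\chi_{2,0}(\beta_i^{\vee}(x^{\mathfrak{l}(\beta_i)}))=|x|$ for all simple roots $\beta_i$, again with one free direction $\beta_1^{\vee}$. Then, using the coordinates \eqref{eq:convenient coordinates of torus}--\eqref{eq:image of convenient coordinates of the torus in GSpin} and the isomorphism \eqref{iso:torus splits into GL part and G part}, an element of $T_{\glnidx+1}(F)^2$ reads $\prod_{i=1}^{\glnidx}\eta_i^{\vee}(a_i^2)\beta_1^{\vee}(t_1)$, and I match $\chi_0$ of this element with $(\chi_{1,0}\otimes\chi_{2,0})$ of the corresponding pair. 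The two free directions ($\prod\eta_i^{\vee}(d)$ on the $\GL{k}$ side, $\beta_1^{\vee}$ on the $G_{\glnidx-k}$ side) are precisely the slack needed to absorb the restriction of $\chi_0$ to the two-dimensional ``non-simple-root'' part of $T_{\glnidx+1}(F)^2$, so one chooses the free parameters of $\chi_{1,0}$ and $\chi_{2,0}$ accordingly; a short computation with \eqref{eq:image of standard A_k in GSpin}, \eqref{eq:image of standard T_n-k+1 in GSpin} and \eqref{eq:image of convenient coordinates of the torus in GSpin} shows the system is consistent and has a solution.

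Finally I would incorporate the Weyl conjugations. By \eqref{eq:formula for conjugation by w_0 with convenient coordinates}, $\rconj{\mathbf{w_0}}$ acts on $T_{\glnidx+1}(F)^2$ by inverting the $\eta_i^{\vee}$-coordinates and twisting the $\beta_1^{\vee}$-coordinate by $(\prod a_i)^{-1}$; the longest elements $\mathbf{w_0'}$ of the $\GL{k}$ Weyl group and $\mathbf{w_0''}$ of $W_{\glnidx-k}$ act block-wise in the analogous way on $T_{\GL{k}}(F)^2$ and $T_{\glnidx-k+1}(F)^2$. Since $\cover{T}_{\glnidx+1}(F)^2$ is split under $\mathfrak{s}$ and these conjugations are compatible with the decomposition \eqref{iso:torus splits into GL part and G part} (the relevant $\sigma$-values being trivial on squares by Claim~\ref{claim:T^2 is split by s}), applying $\rconj{\mathbf{w_0}}$ to $\chi_0=\chi_{1,0}\otimes\chi_{2,0}$ is the same as applying $\rconj{\mathbf{w_0'}}$ and $\rconj{\mathbf{w_0''}}$ to the factors, up to exactly the center/$\beta_1^{\vee}$ twist already accounted for; re-choosing the free parameters of $\chi_1,\chi_2$ one more time gives \eqref{eq:good condition for relating an exceptional character to two such}. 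The main obstacle I anticipate is bookkeeping: making sure that the two ``free'' coordinates on the right-hand side genuinely suffice to match $\chi_0$ on \emph{all} of $T_{\glnidx+1}(F)^2$ (in particular that no constraint from exceptionality of $\chi$ is violated), and doing this uniformly in the parities of $k$ and $\glnidx$ and in the degenerate cases $k=0$ and $k=\glnidx$ (where $\beta_1^{\vee}$ maps onto $C_{G_{\glnidx}(F)}$, cf.\ \eqref{eq:image of standard T_n-k+1 in GSpin}). Everything else is a routine linear computation with the coroot coordinates.
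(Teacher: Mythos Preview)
Your reduction to non-genuine characters on $T_{\glnidx+1}(F)^2$, $T_{\GL{k}}(F)^2$, $T_{\glnidx-k+1}(F)^2$ via the splitting $\mathfrak{s}$ is exactly what the paper does, and the recognition that exceptionality is detected on the squares is correct. The difference is in how the construction is organized. The paper does not first match $\chi_0=\chi_{1,0}\otimes\chi_{2,0}$ and then ``incorporate the Weyl conjugations'' as a second step; instead it computes $\rconj{\mathbf{w_0}}(at)$ directly for $a\in T_{\GL{k}}(F)^2$, $t\in T_{\glnidx-k+1}(F)^2$ using \eqref{eq:conjugation of torus by w_0}--\eqref{eq:image of standard T_n-k+1 in GSpin} and \eqref{eq:formula for conjugation by w_0 with convenient coordinates}, obtaining (for $k<\glnidx$)
\[
\rconj{\mathbf{w_0}}(at)=\prod_{i=1}^{k}\eta_i^{\vee}(a_i^{-2})\cdot\prod_{i=1}^{\glnidx-k}\beta_i^{\vee}(t_i^{-2}t_1^4\det{a}^{-2})\,\beta_{\glnidx-k+1}^{\vee}(t_{\glnidx-k+1}^{-1}t_1^2\det{a}^{-1}),
\]
and then simply reads off the answer: set $\chi_2=\chi|_{T_{\glnidx-k+1}(F)^2}$ and $\chi_1=\eta\cdot\rconj{\mathbf{w_0'}}(\chi|_{T_{\GL{k}}(F)^2})^{-1}$, where $\eta(a)=\chi\bigl(\prod_{i=1}^{\glnidx-k}\beta_i^{\vee}(\det{a}^{-2})\,\beta_{\glnidx-k+1}^{\vee}(\det{a}^{-1})\bigr)$. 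The three required properties are then immediate. Your two-step route works too, but the step ``$\rconj{\mathbf{w_0}}$ acts block-wise as $\rconj{\mathbf{w_0'}}\times\rconj{\mathbf{w_0''}}$ up to a center/$\beta_1^{\vee}$ twist'' deserves care: on the $\GL{k}$ factor $\mathbf{w_0}$ \emph{inverts} each $\eta_i^{\vee}$ while $\mathbf{w_0'}$ \emph{reverses} them, and it is only because you are evaluating on exceptional characters that the discrepancy collapses to a determinant twist absorbable in your free parameter. The paper's direct computation sidesteps this subtlety and the parity/degenerate-case bookkeeping you anticipate.
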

\begin{proof}
Since $\cover{T}_{\glnidx+1}(F)^2$ is split under $\mathfrak{s}$, one can regard $\chi$ as a character of
$T_{\glnidx+1}(F)^2$ satisfying $\chi(\alpha^{\vee}(x^{\mathfrak{l}(\alpha)}))=|x|$ for all $\alpha\in\Delta_{G_{\glnidx}}$ and $x\in F^*$. It is enough to show there are characters $\chi_1$ and $\chi_2$ of
$T_{\GL{k}}(F)^2$ and $T_{\glnidx-k+1}(F)^2$ with the following properties.
\begin{enumerate}[leftmargin=*]
\item \label{item:claim relating item 1}$\chi_1(\eta_i^{\vee}(x^2)\eta_{i+1}^{\vee}(x^{-2}))=|x|$ for all $1\leq i<k$ and $x\in F^*$,
\item \label{item:claim relating item 2}$\chi_2(\alpha_i^{\vee}(x^{\mathfrak{l}(\alpha_i)}))=|x|$ for all $k+2\leq i\leq\glnidx+1$ and $x\in F^*$,
\item \label{item:claim relating item 3}
$\rconj{\mathbf{w_0}}\chi|_{T_{\glnidx+1}(F)^2}=\rconj{\mathbf{w_0'}}\chi_1\otimes\rconj{\mathbf{w_0''}}\chi_2$.
\end{enumerate}
Indeed, given such characters, one can extend them to genuine characters of $\cover{T}_{\GL{k}}(F)^2$ and $\cover{T}_{\glnidx-k+1}(F)^2$, then extend them again, not necessarily uniquely (depending on the parity of $k$ and $\glnidx$), to exceptional characters of the corresponding centers.

If $a=\prod_{i=1}^k{\eta_i^{\vee}}(a_i^2)$ and $t=\prod_{i=1}^{\glnidx-k}\beta_i^{\vee}(t_i^2)\beta_{\glnidx-k+1}^{\vee}(t_{\glnidx-k+1})$, using
\eqref{eq:conjugation of torus by w_0}-\eqref{eq:image of standard T_n-k+1 in GSpin} and 
\eqref{eq:formula for conjugation by w_0 with convenient coordinates} we get
\begin{align*}
\rconj{\mathbf{w_0}}(at)=\begin{dcases}
\prod_{i=1}^{k}\eta_i^{\vee}(a_i^{-2})\prod_{i=1}^{\glnidx-k}\beta_i^{\vee}(t_i^{-2}t_1^4\det{a}^{-2})\beta_{\glnidx-k+1}^{\vee}(t_{\glnidx-k+1}^{-1}t_1^2\det{a}^{-1})&k<\glnidx,\\
\prod_{i=1}^{\glnidx}\eta_i^{\vee}(a_i^{-2})\beta_1^{\vee}(\det{a}^{-1}t_1)&k=\glnidx.
\end{dcases}
\end{align*}
Set $\eta(a)=\chi(\prod_{i=1}^{\glnidx-k}\beta_i^{\vee}(\det{a}^{-2})\beta_{\glnidx-k+1}^{\vee}(\det{a}^{-1}))$,
$\chi_1=\eta\cdot\rconj{\mathbf{w_0'}}(\chi|_{T_{\GL{k}}(F)^2})^{-1}$ and $\chi_2=\chi|_{T_{\glnidx-k+1}(F)^2}$.
Clearly these characters satisfy the above properties.
\end{proof}
In general if $0<k\leq\glnidx$ and $\chi_1$ and $\chi_2$ are exceptional characters of $C_{\cover{T}_{\GL{k}}(F)}$ and $C_{\cover{T}_{\glnidx-k+1}(F)}$, 
we have the representation $I(\Theta_{\GL{k},\chi_1},\Theta_{G_{\glnidx-k},\chi_2})^{\star}$ defined in Section~\ref{subsubsection:tensor product},
\begin{align*}
I(\Theta_{\GL{k},\chi_1},\Theta_{G_{\glnidx-k},\chi_2})^{\star}=
\cinduced{p^{-1}(\GLF{k}{F}^{\star}\times G_{\glnidx-k}(F)^{\star})}{\cover{M}_k(F)}
{\Theta_{\GL{k},\chi_1}^{\star}\otimes \Theta_{G_{\glnidx-k},\chi_2}^{\star}}.
\end{align*}

The following two results are analogs of \cite{BFG} (Theorem~2.3 and Proposition~2.4).
\begin{proposition}\label{proposition:Jacquet along standard nonminimal unipotent}
Let $\chi$ be an exceptional character and let $0<k\leq\glnidx$. Then for any exceptional characters $\chi_1$ and $\chi_2$ of $C_{\cover{T}_{\GL{k}}(F)}$ and $C_{\cover{T}_{\glnidx-k+1}(F)}$ satisfying \eqref{eq:good condition for relating an exceptional character to two such},
\begin{align*}
j_{U_k}(\Theta_{G_{\glnidx},\chi})\subset I(\Theta_{\GL{k},\chi_1},\Theta_{G_{\glnidx-k},\chi_2})^{\star}.
\end{align*}
\end{proposition}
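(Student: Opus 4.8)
The plan is to follow the proof of Theorem~2.3 in \cite{BFG}, the only genuinely new feature being that $\CGLF{k}{F}$ and $\cover{G}_{\glnidx-k}(F)$ do not commute inside $\cover{M}_k(F)$, so the outer tensor product used there is replaced by the induced representation $I(\cdot,\cdot)^{\star}$ of Section~\ref{subsubsection:tensor product}. First I would use the realization of $\Theta_{G_{\glnidx},\chi}$ from Proposition~\ref{proposition:periodicity theorem} as the unique irreducible subrepresentation of $V(\rconj{\mathbf{w_0}}\chi)$; since $j_{U_k}$ is exact, applying it to this inclusion gives an embedding $j_{U_k}(\Theta_{G_{\glnidx},\chi})\hookrightarrow j_{U_k}(V(\rconj{\mathbf{w_0}}\chi))$. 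The Bernstein--Zelevinsky geometric lemma --- valid for $\ell$-groups and hence for the cover, since $\cover{G}_{\glnidx}(F)$ splits over unipotent subgroups, and using $\rconj{\mathbf{w}}\rho(\chi)=\rho(\rconj{\mathbf{w}}\chi)$ --- equips $j_{U_k}(V(\rconj{\mathbf{w_0}}\chi))$ with a finite filtration indexed by the minimal-length representatives $w$ of $\lmodulo{W_{M_k}}{W_{\glnidx}}$ (equivalently by $\lmodulo{Q_k(F)}{G_{\glnidx}(F)}/B_{\glnidx}(F)$), where $W_{M_k}\subset W_{\glnidx}$ is the Weyl group of $M_k$; the layer attached to $w$ is a principal series representation of $\cover{M}_k(F)$ induced from a $w$-twist of $\rho(\rconj{\mathbf{w_0}}\chi)$ (together with a modulus shift), and the layer attached to $w=e$, the closed cell, is a subrepresentation. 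I write $\mathcal{F}$ for this bottom layer; it is $\induced{\cover{B}_{M_k}(F)}{\cover{M}_k(F)}{\rho(\rconj{\mathbf{w_0}}\chi)}$, where $B_{M_k}=B_{\glnidx}\cap M_k$ is the standard Borel subgroup of $M_k$.

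Next I would pin the image into $\mathcal{F}$. Putting $N_{M_k}=N_{\glnidx}\cap M_k$ we have $N_{\glnidx}=N_{M_k}\ltimes U_k$ and $j_{N_{\glnidx}}=j_{N_{M_k}}\circ j_{U_k}$, so Proposition~\ref{proposition:periodicity theorem} gives $j_{N_{M_k}}(j_{U_k}(\Theta_{G_{\glnidx},\chi}))=j_{N_{\glnidx}}(\Theta_{G_{\glnidx},\chi})=\rho(\rconj{\mathbf{w_0}}\chi)$, a single irreducible representation of $\cover{T}_{\glnidx+1}(F)$. On the other hand, as $\chi$ is exceptional it is regular, so the $N_{M_k}$-Jacquet module of the layer attached to $w$ is a semisimple representation supported on the $W_{M_k}$-orbit of $\rconj{w\mathbf{w_0}}\chi$, and $\rconj{\mathbf{w_0}}\chi$ lies in that orbit only when $w=e$. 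Since $j_{N_{M_k}}$ is exact, a subrepresentation of the filtered module whose $N_{M_k}$-Jacquet module is the single irreducible $\rho(\rconj{\mathbf{w_0}}\chi)$ can have nonzero image in no layer except $w=e$; as that layer is the bottom one, this forces $j_{U_k}(\Theta_{G_{\glnidx},\chi})\hookrightarrow\mathcal{F}$.

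Then I would identify $\mathcal{F}$ and locate $I(\Theta_{\GL{k},\chi_1},\Theta_{G_{\glnidx-k},\chi_2})^{\star}$ inside it. By Claim~\ref{claim:relating a character to two characters} the hypothesis \eqref{eq:good condition for relating an exceptional character to two such} reads $\rconj{\mathbf{w_0}}\chi|_{\cover{T}_{\glnidx+1}(F)^2}=\rconj{\mathbf{w_0'}}\chi_1\odot\rconj{\mathbf{w_0''}}\chi_2$ with $\chi_1,\chi_2$ exceptional, and $\Theta_{\GL{k},\chi_1}$ (resp.\ $\Theta_{G_{\glnidx-k},\chi_2}$) is the unique irreducible subrepresentation of $\induced{\cover{B}_{\GL{k}}(F)}{\CGLF{k}{F}}{\rho(\rconj{\mathbf{w_0'}}\chi_1)}$ (Kazhdan--Patterson \cite{KP}) (resp.\ of $\induced{\cover{B}_{\glnidx-k}(F)}{\cover{G}_{\glnidx-k}(F)}{\rho(\rconj{\mathbf{w_0''}}\chi_2)}$, Proposition~\ref{proposition:periodicity theorem}). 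Combining this with the block-compatibility \eqref{eq:block-compatibility on Levi subgroups}, transitivity of induction, and the Mackey-theoretic description of $\cinduced{p^{-1}(\GLF{k}{F}^{\star}\times G_{\glnidx-k}(F)^{\star})}{\cover{M}_k(F)}{(-)}$ in Section~\ref{subsubsection:tensor product} (Lemma~\ref{lemma:induced representation composition factors}), one obtains an embedding $I(\Theta_{\GL{k},\chi_1},\Theta_{G_{\glnidx-k},\chi_2})^{\star}\hookrightarrow\mathcal{F}$. To conclude, I would observe that every irreducible constituent of the subrepresentation $j_{U_k}(\Theta_{G_{\glnidx},\chi})$ of $\mathcal{F}$ has $\rconj{\mathbf{w_0}}\chi$ as the leading term of its $N_{M_k}$-Jacquet module; that the irreducible genuine representations of $\cover{M}_k(F)$ with this property occurring in $\mathcal{F}$ are exactly the finitely many $\lmodulo{\cover{M}_k(F)^{\star}}{\cover{M}_k(F)}$-twists making up $I(\Theta_{\GL{k},\chi_1},\Theta_{G_{\glnidx-k},\chi_2})^{\star}$; and that a subrepresentation (not merely a subquotient) supported on these constituents is therefore contained in $I(\Theta_{\GL{k},\chi_1},\Theta_{G_{\glnidx-k},\chi_2})^{\star}$.

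The hard part will be this last identification. In the $SO_{2\glnidx+1}$ case of \cite{BFG} the analogue of $\mathcal{F}$ factors as an outer tensor product and $j_{U_k}(\Theta_{SO_{2\glnidx+1}})$ is literally $\Theta_{\GL{k}}\otimes\Theta_{SO_{2(\glnidx-k)+1}}$; here the two direct factors of $\cover{M}_k(F)$ are entangled by the cocycle, so one must argue inside the metaplectic-tensor object $I(\cdot,\cdot)^{\star}$, and both the embedding $I(\Theta_{\GL{k},\chi_1},\Theta_{G_{\glnidx-k},\chi_2})^{\star}\hookrightarrow\mathcal{F}$ and the containment of $j_{U_k}(\Theta_{G_{\glnidx},\chi})$ in it rely on the restriction-and-induction bookkeeping of Section~\ref{subsubsection:tensor product}, in particular Lemma~\ref{lemma:induced representation composition factors for torus}. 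One expects the inclusion to be strict when $\glnidx$ is odd --- matching the multiplicities recorded there --- and an equality when $\glnidx$ is even.
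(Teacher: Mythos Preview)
Your strategy through the embedding $j_{U_k}(\Theta_{G_{\glnidx},\chi})\hookrightarrow\mathcal{F}=\induced{\cover{B}_{M_k}(F)}{\cover{M}_k(F)}{\rho(\rconj{\mathbf{w_0}}\chi)}$ is sound and is in fact what the paper uses (via Frobenius reciprocity rather than the geometric lemma, but this is cosmetic). The gap is in the second half.

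The claimed embedding $I(\Theta_{\GL{k},\chi_1},\Theta_{G_{\glnidx-k},\chi_2})^{\star}\hookrightarrow\mathcal{F}$ is false in general. By the Langlands Quotient Theorem (Ban--Jantzen), $\mathcal{F}$ has a \emph{unique} irreducible subrepresentation, whereas $I(\Theta_{\GL{k},\chi_1},\Theta_{G_{\glnidx-k},\chi_2})^{\star}$ is induced from a finite-index normal subgroup and is therefore semisimple; by Lemma~\ref{lemma:induced representation composition factors for torus} its $N_{M_k}$-Jacquet module is $[F^*:{\Fsquares}]^r\bigoplus_{\chi'\in\Pi(\rconj{\mathbf{w_0}}\chi)}\rho(\chi')$, which has length strictly greater than one over any $p$-adic field. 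So $I(\ldots)^{\star}$ cannot sit inside $\mathcal{F}$, and your ``leading term'' matching in step~5 cannot be carried out as stated. (This also shows your parity remark at the end is off: the inclusion $j_{U_k}(\Theta_{G_{\glnidx},\chi})\subset I(\ldots)^{\star}$ is always strict.)

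The paper repairs this by reversing the direction of the last step. First it observes that $j_{U_k}(\Theta_{G_{\glnidx},\chi})$ is \emph{irreducible}: it has no cuspidal constituent (being a Jacquet module of a principal-series quotient) and its $N_{M_k}$-Jacquet module $j_{N_{\glnidx}}(\Theta_{G_{\glnidx},\chi})=\rho(\rconj{\mathbf{w_0}}\chi)$ is irreducible, so it is itself irreducible. Hence $j_{U_k}(\Theta_{G_{\glnidx},\chi})$ is the unique irreducible sub of $\mathcal{F}$. On the other side, the semisimplicity of $j_{N_{M_k}}(I(\ldots)^{\star})$ and the occurrence of $\rho(\rconj{\mathbf{w_0}}\chi)$ in it force some irreducible summand $\mathcal{V}\subset I(\ldots)^{\star}$ to have $\rho(\rconj{\mathbf{w_0}}\chi)$ as a quotient of its Jacquet module; Frobenius then gives $\mathcal{V}\hookrightarrow\mathcal{F}$, so $\mathcal{V}$ is that same unique sub. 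Thus $j_{U_k}(\Theta_{G_{\glnidx},\chi})=\mathcal{V}\subset I(\ldots)^{\star}$. If you insert this irreducibility argument and drop the attempted embedding of $I(\ldots)^{\star}$ into $\mathcal{F}$, your proof goes through.
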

\begin{proof}
We start with computing the Jacquet module along $N_{\GL{k}}\times N_{\glnidx-k}<M_k$ of both representations. The double coset space
\begin{align*}
\rmodulo{\lmodulo{(\GLF{k}{F}^{\star} G_{\glnidx-k}(F)^{\star})}{M_k(F)}}{(T_{\glnidx+1}(F)N_{\GL{k}}(F)N_{\glnidx-k}(F))}
\end{align*}
has one element.
According to the Geometric Lemma of Bernstein and Zelevinsky \cite{BZ2} (Theorem~5.2),
\begin{align}\label{eq:jacquet module of rhs}
&j_{N_{\GL{k}}N_{\glnidx-k}}(I(\Theta_{\GL{k},\chi_1},\Theta_{G_{\glnidx-k},\chi_2})^{\star})\\&=\nonumber\cinduced{p^{-1}
(T_{\GL{k}}(F)^{\star}\times T_{\glnidx-k+1}(F)^{\star})}{\cover{T}_{\glnidx+1}(F)}
{j_{N_{\GL{k}}}^{\star}(\Theta_{\GL{k},\chi_1}^{\star})\otimes j_{N_{\glnidx-k}}^{\star}(\Theta_{G_{\glnidx-k},\chi_2}^{\star})}.
\end{align}
Here $j_{N_{\GL{k}}}^{\star}$ is the Jacquet functor taking representations of $\CGLF{k}{F}^{\star}$ to representations of
$\cover{T}_{\GL{k}}(F)^{\star}$ and $j_{N_{\glnidx-k}}^{\star}$ is defined similarly. We have
\begin{align*}
j_{N_{\GL{k}}}^{\star}(\Theta_{\GL{k},\chi_1}^{\star})=(j_{N_{\GL{k}}}(\Theta_{\GL{k},\chi_1}))^{\star}=\rho(\rconj{\mathbf{w_0'}}\chi_1)^{\star},
\end{align*}
where for the first equality we used the fact that $N_{\GL{k}}(F)<\CGLF{k}{F}^{\star}$, the second equality follows from \cite{KP} (Theorem~I.2.9).
Similarly, using Proposition~\ref{proposition:periodicity theorem} we get
$j_{N_{\glnidx-k}}^{\star}(\Theta_{G_{\glnidx-k},\chi_2}^{\star})=\rho(\rconj{\mathbf{w_0''}}\chi_2)^{\star}$. 
Applying Lemma~\ref{lemma:induced representation composition factors for torus} to the \rhs\ of \eqref{eq:jacquet module of rhs} implies
\begin{align*}
&j_{N_{\GL{k}}N_{\glnidx-k}}(I(\Theta_{\GL{k},\chi_1},\Theta_{G_{\glnidx-k},\chi_2})^{\star})
=m\bigoplus_{\chi'}\rho(\chi'),
\end{align*}
where $m>0$ is an integer (depending only on $k$ and $\glnidx$), and the summation is over all genuine characters $\chi'$ of $C_{\cover{T}_{\glnidx+1}(F)}$ such that
\begin{align*}
\chi'|_{\cover{T}_{\glnidx+1}(F)^2}=\rconj{\mathbf{w_0'}}\chi_1\odot\rconj{\mathbf{w_0''}}\chi_2=\rconj{\mathbf{w_0}}\chi|_{\cover{T}_{\glnidx+1}(F)^2}
\end{align*}
(the second equality is \eqref{eq:good condition for relating an exceptional character to two such}). Thus $\rconj{\mathbf{w_0}}\chi$ appears in the summation at least once.

Since $j_{N_{\GL{k}}N_{\glnidx-k}}(I(\Theta_{\GL{k},\chi_1},\Theta_{G_{\glnidx-k},\chi_2})^{\star})$ is semisimple and the Jacquet functor is exact, $j_{N_{\GL{k}}N_{\glnidx-k}}(\mathcal{V}_0)$ is semisimple for any $\mathcal{V}_0\subset I(\Theta_{\GL{k},\chi_1},\Theta_{G_{\glnidx-k},\chi_2})^{\star}$. Thus there is an irreducible $\mathcal{V}\subset I(\Theta_{\GL{k},\chi_1},\Theta_{G_{\glnidx-k},\chi_2})^{\star}$ such that
$\rho(\rconj{\mathbf{w_0}}\chi)$ is a quotient of $j_{N_{\GL{k}}N_{\glnidx-k}}(\mathcal{V})$. Set
\begin{align*}
I^{\cover{M}_k}(\rho(\rconj{\mathbf{w_0}}\chi))=\induced{p^{-1}(T_{\glnidx+1}(F)N_{\GL{k}}(F)N_{\glnidx-k}(F))}{\cover{M}_k(F)}{\rho(\rconj{\mathbf{w_0}}\chi)}.
\end{align*}
Then Frobenius reciprocity shows
\begin{align}\label{eq:Frobenius on Jacquet U_k claim}
Hom_{\cover{M}_{k}(F)}(\mathcal{V},I^{\cover{M}_k}(\rho(\rconj{\mathbf{w_0}}\chi))) =Hom_{\cover{T}_{\glnidx+1}(F)}(j_{N_{\GL{k}}N_{\glnidx-k}}(\mathcal{V}),\rho(\rconj{\mathbf{w_0}}\chi))\ne0
\end{align}
whence $\mathcal{V}\subset I^{\cover{M}_k}(\rho(\rconj{\mathbf{w_0}}\chi))$.
According to the Langlands Quotient Theorem \cite{BJ} the representation
$I^{\cover{M}_k}(\rho(\rconj{\mathbf{w_0}}\chi))$ has a unique irreducible subrepresentation, which is $\mathcal{V}$.

Let us turn to $j_{U_k}(\Theta_{G_{\glnidx},\chi})$. Proposition~\ref{proposition:periodicity theorem} implies
\begin{align*}
j_{N_{\GL{k}}N_{\glnidx-k}}(j_{U_{k}}(\Theta_{G_{\glnidx},\chi}))=
j_{N_{\glnidx}}(\Theta_{G_{\glnidx},\chi})=\rho(\rconj{\mathbf{w_0}}\chi).
\end{align*}
The representation $j_{U_{k}}(\Theta_{G_{\glnidx},\chi})$, as a Jacquet module of a quotient of $V(\chi)$ with respect to a non-minimal unipotent radical, does not have a cuspidal constituent. Hence, since
$j_{N_{\GL{k}}N_{\glnidx-k}}(j_{U_{k}}(\Theta_{G_{\glnidx},\chi}))$ is irreducible, so is $j_{U_{k}}(\Theta_{G_{\glnidx},\chi})$. Now \eqref{eq:Frobenius on Jacquet U_k claim} with $j_{U_k}(\Theta_{G_{\glnidx},\chi})$ instead of $\mathcal{V}$ shows $j_{U_k}(\Theta_{G_{\glnidx},\chi})\subset I^{\cover{M}_k}(\rho(\rconj{\mathbf{w_0}}\chi))$. Thus $j_{U_k}(\Theta_{G_{\glnidx},\chi})=\mathcal{V}\subset I(\Theta_{\GL{k},\chi_1},\Theta_{G_{\glnidx-k},\chi_2})^{\star}$.
\end{proof}

\begin{corollary}\label{corollary:Jacquet along standard nonminimal unipotent}
Let $\chi$ be an exceptional character and let $0<k\leq\glnidx$. For any exceptional characters $\chi_1,\chi_3$ of $C_{\cover{T}_{\GL{k}}(F)}$ and $\chi_2,\chi_4$ of $C_{\cover{T}_{\glnidx-k+1}(F)}$ such that $(\chi_1,\chi_2)$ satisfy \eqref{eq:good condition for relating an exceptional character to two such} with respect to $\chi$, and $(\rconj{\mathbf{w_0'}}\chi_3^{\wedge},\rconj{\mathbf{w_0''}}\chi_4^{\wedge})$ satisfy \eqref{eq:good condition for relating an exceptional character to two such} with respect to $\rconj{\mathbf{w_0}}\chi^{\wedge}$,
\begin{align*}
&Hom_{\cover{G}_{\glnidx}(F)}(\Theta_{G_{\glnidx},\chi},\induced{\cover{Q}_k(F)}{\cover{G}_{\glnidx}(F)}{I(\Theta_{\GL{k},\chi_1},\Theta_{G_{\glnidx-k},\chi_2})^{\star}})\ne0,\\
&Hom_{\cover{G}_{\glnidx}(F)}(\induced{\cover{Q}_k(F)}{\cover{G}_{\glnidx}(F)}{I(\Theta_{\GL{k},\chi_3},\Theta_{G_{\glnidx-k},\chi_4})^{\star}},\Theta_{G_{\glnidx},\chi})\ne0.
\end{align*}
\end{corollary}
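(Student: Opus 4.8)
The plan is to deduce both nonvanishing statements from Proposition~\ref{proposition:Jacquet along standard nonminimal unipotent} by Frobenius reciprocity, combined with the duality properties of exceptional representations from Claim~\ref{claim:small rep is not self dual} (and its $\GL{k}$-analog from \cite{KP}), following closely the proof of \cite{BFG} (Proposition~2.4).

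\textbf{First Hom.} By the standard Frobenius reciprocity for the adjunction between $j_{U_k}$ and $\induced{\cover{Q}_k(F)}{\cover{G}_{\glnidx}(F)}{}$,
\begin{align*}
Hom_{\cover{G}_{\glnidx}(F)}(\Theta_{G_{\glnidx},\chi},\induced{\cover{Q}_k(F)}{\cover{G}_{\glnidx}(F)}{I(\Theta_{\GL{k},\chi_1},\Theta_{G_{\glnidx-k},\chi_2})^{\star}})\isomorphic Hom_{\cover{M}_k(F)}(j_{U_k}(\Theta_{G_{\glnidx},\chi}),I(\Theta_{\GL{k},\chi_1},\Theta_{G_{\glnidx-k},\chi_2})^{\star}).
\end{align*}
Since $(\chi_1,\chi_2)$ satisfy \eqref{eq:good condition for relating an exceptional character to two such} with respect to $\chi$, Proposition~\ref{proposition:Jacquet along standard nonminimal unipotent} provides an embedding $j_{U_k}(\Theta_{G_{\glnidx},\chi})\hookrightarrow I(\Theta_{\GL{k},\chi_1},\Theta_{G_{\glnidx-k},\chi_2})^{\star}$, which is a nonzero element of the right-hand side, so the first Hom space is nonzero.

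\textbf{Second Hom.} Here I pass to contragredients. The representations involved are admissible, parabolic induction over a $p$-adic field commutes with taking contragredients, and $I(\tau,\pi)^{\star}$ is a compact induction from the finite-index open subgroup $p^{-1}(\GLF{k}{F}^{\star}\times G_{\glnidx-k}(F)^{\star})$ of $\cover{M}_k(F)$, hence its contragredient is the (compact = regular) induction of $(\tau^{\star}\otimes\pi^{\star})^{\wedge}$; since restriction to the $\star$-subgroup commutes with taking contragredient (smoothness being a local condition for an open subgroup) this gives $(I(\tau,\pi)^{\star})^{\wedge}=I(\tau^{\wedge},\pi^{\wedge})^{\star}$. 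Therefore
\begin{align*}
Hom_{\cover{G}_{\glnidx}(F)}(\induced{\cover{Q}_k(F)}{\cover{G}_{\glnidx}(F)}{I(\Theta_{\GL{k},\chi_3},\Theta_{G_{\glnidx-k},\chi_4})^{\star}},\Theta_{G_{\glnidx},\chi})\isomorphic Hom_{\cover{G}_{\glnidx}(F)}(\Theta_{G_{\glnidx},\chi}^{\wedge},\induced{\cover{Q}_k(F)}{\cover{G}_{\glnidx}(F)}{I(\Theta_{\GL{k},\chi_3}^{\wedge},\Theta_{G_{\glnidx-k},\chi_4}^{\wedge})^{\star}}).
\end{align*}
By Claim~\ref{claim:small rep is not self dual}, $\Theta_{G_{\glnidx},\chi}^{\wedge}=\Theta_{G_{\glnidx},\rconj{\mathbf{w_0}}\chi^{\wedge}}$ and $\rconj{\mathbf{w_0}}\chi^{\wedge}$ is again exceptional; by the $\GL{k}$-analog of that claim (\cite{KP}, Theorem~I.2.9) and by the claim itself applied to $G_{\glnidx-k}$, $\Theta_{\GL{k},\chi_3}^{\wedge}=\Theta_{\GL{k},\rconj{\mathbf{w_0'}}\chi_3^{\wedge}}$ and $\Theta_{G_{\glnidx-k},\chi_4}^{\wedge}=\Theta_{G_{\glnidx-k},\rconj{\mathbf{w_0''}}\chi_4^{\wedge}}$, again with exceptional characters. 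By hypothesis $(\rconj{\mathbf{w_0'}}\chi_3^{\wedge},\rconj{\mathbf{w_0''}}\chi_4^{\wedge})$ satisfy \eqref{eq:good condition for relating an exceptional character to two such} with respect to $\rconj{\mathbf{w_0}}\chi^{\wedge}$, so the First Hom statement, applied with $\rconj{\mathbf{w_0}}\chi^{\wedge}$ in place of $\chi$ and this pair of characters, shows the right-hand side is nonzero; hence so is the second Hom space.

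\textbf{Main obstacle.} The conceptually routine but technically delicate point is the bookkeeping identity $(I(\tau,\pi)^{\star})^{\wedge}\isomorphic I(\tau^{\wedge},\pi^{\wedge})^{\star}$, which rests on the fact that $p^{-1}(\GLF{k}{F}^{\star}\times G_{\glnidx-k}(F)^{\star})$ has finite index in $\cover{M}_k(F)$ (so the relevant induction is both compact and regular and commutes with contragredient) together with the commutation of $(-)^{\star}$ with $(-)^{\wedge}$, and on the fact that normalized parabolic induction commutes with contragredient in the metaplectic setting; all are standard. Once these are in place, verifying that the hypotheses match under the substitutions $\chi\mapsto\rconj{\mathbf{w_0}}\chi^{\wedge}$, $\chi_3\mapsto\rconj{\mathbf{w_0'}}\chi_3^{\wedge}$, $\chi_4\mapsto\rconj{\mathbf{w_0''}}\chi_4^{\wedge}$ is immediate from the formulas in the proof of Claim~\ref{claim:small rep is not self dual}.
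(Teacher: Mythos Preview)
Your proof is correct and follows essentially the same approach as the paper: the first Hom via Frobenius reciprocity and Proposition~\ref{proposition:Jacquet along standard nonminimal unipotent}, and the second by dualizing, using $(I(\tau,\pi)^{\star})^{\wedge}\isomorphic I(\tau^{\wedge},\pi^{\wedge})^{\star}$ together with Claim~\ref{claim:small rep is not self dual} (and its $\GL{k}$-analog), then applying the first assertion with $\chi$ replaced by $\rconj{\mathbf{w_0}}\chi^{\wedge}$. The paper justifies $(I(\tau,\pi)^{\star})^{\wedge}\isomorphic I(\tau^{\wedge},\pi^{\wedge})^{\star}$ slightly differently, via the identity $(\xi^{\star})^{\wedge}=(\xi^{\wedge})^{\star}$ (valid because $\cover{H}(F)^{\star}$ is open), but this is equivalent to your finite-index argument.
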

\begin{proof}
The first assertion follows from Proposition~\ref{proposition:Jacquet along standard nonminimal unipotent} using the Frobenius reciprocity. Dualizing,
\begin{align*}
&Hom_{\cover{G}_{\glnidx}(F)}(\induced{\cover{Q}_k(F)}{\cover{G}_{\glnidx}(F)}{(I(\Theta_{\GL{k},\chi_1},\Theta_{G_{\glnidx-k},\chi_2})^{\star})^{\wedge}},\Theta_{G_{\glnidx},\chi}^{\wedge})\ne0.
\end{align*}
In general if $\xi$ is a genuine representation of $\cover{H}(F)$ where $H<G_{\glnidx}$, $(\xi^{\star})^{\wedge}=(\xi^{\wedge})^{\star}$ (because $\cover{H}(F)^{\star}$ contains an open neighborhood of the identity of $\cover{H}(F)$). Applying Claim~\ref{claim:small rep is not self dual} and its analog for $\GL{\glnidx}$ we get
\begin{align*}
(I(\Theta_{\GL{k},\chi_1},\Theta_{G_{\glnidx-k},\chi_2})^{\star})^{\wedge}=
I(\Theta_{\GL{k},\rconj{\mathbf{w_0'}}\chi_1^{\wedge}},\Theta_{G_{\glnidx-k},\rconj{\mathbf{w_0''}}\chi_2^{\wedge}})^{\star}.
\end{align*}
Now replace $\chi$ with $\rconj{\mathbf{w_0}}\chi^{\wedge}$ and set $\chi_1=\rconj{\mathbf{w_0'}}\chi_3^{\wedge}$ and $\chi_2=\rconj{\mathbf{w_0''}}\chi_4^{\wedge}$. We see that 
\[
Hom_{\cover{G}_{\glnidx}(F)}(\induced{\cover{Q}_k(F)}{\cover{G}_{\glnidx}(F)}{I(\Theta_{\GL{k},\chi_3},\Theta_{G_{\glnidx-k},\chi_4})^{\star}},\Theta_{G_{\glnidx},\chi})\ne0.\qedhere
\]
\end{proof}
In the case $k=\glnidx$, we can strengthen the results of Proposition~\ref{proposition:Jacquet along standard nonminimal unipotent} and Corollary~\ref{corollary:Jacquet along standard nonminimal unipotent} and obtain a result more similar to that of \cite{BFG}.
\begin{claim}\label{claim:special case k=n}
There are unique exceptional characters $\chi_1,\chi_2,\chi_3,\chi_4$ such that
\begin{align*}
&j_{U_{\glnidx}}(\Theta_{G_{\glnidx},\chi})=\Theta_{\GL{\glnidx},\chi_1}\otimes\Theta_{G_{0},\chi_2},\\
&Hom_{\cover{G}_{\glnidx}(F)}(\Theta_{G_{\glnidx},\chi},\induced{\cover{Q}_{\glnidx}(F)}{\cover{G}_{\glnidx}(F)}{\Theta_{\GL{\glnidx},\chi_1}\otimes\Theta_{G_{0},\chi_2}})\ne0,\\
&Hom_{\cover{G}_{\glnidx}(F)}(\induced{\cover{Q}_{\glnidx}(F)}{\cover{G}_{\glnidx}(F)}{\Theta_{\GL{\glnidx},\chi_3}\otimes\Theta_{G_{0},\chi_4}},\Theta_{G_{\glnidx},\chi})\ne0.
\end{align*}
\end{claim}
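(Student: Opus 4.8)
The plan is to specialize Proposition~\ref{proposition:Jacquet along standard nonminimal unipotent} and Corollary~\ref{corollary:Jacquet along standard nonminimal unipotent} to $k=\glnidx$ and to exploit the structure of $\cover{M}_{\glnidx}(F)$ in this case. Recall (Section~\ref{subsubsection:Block-compatibility}) that for $k=\glnidx$ the subgroups $\CGLF{\glnidx}{F}$ and $\cover{G}_0(F)$ commute in $\cover{M}_{\glnidx}(F)$, so $\cover{M}_{\glnidx}(F)$ is their direct product with amalgamated $\mu_2$; moreover $\cover{G}_0(F)$ is abelian (hence central in $\cover{M}_{\glnidx}(F)$) and split, so $\Theta_{G_0,\chi_2}=\chi_2$ is merely a genuine character of $\cover{G}_0(F)$. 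By Claim~\ref{claim:relating a character to two characters} choose exceptional characters $\chi_1^{0}$ of $C_{\cover{T}_{\GL{\glnidx}}(F)}$ and $\chi_2$ of $\cover{G}_0(F)$ satisfying \eqref{eq:good condition for relating an exceptional character to two such}. From the proof of Proposition~\ref{proposition:Jacquet along standard nonminimal unipotent}, $j_{U_{\glnidx}}(\Theta_{G_{\glnidx},\chi})$ is irreducible and is contained in $I(\Theta_{\GL{\glnidx},\chi_1^{0}},\Theta_{G_0,\chi_2})^{\star}$, which for $H_2=G_0$ equals $\cinduced{\CGLF{\glnidx}{F}^{\star}}{\CGLF{\glnidx}{F}}{\Theta_{\GL{\glnidx},\chi_1^{0}}^{\star}}\otimes\chi_2$.

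By Proposition~\ref{proposition:Kable summary of restriction results}, whether $\glnidx$ is odd or even, $\cinduced{\CGLF{\glnidx}{F}^{\star}}{\CGLF{\glnidx}{F}}{\Theta_{\GL{\glnidx},\chi_1^{0}}^{\star}}$ is a finite direct sum of irreducible representations, each of the form $\omega\,\Theta_{\GL{\glnidx},\chi_1^{0}}=\Theta_{\GL{\glnidx},\chi_1}$ for a character $\omega$ of $\lmodulo{\CGLF{\glnidx}{F}^{\star}}{\CGLF{\glnidx}{F}}$; since $\cover{T}_{\GL{\glnidx}}(F)^{2}\subset\CGLF{\glnidx}{F}^{\star}$, each resulting $\chi_1$ is again exceptional. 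Hence the irreducible subrepresentation $j_{U_{\glnidx}}(\Theta_{G_{\glnidx},\chi})$ must equal one of these summands tensored with $\chi_2$, i.e. $j_{U_{\glnidx}}(\Theta_{G_{\glnidx},\chi})=\Theta_{\GL{\glnidx},\chi_1}\otimes\Theta_{G_0,\chi_2}$ for a suitable exceptional $\chi_1$; this is the first assertion. For uniqueness: restricting this representation to the central $\cover{G}_0(F)$ recovers $\chi_2$, and then $j_{N_{\GL{\glnidx}}}(\Theta_{\GL{\glnidx},\chi_1})=\rho(\rconj{\mathbf{w_0'}}\chi_1)$ (\cite{KP}), together with the fact that $\rho(\cdot)$ determines its central character, recovers $\chi_1$.

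For the second assertion apply Frobenius reciprocity: with $\sigma=\Theta_{\GL{\glnidx},\chi_1}\otimes\Theta_{G_0,\chi_2}$ one has $Hom_{\cover{G}_{\glnidx}(F)}(\Theta_{G_{\glnidx},\chi},\induced{\cover{Q}_{\glnidx}(F)}{\cover{G}_{\glnidx}(F)}{\sigma})=Hom_{\cover{M}_{\glnidx}(F)}(j_{U_{\glnidx}}(\Theta_{G_{\glnidx},\chi}),\sigma)$, which is nonzero since $j_{U_{\glnidx}}(\Theta_{G_{\glnidx},\chi})=\sigma$. For the third assertion, dualize: using $(\induced{\cover{Q}_{\glnidx}(F)}{\cover{G}_{\glnidx}(F)}{\sigma})^{\wedge}\isomorphic\induced{\cover{Q}_{\glnidx}(F)}{\cover{G}_{\glnidx}(F)}{\sigma^{\wedge}}$, Claim~\ref{claim:small rep is not self dual}, its $\GL{\glnidx}$-analog $\Theta_{\GL{\glnidx},\chi_3}^{\wedge}=\Theta_{\GL{\glnidx},\rconj{\mathbf{w_0'}}\chi_3^{\wedge}}$, and the fact that $\rconj{\mathbf{w_0}}$ is an involution on characters, the third $Hom$ equals $Hom_{\cover{M}_{\glnidx}(F)}(j_{U_{\glnidx}}(\Theta_{G_{\glnidx},\chi}^{\wedge}),\Theta_{\GL{\glnidx},\rconj{\mathbf{w_0'}}\chi_3^{\wedge}}\otimes\chi_4^{-1})$. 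By Claim~\ref{claim:small rep is not self dual} and the first assertion applied to the exceptional character $\rconj{\mathbf{w_0}}\chi^{\wedge}$, $j_{U_{\glnidx}}(\Theta_{G_{\glnidx},\chi}^{\wedge})=\Theta_{\GL{\glnidx},\tilde\chi_1}\otimes\tilde\chi_2$ with $\tilde\chi_1,\tilde\chi_2$ exceptional, so this $Hom$ is nonzero precisely when $\chi_3=\rconj{\mathbf{w_0'}}\tilde\chi_1^{\wedge}$ and $\chi_4=\tilde\chi_2^{-1}$ (both exceptional), giving existence and uniqueness of $\chi_3,\chi_4$.

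The only delicate point is the identification made in the first two paragraphs, namely that the irreducible Jacquet module $j_{U_{\glnidx}}(\Theta_{G_{\glnidx},\chi})$ is \emph{exactly} of the form $\Theta_{\GL{\glnidx},\chi_1}\otimes(\text{genuine character})$ with $\chi_1$ exceptional. For odd $\glnidx$ the character $\chi$ does not determine $\chi_1^{0}$ uniquely --- there are $[F^{*}:\Fsquares]$ admissible choices, parametrized by $\Pi(\chi)$ --- and one must use the irreducibility of $j_{U_{\glnidx}}(\Theta_{G_{\glnidx},\chi})$, established in the proof of Proposition~\ref{proposition:Jacquet along standard nonminimal unipotent}, to see that it singles out one summand of $\cinduced{\CGLF{\glnidx}{F}^{\star}}{\CGLF{\glnidx}{F}}{\Theta_{\GL{\glnidx},\chi_1^{0}}^{\star}}\otimes\chi_2$; everything else is bookkeeping with central characters and Frobenius reciprocity.
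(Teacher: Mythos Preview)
Your proof is correct, but it takes a more circuitous route than the paper's. You specialize Proposition~\ref{proposition:Jacquet along standard nonminimal unipotent} to $k=\glnidx$, pass through the induced representation $I(\Theta_{\GL{\glnidx},\chi_1^{0}},\Theta_{G_0,\chi_2})^{\star}=\cinduced{\CGLF{\glnidx}{F}^{\star}}{\CGLF{\glnidx}{F}}{\Theta_{\GL{\glnidx},\chi_1^{0}}^{\star}}\otimes\chi_2$, decompose it via Proposition~\ref{proposition:Kable summary of restriction results}, and then invoke the irreducibility of $j_{U_{\glnidx}}(\Theta_{G_{\glnidx},\chi})$ (extracted from the proof of Proposition~\ref{proposition:Jacquet along standard nonminimal unipotent}) to select a unique summand. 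The paper instead observes that for $k=\glnidx$ the center $C_{\cover{T}_{\glnidx+1}(F)}$ already factors as the product of $C_{\cover{T}_{\GL{\glnidx}}(F)}$ and $\cover{T}_1(F)$ with amalgamated $\mu_2$ (this follows from Claim~\ref{claim:center of the torus} together with the commutativity of $\CGLF{\glnidx}{F}$ and $\cover{G}_0(F)$ in Section~\ref{subsubsection:Block-compatibility}). Hence $\rconj{\mathbf{w_0}}\chi$ decomposes \emph{uniquely} as $\rconj{\mathbf{w_0'}}\chi_1\otimes\chi_2$, with $\chi_1,\chi_2$ determined on the full centers rather than only on squares; this yields existence and uniqueness of $\chi_1,\chi_2$ in one stroke, without Kable's restriction results. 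The first assertion then follows directly from the Jacquet-module/Langlands-quotient argument as in \cite{BFG}~(Theorem~2.3), bypassing the $I(\cdot,\cdot)^{\star}$ construction entirely. Your approach has the virtue of reusing the general-$k$ machinery wholesale; the paper exploits a structural simplification available only at $k=\glnidx$. For the second and third assertions you spell out the Frobenius reciprocity and dualization explicitly (in the spirit of Corollary~\ref{corollary:Jacquet along standard nonminimal unipotent}); the paper compresses both under the phrase ``follow from the Frobenius reciprocity''.
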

\begin{proof}
In this case $C_{\cover{T}_{\glnidx+1}(F)}$ is the product of $C_{\cover{T}_{\GL{\glnidx}}(F)}$ and $\cover{T}_1(F)$ with amalgamated $\mu_2$. Therefore, there are unique characters $\chi_1$ and $\chi_2$ such that $\rconj{\mathbf{w_0}}\chi=\rconj{\mathbf{w_0'}}\chi_1\otimes\chi_2$. Then the first assertion follows as in \cite{BFG} (Theorem~2.3), by calculating the Jacquet modules and using the fact that the representation induced from $\rho(\rconj{\mathbf{w_0}}\chi)$ to $\cover{M}_{\glnidx}(F)$ has a unique irreducible subrepresentation. The other assertions follow from the Frobenius reciprocity. 
\end{proof}

Due to global reasons (see Proposition~\ref{proposition:global small rep} below), it will be necessary to compute the constant $c(\mathbf{w_0},\chi)$ defined in Claim~\ref{claim:props of intertwining and GK formula}. We have the following claim. 
\begin{claim}\label{claim:values of exceptional unramified character on a alpha}
Let $\chi$ be an exceptional character, which is unramified. Then
\begin{align*}
c(\mathbf{w_0},\chi)=\prod_{2\leq i<j\leq\glnidx+1}\frac{(1-q^{-1-j+i})(1-q^{-1+j+i-2(\glnidx+2)})}
{(1-q^{-j+i})(1-q^{j+i-2(\glnidx+2)})}\prod_{2\leq i\leq\glnidx+1}\frac{(1-q^{-1-\glnidx-2+i})}{(1-q^{-\glnidx-2+i})}.
\end{align*}
\end{claim}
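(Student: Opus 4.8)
The plan is to evaluate the product of Claim~\ref{claim:props of intertwining and GK formula} at $\mathbf{w}=\mathbf{w_0}$. Since the longest element of $W_{\glnidx}$ acts on the type $B_{\glnidx}$ root system by $\epsilon_i\mapsto-\epsilon_i$, it sends every positive root to a negative one, so
\[
c(\mathbf{w_0},\chi)=\prod_{\alpha\in\Sigma_{G_{\glnidx}}^+}\frac{1-q^{-1}\chi(a_{\alpha})}{1-\chi(a_{\alpha})},\qquad a_{\alpha}={\alpha^{\vee}}^*(\varpi^{\mathfrak{l}(\alpha)}),
\]
and the task reduces to computing $\chi(a_{\alpha})$ for each $\alpha$. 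On the coordinates $\epsilon_2,\dots,\epsilon_{\glnidx+1}$ the positive roots are $\epsilon_i\pm\epsilon_j$ ($2\le i<j\le\glnidx+1$; long, $\mathfrak{l}=2$) and $\epsilon_i$ ($2\le i\le\glnidx+1$; short, $\mathfrak{l}=1$), with simple roots $\alpha_i=\epsilon_i-\epsilon_{i+1}$ ($2\le i\le\glnidx$) and $\alpha_{\glnidx+1}=\epsilon_{\glnidx+1}$, and their coroots in the basis of simple coroots are
\[
(\epsilon_i-\epsilon_j)^{\vee}=\sum_{k=i}^{j-1}\alpha_k^{\vee},\quad (\epsilon_i+\epsilon_j)^{\vee}=\sum_{k=i}^{j-1}\alpha_k^{\vee}+2\sum_{k=j}^{\glnidx}\alpha_k^{\vee}+\alpha_{\glnidx+1}^{\vee},\quad (\epsilon_i)^{\vee}=2\sum_{k=i}^{\glnidx}\alpha_k^{\vee}+\alpha_{\glnidx+1}^{\vee}.
\]

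The heart of the matter is the multiplicativity $\chi(a_{\alpha})=\prod_k\chi({\alpha_k^{\vee}}^*(\varpi^{\mathfrak{l}(\alpha)m_k}))$ when $\alpha^{\vee}=\sum_k m_k\alpha_k^{\vee}$. Both ${\alpha^{\vee}}^*(\varpi^{\mathfrak{l}(\alpha)})$ and $\prod_k{\alpha_k^{\vee}}^*(\varpi^{\mathfrak{l}(\alpha)m_k})$ lie in $\cover{T}_{\glnidx+1}(F)$ and have the same image in $T_{\glnidx+1}(F)$; passing from one to the other by means of the torus relations \eqref{eq:torus alpha and alpha}, \eqref{eq:torus alpha and alpha'} and the Weyl-conjugation relation \eqref{eq:weyl element and torus} --- exactly as in the derivation of Claim~\ref{claim:cocycle of spin on the torus} --- only introduces $\mu_2$-valued factors which are values of $c$ at powers of $\varpi$ having a square in one of the two slots. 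Here is where $r=2$ enters decisively: such a factor $c(\varpi^{2a},\varpi^b)=c(\varpi,\varpi^b)^{2a}=1$ because $c$ is $\mu_2$-valued. That a square always appears is a bookkeeping point: for a long root $\mathfrak{l}(\alpha)=2$, so every exponent $\mathfrak{l}(\alpha)m_k$ is even; for a short root $\mathfrak{l}(\alpha)=1$ but then $m_k$ is even for every long simple coroot $\alpha_k^{\vee}$ ($k\le\glnidx$), while for $\alpha_{\glnidx+1}$ one has $c_{\alpha_{\glnidx+1}}(x,y)=c(x^2,y)$, which already carries a square. The same vanishing, together with the exceptionality of $\chi$, gives $\chi({\alpha_k^{\vee}}^*(\varpi^{s\,\mathfrak{l}(\alpha_k)}))=|\varpi|^{s}=q^{-s}$ for $k\le\glnidx$ and $\chi({\alpha_{\glnidx+1}^{\vee}}^*(\varpi^{s}))=q^{-s}$. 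Combining these with the coroot expansions above yields
\[
\chi(a_{\epsilon_i-\epsilon_j})=q^{-(j-i)},\qquad \chi(a_{\epsilon_i+\epsilon_j})=q^{\,i+j-2(\glnidx+2)},\qquad \chi(a_{\epsilon_i})=q^{\,i-\glnidx-2}.
\]

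It remains to substitute these values into the product over $\Sigma_{G_{\glnidx}}^+$: the roots $\epsilon_i-\epsilon_j$ contribute $\prod_{2\le i<j\le\glnidx+1}\frac{1-q^{-1-j+i}}{1-q^{-j+i}}$, the roots $\epsilon_i+\epsilon_j$ contribute $\prod_{2\le i<j\le\glnidx+1}\frac{1-q^{-1+j+i-2(\glnidx+2)}}{1-q^{j+i-2(\glnidx+2)}}$, and the short roots $\epsilon_i$ contribute $\prod_{2\le i\le\glnidx+1}\frac{1-q^{-1-\glnidx-2+i}}{1-q^{-\glnidx-2+i}}$, which is precisely the asserted formula. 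I expect the only real obstacle to be organizational rather than conceptual: writing down the coroots in type $B_{\glnidx}$ correctly and checking scrupulously that each $\mu_2$-valued structure constant arising in the passage from ${\alpha^{\vee}}^*$ to the product of the simple ${\alpha_k^{\vee}}^*$ really does contain a square --- the short simple root $\alpha_{\glnidx+1}$, with coroot $2\epsilon_{\glnidx+1}^{\vee}$ and $c_{\alpha_{\glnidx+1}}(x,y)=c(x^2,y)$, being the single spot that warrants a second look. There is no analytic difficulty: the convergence, meromorphic continuation and the validity of the Gindikin--Karpelevich formula have already been established in Claim~\ref{claim:props of intertwining and GK formula}.
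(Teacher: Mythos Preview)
Your proposal is correct and follows essentially the same approach as the paper: compute $\chi(a_{\alpha})$ for each $\alpha\in\Sigma_{G_{\glnidx}}^+$ by expanding $\alpha^{\vee}$ in simple coroots, use that the resulting element lies in $T_{\glnidx+1}(F)^2$ so that the passage to the product of ${\alpha_k^{\vee}}^*$ introduces no $\mu_2$-factor, and then substitute into the Gindikin--Karpelevich product. The only cosmetic difference is that the paper packages the multiplicativity step by invoking the splitting of $\cover{T}_{\glnidx+1}(F)^2$ under $\mathfrak{s}$ (Claim~\ref{claim:T^2 is split by s}), whereas you argue the vanishing of the $\mu_2$-factors directly; these are the same observation.
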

\begin{proof}
We compute $\chi(a_{\alpha})$ for an arbitrary $\alpha\in\Sigma_{G_{\glnidx}}^+$. We will show
\begin{align*}
\chi(a_{\alpha})=\begin{cases}
q^{-j+i}&\alpha=\epsilon_i-\epsilon_j,\quad 2\leq i<j\leq\glnidx+1,\\
q^{j+i-2(\glnidx+2)}&\alpha=\epsilon_i+\epsilon_j,\quad 2\leq i<j\leq\glnidx+1,\\
q^{-\glnidx-2+i}&\alpha=\epsilon_i,\quad 2\leq i\leq\glnidx+1.
\end{cases}
\end{align*}
The formula for $c(\mathbf{w_0},\chi)$ clearly follows from this.

In general if $\alpha^{\vee}(x)=\prod_{l=1}^m\alpha_{i_l}^{\vee}(x_l)$ where $\alpha_{i_l}\in\Delta_{G'_{\glnidx+1}}$, i.e.,
$1\leq i_l\leq\glnidx+1$ for all $1\leq l\leq m$, and for each $l$ either $x_l\in{\Fsquares}$ or $i_l=\glnidx+1$, then $\prod_{l=1}^m\alpha_{i_l}^{\vee}(x_l)\in T_{\glnidx+1}(F)^2$. Then by applying $\mathfrak{s}$ we obtain
${\alpha^{\vee}}^*(x)=\prod_{l=1}^m{\alpha_{i_l}^{\vee}}^*(x_l)$, whence
$\chi({\alpha^{\vee}}^*(x))=\prod_{l=1}^m\chi({\alpha_{i_l}^{\vee}}^*(x_l))$.

Start with the computation of $\chi(a_{\alpha})$ for $\alpha=\epsilon_i-\epsilon_j$, $2\leq i<j\leq\glnidx+1$.
Since $\alpha^{\vee}=\prod_{l=i}^{j-1}\alpha_l^{\vee}$ and $\mathfrak{l}(\alpha)=2$, ${\alpha^{\vee}}(\varpi^2)=\prod_{l=i}^{j-1}{\alpha_l^{\vee}}(\varpi^2)$. The definition of $\chi$ implies $\chi({{\alpha_l}^{\vee}}^*(\varpi^2))=q^{-1}$, giving the result.

Next consider $\alpha=\epsilon_{i}$, $2\leq i\leq\glnidx+1$. We have $\mathfrak{l}(\alpha)=1$ and $\alpha^{\vee}(\varpi)=\prod_{l=i}^{\glnidx}\alpha_{l}^{\vee}(\varpi^2)\alpha_{\glnidx+1}^{\vee}(\varpi)$ (if $\glnidx=1$ there is no product), whence $\chi(a_{\alpha})=q^{-\glnidx-2+i}$.

For $\alpha=\epsilon_{i}+\epsilon_{j}$ ($2\leq i<j\leq\glnidx+1$),
$\alpha^{\vee}(\varpi^2)=(\epsilon_i-\epsilon_j)^{\vee}(\varpi^2)\epsilon_j^{\vee}(\varpi^2)$. Applying $\mathfrak{s}$ gives
${\alpha^{\vee}}^*(\varpi^2)={(\epsilon_i-\epsilon_j)^{\vee}}^*(\varpi^2){\epsilon_j^{\vee}}^*(\varpi^2)$. Now the result follows from the previous calculations. 
\end{proof}

\subsubsection{Vanishing results}\label{subsubsection:vanishing and results}
As described in the introduction, the main motivation for studying exceptional representations is their applications.
The remarkable (local) property of these representations is the vanishing of many twisted Jacquet modules.

In this section $F$ is a \nonarchimedean\ field of odd residual characteristic. Fix a nontrivial additive character $\psi$ of $F$. For a column $b \in F^l$ define the ``length" of $b$, $\ell(b)$, with respect to the symmetric bilinear form corresponding to $J_{l}$, $\ell(b)=\transpose{b}J_{l}b$. If $U<N_{\glnidx}$, $u\in U(F)$ and $\alpha\in\Sigma_{G_{\glnidx}}^+$, denote by $u_{\alpha}$ the projection of $u$ on $\mathcal{U}_{\alpha}$.

We recall the notion of unipotent classes and their corresponding unipotent subgroups and characters. Since the unipotent subgroups
of $G_{\glnidx}$ are in bijection with those of $SO_{2\glnidx+1}$, we can use the description of Bump, Friedberg and Ginzburg \cite{BFG} (Section~4). For a general reference see \cite{CM,Cr}. See also Ginzburg \cite{G2}.

A partition of $k$ is an $m$-tuple $r_1\geq\ldots\geq r_m>0$ such that
$r_1+\ldots+r_m=k$. There is a natural partial order on the set of partitions of $k$, $(r_1,\ldots,r_m)\geq (r'_1,\ldots,r'_{m'})$ if
$\sum_{i=1}^lr_i\geq\sum_{i=1}^lr'_i$ for all $1\leq l\leq\min(m,m')$. We denote $(r_1,\ldots,r_m)\gtorncw(r'_1,\ldots,r'_{m'})$ if
$(r_1,\ldots,r_m)$ is greater than or non comparable with $(r'_1,\ldots,r'_{m'})$.

A unipotent class $\UnipotentOrbit=(r_1,\ldots,r_m)$ of $G_{\glnidx}$ corresponds to a partition of $2\glnidx+1$ in which any even number appears with even multiplicity. 
Write the integers occurring in the multiset $\bigcup_{i=1}^m\{r_i-2j+1\}_{j=1}^{r_i}$ in a decreasing order and let
$l_1\geq\ldots\geq l_{\glnidx}\geq0$ be the first $\glnidx$ numbers (these are the $\glnidx$ largest ones). Define the one-parameter subgroup $h_{\UnipotentOrbit}$ by the image of
\begin{align*}
h_{\UnipotentOrbit}=\prod_{i=1}^{\glnidx}({\eta_i^{\vee}})^{l_i}.
\end{align*}
The group $h_{\UnipotentOrbit}$ acts on $N_{\glnidx}$ by conjugation. For any $\alpha\in\Sigma_{G_{\glnidx}}^+$ there exists $j_{\alpha}\geq0$ such that $h_{\UnipotentOrbit}(t)n_{\alpha}(x)h_{\UnipotentOrbit}(t)^{-1}=n_{\alpha}(t^{j_{\alpha}}x)$ for all $x$ and $t$. Let $V_{\UnipotentOrbit}<N_{\glnidx}$ be the unipotent subgroup generated by those $n_{\alpha}$ for which $j_{\alpha}\geq2$ (i.e., $V_{\UnipotentOrbit}(F)$ is generated by $\setof{n_{\alpha}(x)}{j_{\alpha}\geq2,x\in F}$). Let $V_{\UnipotentOrbit}^{\matha}$ be the quotient of $V_{\UnipotentOrbit}$ by its derived group. Recall (Section~\ref{subsection:the groups}) that $C(G_{\glnidx},h_{\UnipotentOrbit})$ denotes the centralizer of
$h_{\UnipotentOrbit}$ in $G_{\glnidx}$. The centralizer $C(G_{\glnidx},h_{\UnipotentOrbit})$ acts on $V_{\UnipotentOrbit}^{\matha}$ by conjugation. The stabilizer of $x\in V_{\UnipotentOrbit}^{\matha}$ under this action is denoted $St_x$ and its connected component by $St_x^0$.

Any character of $V_{\UnipotentOrbit}(F)$ is the pull back of a character of $V_{\UnipotentOrbit}^{\matha}(F)$, and because $V_{\UnipotentOrbit}^{\matha}(F)$ is abelian, such a character can be identified with a point in $V_{\UnipotentOrbit}^{\matha}(F)$. If $x\in V_{\UnipotentOrbit}^{\matha}(F)$, let $\psi_x$ be the corresponding character.

Let $\overline{F}$ be the algebraic closure of $F$. The action of $C(G_{\glnidx}(\overline{F}),h_{\UnipotentOrbit}(\overline{F}^*))$ on $V_{\UnipotentOrbit}^{\matha}(\overline{F})$ has an open orbit. For $\varepsilon\in V_{\UnipotentOrbit}^{\matha}(\overline{F})$ in this orbit,
$St_{\varepsilon}(\overline{F})^{0}$ is a reductive group.
Let $b\in V_{\UnipotentOrbit}^{\matha}(F)$. The character $\psi_b$ is called generic if $b$ belongs to the open orbit.

\begin{remark}
In \cite{BFG} the notion of a generic character was restricted to allow only $F$-split stabilizers. 
For example, if $St_{\varepsilon}(\overline{F})^{0}=SO_{2l}(\overline{F})$, then $\psi_b$ is generic if $St_{b}(F)^{0}$ is the $F$-split group $SO_{2l}(F)$. A result of \cite{BFG2} (Proposition~3) indicates this notion can be relaxed by considering \quasisplit\ stabilizers. 
\end{remark}

Recall that $\UnipotentOrbit_{0}=(2^{\glnidx}1)$ if $\glnidx$ is even, otherwise $\UnipotentOrbit_{0}=(2^{\glnidx-1}1^3)$. Let $\chi$ be an exceptional character of $C_{\cover{T}_{\glnidx+1}(F)}$.
In this section we prove Theorem~\ref{theorem:local vanishing result}. Namely, for any $\UnipotentOrbit\gtorncw\UnipotentOrbit_0$ and generic character $\psi_b$ of $V_{\UnipotentOrbit}(F)$,
\begin{align}\label{eq:short statement of local vanishing thm}
j_{V_{\UnipotentOrbit},\psi_b}(\Theta_{G_{\glnidx},\chi})=0.
\end{align}
(As in Section~\ref{subsubsection:defs and basic properties} we omitted $\mathfrak{s}$.)
Let $\UnipotentOrbit_1=(31^{2\glnidx-2})$. If a unipotent class $\UnipotentOrbit$ satisfies $\UnipotentOrbit\gtorncw\UnipotentOrbit_0$, then
$\UnipotentOrbit\geq\UnipotentOrbit_1$. This was used by Bump, Friedberg and Ginzburg \cite{BFG} (Theorem~4.2) to reduce the (global) vanishing results to a statement on $\UnipotentOrbit_1$ (\cite{BFG} Proposition~4.3). We follow (a local version of) their arguments (see \cite{BFG2}, proof of Proposition~6). In Lemmas~\ref{lemma:thm minimal case vanishing n=1}-\ref{lemma:twisted Jacquet modules vanish on small representations} we prove $j_{V_{\UnipotentOrbit_1},\psi_b}(\Theta_{G_{\glnidx},\chi})=0$. The proof of \eqref{eq:short statement of local vanishing thm} will follow from this using properties of Jacquet modules.

In the case of $\UnipotentOrbit_1$, the corresponding unipotent subgroup $V_{\UnipotentOrbit_1}$ is $U_1$.
If $u\in U_1(F)$, let $r(u)\in F^{2\glnidx-1}$ be the row vector given by
\begin{align*}
r(u)=(u_{\epsilon_2-\epsilon_3},\ldots,u_{\epsilon_2-\epsilon_{\glnidx+1}},u_{\epsilon_{2}},u_{\epsilon_2+\epsilon_{\glnidx+1}},\ldots,u_{\epsilon_2+\epsilon_3}).
\end{align*}
Since $U_1$ is abelian, for any column $b\in F^{2\glnidx-1}$ we have a character of $U_1(F)$ given by $\psi_b(u)=\psi(r(u)b)$. Also $C(G_{\glnidx},h_{\UnipotentOrbit_1})=M_1$.
The points belonging to the open orbit are those $b\in U_1(\overline{F})$ with $\ell(b)\ne0$. Indeed, there are only three orbits, $\setof{b}{\ell(b)\ne0}$, $\setof{b}{\ell(b)=0}$ and $\{b=0\}$. if $\ell(b)=0$, $St_b(\overline{F})^{0}$ is not reductive, because it contains a unipotent radical of $G_{\glnidx-1}$, which is normal in $St_b(\overline{F})^{0}$.

Let $b\in U_1(F)$. Then $\psi_b$ is generic if and only if $\ell(b)\ne0$. For a generic $\psi_b$,
$St_{b}(F)^0\isomorphic GSpin_{2(\glnidx-1)}(F)$, a \quasisplit\ group (in \cite{BFG,BFG2} this was $SO_{2(\glnidx-1)}(F)$), where $GSpin_{2(\glnidx-1)}<G_{\glnidx-1}<M_1$. To see this start with the root subgroups contained in the stabilizer, these are found using the identification of $U_1$ with a unipotent radical $U_1'$ of $SO_{2\glnidx+1}$ (see Section~\ref{subsection:properties of GSpin}). To find the coroots write $t\in T_{\glnidx+1}(F)$ in the form \eqref{eq:convenient coordinates of torus}, then consider the action of $\prod_{i=1}^{\glnidx}\eta_{i}^{\vee}(a_i)$ on $U_1'(F)$ ($\beta_1^{\vee}(t_1)\in C_{G_{\glnidx}(F)}$). 
\begin{lemma}\label{lemma:thm minimal case vanishing n=1}
Assume $\glnidx=1$ and let $\varphi$ be a linear functional on the space of $\Theta_{G_{1},\chi}$. If $\varphi(n_{\alpha_2}^*(x)v)=\psi(x)\varphi(v)$ for all $x\in F$, then $\varphi=0$.
\end{lemma}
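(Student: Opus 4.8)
\medskip

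\noindent\textbf{Proof proposal.} The plan is to reduce everything to the representation theory of $\GLF{2}{F}$. Since $\glnidx=1$ we have $G_1=GSpin_3\isomorphic\GL{2}$, and by Claim~\ref{claim:cover minimal cases} the cover $\cover{G}_1(F)$ splits under $\mathfrak{s}$; thus $\cover{G}_1(F)=\iota(\mu_2)\cdot\mathfrak{s}(G_1(F))$ is an inner direct product, and a genuine irreducible representation of $\cover{G}_1(F)$, restricted along the homomorphism $\mathfrak{s}$, is an irreducible smooth admissible representation of $G_1(F)\isomorphic\GLF{2}{F}$. Let $\pi$ be the representation of $\GLF{2}{F}$ obtained in this way from $\Theta_{G_1,\chi}$, so that $\Theta_{G_1,\chi}(n_{\alpha_2}^*(x))=\pi(n_{\alpha_2}(x))$ because $\mathfrak{s}(n_{\alpha_2}(x))=n_{\alpha_2}^*(x)$. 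Since $\mathcal{U}_{\alpha_2}(F)=\setof{n_{\alpha_2}(x)}{x\in F}=N_1(F)$ is a maximal unipotent subgroup of $G_1(F)$, the hypothesis on $\varphi$ says exactly that $\varphi$ is a $\psi$-Whittaker functional on $\pi$. Hence it suffices to prove that $\pi$ is one-dimensional: then $\pi(n_{\alpha_2}^*(x))$ is the identity, so $\varphi(v)=\varphi(n_{\alpha_2}^*(x)v)=\psi(x)\varphi(v)$ for all $x\in F$ and all $v$, and choosing $x$ with $\psi(x)\neq1$ forces $\varphi=0$.

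To identify $\pi$ I would invoke Proposition~\ref{proposition:periodicity theorem} (its hypothesis holds since an exceptional character lies in the positive Weyl chamber, as noted in the proof of that proposition): $\pi$ is the unique irreducible quotient of the normalized principal series $V(\chi)=\induced{\cover{B}_1(F)}{\cover{G}_1(F)}{\rho(\chi)}$. Transporting along $\mathfrak{s}$, $V(\chi)$ becomes a normalized principal series $\induced{B_{\GL{2}}(F)}{\GLF{2}{F}}{\chi_1\otimes\chi_2}$ of $\GLF{2}{F}$, where $\chi_1\otimes\chi_2$ is the character of the diagonal torus transported from $\chi|_{\mathfrak{s}(T_2(F))}$; fixing the isomorphism $G_1\isomorphic\GL{2}$ so that $B_1$ goes to the upper triangular Borel, the lift ${\alpha_2^{\vee}}^{*}(x)$ of the coroot of the unique simple root corresponds to $\mathrm{diag}(x,x^{-1})$. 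Because $\mathfrak{l}(\alpha_2)=1$ when $\glnidx=1$, the exceptional condition $\chi({\alpha_2^{\vee}}^{*}(x))=|x|$ becomes $\chi_1(x)\chi_2(x)^{-1}=|x|$ for all $x\in F^*$, so writing $\mu=\absdet{}^{-1/2}\chi_1=\absdet{}^{1/2}\chi_2$ we get $V(\chi)\isomorphic\induced{B_{\GL{2}}(F)}{\GLF{2}{F}}{\absdet{}^{1/2}\mu\otimes\absdet{}^{-1/2}\mu}$. By the standard structure of principal series of $\GLF{2}{F}$ this standard module is reducible, its unique irreducible subrepresentation being a twist of the Steinberg representation and its unique irreducible quotient being the one-dimensional representation $\mu\circ\det$. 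Hence $\pi\isomorphic\mu\circ\det$ is one-dimensional, which finishes the argument.

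The only point that I expect to require care is the coordinate bookkeeping in the second paragraph: matching the description of $T_2$ in the coroot basis $\{\alpha_1^{\vee},\alpha_2^{\vee}\}$ with the diagonal torus of $\GL{2}$, and checking that ``exceptional, hence in the positive Weyl chamber'' places $\chi$ on the side of the reducibility point $\chi_1(x)\chi_2(x)^{-1}=|x|$ whose Langlands quotient is the one-dimensional representation (rather than the Steinberg twist, which is the irreducible subrepresentation there). Everything else is a routine application of Proposition~\ref{proposition:periodicity theorem} and the rank-one structure theory; the lemma will then serve as the base case for the induction on $\glnidx$ carried out in Lemmas~\ref{lemma:thm minimal case vanishing n=1}--\ref{lemma:twisted Jacquet modules vanish on small representations}.
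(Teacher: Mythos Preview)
Your proposal is correct and follows essentially the same approach as the paper: both identify $\Theta_{G_1,\chi}$ with a one-dimensional representation (so that $N_1$ acts trivially and no nonzero Whittaker functional can exist), using the splitting of the cover (Claim~\ref{claim:cover minimal cases}) and Proposition~\ref{proposition:periodicity theorem}. The only cosmetic difference is that the paper works on the subrepresentation side of $V(\rconj{\mathbf{w_0}}\chi)$ and exhibits the one-dimensional piece explicitly as $g\mapsto\eta(\Upsilon(g))$, whereas you work on the quotient side of $V(\chi)$ and invoke the standard $\GL{2}$ principal-series structure at the reducibility point $\chi_1\chi_2^{-1}=|\cdot|$.
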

\begin{proof}
The subgroup generated by $n_{\alpha_2}(x)$ is $U_1(F)=N_{1}(F)$. The functional $\varphi$ is the usual Whittaker functional with respect to $N_1$ and $\psi$. We will show that $V(\rconj{\mathbf{w_0}}\chi)$ contains an irreducible genuine representation, which does not afford such a functional. By Proposition~\ref{proposition:periodicity theorem} this representation must be $\Theta_{G_1,\chi}$, which gives the result.

In this case $\chi$ is a character of $\cover{T}_{2}(F)$ (see Section~\ref{subsubsection:representations of the torus}). By Claim~\ref{claim:cover minimal cases}
the cover splits, hence $\chi$ can be considered as the extension of a character $\chi'$ of $T_{2}(F)$. The character $\chi'$ must satisfy
$\chi'({\alpha_2^{\vee}}(t_2))=|t_2|$ for all $t_2\in F^*$.
Since $w_0=w_{\alpha_2}$, Equalities~\eqref{eq:image of convenient coordinates of the torus in GSpin}-\eqref{eq:formula for delta Borel with convenient coordinates} imply 
\begin{align*}
\delta_{B_1(F)}^{1/2}\rconj{\mathbf{w_0}}\chi'({\alpha_1^{\vee}}(t_1){\alpha_2^{\vee}}(t_2))=
|t_1|^{1/2}\chi'({\alpha_1^{\vee}}(t_1))=\eta(\Upsilon({\alpha_1^{\vee}}(t_1){\alpha_2^{\vee}}(t_2))),
\end{align*}
where $\eta$ is some character of $F^*$.
Hence the nongenuine function $f(g)=\eta(\Upsilon(g))$ belongs to $V(\rconj{\mathbf{w_0}}\chi)$ and by its
definition, spans a one-dimensional subspace which is also a $G_1(F)$-module. Because the cover splits, $f$ can be extended to a genuine function hence this is also a genuine $\cover{G}_1(F)$-module.
The action of $n_{\alpha_2}^*(x)$ on this subspace is trivial, completing the proof.
\end{proof}

In the case $\glnidx=2$, $U_1(F)$ is generated by $n_{\alpha_2}(x)$, $n_{\alpha_2+\alpha_3}(y)$ and $n_{\alpha_2+2\alpha_3}(z)$.
\begin{lemma}\label{lemma:minimal cases vanishing}
Assume $\glnidx=2$ and let $\varphi$ be a linear functional on the space of $\Theta_{G_{2},\chi}$. If $b_1,b_2,b_3\in F$ satisfy $\ell(\transpose{(b_1,b_2,b_3)})\ne0$ and for all $x,y,z\in F$,
\begin{align*}
\varphi(n_{\alpha_2}^*(x)n_{\alpha_2+\alpha_3}^*(y)n_{\alpha_2+2\alpha_3}^*(z)v)=\psi(b_1x+b_2y+b_3z)\varphi(v),
\end{align*}
then $\varphi=0$.
\end{lemma}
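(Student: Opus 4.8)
The plan is to recognise the relevant piece of $\Theta_{G_2,\chi}$ as a Weil representation and compute the twisted Jacquet module in its Schr\"odinger model; I will also indicate a principal‑series route that stays inside the methods of the paper. Observe that $G'_2=Spin_5$ is isomorphic to $Sp_4$ and that the Matsumoto double cover $\cover{G'}_2(F)=\cover{Spin_5}(F)$ is the metaplectic cover of $Sp_4(F)$, while under $G_2=GSpin_5\cong GSp_4$ the parabolic $Q_1=M_1\ltimes U_1$ is the Siegel parabolic (its radical $U_1$ being abelian, $3$‑dimensional; indeed $U_1$ is generated by $\mathcal{U}_{\epsilon_2-\epsilon_3},\mathcal{U}_{\epsilon_2},\mathcal{U}_{\epsilon_2+\epsilon_3}$, with all commutators trivial). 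The first step is then to identify $\Theta_{G_2,\chi}$, restricted to $\cover{G'}_2(F)$, with a sum of unramified twists of the two halves of the Weil representation $\omega_\psi$. This rests on Proposition~\ref{proposition:periodicity theorem}: $j_{N_2}(\Theta_{G_2,\chi})=\rho(\rconj{\mathbf{w_0}}\chi)$ is a single genuine character of $\cover{T}_3(F)$, so $\Theta_{G_2,\chi}|_{\cover{G'}_2(F)}$ has minimal Gelfand--Kirillov dimension, and the only genuine representations of $\cover{Sp_4}(F)$ with this property are the twists of the Weil halves; in particular each irreducible constituent of $\Theta_{G_2,\chi}|_{\cover{G'}_2(F)}$ is such a twist, and the $U_1(F)$‑module it carries is that of a Weil representation.

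In the Schr\"odinger model on $\mathcal{S}(F^2)$ the Siegel radical $U_1(F)$, identified with the group of symmetric $2\times2$ matrices $X$ over $F$, acts by $(\omega_\psi(X)\phi)(v)=\psi(c\,\transpose{v}Xv)\phi(v)$ for a fixed nonzero constant $c$ (depending only on normalisations and on the twist). Matching the coordinates $(u_{\epsilon_2-\epsilon_3},u_{\epsilon_2},u_{\epsilon_2+\epsilon_3})$ of $U_1$ with the entries $(X_{11},X_{12},X_{22})$ of $X$, the form $\ell$ becomes a nonzero scalar multiple of $\det X$, so $\ell(b)\ne 0$ exactly when $b$, read as a symmetric matrix, has rank $2$. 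The $(U_1(F),\psi_b)$‑coinvariants of $\mathcal{S}(F^2)$ are the distributions supported on $\{v:c\,v\transpose{v}=b\}$; since $v\transpose{v}$ has rank at most $1$, this set is empty whenever $\mathrm{rank}(b)=2$, i.e.\ whenever $\ell(b)\ne 0$. Hence $j_{U_1,\psi_b}(\Theta_{G_2,\chi})=0$. As this is only a statement about the $U_1(F)$‑module structure, it is unaffected by the passage from $\cover{Sp_4}(F)$ to $\cover{GSpin_5}(F)$ and by twisting by $\Upsilon$, and it forces $\varphi=0$.

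The step requiring the most care is the identification of $\Theta_{G_2,\chi}|_{\cover{G'}_2(F)}$ with (twists of) $\omega_\psi$: that an exceptional character gives the metaplectic minimal representation, and that two exceptional characters of $C_{\cover{T}_3(F)}$ differ by a character whose effect on the $U_1(F)$‑module is harmless; together with the routine bookkeeping matching the roots of $U_1$ with symmetric matrices and $\ell$ with the determinant. An alternative that avoids invoking $GSp_4$ is to pull $\varphi$ back to a $(U_1(F),\psi_b)$‑functional on $V(\chi)$ — legitimate since $\Theta_{G_2,\chi}$ is its unique irreducible quotient by Proposition~\ref{proposition:periodicity theorem} — decompose $j_{U_1,\psi_b}(V(\chi))$ by the geometric lemma of \cite{BZ2} over the four double cosets $W_{M_1}\backslash W_{G_2}$, and check that for a generic $\psi_b$ — after conjugating $\varphi$ by a suitable $n_{\alpha_3}^*(t)$ to normalise $b$, and using the commutator relation $[\mathcal{U}_{\alpha_2},\mathcal{U}_{\alpha_3}]=\mathcal{U}_{\alpha_2+\alpha_3}$ to exchange roots — every surviving contribution factors through a functional that does not descend to the irreducible quotient; this is faithful to the arguments of Proposition~\ref{proposition:Jacquet along standard nonminimal unipotent} and Lemma~\ref{lemma:thm minimal case vanishing n=1} but needs a more delicate tracking of the Bruhat cells and of the $F$‑orbits of generic characters.
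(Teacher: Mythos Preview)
Your approach is essentially the same as the paper's: exploit the exceptional isomorphism $G_2'=Spin_5\cong Sp_4$, identify the restriction of $\Theta_{G_2,\chi}$ with pieces of the Weil representation, and then compute the $(U_1,\psi_b)$-coinvariants in the Schr\"odinger model, where the rank condition on $b$ (equivalently $\ell(b)\ne0$) forces the support to be empty. The Schr\"odinger computation you sketch is exactly what the paper invokes at the end (citing \cite{BFG}, Proposition~2.5).

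The difference is in how the identification with the Weil representation is justified. You appeal to Gelfand--Kirillov dimension and the classification of minimal genuine representations of $\cover{Sp}_4(F)$; this is valid but imports a result not developed in the paper. The paper instead works on $\cover{G}_2(F)^{\star}$ (not $\cover{G}_2'(F)$): it first embeds $\Theta_{G_2,\chi}$ into $\induced{\cover{Q_1}(F)^{\boxplus}}{\cover{G}_2(F)}{\delta_{Q_1}^{1/6}\eta\Upsilon}$ by computing $\delta_{B_2}^{1/2}\,\rconj{\mathbf{w_0}}\chi_0$ explicitly, then constructs by hand an embedding $E:\eta\Upsilon\cdot\omega_{\psi_c}^{even}\hookrightarrow\Theta_{G_2,\chi}^{\star}$ via $E(\phi)(g)=\eta\Upsilon\cdot\omega_{\psi_c}^{even}(g)\phi(0)$ on $\cover{G}_2(F)^{\star}$ and zero off it, and finally counts: as $c$ ranges over $\lmodulo{\Fsquares}{F^*}$ these give $[F^*:\Fsquares]$ pairwise inequivalent irreducibles, which is exactly $[\cover{G}_2(F):\cover{G}_2(F)^{\star}]$, so by \cite{BZ1} (2.9) they exhaust $\Theta_{G_2,\chi}^{\star}$. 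Only the \emph{even} halves appear (the odd half does not extend to $\cover{G}_2(F)^{\star}$ compatibly with the central element $\alpha_3^{\vee}(-1)$), and the ``twists'' are by the square classes in $\psi_c$, not unramified twists. The paper's route is more self-contained; yours is shorter if one is willing to quote the minimal-representation classification.

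One small correction: $\rho(\rconj{\mathbf{w_0}}\chi)$ is not a single genuine \emph{character} of $\cover{T}_3(F)$ --- for $\glnidx=2$ the cover of the torus is two-step nilpotent and $\rho(\chi)$ is a finite-dimensional irreducible. What Proposition~\ref{proposition:periodicity theorem} gives is that the Jacquet module is a single \emph{irreducible}; that suffices for your smallness argument, but the phrasing should be adjusted.
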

\begin{proof}
By Claim~\ref{claim:center of the torus}, $C_{\cover{T}_3(F)}=\cover{T}_3(F)^2$, hence we can regard $\chi$ as the extension to $\cover{T}_3(F)^2$ of a character $\chi_0$ of $T_3(F)^2$. If
$t={\eta_1^{\vee}}(a_1^2){\eta_2^{\vee}}(a_2^2){\beta_1^{\vee}}(t_1)$,
\begin{align*}
\delta_{B_2(F)}^{1/2}\rconj{\mathbf{w_0}}\chi_0(t)=|a_1|^{-1}|a_2|^{-2}|t_1|^2\chi_0(\alpha_1^{\vee}(a_1^{-2}a_2^{-2}t_1^{2}))
=|a_1|\eta^{-1}(a_1^{-2}a_2^{-2}t_1^{2})=|a_1|\eta(\Upsilon(t)),
\end{align*}
for a suitable character $\eta$ of $F^*$. 

Let $M_1(F)^{\boxplus}=\GLF{1}{F}^{\star}\times G_{1}(F)^{\star}$
and $Q_1(F)^{\boxplus}=M_1(F)^{\boxplus}\ltimes U_1(F)$. According to \eqref{eq:block-compatibility on Levi subgroups}, and because
the covers of $\GLF{1}{F}$ and $G_1(F)$ are split, the cover of $M_1(F)^{\boxplus}$ is split. Therefore $\delta_{Q_1(F)}^{1/6}\eta\Upsilon$ can be pulled back to a genuine representation of $\cover{Q}_1(F)^{\boxplus}$ whence $\induced{\cover{Q_1}(F)^{\boxplus}}{\cover{G}_2(F)}{\delta_{Q_1(F)}^{1/6}\eta\Upsilon}$ is a genuine representation (this induction is not normalized). Because $[\cover{Q_1}(F):\cover{Q_1}(F)^{\boxplus}]$ is finite 
and for $t$ as above $\delta_{Q_1(F)}^{1/6}(t)=|a_1|$, we get
\begin{align*}
\induced{\cover{Q_1}(F)^{\boxplus}}{\cover{G}_2(F)}{\delta_{Q_1(F)}^{1/6}\eta\Upsilon}\subset \induced{\cover{B}_2(F)}{\cover{G}_2(F)}{\rho(\rconj{\mathbf{w_0}}\chi)}.
\end{align*}

This inclusion and Proposition~\ref{proposition:periodicity theorem} imply that $\induced{\cover{Q_1}(F)^{\boxplus}}{\cover{G}_2(F)}{\delta_{Q_1(F)}^{1/6}\eta\Upsilon}$ has a unique irreducible genuine subrepresentation, which must be $\Theta_{G_2,\chi}$. We proceed to show that $[F^*:{\Fsquares}]$ non-isomorphic irreducible
representations can be embedded in $\Theta_{G_2,\chi}$ over $\cover{G}_2(F)^{\star}$.
These will be certain twists of the Weil representation. According to Bernstein and Zelevinsky \cite{BZ1} (2.9),
$\Theta_{G_2,\chi}^{\star}$ equals the sum of these representations
($[\cover{G}_2(F):\cover{G}_2(F)^{\star}]=[F^*:{\Fsquares}]$). Thus it will suffice to show that the prescribed functional must vanish on each of them.

We use the exceptional isomorphism $G_2'\isomorphic Sp_2$. Let $Sp_2$ be the subgroup of $\GLF{4}{F}$ preserving the antisymmetric form on $F^4$ given by $(x,y)\mapsto\transpose{x}\left(\begin{smallmatrix}&J_{2}\\-J_{2}\end{smallmatrix}\right)y$ (the matrix $J_2$ was defined in Section~\ref{subsection:the groups}). Fix the Borel subgroup $B_{Sp_2}=T_{Sp_2}\ltimes N_{Sp_2}$ of upper triangular matrices in $Sp_2$. Let $\gamma_1,\gamma_2$ be the coordinate functions, i.e., if $y=diag(y_1,y_2,y_2^{-1},y_1^{-1})\in T_{Sp_2}(F)$, $\gamma_i(y)=y_i$. Denote by $\varrho_1,\varrho_2$ the simple roots of $Sp_2$. The mapping from $G_2'$ to $Sp_2$ is defined by $\alpha_2\mapsto\varrho_2$ and $\alpha_3\mapsto \varrho_1$.

Let $\omega_{\psi}$ be the Weil representation of the metaplectic double cover $\cover{Sp}_2(F)$ of $Sp_2(F)$, realized on the space $\mathcal{S}(F^2)$ of Schwartz-Bruhat functions on the row space $F^2$. It satisfies the following formulas (see \cite{P}):
\begin{align}\nonumber
&\omega_{\psi}((diag(a,a^*),\zeta))\phi(\xi)=\zeta\gamma_{\psi}(\det{a})\absdet{a}^{\half}\phi(\xi a),\\\label{eq:weil rep Siegel unipotent action}
&\omega_{\psi}((\left(\begin{smallmatrix}I_{2}&u\\&I_{2}\end{smallmatrix}\right),\zeta))\phi(\xi)=\zeta\psi(\half\xi J_{2}\transpose{u}\transpose{\xi})\phi(\xi).
\end{align}
Here $a\in\GLF{2}{F}$, $\zeta\in\mu_2$, $\phi\in\mathcal{S}(F^{2})$, $\xi\in F^2$ and $\gamma_{\psi}$ is the normalized Weil factor associated to $\psi$ (see Sections~\ref{subsection:the groups} and \ref{subsection:The Weil symbol}).
Note that $\gamma_{\psi}|_{{\Fsquares}}=1$. We have $\omega_{\psi}=\omega_{\psi}^{even}\oplus\omega_{\psi}^{odd}$, a decomposition
into even and odd functions, each space is irreducible. For $c\in F^*$, denote $\psi_c(x)=\psi(cx)$. If $c\ne d$ modulo
${\Fsquares}$, $\omega_{\psi_c}^{even}$ and $\omega_{\psi_d}^{even}$ are non-isomorphic.

The cover $\cover{G}_2'(F)$ is nontrivial and because $\cover{Sp}_2(F)$ is unique, the isomorphism $G_2'\isomorphic Sp_2$
extends to the cover groups and we can regard $\omega_{\psi}$ as a representation of $\cover{G}_2'(F)$.
We have $C_{\cover{G}_2(F)^{\star}}=C_{\cover{G}_2(F)}$,
$C_{\cover{G}_2(F)}\cap \cover{G}_2'(F)=p^{-1}(\alpha_3^{\vee}(\mp1))$ and
$\cover{G}_2(F)^{\star}=C_{\cover{G}_2(F)^{\star}}\cover{G}_2'(F)$ (see Claim~\ref{claim:center of cover}). Since $\alpha_3^{\vee}(-1)$ is mapped to the matrix $-I_4\in Sp_2(F)$ which acts by $-1$ on $\omega_{\psi}^{odd}$ and by $1$ on $\omega_{\psi}^{even}$, and $\Upsilon(\alpha_3^{\vee}(-1))=1$, we can extend $\omega_{\psi}^{even}$ to an irreducible genuine representation
$\eta\Upsilon\cdot\omega_{\psi}^{even}$ of $\cover{G}_2(F)^{\star}$.

For an even function $\phi\in\mathcal{S}(F^2)$ , define
\begin{align*}
E(\phi)(g)=\begin{cases}\eta\Upsilon\cdot\omega_{\psi}^{even}(g)\phi(0)&g\in\cover{G}_2(F)^{\star},\\
0&\text{otherwise.}\end{cases}
\end{align*}
We claim that $\phi\mapsto E(\phi)$ defines an embedding of $\eta\Upsilon\cdot\omega_{\psi}^{even}$ in $\Theta_{G_2,\chi}^{\star}$. 
It is enough to show $E(\phi)\in\induced{\cover{Q_1}(F)^{\boxplus}}{\cover{G}_2(F)}{\delta_{Q_1(F)}^{1/6}\eta\Upsilon}$. Indeed, because $\induced{\cover{Q_1}(F)^{\boxplus}}{\cover{G}_2(F)}{\delta_{Q_1(F)}^{1/6}\eta\Upsilon}$ is of finite length and $\lmodulo{\cover{G}_2(F)^{\star}}{\cover{G}_2(F)}$ is a finite group, any $\cover{G}_2(F)^{\star}$-submodule of
$\induced{\cover{Q_1}(F)^{\boxplus}}{\cover{G}_2(F)}{\delta_{Q_1(F)}^{1/6}\eta\Upsilon}$ is contained in $\Theta_{G_2,\chi}^{\star}$.


Clearly $E(\phi)$ is a smooth genuine function on $\cover{G}_2(F)$ with support in $\cover{G}_2(F)^{\star}$. 

For $m\in M_1(F)^{\boxplus}$ we can write $m=cm_1h'$ with
\begin{align*}
c=\alpha_1^{\vee}(a_1^{-2}t_1^2)\alpha_2^{\vee}(a_1^{-2}t_1^2)\alpha_3^{\vee}(a_1^{-1}t_1)\in C_{G_2(F)},\qquad m_1=\alpha_2^{\vee}(a_1^2)\alpha_3^{\vee}(a_1t_1^{-1}t_2),\qquad h'\in G_1'(F).
\end{align*}
We must show
\begin{align*}
E(\phi)(mg)=|a_1|\eta\Upsilon(a_1^{-2}t_1^2)E(\phi)(g),\qquad\forall g\in \cover{G}_2(F).
\end{align*}
Note that we can regard $m$ as an element of $\cover{M}_1(F)^{\boxplus}$ because the cover splits. Since $M_1(F)^{\boxplus}<G_2(F)^{\star}$, it is enough to assume $g\in\cover{G}_2(F)^{\star}$.

Since $h'$ is generated by the elements $n_{\alpha_3}(x)$ ($x\in F$) and $w_{\alpha_3}$, whose images in $Sp_2(F)$ belong to $\setof{diag(a,a^*)}{a\in SL_2(F)}$, $\omega_{\psi}^{even}((h',1))\phi(0)=\phi(0)$. Also the image of $m_1$ in $Sp_2(F)$ is
\begin{align*}
\varrho_1^{\vee}(a_1t_1^{-1}t_2)\varrho_2^{\vee}(a_1^2)=diag(a_1t_1^{-1}t_2,a_1t_1t_2^{-1},(a_1t_1t_2^{-1})^{-1},(a_1t_1^{-1}t_2)^{-1}).
\end{align*}
Thus $\eta\Upsilon\cdot\omega_{\psi}^{even}((m,1))\phi(0)=|a_1|\eta\Upsilon(a_1^{-2}t_1^2)\phi(0)$ as required.

Regarding invariancy under $\mathfrak{s}(U_1(F))$, let $V(F)<Sp_2(F)$ be the unipotent subgroup
\begin{align*}
\left\{v(x,y,z)=\left(\begin{array}{cccc}1&0&y&z\\&1&x&y\\&&1&0\\&&&1\end{array}\right):x,y,z\in F\right\}.
\end{align*}
The image of $n_{\alpha_2}(x)n_{\alpha_2+\alpha_3}(y)n_{\alpha_2+2\alpha_3}(z)\in U_1(F)$ in $Sp_2(F)$ is
$v(x,y,z)$, which acts trivially at $0$ by \eqref{eq:weil rep Siegel unipotent action}.

We conclude that $E$ is an embedding. Of course, this applies to any $\psi_c$.
It remains to show that $\eta\Upsilon\cdot\omega_{\psi}^{even}$ does not afford the functional appearing in the proposition. Indeed, the subgroup $U_1(F)$ is in bijection with $V(F)$ and exactly as in 
\cite{BFG} (Proposition~2.5), as long as $2b_1b_3+b_2^2\ne0$, any such functional vanishes on the space of $\omega_{\psi}^{even}$.
\end{proof}
\begin{remark}
The proof follows the arguments of \cite{BFG} (Proposition~2.5). There, the analog of $G_2(F)^{\star}$ is the kernel of the spinor norm map, $SO_5(F)'$. 
\end{remark}

The following lemma combines \cite{BFG} (Theorem~2.6) and \cite{BFG2} (Proposition~3).
\begin{lemma}\label{lemma:twisted Jacquet modules vanish on small representations}
Assume $\glnidx\geq1$. For any generic character $\psi_b$ of $V_{\UnipotentOrbit_1}(F)$, $j_{V_{\UnipotentOrbit_1},\psi_b}(\Theta_{G_{\glnidx},\chi})=0$.
\end{lemma}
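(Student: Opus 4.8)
The plan is to argue by induction on $\glnidx$. The cases $\glnidx=1$ and $\glnidx=2$ are precisely Lemmas~\ref{lemma:thm minimal case vanishing n=1} and~\ref{lemma:minimal cases vanishing}: for $\glnidx\leq2$ the subgroup $V_{\UnipotentOrbit_1}=U_1$ is the one handled there, and for $\glnidx=1$ a generic character of $U_1(F)=N_1(F)$ is just a nontrivial one. So I would assume $\glnidx\geq3$ and take as induction hypothesis that $j_{V_{\UnipotentOrbit_1'},\psi'}(\Theta_{G_{\glnidx-1},\chi''})=0$ for \emph{every} exceptional character $\chi''$ of $C_{\cover{T}_{\glnidx}(F)}$ and every generic character $\psi'$ of $V_{\UnipotentOrbit_1'}(F)=U_1^{(\glnidx-1)}(F)$, where $\UnipotentOrbit_1'=(31^{2(\glnidx-1)-2})$ is the analogous class of $G_{\glnidx-1}$ (a partition of $2(\glnidx-1)+1$, with $V_{\UnipotentOrbit_1'}$ the unipotent radical of the maximal parabolic of $G_{\glnidx-1}$ with Levi $\GL{1}\times G_{\glnidx-2}$).

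Now suppose there is a nonzero functional $\varphi$ on the space of $\Theta_{G_{\glnidx},\chi}$ with $\varphi(\pi(\mathfrak{s}(u))v)=\psi_b(u)\varphi(v)$ for all $u\in V_{\UnipotentOrbit_1}(F)=U_1(F)$, where $\ell(b)\neq0$. Since $C(G_{\glnidx},h_{\UnipotentOrbit_1})=M_1$ and $\psi_b$ lies in the open $M_1$-orbit, its stabilizer satisfies $St_{b}(F)^{0}\isomorphic GSpin_{2(\glnidx-1)}(F)$, sitting inside $G_{\glnidx-1}(F)\subset M_1(F)$; crucially this stabilizer is always \quasisplit, so no split/non-split case distinction is needed and one may replace $\psi_b$ by a convenient representative in its $M_1(F)$-orbit, following \cite{BFG2} (Proposition~3). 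For such a representative I would decompose $U_1\cong F^{2\glnidx-1}$ relative to the parabolic of $G_{\glnidx-1}$ whose unipotent radical is $U_1^{(\glnidx-1)}$, and combine an exchange-of-roots manipulation in the style of \cite{BFG} (Section~2) (cf.\ \cite{GRS3}) with the constant-term description of Proposition~\ref{proposition:Jacquet along standard nonminimal unipotent} for $k=1$, namely $j_{U_1}(\Theta_{G_{\glnidx},\chi})\subset I(\Theta_{\GL{1},\chi_1},\Theta_{G_{\glnidx-1},\chi_2})^{\star}$ for suitable exceptional $\chi_1,\chi_2$. Using the exactness of the Jacquet functor, $\varphi$ then produces a nonzero functional on $I(\Theta_{\GL{1},\chi_1},\Theta_{G_{\glnidx-1},\chi_2})^{\star}$ that is $(U_1^{(\glnidx-1)},\psi')$-equivariant for a generic character $\psi'$; the $\GL{1}$-factor contributes only an abelian character and imposes no constraint.

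By the definition of $I(\cdot,\cdot)^{\star}$ in Section~\ref{subsubsection:tensor product} and Lemma~\ref{lemma:induced representation composition factors}, this representation is, up to finite multiplicity and compact induction along a finite-index subgroup, assembled from $\Theta_{\GL{1},\chi_1}^{\star}\otimes\Theta_{G_{\glnidx-1},\chi_2}^{\star}$. Since $U_1^{(\glnidx-1)}(F)\subset\cover{G}_{\glnidx-1}(F)^{\star}$, restricting the functional to an irreducible summand $\sigma$ of $\Theta_{G_{\glnidx-1},\chi_2}^{\star}$ gives $j_{U_1^{(\glnidx-1)},\psi'}(\sigma)\neq0$; and because $\sigma$ is a $U_1^{(\glnidx-1)}(F)$-submodule of $\Theta_{G_{\glnidx-1},\chi_2}$ and this twisted Jacquet functor is exact, this forces $j_{V_{\UnipotentOrbit_1'},\psi'}(\Theta_{G_{\glnidx-1},\chi_2})\neq0$, contradicting the induction hypothesis. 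Hence $\varphi=0$ and the lemma follows.

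I expect the main obstacle to be the exchange-of-roots reduction in the metaplectic setting: one must track the cocycle $\sigma$ through the conjugations and the trading of root subgroups and check that, once $\psi_b$ is in normal form, the unipotents and characters involved live on genuine subgroups where $\mathfrak{s}$ behaves exactly as in the split group, so that the descent of the generic character to $G_{\glnidx-1}$ is legitimate. In addition, because the direct factors $\GL{1}$ and $G_{\glnidx-1}$ of $M_1$ do not commute in the cover (only their $\star$-subgroups do), one cannot work with a genuine tensor product and must carry $I(\Theta_{\GL{1},\chi_1},\Theta_{G_{\glnidx-1},\chi_2})^{\star}$ and its decomposition from Lemmas~\ref{lemma:induced representation composition factors} and~\ref{lemma:induced representation composition factors for torus} throughout. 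The remaining bookkeeping — the several $F$-rational forms of the generic orbit and the \quasisplit\ stabilizer $GSpin_{2(\glnidx-1)}$ — is handled following \cite{BFG2}.
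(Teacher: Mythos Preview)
Your approach has a substantive gap at the point where you pass from the $(U_1,\psi_b)$-equivariant functional $\varphi$ to a functional on $I(\Theta_{\GL{1},\chi_1},\Theta_{G_{\glnidx-1},\chi_2})^{\star}$. Proposition~\ref{proposition:Jacquet along standard nonminimal unipotent} with $k=1$ computes the \emph{untwisted} Jacquet module $j_{U_1}(\Theta_{G_{\glnidx},\chi})$; a functional equivariant for the nontrivial (indeed generic) character $\psi_b$ does not factor through that quotient. The exchange of roots you invoke does not close the gap: after normalizing $\psi_b$ to $\psi(u_{\epsilon_2})$ (say when $\ell(b)\in\Fsquares$), the nontrivial coordinate lies on the short root $\epsilon_2$, and no root-swap trivializes $\psi_b$ on all of $U_1$ while depositing a generic character on $U_1^{(\glnidx-1)}\subset G_{\glnidx-1}$. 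Note moreover that $U_1\cap G_{\glnidx-1}=\{1\}$, so there is no natural ``decomposition of $U_1$ relative to the parabolic of $G_{\glnidx-1}$'' of the kind you describe. The obstacle you anticipate is therefore not a metaplectic bookkeeping issue; it is already present in the linear group.

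The paper's argument for $\glnidx\geq3$ is organized quite differently and does not run a direct induction through $Q_1$. It first proves an auxiliary inductive vanishing (Claim~\ref{claim:twisted Jacquet modules vanish on small representations claim 1}) for the mixed Jacquet modules $j_{N_{\GL{k}}U_k,\psi}$, $2\leq k\leq\glnidx$; an exchange of roots then converts hypothetical nonvanishing of $j_{U_1,\psi_b}(\Theta_{G_{\glnidx},\chi})$ into nonvanishing of some $j_{N_{\GL{l+1}}U_{l+1},\psi}$, forcing $j_{U_1,\psi_b}(\Theta_{G_{\glnidx},\chi})$ to be either zero or \emph{supercuspidal} on $\cover{GSpin}_{2(\glnidx-1)}(F)$ (Claim~\ref{claim:twisted Jacquet modules is supercuspidal}). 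Supercuspidality is then excluded by realizing $\Theta_{G_{\glnidx},\chi}$ via the parabolic $Q_{\glnidx}$ (Claim~\ref{claim:special case k=n}, that is $k=\glnidx$, not $k=1$) and applying Bruhat theory together with the key input that $\Theta_{\GL{\glnidx},\chi_1}$ has no Whittaker model for $\glnidx\geq3$ (Claim~\ref{claim:supercuspidal in homspace}); this last fact is the real engine of the proof and the source of the odd-residue-characteristic hypothesis. Your induction hypothesis on $G_{\glnidx-1}$ does appear in the paper, but only inside the proof of Claim~\ref{claim:twisted Jacquet modules vanish on small representations claim 1}, not at the top level of the lemma.
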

\begin{remark}
If $\ell(b)\in\Fsquares$, it is possible to conjugate $\psi_b$ into the character $u\mapsto\psi(u_{\epsilon_2})$. Then the statement of Lemma~\ref{lemma:twisted Jacquet modules vanish on small representations} is similar to that of Theorem~2.6 of \cite{BFG}. The general case was stated in \cite{BFG2} (Proposition~3).
\end{remark}
\begin{proof}
Exactly as in \cite{BFG}, the proof is derived using a sequence of claims, all of which follow from
the results already proved in Sections~\ref{subsubsection:defs and basic properties}-\ref{subsubsection:vanishing and results}. The arguments of \cite{BFG} referring to unipotent subgroups can be repeated without a change. The difference is that here instead of using a tensor product representation to describe $j_{U_k}(\Theta_{G_{\glnidx},\chi})$, we use the induced representation of Proposition~\ref{proposition:Jacquet along standard nonminimal unipotent} (or Claim~\ref{claim:special case k=n} when $k=\glnidx$). We briefly present the arguments of \cite{BFG} and focus on the necessary changes.
The restriction of the cover $\cover{G}_{\glnidx}(F)$ to $GSpin_{2(\glnidx-1)}(F)$ gives a cover of the latter. The key observation is that $j_{U_1,\psi_b}(\Theta_{G_{\glnidx},\chi})$ is either a supercuspidal representation of $\cover{GSpin}_{2(\glnidx-1)}(F)$, or zero.


\begin{claim}\label{claim:twisted Jacquet modules vanish on small representations claim 1}
Assume $2\leq k\leq\glnidx$. Let $c_1,\ldots,c_{k-1}\in F$ and $d_1,d_2,d_3\in F$ be given with $\ell(\transpose{(d_1,d_2,d_3)})\ne0$. Define a character $\psi$ of $N_{\GL{k}}(F)U_k(F)$ by
\begin{align*}
\psi(u)=\begin{cases}\psi(\sum_{i=1}^{k-1}c_iu_{\alpha_{i+1}}+
d_1u_{\epsilon_{k+1}-\epsilon_{\glnidx+1}}+
d_2u_{\epsilon_{k+1}}+
d_3u_{\epsilon_{k+1}+\epsilon_{\glnidx+1}})&k<\glnidx,\\
\psi(\sum_{i=1}^{k-1}c_iu_{\alpha_{i+1}}+u_{\epsilon_{\glnidx+1}})&k=\glnidx.
\end{cases}
\end{align*}
Then $j_{N_{\GL{k}}U_k,\psi}(\Theta_{G_{\glnidx},\chi})=0$. Here $N_{\GL{k}}$ is embedded in the $\GL{k}$ part of $M_k$.
\end{claim}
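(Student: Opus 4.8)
\emph{Proof sketch.} The plan is to follow the inductive scheme of Bump, Friedberg and Ginzburg \cite{BFG} (proof of Theorem~2.6; see also \cite{BFG2}, Proposition~6), with the sole structural change being that the tensor-product description of $j_{U_k}(\Theta_{G_{\glnidx},\chi})$ in \cite{BFG} is replaced by the induced representation of Proposition~\ref{proposition:Jacquet along standard nonminimal unipotent}, and by Claim~\ref{claim:special case k=n} when $k=\glnidx$. The present claim is one link in the chain of statements that together prove Lemma~\ref{lemma:twisted Jacquet modules vanish on small representations}, and it is established by induction on $\glnidx$, the base cases $\glnidx=1,2$ being Lemmas~\ref{lemma:thm minimal case vanishing n=1} and~\ref{lemma:minimal cases vanishing}.

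First I would split the character as $\psi=\psi_{\GL{k}}\cdot\psi_{U_k}$, where $\psi_{\GL{k}}$ is the (possibly degenerate) Whittaker character of $\GL{k}$ built from $c_1,\dots,c_{k-1}$ and $\psi_{U_k}$ is supported on the three root subgroups $\mathcal{U}_{\epsilon_{k+1}-\epsilon_{\glnidx+1}}$, $\mathcal{U}_{\epsilon_{k+1}}$, $\mathcal{U}_{\epsilon_{k+1}+\epsilon_{\glnidx+1}}$, with the non-degeneracy $\ell(\transpose{(d_1,d_2,d_3)})\ne0$ (for $k=\glnidx$ the third factor has coefficient $1$ and the other two are absent). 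One then factors the Jacquet functor through $j_{U_k}$: by Proposition~\ref{proposition:Jacquet along standard nonminimal unipotent} (resp. Claim~\ref{claim:special case k=n}) we have $j_{U_k}(\Theta_{G_{\glnidx},\chi})\subset I(\Theta_{\GL{k},\chi_1},\Theta_{G_{\glnidx-k},\chi_2})^{\star}$ for suitable exceptional $\chi_1,\chi_2$, and the Geometric Lemma of Bernstein and Zelevinsky \cite{BZ2} computes the twisted Jacquet module of this induced module. After a root exchange moving the $\psi_{U_k}$-data against the corresponding ``opposite'' root subgroups — legitimate because $\ell(\transpose{(d_1,d_2,d_3)})\ne0$ lets one conjugate by an element of $G_{\glnidx-k}(F)$ stabilizing $\psi_{U_k}$, exactly as in \cite{BFG,BFG2} — the module $j_{N_{\GL{k}}U_k,\psi}(\Theta_{G_{\glnidx},\chi})$ is expressed through two ingredients: (i) a generalized Whittaker functional on $\Theta_{\GL{k},\chi_1}$ attached to $\psi_{\GL{k}}$, and (ii) an $\UnipotentOrbit_1$-type twisted Jacquet module of $\Theta_{G_{\glnidx-k},\chi_2}$ on the group $\cover{GSpin_{2(\glnidx-k)}}(F)\subset\cover{G}_{\glnidx-k}(F)$, both genuine through the amalgamation \eqref{eq:block-compatibility on Levi subgroups}.

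The vanishing then follows from whichever ingredient is available. If $\glnidx-k\geq1$, ingredient (ii) vanishes by the inductive hypothesis (Lemma~\ref{lemma:twisted Jacquet modules vanish on small representations} for $G_{\glnidx-k}$, with base cases $\glnidx-k=1,2$ from Lemmas~\ref{lemma:thm minimal case vanishing n=1} and~\ref{lemma:minimal cases vanishing}), and this covers in particular all cases with $k\leq2<\glnidx$, where $\psi_{U_k}$ keeps the $\UnipotentOrbit_1$-datum generic. If $k=\glnidx$ there is no $G_{\glnidx-k}$-factor and the whole expression reduces to a (generalized) Whittaker functional on $\Theta_{\GL{\glnidx},\chi_1}$; since $\chi_1$ is exceptional with the \cite{KP} parameter $\mathc=0$, this vanishes for $\glnidx\geq3$ by the absence of Whittaker models (cf. the discussion following Theorem~\ref{theorem:local vanishing result}), while $\glnidx=k\leq2$ is again Lemmas~\ref{lemma:thm minimal case vanishing n=1} and~\ref{lemma:minimal cases vanishing}. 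When $\psi_{\GL{k}}$ is degenerate one iterates the same reduction on the remaining simple-root coordinates before invoking these inputs.

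The main obstacle is bookkeeping rather than a single hard idea. In contrast with the $SO_{2\glnidx+1}$ situation of \cite{BFG}, the direct factors $\cover{\GL{k}}(F)$ and $\cover{G}_{\glnidx-k}(F)$ of $\cover{M}_k(F)$ do not commute, so $j_{U_k}(\Theta_{G_{\glnidx},\chi})$ is not a literal tensor product but the larger module $I(\cdot,\cdot)^{\star}$; one must therefore carry Lemma~\ref{lemma:induced representation composition factors for torus} and the cocycle data of \eqref{eq:block-compatibility on Levi subgroups} through the Geometric Lemma and the root exchange, and check that the torus elements used to normalize $\psi_{U_k}$ neither destroy genuineness nor perturb the exceptional character. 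Verifying that $\psi_{U_k}$ stays stabilized throughout the exchange, and that the resulting datum on $\cover{GSpin_{2(\glnidx-k)}}(F)$ is indeed generic in the sense of Section~\ref{subsubsection:vanishing and results}, are the delicate points; once these are in place the argument is identical to \cite{BFG,BFG2}.
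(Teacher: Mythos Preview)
There is a genuine gap in your plan at the very first step. The three root subgroups $\mathcal{U}_{\epsilon_{k+1}-\epsilon_{\glnidx+1}}$, $\mathcal{U}_{\epsilon_{k+1}}$, $\mathcal{U}_{\epsilon_{k+1}+\epsilon_{\glnidx+1}}$ (and, for $k=\glnidx$, the single subgroup $\mathcal{U}_{\epsilon_{\glnidx+1}}$) lie in $U_k$, not in $G_{\glnidx-k}$: the index $k+1$ belongs to the $\GL{k}$--block, so these roots are in the unipotent radical of $Q_k$ and are killed by $j_{U_k}$. Consequently $\psi$ is \emph{nontrivial} on $U_k$, and you cannot factor $j_{N_{\GL{k}}U_k,\psi}$ through the untwisted functor $j_{U_k}$; Proposition~\ref{proposition:Jacquet along standard nonminimal unipotent} and Claim~\ref{claim:special case k=n} compute $j_{U_k}$ with the trivial character and say nothing about $j_{U_k,\psi_{U_k}}$. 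For the same reason the $d$--data never become an $\UnipotentOrbit_1$--type Jacquet module on $\Theta_{G_{\glnidx-k},\chi_2}$ (those root subgroups are not inside $G_{\glnidx-k}$), and in the case $k=\glnidx$ the coordinate $u_{\epsilon_{\glnidx+1}}$ is again in $U_{\glnidx}$, so the problem does not reduce to a Whittaker functional on $\Theta_{\GL{\glnidx},\chi_1}$. No conjugation by $G_{\glnidx-k}(F)$ or root exchange inside $M_k$ can move these subgroups out of $U_k$, since $M_k$ normalizes $U_k$.

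The paper's argument is organized differently and avoids this obstruction. It splits into two cases. If some $c_l=0$ with $1\leq l<k$, one factors through $j_{U_l}$ (with the \emph{smaller} index $l$, not $k$); the character is trivial on $U_l$, and the entire $d$--block now sits inside the subgroup $N_{\GL{k-l}}'U_{k-l}'$ of $G_{\glnidx-l}<M_l$. Proposition~\ref{proposition:Jacquet along standard nonminimal unipotent} applies to $j_{U_l}(\Theta_{G_{\glnidx},\chi})$, the Geometric Lemma transfers the vanishing to the inducing data, and the induction hypothesis on $G_{\glnidx-l}$ (or Lemma~\ref{lemma:thm minimal case vanishing n=1} when $\glnidx-l=1$) finishes this case. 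If all $c_i\ne0$, one instead uses Corollary~\ref{corollary:Jacquet along standard nonminimal unipotent} to realize $\Theta_{G_{\glnidx},\chi}$ as a quotient of an induction from $\cover{Q}_1(F)$ and runs the filtration by double cosets $\lmodulo{Q_1(F)}{G_{\glnidx}(F)}/(N_{\GL{k}}(F)\,GSpin_{2(\glnidx-k)}(F))$ exactly as in \cite{BFG}: for each representative $w$, either some root subgroup on which $\psi$ is nontrivial is conjugated into $U_1$ (trivial vanishing), or the restriction of $\psi$ to $\rconj{w}(N_{\GL{k}}U_k)\cap(N_{\GL{k}}U_k)$ is again of the form handled by the induction hypothesis on $G_{\glnidx-1}$. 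The two cases together give the claim; your single ``factor through $j_{U_k}$'' step is not a valid substitute for either.
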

\begin{proof}
We use induction on $\glnidx$, the base case is $\glnidx=2$. For any $1\leq l<k$, let $G_{\glnidx-l}$ be embedded in $M_{l}$ and $Q_{k-l}'=M_{k-l}'\ltimes U_{k-l}'<G_{\glnidx-l}$ be the standard parabolic subgroup with a Levi part $M_{k-l}'$ isomorphic to $\GL{k-l}\times G_{\glnidx-k}$. Let $N_{\GL{k-l}}'$ be the embedding of $N_{\GL{k-l}}$ in the $\GL{k-l}$ part of $M_{k-l}'$.
As a unipotent subgroup of $SO_{2\glnidx+1}$, $N_{\GL{k}}U_k$ takes the form \begin{align*}
\left(\begin{array}{ccccc}
z_1&u_1&u_2&u_3&u_4\\
&z_2&v_1&v_2&*\\
&&I_{2(\glnidx-k)+1}&*&*\\
&&&*&*\\
&&&&*\end{array}\right),\quad
\end{align*}
where $z_1\in N_{\GL{l}}$, $z_2\in N_{\GL{k-l}}'$, $u_1,\ldots,u_4$ are the coordinates of $U_l$ and $v_1,v_2$
are the coordinates of $U_{k-l}'$.

Assume $c_l=0$ for some $1\leq l<k$. Then
\begin{align*}
j_{N_{\GL{k}}U_k,\psi}=j_{N_{\GL{l}},\psi_1}j_{N_{\GL{k-l}}'U_{k-l}',\psi_2}j_{U_l},
\end{align*}
where $\psi_1$ and $\psi_2$ are obtained from $\psi$ by restriction.

By the induction hypothesis, or by Lemma~\ref{lemma:thm minimal case vanishing n=1} if $\glnidx-l=1$ (and then $k=\glnidx$),
\begin{align}\label{eq:vanishing claim 1 induction hypothesis}
j_{N_{\GL{k-l}}'U_{k-l}',\psi_2}(\Theta_{G_{\glnidx-l},\chi_2})=0
\end{align}
 for any exceptional character $\chi_2$ of $C_{\cover{T}_{\glnidx-l+1}(F)}$.

Write as in Proposition~\ref{proposition:Jacquet along standard nonminimal unipotent},
$j_{U_l}(\Theta_{G_{\glnidx},\chi})\subset I(\Theta_{\GL{l},\chi_1},\Theta_{G_{\glnidx-l},\chi_2})^{\star}$.
It is enough to prove
\begin{align*}
j_{N_{\GL{k-l}}'U_{k-l}',\psi_2}(I(\Theta_{\GL{l},\chi_1},\Theta_{G_{\glnidx-l},\chi_2})^{\star})=0.
\end{align*}
Moreover, according to Lemma~\ref{lemma:induced representation composition factors} it suffices to prove that $j_{N_{\GL{k-l}}'U_{k-l}',\psi_2}$ vanishes on
\begin{align}\label{eq:vanishing twisted Jacquet claim 1 first space to consider}
\cinduced{p^{-1}(\GLF{l}{F}^{\star}\times G_{\glnidx-l}(F))}{\cover{M}_l(F)}
{\Theta_{\GL{l},\chi_1}^{\star}\otimes \Theta_{G_{\glnidx-l},\chi_2}}.
\end{align}
Let $St_{\psi_2}(F)$ be the normalizer of $N_{\GL{k-l}}'(F)U_{k-l}'(F)$ and stabilizer of $\psi_2$ in $G_{\glnidx-l}(F)$.
The double coset space
\begin{align*}
\rmodulo{\lmodulo{(\GLF{l}{F}^{\star}G_{\glnidx-l}(F))}{M_l(F)}}{(\GLF{l}{F}St_{\psi_2}(F))}
\end{align*}
has one element. Using the Geometric Lemma of Bernstein and Zelevinsky \cite{BZ2} (Theorem~5.2), the application of $j_{N_{\GL{k-l}}'U_{k-l}',\psi_2}$ to \eqref{eq:vanishing twisted Jacquet claim 1 first space to consider} is seen to be equal to
\begin{align*}
\cinduced{p^{-1}(\GLF{l}{F}^{\star}\times St_{\psi_2}(F))}{p^{-1}(\GLF{l}{F}\times St_{\psi_2}(F))}
{\Theta_{\GL{l},\chi_1}^{\star}\otimes j_{N_{\GL{k-l}}'U_{k-l}',\psi_2}(\Theta_{G_{\glnidx-l},\chi_2})},
\end{align*}
which vanishes by \eqref{eq:vanishing claim 1 induction hypothesis}.

Thus we may assume $c_1=\ldots=c_{k-1}=1$. By virtue of Corollary~\ref{corollary:Jacquet along standard nonminimal unipotent} and Lemma~\ref{lemma:induced representation composition factors} it is enough to show
\begin{align*}
j_{N_{\GL{k}}U_k,\psi}(\induced{\cover{Q}_1(F)}{\cover{G}_{\glnidx}(F)}{\cinduced{p^{-1}(\GLF{1}{F}^{\star}\times G_{\glnidx-1}(F))}{\cover{M}_1(F)}
{\Theta_{\GL{1},\chi_3}^{\star}\otimes \Theta_{G_{\glnidx-1},\chi_4}}})=0.
\end{align*}

Let $\chi'$ be an exceptional character of $C_{\cover{T}_{\glnidx}(F)}$. If $\glnidx=2$, let $r=1$, otherwise take
$2\leq r\leq\glnidx-1$. Let $\psi'$ be a nontrivial character of $N_{\GL{r}}(F)U_{r}(F)$, where $N_{\GL{r}}U_{r}$ is a subgroup of $G_{\glnidx-1}$ embedded in the standard maximal parabolic subgroup with a Levi part $\GL{r}\times G_{\glnidx-1-r}$. If $\glnidx>2$, assume that $\psi'$ is defined
as in the statement of the lemma, with respect to $c'_1,\ldots,c'_{r-1},d'_1,d'_2,d'_3\in F$, where $\ell(\transpose{(d'_1,d'_2,d'_3)})\ne0$. According to the induction hypothesis and Lemma~\ref{lemma:thm minimal case vanishing n=1},
\begin{align*}
j_{N_{\GL{r}}U_{r},\psi'}(\Theta_{G_{\glnidx-1},\chi'})=0.
\end{align*}
As above \cite{BZ2} (Theorem~5.2) implies
\begin{align}\label{eq:vanishing twisted Jacquet claim 1 second induction}
j_{N_{\GL{r}}U_r,\psi'}(\cinduced{p^{-1}(\GLF{1}{F}^{\star}\times G_{\glnidx-1}(F))}{\cover{M}_1(F)}
{\Theta_{\GL{1},\chi_3}^{\star}\otimes \Theta_{G_{\glnidx-1},\chi_4}})=0.
\end{align}

Now the filtration argument of \cite{BFG} applies to our case as well. Specifically, the stabilizer in $M_{k}(F)$ of $\psi$
contains $N_{\GL{k}}(F)GSpin_{2(\glnidx-k)}(F)$, where $GSpin_{2(\glnidx-k)}(F)$ is the stabilizer of $\psi$ in $G_{\glnidx-k}(F)$ ($\GL{k}$ and $G_{\glnidx-k}$ are regarded as subgroups of $M_k$). The space of the representation induced from $\cover{Q}_1(F)$ to $\cover{G}_{\glnidx}(F)$ has a filtration according to the double cosets
$\rmodulo{\lmodulo{Q_1(F)}{G_{\glnidx}(F)}}{(N_{\GL{k}}(F)GSpin_{2(\glnidx-k)}(F))}$. Let $w$ be a representative of some double coset.
Then either $w$ conjugates one of the root subgroups of $N_{\GL{k}}U_k$ on which $\psi$ is nontrivial,
into $U_1$, in which case $j_{N_{\GL{k}}U_k,\psi}$ clearly vanishes, or $\psi|_{\rconj{w}(N_{\GL{k}}U_k)\cap (N_{\GL{k}}U_k)}=\psi'$ with $\psi'$ as above and then
the Jacquet module vanishes by \eqref{eq:vanishing twisted Jacquet claim 1 second induction}.
\end{proof}

For $\glnidx=1,2$ the lemma follows immediately from Lemmas~\ref{lemma:thm minimal case vanishing n=1} and \ref{lemma:minimal cases vanishing}. Henceforth assume $\glnidx>2$.
\begin{claim}\label{claim:twisted Jacquet modules is supercuspidal}
The representation $j_{U_1,\psi_b}(\Theta_{G_{\glnidx},\chi})$, if nonzero, is a supercuspidal representation of $\cover{GSpin}_{2(\glnidx-1)}(F)$.
\end{claim}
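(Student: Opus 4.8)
The representation $j_{U_1,\psi_b}(\Theta_{G_{\glnidx},\chi})$, if nonzero, is a supercuspidal representation of $\cover{GSpin}_{2(\glnidx-1)}(F)$.

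Here is how I would prove it.

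The plan is to imitate the argument of \cite{BFG} (Theorem~2.6), reducing the supercuspidality to a statement about the vanishing of Jacquet modules of $j_{U_1,\psi_b}(\Theta_{G_{\glnidx},\chi})$ along the unipotent radicals of proper parabolic subgroups of $\cover{GSpin}_{2(\glnidx-1)}(F)$. First I would recall, from the discussion preceding Lemma~\ref{lemma:thm minimal case vanishing n=1}, that for a generic $\psi_b$ the stabilizer $St_b(F)^0$ is $GSpin_{2(\glnidx-1)}(F)$, sitting inside $G_{\glnidx-1}<M_1$; the cover $\cover{G}_{\glnidx}(F)$ restricts to a cover of $GSpin_{2(\glnidx-1)}(F)$, so $j_{U_1,\psi_b}(\Theta_{G_{\glnidx},\chi})$ is naturally a genuine representation of this group. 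To test supercuspidality it suffices (\cite{BZ2}) to show that for every standard maximal parabolic $P'=L'\ltimes V'$ of $GSpin_{2(\glnidx-1)}$, with $L'$ of the form $\GL{r}\times GSpin_{2(\glnidx-1-r)}$ or the Siegel-type Levi $\GL{\glnidx-1}$, the Jacquet module $j_{V'}(j_{U_1,\psi_b}(\Theta_{G_{\glnidx},\chi}))$ is zero. Composing Jacquet functors, $j_{V'}\circ j_{U_1,\psi_b}$ is a twisted Jacquet functor $j_{N,\psi}$ for a suitable unipotent subgroup $N<N_{\glnidx}$ and character $\psi$ of it, where $N$ contains $V_{\UnipotentOrbit_1}$-type data together with the full unipotent radical $V'$ of a $\GL$-block.

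Next I would identify that composite functor explicitly. Conjugating by an appropriate element of $M_1(F)$, the subgroup $V'$ together with the root subgroups of $U_1$ on which $\psi_b$ is supported can be arranged so that $j_{V'}\circ j_{U_1,\psi_b}$ is, up to conjugation, exactly the functor $j_{N_{\GL{k}}U_k,\psi}$ appearing in Claim~\ref{claim:twisted Jacquet modules vanish on small representations claim 1} for the appropriate $k$ (with $k=r+1$ or $k=\glnidx$), where the character $\psi$ restricts to $\psi_b$ on the $\epsilon_{k+1}\mp\epsilon_{\glnidx+1}$, $\epsilon_{k+1}$ root subgroups (these encode $\ell(b)\ne0$) and to the generic character $u\mapsto\psi(\sum c_i u_{\alpha_{i+1}})$ on $N_{\GL{k}}$ coming from $V'$. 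The point is that the non-degeneracy $\ell(b)\ne0$ translates precisely into the hypothesis $\ell(\transpose{(d_1,d_2,d_3)})\ne0$ of that claim. Then Claim~\ref{claim:twisted Jacquet modules vanish on small representations claim 1} gives $j_{N_{\GL{k}}U_k,\psi}(\Theta_{G_{\glnidx},\chi})=0$, hence $j_{V'}(j_{U_1,\psi_b}(\Theta_{G_{\glnidx},\chi}))=0$. Since this holds for every maximal parabolic of $GSpin_{2(\glnidx-1)}$, the representation $j_{U_1,\psi_b}(\Theta_{G_{\glnidx},\chi})$ is supercuspidal (it is also admissible and of finite length, being a Jacquet module of an admissible representation).

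The main obstacle, and the step requiring care, is the bookkeeping of root subgroups: verifying that composing $j_{U_1,\psi_b}$ with $j_{V'}$ genuinely lands — after conjugation by an element of the Levi $M_1(F)$ normalizing the relevant subgroups — in one of the functors covered by Claim~\ref{claim:twisted Jacquet modules vanish on small representations claim 1}, rather than in some degenerate variant not treated there. One must check that the $c_i$ coming from $V'$ can all be normalized to $1$ (or that if some $c_l=0$ the claim's inductive reduction still applies), and that the three "length" coordinates $d_1,d_2,d_3$ are exactly the ones inherited from $\psi_b$ with $\ell\ne0$ preserved. This is the same filtration-and-conjugation analysis as in \cite{BFG}, and since the unipotent subgroups of $G_{\glnidx}$ are in bijection with those of $SO_{2\glnidx+1}$ (see Section~\ref{subsection:properties of GSpin}), the combinatorics is identical to theirs; only the passage through the cover — already handled by Claim~\ref{claim:twisted Jacquet modules vanish on small representations claim 1} via Proposition~\ref{proposition:Jacquet along standard nonminimal unipotent} and Lemma~\ref{lemma:induced representation composition factors} — needs the new input, which is in place.
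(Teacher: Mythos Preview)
Your overall strategy — test supercuspidality by showing the Jacquet modules along unipotent radicals of maximal parabolics of $GSpin_{2(\glnidx-1)}$ vanish, and reduce this vanishing to Claim~\ref{claim:twisted Jacquet modules vanish on small representations claim 1} — is the same as the paper's. The gap is in the reduction step itself.

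You assert that, after conjugating by an element of $M_1(F)$, the composite $j_{V'}\circ j_{U_1,\psi_b}$ becomes literally the functor $j_{N_{\GL{k}}U_k,\psi}$. This is false on dimension grounds: the unipotent group $V'U_1$ is strictly smaller than $N_{\GL{l+1}}U_{l+1}$. For instance (in $SO_{2\glnidx+1}$ coordinates, with $\psi_b$ normalized so that it is supported on the short root $\epsilon_2$), $V'U_1$ contains no root subgroups for the short roots $\epsilon_3,\ldots,\epsilon_{l+2}$ nor for the roots $\epsilon_i+\epsilon_j$ with $2\le i<j\le l+2$, whereas $N_{\GL{l+1}}U_{l+1}$ does. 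Conjugation by $M_1(F)$ cannot repair this, and knowing $j_{N_{\GL{l+1}}U_{l+1},\psi}(\Theta)=0$ for the larger group does not immediately give vanishing for the smaller one.

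The paper closes this gap (following \cite{BFG}, Lemma~2.8) with two additional ingredients you have not invoked. First, an ``exchange of roots'' — an honest manipulation of twisted Jacquet modules, not a conjugation — that transforms the question of whether $j_{V'}\bigl(j_{U_1,\psi_b}(\Theta)\bigr)$ vanishes into whether $\tau=j_{U_{l+1},\psi_2}(\Theta)$ vanishes as a representation of the mirabolic subgroup $\cover{Y}_{l+1}(F)<\CGLF{l+1}{F}$, where $\psi_2(u)=\psi(u_{\epsilon_{l+1}})$. Second, if $\tau\ne 0$, the Bernstein--Zelevinsky theory of derivatives (\cite{BZ1},~5.14) produces some character $\psi_1$ of $N_{\GL{l+1}}(F)$ with $j_{N_{\GL{l+1}},\psi_1}(\tau)\ne 0$; composing gives $j_{N_{\GL{l+1}}U_{l+1},\psi_1\psi_2}(\Theta)\ne 0$, contradicting Claim~\ref{claim:twisted Jacquet modules vanish on small representations claim 1}. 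Both the root exchange and the appeal to the mirabolic/derivative theory are essential and are absent from your sketch.
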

\begin{proof}
Assume $\ell(b)\in\Fsquares$, then we may assume $b=e_{\glnidx}(\in F^{2\glnidx-1})$, i.e., $\psi_b(u)=\psi(u_{\epsilon_2})$. Exactly as in \cite{BFG} (Lemma~2.8), using an ``exchange of roots" the claim can be reduced to showing the vanishing of
$\tau=j_{U_{l+1},\psi_2}(\Theta_{G_{\glnidx},\chi})$, where $0<l<\glnidx$ and $\psi_2$ is the character of $U_{l+1}(F)$
given by $\psi_2(u)=\psi(u_{\epsilon_{l+1}})$. Let $Y_{l+1}<\GL{l+1}$ be the mirabolic subgroup - the subgroup of matrices with the last row equal
to $(0,\ldots,0,1)$. Then $\tau$ is a genuine representation of $\cover{Y}_{l+1}(F)$ and we must show $\tau=0$. Suppose this does not hold. According to
Bernstein and Zelevinsky \cite{BZ1} (5.14), there are $c_1,\ldots,c_l\in F$ such that the character $\psi_1(u)=\psi(\sum_{i=1}^{l}c_iu_{\alpha_{i+1}})$ of
$N_{\GL{l+1}}(F)$ satisfies $j_{N_{\GL{l+1}},\psi_1}(\tau)\ne0$. But this means
$j_{N_{\GL{l+1}}U_{l+1},\psi}(\Theta_{G_{\glnidx},\chi})\ne0$,
where $\psi=\psi_1\cdot\psi_2$, contradicting Claim~\ref{claim:twisted Jacquet modules vanish on small representations claim 1}. If $\ell(b)\in F^*\setdifference\Fsquares$ the argument is similar, with a minor change to $\psi_2$.
\end{proof}

\begin{claim}\label{claim:supercuspidal in homspace}
For any genuine supercuspidal representation $\tau$ of $\cover{GSpin}_{2(\glnidx-1)}(F)$ and exceptional characters $\chi_1$ and $\chi_2$ of $C_{\cover{T}_{\GL{\glnidx}}}(F)$ and $\cover{T}_1(F)$,
\begin{align}\label{eq:homspace in supercuspidal in homspace}
Hom_{\cover{GSpin}_{2(\glnidx-1)}(F)}(j_{U_1,\psi_b}(\induced{\cover{Q}_{\glnidx}(F)}{\cover{G}_{\glnidx}(F)}{\Theta_{\GL{\glnidx},\chi_1}\otimes\Theta_{G_0,\chi_2}}),\tau)=0.
\end{align}
\end{claim}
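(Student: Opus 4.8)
The plan is to adapt the corresponding step of \cite{BFG} (proof of Theorem~2.6; see also \cite{BFG2}, proof of Proposition~6): compute $j_{U_1,\psi_b}$ of $\induced{\cover{Q}_{\glnidx}(F)}{\cover{G}_{\glnidx}(F)}{\Theta_{\GL{\glnidx},\chi_1}\otimes\Theta_{G_0,\chi_2}}$ by the geometric lemma and show that every subquotient that can occur is either zero or has no genuine supercuspidal subquotient; since $\tau$ is irreducible genuine supercuspidal, \eqref{eq:homspace in supercuspidal in homspace} follows (an irreducible $\tau$ with $\mathrm{Hom}(X,\tau)\ne0$ is a subquotient of some graded piece of a filtration of $X$). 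Recall that $\glnidx>2$ is in force. First I would note that $\cover{G}_0(F)=\cover{\GL{1}}(F)$ is split, so $\Theta_{G_0,\chi_2}$ is merely a genuine character; writing $\sigma=\Theta_{\GL{\glnidx},\chi_1}\otimes\Theta_{G_0,\chi_2}$, extended trivially on $\mathfrak{s}(U_{\glnidx}(F))$ to $\cover{Q}_{\glnidx}(F)$, the content of the computation concerns $\Theta_{\GL{\glnidx},\chi_1}$ alone.

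Next, recall (from the discussion preceding Lemma~\ref{lemma:thm minimal case vanishing n=1}) that $C(G_{\glnidx},h_{\UnipotentOrbit_1})=M_1$ and that the stabilizer $St_{\psi_b}(F)$ of the generic character $\psi_b$ of $U_1(F)=V_{\UnipotentOrbit_1}(F)$ in $M_1(F)$ is, up to the central $\GL{1}$, the \quasisplit\ group $GSpin_{2(\glnidx-1)}(F)$. I would restrict $\induced{\cover{Q}_{\glnidx}(F)}{\cover{G}_{\glnidx}(F)}{\sigma}$ to $\mathfrak{s}(U_1(F))\cdot p^{-1}(St_{\psi_b}(F))$ and apply the geometric lemma of Bernstein and Zelevinsky \cite{BZ2} (Theorem~5.2): the induced space carries a $p^{-1}(St_{\psi_b}(F))$-stable filtration whose graded pieces are indexed by the (finitely many) double cosets in $\lmodulo{Q_{\glnidx}(F)}{\rmodulo{G_{\glnidx}(F)}{Q_1(F)}}$, and the piece attached to a representative $w$, after applying $j_{U_1,\psi_b}$, is a representation of $p^{-1}(St_{\psi_b}(F))$ compactly induced from a twisted Jacquet module $j_{V_w,\psi_w}(\sigma)$ of $\sigma$ along an appropriate unipotent subgroup $V_w\subset M_{\glnidx}$, with $\psi_w$ built from $\psi_b$; it vanishes unless $\psi_w$ is trivial on $V_w\cap U_{\glnidx}$, because $\sigma$ is trivial on $\mathfrak{s}(U_{\glnidx}(F))$.

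Then, exactly as in \cite{BFG}, I would dichotomize on $w$, using the bijection between the unipotent subgroups of $G_{\glnidx}$ and those of $SO_{2\glnidx+1}$ (Section~\ref{subsection:properties of GSpin}) to make the root bookkeeping concrete. Either $w$ carries a root subgroup of $U_1$ on which $\psi_b$ is nontrivial into $U_{\glnidx}$, and the piece vanishes; or it is compactly induced from a proper parabolic subgroup of $\cover{GSpin}_{2(\glnidx-1)}(F)$, hence has no genuine supercuspidal subquotient; or, for the single remaining ``large'' cell, it is (induced from) a genuine twisted Jacquet module $j_{V,\psi'}(\Theta_{\GL{\glnidx},\chi_1}\otimes\Theta_{G_0,\chi_2})$ of $\Theta_{\GL{\glnidx},\chi_1}$ itself along a unipotent subgroup $V$ of $\GL{\glnidx}$, where $\psi'$ — by genericity of $\psi_b$, i.e. $\ell(b)\ne0$ — is non-degenerate along a unipotent orbit $\gtorncw(2^{\lfloor\glnidx/2\rfloor}1^{\glnidx\bmod2})$; in particular $\psi'$ restricts to a Whittaker character on $N_{\GL{\glnidx}}$. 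Since $\glnidx\geq3$ and $F$ has odd residual characteristic, the Kazhdan--Patterson vanishing for exceptional representations of $\CGLF{\glnidx}{F}$ (the $\GL{}$-analog of Theorem~\ref{theorem:local vanishing result}; in particular $\Theta_{\GL{\glnidx},\chi_1}$ has no Whittaker model, cf. \cite{KP}) forces $j_{V,\psi'}(\Theta_{\GL{\glnidx},\chi_1})=0$, hence this piece vanishes too. Thus every graded piece of $j_{U_1,\psi_b}(\induced{\cover{Q}_{\glnidx}(F)}{\cover{G}_{\glnidx}(F)}{\sigma})$ is either zero or non-supercuspidal, which gives \eqref{eq:homspace in supercuspidal in homspace}.

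I expect the main obstacle to be the explicit geometric-lemma bookkeeping: enumerating the relevant double cosets $\lmodulo{Q_{\glnidx}(F)}{\rmodulo{G_{\glnidx}(F)}{Q_1(F)}}$, and in the single non-vanishing cell identifying precisely which unipotent subgroup $V\subset\GL{\glnidx}$ and which character $\psi'$ of it appear, so that it can be matched against the known vanishing of twisted Jacquet modules of $\Theta_{\GL{\glnidx},\chi_1}$. Since this is parallel to the classical-group computation in \cite{BFG} and the unipotent subgroups are in bijection, I would transport their argument through the identification with $SO_{2\glnidx+1}$, checking only the points where the cover alters the picture — the non-commuting of the $\GL{}$- and $GSpin$-factors in the Levi and the extra central $\GL{1}$ in $M_1$ — and noting that these do not affect the conclusion, since $\Theta_{G_0,\chi_2}$ is just a character and the surviving pieces remain induced from proper parabolics.
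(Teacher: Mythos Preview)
Your proposal is correct and takes essentially the same approach as the paper: a double-coset analysis in which supercuspidality of $\tau$ kills all but one contribution, and that remaining cell is governed by the (vanishing, for $\glnidx\geq3$ in odd residual characteristic) Whittaker model of $\Theta_{\GL{\glnidx},\chi_1}$. The paper packages the same computation via Silberger's Bruhat theory \cite{Silb} together with the Bernstein--Zelevinsky theory of derivatives \cite{BZ1} (following \cite{Soudry,GRS5,me5}), and indexes by $\rmodulo{\lmodulo{Q_{\glnidx}(F)}{G_{\glnidx}(F)}}{(U_1(F)\,GSpin_{2(\glnidx-1)}(F))}$ rather than by $\rmodulo{\lmodulo{Q_{\glnidx}(F)}{G_{\glnidx}(F)}}{Q_1(F)}$, but the substance is identical.
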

\begin{proof}
Since in this particular case the tensor representation is defined, we are in the same situation as in \cite{BFG} (Lemma~2.9), except that
we have the quasi-split $GSpin_{2(\glnidx-1)}(F)$ instead of split $SO_{2(\glnidx-1)}(F)$. The space \eqref{eq:homspace in supercuspidal in homspace} can be analyzed using the Bruhat theory of
Silberger \cite{Silb} (Theorems~1.9.4 and 1.9.5). One considers the double coset space
$\rmodulo{\lmodulo{Q_{\glnidx}(F)}{G_{\glnidx}(F)}}{(U_1(F)GSpin_{2(\glnidx-1)}(F))}$, then uses the theory of derivations of Bernstein and Zelevinsky \cite{BZ1}. Since $\tau$ is supercuspidal, this space is of dimension at most $1$. Because $\glnidx\geq3$ and we are assuming that the residue characteristic of $F$ is odd,
$\Theta_{\GL{\glnidx},\chi_1}$ does not have a Whittaker model (\cite{KP} Section~I.3, see also \cite{BG} p.~145).
In this case, the dimension of \eqref{eq:homspace in supercuspidal in homspace} is zero. The arguments follow closely those of \cite{me5} (Section~4) for \quasisplit\ $SO_{2(\glnidx-1)}(F)$, Soudry \cite{Soudry} (Section~8) and Ginzburg, Rallis and Soudry \cite{GRS5} (Theorem~6.2(c)).
\end{proof}
As in \cite{BFG}, the proposition follows: set $\tau=j_{U_1,\psi_b}(\Theta_{G_{\glnidx},\chi})$, then
Claims~\ref{claim:special case k=n}, \ref{claim:twisted Jacquet modules is supercuspidal} and \ref{claim:supercuspidal in homspace} applied to
$\tau$ imply $Hom_{\cover{GSpin}_{2(\glnidx-1)}(F)}(\tau,\tau)=0$, contradiction.
\end{proof}

Now we are ready to prove the theorem.
\begin{proof}[Proof of Theorem~\ref{theorem:local vanishing result}]
Let $\Theta=\Theta_{G_{\glnidx},\chi}$, $\UnipotentOrbit=(r_1,\ldots,r_m)$ be such that
$\UnipotentOrbit\gtorncw\UnipotentOrbit_0$ and $\psi_b$ be a generic character of $V_{\UnipotentOrbit}(F)$.
The proof is a local analog of \cite{BFG} (Theorem~4.2 (i)). We briefly reproduce the arguments. Let $e$ (resp. $o$) be the number of even (resp. odd) numbers $r_i$ greater than $1$. By the definition of a unipotent class, $e$ is even, whence $m-e$ is odd.

We only show the case $o>0$. Put $r=\glnidx-o-(m+e-1)/2$. The unipotent subgroup $V_{\UnipotentOrbit}(F)$ corresponds to the following unipotent subgroup of $SO_{2\glnidx+1}(F)$ (see Section~\ref{subsection:properties of GSpin}),
\begin{align*}
\left\{\left(\begin{array}{ccccccc}\ddots&&&&&\alpha&\udots\\&I_o&0&x&y&z&\alpha'\\&&I_e&0&v&y'\\&&&I_{m-e}&0&x'\\&&&&I_e&0\\&&&&&I_o\\&&&&&&\ddots\end{array}\right)\right\}.
\end{align*}
Here $\alpha$ is an $r\times o$ matrix and if $h$ is an $i\times j$ matrix, $h'=-J_j\transpose{h}J_i$.
The middle $(2o+e+m)\times(2o+e+m)$ block, regarded as a unipotent subgroup of $G_{\glnidx-r}<M_r$, corresponds to the unipotent class $(3^o2^e1^{m-o-e})$ and we denote it by $V_{(3^o2^e1^{m-o-e})}$. We have
\begin{align*}
C(G_{\glnidx-r}(F),h_{(3^o2^e1^{m-o-e})}(F))=\GLF{o}{F}\times\GLF{e}{F}\times G_{(m-e-1)/2}(F).
\end{align*}
According to the description of Carter \cite{Cr} (p.~398), if $\varepsilon$ belongs to the open orbit (see the discussion in the beginning of this section), $St_{\varepsilon}(\overline{F})^0$ is a reductive group of Lie type
\begin{align*}
\begin{cases}
B_{(o-1)/2}\times C_{e/2}\times D_{(m-o-e)/2}&\text{$o$ is odd,}\\
D_{o/2}\times C_{e/2}\times D_{(m-o-e-1)/2}&\text{$o$ is even.}
\end{cases}
\end{align*}

Since the coordinates of $\alpha$ and $z$ vanish in $V_{\UnipotentOrbit}^{\matha}$, $\psi_b$ is trivial on the coordinates of $\alpha'$ and $z$. According to the definition, the restriction of $\psi_b$ to $V_{(3^o2^e1^{m-o-e})}(F)$ is a generic character. The restriction of $\psi_b$ to the coordinates of $x$ (resp. $v$) is characterized by an $o\times(m-e)$ (resp. $e\times e$) matrix $\mathbf{x}$ (resp. $\mathbf{v}$). We may assume that, if $(\mathbf{x},\mathbf{v})\in V_{\UnipotentOrbit}^{\matha}(F)$ is the point defined by $\mathbf{x}$ and $\mathbf{v}$, $St_{(\mathbf{x},\mathbf{v})}(\overline{F})^0$ is already of the prescribed Lie type. It follows that $\psi_b$ is trivial on $y$ and using a conjugation by a suitable $g\in\GLF{k}{F}$ (a product of a unipotent matrix and a Weyl element), we may also assume $\ell(\mathbf{x}_1)\ne0$.

%

Using another conjugation, we can move the $(r+1)$-th row (which contains $\mathbf{x}_1$) to the first row. Again by the transitivity of the Jacquet functor, it is enough to prove $j_{U,\psi_b'}(\Theta)=0$, for the subgroup
$U(F)<U_1(F)$ of matrices $u$ whose first row is
\begin{align*}
\left(\begin{array}{ccccccc}1&0_{r+o-1+e}&x_1&y_1&z_{1,1}&\alpha'_1&z_{1,2}\end{array}\right),
\end{align*}
where for a matrix $h$, $h_i$ is the $i$-th row of $h$; $(h_{i,1},h_{i,2})=h_i$; $h_{i,2}\in F$; and $\psi_b'(u)=\psi(\mathbf{x}_1(\transpose{x_1}))$.

If $j_{U,\psi_b'}(\Theta)\ne0$, 
there exists some character $\psi_c$ of $U_1(F)$ which agrees with $\psi_b'$ on $U(F)$, such that $j_{U_1,\psi_c}(\Theta)\ne0$. However, 
any such character takes the form $\psi_c(u)=\psi(r(u)c)$ where $\transpose{c}$ is the row $(c_1,\ldots,c_{r+o-1+e},\mathbf{x},0_{r+o-1+e})\in F^{2\glnidx-1}$. Since
$\ell(c)\ne0$, this contradicts Lemma~\ref{lemma:twisted Jacquet modules vanish on small representations}.
\end{proof}
We also have the following vanishing result.
\begin{proposition}\label{proposition:vanishing along GLn of the Whittaker Jacquet for small representations}
For all $3\leq k\leq\glnidx$, $j_{N_{\GL{k}}U_k,\psi}(\Theta_{G_{\glnidx},\chi})=0$, where
$\psi(u)=\psi(\sum_{i=1}^{k-1}u_{\alpha_{i+1}})$.
\end{proposition}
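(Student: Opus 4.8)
The plan is to reduce, by transitivity of the Jacquet functor together with Proposition~\ref{proposition:Jacquet along standard nonminimal unipotent} and Lemma~\ref{lemma:induced representation composition factors}, to the single external input used already in Claim~\ref{claim:supercuspidal in homspace}: the exceptional representation $\Theta_{\GL{k},\chi_1}$ of $\CGLF{k}{F}$ carries no Whittaker model once $k\geq3$ (\cite{KP} Section~I.3; see also \cite{BG} p.~145).

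First I would observe that, since $\psi$ is trivial on $U_k(F)$ and $N_{\GL{k}}(F)$ normalizes $U_k(F)$, transitivity of the Jacquet functor gives $j_{N_{\GL{k}}U_k,\psi}(\Theta_{G_{\glnidx},\chi})=j_{N_{\GL{k}},\psi}(j_{U_k}(\Theta_{G_{\glnidx},\chi}))$, with $N_{\GL{k}}$ now inside the $\GL{k}$-factor of $M_k$. By Proposition~\ref{proposition:Jacquet along standard nonminimal unipotent} (or Claim~\ref{claim:special case k=n} when $k=\glnidx$), $j_{U_k}(\Theta_{G_{\glnidx},\chi})$ is a subrepresentation of $I(\Theta_{\GL{k},\chi_1},\Theta_{G_{\glnidx-k},\chi_2})^{\star}$ for suitable exceptional characters $\chi_1,\chi_2$ (it equals $\Theta_{\GL{\glnidx},\chi_1}\otimes\Theta_{G_0,\chi_2}$ when $k=\glnidx$). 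Since the Jacquet functor is exact, it suffices to annihilate this ambient representation under $j_{N_{\GL{k}},\psi}$.

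When $k=\glnidx$ this is immediate: $N_{\GL{\glnidx}}(F)\subset\GLF{\glnidx}{F}^{\star}$, which commutes with $\cover{G}_0(F)$ in $\cover{M}_{\glnidx}(F)$ (Section~\ref{subsubsection:Block-compatibility}), so $j_{N_{\GL{\glnidx}},\psi}(\Theta_{\GL{\glnidx},\chi_1}\otimes\Theta_{G_0,\chi_2})=j_{N_{\GL{\glnidx}},\psi}(\Theta_{\GL{\glnidx},\chi_1})\otimes\Theta_{G_0,\chi_2}$, and the first factor vanishes since $\glnidx\geq3$. For $3\leq k<\glnidx$ I would apply Lemma~\ref{lemma:induced representation composition factors} to write $I(\Theta_{\GL{k},\chi_1},\Theta_{G_{\glnidx-k},\chi_2})^{\star}$ as a finite direct sum of copies of $\cinduced{p^{-1}(\GLF{k}{F}^{\star}\times G_{\glnidx-k}(F))}{\cover{M}_k(F)}{\Theta_{\GL{k},\chi_1}^{\star}\otimes\Theta_{G_{\glnidx-k},\chi_2}}$; since $N_{\GL{k}}(F)\subset\GLF{k}{F}^{\star}$ and, by block-compatibility \eqref{eq:block-compatibility on Levi subgroups}, $\GLF{k}{F}^{\star}$ commutes with $G_{\glnidx-k}(F)$, the Geometric Lemma of Bernstein and Zelevinsky \cite{BZ2} (Theorem~5.2) computes $j_{N_{\GL{k}},\psi}$ of this induced representation exactly as in the computation at the end of the proof of Claim~\ref{claim:twisted Jacquet modules vanish on small representations claim 1}, yielding pieces built from $j_{N_{\GL{k}},\psi'}(\rconj{a}\Theta_{\GL{k},\chi_1}^{\star})\otimes\omega_a\Theta_{G_{\glnidx-k},\chi_2}$, with $a$ ranging over $\lmodulo{\GLF{k}{F}^{\star}}{\GLF{k}{F}}$ and $\psi'$ a generic character of $N_{\GL{k}}(F)$. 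As $N_{\GL{k}}(F)\subset\GLF{k}{F}^{\star}$, these twisted Jacquet modules of $\Theta_{\GL{k},\chi_1}^{\star}$ coincide with the $\star$-restriction of the corresponding twisted Jacquet modules of $\Theta_{\GL{k},\chi_1}$, which vanish for $k\geq3$; hence every piece is zero.

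I expect the proof to be essentially formal once the no-Whittaker-model input is granted; the step requiring care is the bookkeeping in the Geometric Lemma computation — tracking the conjugates $\psi'$ and the twists $\omega_a$ and confirming that the relevant characters of $N_{\GL{k}}(F)$ remain generic (so that $j_{N_{\GL{k}},\psi'}(\Theta_{\GL{k},\chi_1})=0$ applies uniformly), together with checking that $\star$-restriction commutes with $j_{N_{\GL{k}}}$ because $N_{\GL{k}}(F)$ lies in $\GLF{k}{F}^{\star}$. As in Claim~\ref{claim:supercuspidal in homspace}, it is precisely here that the hypothesis $k\geq3$, and for the moment the assumption that $F$ has odd residual characteristic, enter.
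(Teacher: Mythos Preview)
Your proposal is correct and follows the same overall strategy as the paper: transitivity $j_{N_{\GL{k}}U_k,\psi}=j_{N_{\GL{k}},\psi}\circ j_{U_k}$, Proposition~\ref{proposition:Jacquet along standard nonminimal unipotent}, Lemma~\ref{lemma:induced representation composition factors}, the Geometric Lemma, and finally the non-genericity of $\Theta_{\GL{k},\chi_1}$ for $k\geq3$.

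The one genuine difference is which presentation from Lemma~\ref{lemma:induced representation composition factors} you invoke. You pass to copies of
\[
\cinduced{p^{-1}(\GLF{k}{F}^{\star}\times G_{\glnidx-k}(F))}{\cover{M}_k(F)}{\Theta_{\GL{k},\chi_1}^{\star}\otimes\Theta_{G_{\glnidx-k},\chi_2}},
\]
whereas the paper instead uses the \emph{other} form,
\[
\cinduced{p^{-1}(\GLF{k}{F}\times G_{\glnidx-k}(F)^{\star})}{\cover{M}_k(F)}{\Theta_{\GL{k},\chi_1}\otimes\Theta_{G_{\glnidx-k},\chi_2}^{\star}}.
\]
With the paper's choice the relevant double coset space $\rmodulo{\lmodulo{(\GLF{k}{F}G_{\glnidx-k}(F)^{\star})}{M_k(F)}}{(St_{\psi}(F)G_{\glnidx-k}(F))}$ is a single point, so the Geometric Lemma immediately yields $j_{N_{\GL{k}},\psi}(\Theta_{\GL{k},\chi_1})$ with no $\star$-restriction, no conjugates $\rconj{a}(\cdot)$, and no twisted characters $\psi'$ to track. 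Your route produces (when $k$ is even) several double cosets, forcing you to verify that each conjugated character $\psi'$ remains generic and that $j_{N_{\GL{k}},\psi'}$ commutes with $\star$-restriction. All of that is fine, but the paper's choice of presentation sidesteps the bookkeeping entirely; this is precisely the situation anticipated in the remark following Lemma~\ref{lemma:induced representation composition factors for torus}.
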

\begin{proof}
Since $j_{N_{\GL{k}}U_k,\psi}=j_{N_{\GL{k}},\psi}j_{U_k}$ and using
Proposition~\ref{proposition:Jacquet along standard nonminimal unipotent} and
Lemma~\ref{lemma:induced representation composition factors}, we see that it is enough to prove
\begin{align}\label{eq:vanishing twisted Jacquet on GLn space to consider}
j_{N_{\GL{k}},\psi}(\cinduced{p^{-1}(\GLF{k}{F}\times G_{\glnidx-k}(F)^{\star})}{\cover{M}_k(F)}
{\Theta_{\GL{k},\chi_1}\otimes\Theta_{G_{\glnidx-k},\chi_2}^{\star}})=0.
\end{align}
As above we use \cite{BZ2} (Theorem~5.2).
Let $St_{\psi}(F)$ be the normalizer of $N_{\GL{k}}(F)$ and stabilizer of $\psi$ in $\GLF{k}{F}$. The space
\begin{align*}
\rmodulo{\lmodulo{(\GLF{k}{F}G_{\glnidx-k}(F)^{\star})}{M_k(F)}}{(St_{\psi}(F)G_{\glnidx-k}(F))}
\end{align*}
has one element. Since $\psi|_{N_{\GL{k}}}$ is the standard Whittaker character and
$\Theta_{\GL{k},\chi_1}$ is not generic (\cite{KP} Section~I.3), $j_{N_{\GL{k}},\psi}(\Theta_{\GL{k},\chi_1})=0$. Hence \eqref{eq:vanishing twisted Jacquet on GLn space to consider} follows.
\end{proof}

\subsubsection{Explicit construction of exceptional characters}\label{subsubsection:Explicit construction of an exceptional representation}
Let $\eta$ be a character of $F^*$. Start with defining a character $\chi_0$ of $T_{\glnidx+1}(F)^2$, depending on $\eta$. For $t=\prod_{i=1}^{\glnidx}\eta_i^{\vee}(a_i^2)\beta_1^{\vee}(t_1)$, let
\begin{align*}
\chi_0(t)=\prod_{i=1}^{\glnidx}|a_i|^{\glnidx-i+1}\eta(\Upsilon(t)).
\end{align*}
Using \eqref{eq:image of convenient coordinates of the torus in GSpin} 
we see that $\chi_0(\alpha^{\vee}(x^{\mathfrak{l}(\alpha)}))=|x|$ for
all $\alpha\in\Delta_{G_{\glnidx}}$ and $x\in F^*$. Extend $\chi_0$ to a genuine character $\chi$ of $\cover{T}_{\glnidx+1}(F)^2$ using $\mathfrak{s}$: $\chi(\zeta\mathfrak{s}(t))=\zeta\chi_0(t)$ ($\zeta\in\mu_2$). If $\glnidx$ is even,
$\chi$ is exceptional.

Let $\psi$ be a nontrivial additive character of $F$. If $z=\prod_{i=1}^{\glnidx}\eta_i^{\vee}(d)$ and $z'=\prod_{i=1}^{\glnidx}\eta_i^{\vee}(d')$, where $d,d'\in(F^*)^{2/\gcd(2,\glnidx+1)}$, using
\eqref{eq:section of BLS on torus}, \eqref{eq:torus alpha and alpha} and \eqref{eq:torus alpha and alpha'} we get
$\sigma(z,z')=c(d,d')^{\lceil\glnidx/2\rceil}$. Therefore, $\zeta\mathfrak{s}(z)\mapsto\zeta\gamma_{\psi}^{\lceil\glnidx/2\rceil}(d)$ is a genuine character of $C_{\CGLF{\glnidx}{F}}$
(if $\glnidx$ is even, $\gamma_{\psi}(d)=1$).

Since $[t,z]_{\sigma}=1$, we can define the following exceptional character of $C_{\cover{T}_{\glnidx+1}(F)}$:
\begin{align}\label{eq:explicit exceptional character}
\chi(\zeta\mathfrak{s}(t)\mathfrak{s}(z))=\zeta\chi_0(t)|d|^{\glnidx(\glnidx+1)/4}\eta(d)^{\glnidx}\gamma_{\psi}^{\lceil\glnidx/2\rceil}(d).
\end{align}
In fact, every exceptional character can be written in this form, for suitable $\psi$ and $\eta$.

We mention that in the context of exceptional representations of $\GL{\glnidx}$, the character $\eta$ is the ``determinantal character" of Bump and Ginzburg (\cite{BG} p.~143).

Let $\cover{T}_{\glnidx+1}(F)^{\mathm}$ be the maximal abelian subgroup given in Claim~\ref{claim:constructing a maximal abelian subgroup of the torus in general}. The following formula defines an extension $\chi'$ of $\chi$ to $\cover{T}_{\glnidx+1}(F)^{\mathm}$. For $t=\prod_{i=1}^{\glnidx}\eta_i^{\vee}(a_i)\beta_1^{\vee}(t_1)\in T_{\glnidx+1}(F)^{\mathm}$, let
\begin{align*}
\chi'(\zeta\mathfrak{s}(t))=\zeta\prod_{i=1}^{\glnidx}|a_i|^{(\glnidx-i+1)/2}\eta(\Upsilon(t))\prod_{i=0}^{\lceil\glnidx/2\rceil-1}\gamma_{\psi}(a_{\glnidx-2i}).
\end{align*}

Now consider the unramified case. Assume all data are unramified. This means that $|2|=1$, $q>3$ in $F$; $\psi$ is unramified ($\psi|_{\RingOfIntegers}=1$, $\psi|_{\mathcal{P}^{-1}}\ne1$); and $\eta$ is unramified ($\eta|_{\RingOfIntegers^*}=1$).
Then $\gamma_{\psi}|_{\RingOfIntegers^*}=1$ (see e.g. \cite{Dani} Lemma~3.4). Therefore $\chi$ is trivial on
$C_{\cover{T}_{\glnidx+1}(F)}\cap K^*$. In this case $\chi$ is an unramified character (see Section~\ref{subsubsection:Unramified representations}).

\section{Global theory}\label{section:global theory}

\subsection{The $r$-fold cover of $G_{\glnidx}(\Adele)$}\label{subsection:the double cover of G_n Adele}
Let $F$ be a number field containing all $r$ $r$-th roots of unity, with a ring of ad\`{e}les $\Adele$. The global $r$-fold cover $\cover{G}_{\glnidx}(\Adele)$ of
$G_{\glnidx}(\Adele)$ is defined similarly to the definition of $\CGLF{\glnidx}{\Adele}$ of \cite{KP} (Section~0.2, see also \cite{FK,Tk2,Tk}). At each place $\nu$ we have the exact sequence of Section~\ref{subsubsection:Definition of the cover},
\begin{align*}
1\rightarrow{\mu_r(F_{\nu})}\xrightarrow{\iota_{\nu}} \cover{G}_{\glnidx}(F_{\nu})\xrightarrow{p_{\nu}} G_{\glnidx}(F_{\nu})\rightarrow 1.
\end{align*}
Denote by $c$ the inverse of the global Hilbert symbol, $c$ is the
product of the local functions $c_{\nu}$.
At each $\nu$, let $K_{\nu}<G_{\glnidx}(F_{\nu})$ be either $G_{\glnidx}(\RingOfIntegers_{\nu})$ if $F_{\nu}$ is $p$-adic,
or a maximal compact subgroup in the \archimedean\ case, and put $K=\prod_{\nu}K_{\nu}$.
For simplicity, fix an identification of $\mu_r(F_{\nu})$ with $\mu_r=\mu_r(F)$ (which is usually identified with a subgroup of $\C^*$). Let $S_{\infty}$ be the set of \archimedean\ places of $F$. Fix a finite set $S_0$ containing $S_{\infty}$ and the places $\nu$ such that $|r|_{\nu}<1$ or $q_{\nu}\leq3$.
For any finite set $S\supset S_0$ denote
\begin{align*}
G_{\glnidx}(\Adele)_S=\prod_{\nu\in S}G_{\glnidx}(F_{\nu})\prod_{\nu\notin S}K_{\nu},\qquad
G_{\glnidx}(\Adele)_S^*=\prod_{\nu\in S}\cover{G}_{\glnidx}(F_{\nu})\prod_{\nu\notin S}K_{\nu}^{*}.
\end{align*}
For $\nu\in S$, $\cover{G}_{\glnidx}(F_{\nu})<G_{\glnidx}(\Adele)_S^*$ (in particular $\iota_{\nu}(\mu_r)<G_{\glnidx}(\Adele)_S^*$). Note that $K_{\nu}^{*}$ was defined given the assumption on $S$ (see Section~\ref{subsubsection:A splitting of the hyperspecial subgroup}). Let $\widehat{\mu_r}$ be the subgroup of $G_{\glnidx}(\Adele)_S^*$ generated by the elements $\iota_{\nu}(\zeta)\iota_{\nu'}(\zeta)^{-1}$ where $\zeta\in\mu_r$ and $\nu,\nu'\in S$. Define $\widehat{G}_{\glnidx}(\Adele)_S=\lmodulo{\widehat{\mu_r}}{G_{\glnidx}(\Adele)_S^*}$. Then
\begin{align*}
&G_{\glnidx}(\Adele)=\lim_{S\longrightarrow}G_{\glnidx}(\Adele)_S,\qquad
\cover{G}_{\glnidx}(\Adele)=\lim_{S\longrightarrow}\widehat{G}_{\glnidx}(\Adele)_S.
\end{align*}
We have an exact sequence
\begin{align*}
1\rightarrow{\mu_r}\xrightarrow{\iota}\cover{G}_{\glnidx}(\Adele)\xrightarrow{p} G_{\glnidx}(\Adele)\rightarrow 1.
\end{align*}
Regarding the topologies on these groups refer to \cite{KP} (Section~0.2).

Let $\prod'_{\nu}G_{\glnidx}(F_{\nu})$ be the restricted direct product with respect
to the subgroups $\{K_{\nu}\}_{\nu}$. Then $G_{\glnidx}(\Adele)=\prod'_{\nu}G_{\glnidx}(F_{\nu})$.
Similarly, define $\prod'_{\nu}\cover{G}_{\glnidx}(F_{\nu})$ with respect to $\{K^*_{\nu}\}_{\nu}$. 
Set $\mu_r^{\times}=\setof{(i_{\nu}(\zeta_{\nu}))_{\nu}\in \prod'_{\nu}\cover{G}_{\glnidx}(F_{\nu})}{\prod_{\nu}{\zeta_{\nu}=1}}$. 
Then $\cover{G}_{\glnidx}(\Adele)=\lmodulo{\mu_r^{\times}}{\prod'_{\nu}\cover{G}_{\glnidx}(F_{\nu})}$.

In Section~\ref{subsubsection:A splitting of the hyperspecial subgroup} we defined $\kappa_{\nu}$ for $\nu\notin S_0$, extend $\kappa_{\nu}$ to a smooth section of $G_{\glnidx}(F_{\nu})$. Define sections $\kappa_{\nu}$ also for $\nu\in S_0$ arbitrarily. Set $\kappa=\prod_{\nu}\kappa_{\nu}$. This is a section of $G_{\glnidx}(\Adele)$, well defined because $\kappa_{\nu}(K_{\nu})=K^*_{\nu}$ for $\nu\notin S_0$. The corresponding $2$-cocycle is defined by
$\gamma(g,g')=\kappa(g)\kappa(g')\kappa(gg')^{-1}$, well defined since for $\nu\notin S_0$, $\kappa_{\nu}$ is a splitting of $K_{\nu}$.

Consider $\mathfrak{s}=\prod_{\nu}\mathfrak{s}_{\nu}$, where $\mathfrak{s}_{\nu}$ is the section given in Section~\ref{subsubsection:Definition of the cover}. This is not a section of $G_{\glnidx}(\Adele)$, but can be used with the following subgroups.
\begin{claim}\label{claim:good ol section splits G(F) and N(A)}
The section $\mathfrak{s}$ is a splitting of $G_{\glnidx}(F)$ and $N_{\glnidx}(\Adele)$. It is well defined on $T_{\glnidx+1}(\Adele)$.
\end{claim}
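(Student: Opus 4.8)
The plan is to verify each of the three assertions separately, all of which reduce to classical facts about the local sections $\mathfrak{s}_\nu$ together with the product formula for the Hilbert symbol.

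\textbf{Splitting of $G_{\glnidx}(F)$.} First I would observe that for a field element $g\in G_{\glnidx}(F)$, viewed diagonally in $\prod_\nu G_{\glnidx}(F_\nu)$, only finitely many components $\mathfrak{s}_\nu(g_\nu)$ fail to lie in $K^*_\nu$ (since $g\in K_\nu$ for almost all $\nu$, and there $\mathfrak{s}_\nu(g)=\kappa_\nu(g)\in K^*_\nu$ by Claim~\ref{claim:relations section s and section kappa}); hence $\mathfrak{s}(g)=\prod_\nu\mathfrak{s}_\nu(g_\nu)$ makes sense in $\prod'_\nu\cover{G}_{\glnidx}(F_\nu)$ and descends to $\cover{G}_{\glnidx}(\Adele)$. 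To see it is a homomorphism on $G_{\glnidx}(F)$, I would compute $\mathfrak{s}(g)\mathfrak{s}(g')\mathfrak{s}(gg')^{-1}=\iota\big(\prod_\nu\sigma_\nu(g_\nu,g'_\nu)\big)$ and note $\sigma_\nu$ is built from $c_\nu=(,)_{r,\nu}^{-1}$; by the multiplicativity of $\sigma_\nu$ in the Steinberg symbol (this is visible from Claim~\ref{claim:cocycle of spin on the torus} and the identities \eqref{eq:torus alpha and alpha}--\eqref{eq:weyl simple and unipotent positive} used to derive it) the product $\prod_\nu\sigma_\nu(g,g)$ is a product of expressions of the form $\prod_\nu c_\nu(x,y)=c(x,y)^{?}$ with $x,y\in F^*$, which equals $1$ by the product formula for the global Hilbert symbol (the global $c$ is trivial on $F^*\times F^*$). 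This is exactly the argument of \cite{KP} for $\CGLF{\glnidx}{\Adele}$, and since our cocycle restricts to standard subgroups of type $SL$ compatibly (Section~\ref{subsubsection:Restriction to standard subgroups}), it transfers.

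\textbf{Splitting of $N_{\glnidx}(\Adele)$.} Here I would use that $\mathfrak{s}_\nu$ restricted to $N_{\glnidx}(F_\nu)$ is the \emph{unique} splitting and satisfies $\mathfrak{s}_\nu(n_\alpha(x))=n_\alpha^*(x)$; since the canonical Steinberg lifts $n_\alpha^*$ multiply exactly as the $n_\alpha$ do (the unipotent radical is split in Matsumoto's cover with no cocycle contribution), $\mathfrak{s}_\nu|_{N_{\glnidx}(F_\nu)}$ is a homomorphism with trivial associated cocycle, and for $u\in N_{\glnidx}(\Adele)$ almost all components lie in $K_\nu\cap N_{\glnidx}(F_\nu)$, where $\mathfrak{s}_\nu=\kappa_\nu$ lands in $K^*_\nu$; so $\mathfrak{s}(u)$ is well defined in $\prod'_\nu$ and descends, and is a homomorphism placewise.

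\textbf{Well-definedness on $T_{\glnidx+1}(\Adele)$.} For $t\in T_{\glnidx+1}(\Adele)$ with $t=\prod_i\alpha_i^\vee(t_i)$, $t_i\in\Adele^*$, almost every component $t_{i,\nu}$ is a unit, and by Claim~\ref{claim:relations section s and section kappa} (together with $c_\nu$ being trivial on $\RingOfIntegers_\nu^*\times\RingOfIntegers_\nu^*$, Section~\ref{subsection:The Hilbert symbol}) $\mathfrak{s}_\nu(t_\nu)=\kappa_\nu(t_\nu)\in K^*_\nu$ for almost all $\nu$; hence $\mathfrak{s}(t)=\prod_\nu\mathfrak{s}_\nu(t_\nu)$ lies in $\prod'_\nu\cover{G}_{\glnidx}(F_\nu)$ and descends to a well-defined element of $\cover{G}_{\glnidx}(\Adele)$ (one need not — and in general cannot — claim it is a homomorphism on the full adelic torus).

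\textbf{Main obstacle.} The only genuinely delicate point is the homomorphism property on $G_{\glnidx}(F)$: one must confirm that when $\prod_\nu\sigma_\nu(g,g')$ is expanded via Claim~\ref{claim:cocycle of spin on the torus} and the Bruhat-type reductions of \cite{BLS}, every factor is of the shape $\prod_\nu c_\nu(x_\nu,y_\nu)$ with $x_\nu,y_\nu$ \emph{arising from a single global} $x,y\in F^*$, so that the product formula applies cleanly; the bookkeeping is exactly that of \cite{KP} (Section~0.2) and \cite{BLS}, adapted through the block-compatibility and standard-subgroup compatibility already established, so I would simply cite those and indicate the translation. All other parts are routine finiteness checks using Claim~\ref{claim:relations section s and section kappa}.
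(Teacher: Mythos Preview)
Your overall strategy matches the paper's: check well-definedness in the restricted product, then verify the homomorphism property locally and invoke the product formula for the Hilbert symbol. The arguments for $N_{\glnidx}(\Adele)$ and $T_{\glnidx+1}(\Adele)$ are correct and essentially identical to the paper's.

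There is, however, a genuine slip in your well-definedness argument for $G_{\glnidx}(F)$. You write that for $g\in K_\nu$ one has $\mathfrak{s}_\nu(g)=\kappa_\nu(g)\in K^*_\nu$ ``by Claim~\ref{claim:relations section s and section kappa}.'' But that claim only asserts $\mathfrak{s}_\nu=\kappa_\nu$ on $K_\nu\cap N_{\glnidx}(F_\nu)$, $K_\nu\cap T_{\glnidx+1}(F_\nu)$, and $\mathfrak{W}_{\glnidx}$ --- not on all of $K_\nu$. In fact $\mathfrak{s}_\nu$ is \emph{not} a splitting of $K_\nu$ in general (already for $SL_2$ the Kubota cocycle is nontrivial on $K_\nu$, cf.\ the function $\gamma$ in the proof of Claim~\ref{claim:relations section s and section kappa}), so $\mathfrak{s}_\nu(g)=\kappa_\nu(g)$ fails for generic $g\in K_\nu$. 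The paper repairs this by first writing $g=utwu'$ via \eqref{eq:props of BLS section separate N T and W}; then $\mathfrak{s}_\nu(g)=\mathfrak{s}_\nu(u)\mathfrak{s}_\nu(t)\mathfrak{s}_\nu(w)\mathfrak{s}_\nu(u')$, and for almost all $\nu$ each of $u,t,w,u'$ lies in $K_\nu$, so Claim~\ref{claim:relations section s and section kappa} applies to each factor separately, giving $\mathfrak{s}_\nu(g)\in K^*_\nu$. Once you insert this Bruhat step, your argument goes through and coincides with the paper's.
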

\begin{proof}
First observe that $\mathfrak{s}$ is indeed well defined on these subgroups. Indeed, let $g\in G_{\glnidx}(F)$ and write $g=utwu'$ as in \eqref{eq:props of BLS section separate N T and W}. Then for any place $\nu$, $\mathfrak{s}_{\nu}(g)=\mathfrak{s}_{\nu}(u)\mathfrak{s}_{\nu}(t)\mathfrak{s}_{\nu}(w)\mathfrak{s}_{\nu}(u')$. For almost all $\nu$,
the elements $u,t,w$ and $u'$ lie in $K_{\nu}$, then Claim~\ref{claim:relations section s and section kappa} implies
$\mathfrak{s}_{\nu}(g)\in K^*_{\nu}$. The same claim implies that if $u\in N_{\glnidx}(\Adele)$ and $t\in T_{\glnidx+1}(\Adele)$,
$\mathfrak{s}_{\nu}(u_{\nu})=\kappa_{\nu}(u_{\nu})\in K^*_{\nu}$ and
$\mathfrak{s}_{\nu}(t_{\nu})=\kappa_{\nu}(t_{\nu})\in K^*_{\nu}$ for almost all $\nu$. 

Since for all $\nu$, $\mathfrak{s}_{\nu}$ is a splitting of $N_{\glnidx}(F_{\nu})$, $\mathfrak{s}$ is a splitting of $N_{\glnidx}(\Adele)$. To show that $\mathfrak{s}$ is a splitting of $G_{\glnidx}(F)$ one uses \eqref{eq:props of BLS section separate N T and W}, the relations in \cite{BLS} (Section~1) defining the multiplication in $G_{\glnidx}(F_{\nu})$ and
$\cover{G}_{\glnidx}(F_{\nu})$, and the fact that $c$ is trivial on $F^*\times F^*$.
\end{proof}
\begin{remark}\label{remark:defining sigma globally whenever possible}
Let $H$ be a subgroup of $G_{\glnidx}(\Adele)$ such that for any $h\in H$, for almost all $\nu$, $\mathfrak{s}(h_{\nu})=\kappa(h_{\nu})$. Then if $h,h'\in H$,
we have $\mathfrak{s}_{\nu}(h_{\nu})\mathfrak{s}_{\nu}(h'_{\nu})=\mathfrak{s}_{\nu}(h_{\nu}h'_{\nu})$ almost everywhere. Hence we can define
the $2$-cocycle $\sigma(h,h')=\mathfrak{s}(h)\mathfrak{s}(h')\mathfrak{s}(hh')^{-1}$ on $H$ and then
$\sigma=\prod_{\nu}\sigma_{\nu}$ where $\sigma_{\nu}$ is the block-compatible cocycle of
Section~\ref{subsubsection:Definition of the cover}. According to Claim~\ref{claim:good ol section splits G(F) and N(A)},
$\sigma$ is well defined on $T_{\glnidx}(\Adele)$, and trivial on $G_{\glnidx}(F)$ and $N_{\glnidx}(\Adele)$.
\end{remark}

Henceforth $r=2$.

\subsection{Representations, intertwining operators and Eisenstein series}\label{subsubsection:global reps}
We study irreducible genuine representations of $\cover{T}_{\glnidx+1}(\Adele)$ formed by extending a
genuine character of $C_{\cover{T}_{\glnidx+1}(\Adele)}=
\lmodulo{\mu_2^{\times}}{\prod'_{\nu}C_{\cover{T}_{\glnidx+1}(F_{\nu})}}$ to a maximal abelian subgroup, then
inducing to $\cover{T}_{\glnidx+1}(\Adele)$. The resulting representation is independent of the choices of abelian subgroup and extension (\cite{KP} Section~0.3).

Let $\chi$ be a genuine character of $C_{\cover{T}_{\glnidx+1}(\Adele)}$, which is trivial on $C_{\cover{T}_{\glnidx+1}(\Adele)}\cap\mathfrak{s}(T_{\glnidx+1}(F))$. Then $\chi=\otimes'_{\nu}\chi_{\nu}$, where for almost all $\nu$, $\chi_{\nu}$ is unramified, because it is trivial on
$C_{\cover{T}_{\glnidx+1}(F_{\nu})}\cap K_{\nu}^*$ (see Section~\ref{subsubsection:Unramified representations}).

Following Kazhdan and Patterson \cite{KP} (Section~II.1) we will use two maximal subgroups, the resulting representation will be the same. One choice is $X=\lmodulo{\mu_2^{\times}}{\prod'_{\nu}X_{\nu}}$, where $C_{\cover{T}_{\glnidx+1}(F_{\nu})}<X_{\nu}<\cover{T}_{\glnidx+1}(F_{\nu})$
is a local maximal abelian subgroup such that for almost all $\nu$, $X_{\nu}=C(\cover{T}_{\glnidx+1}(F_{\nu}),\cover{T}_{\glnidx+1}(F_{\nu})\cap K^*_{\nu})$
(see 
Claim~\ref{claim:constructing maximal abelian subgroup unramified case}). The subgroup $X$ is suitable for relating a global representation to its local pieces. Globally it is simpler to work with a subgroup containing $\mathfrak{s}(T_{\glnidx+1}(F))$. As in \cite{KP} (Lemma~II.1.1), we introduce the following alternative.
\begin{claim}\label{claim:convenient global subgroup}
The subgroup $C(\cover{T}_{\glnidx+1}(\Adele),\mathfrak{s}(T_{\glnidx+1}(F)))$ is a maximal abelian subgroup of
$\cover{T}_{\glnidx+1}(\Adele)$, which contains $\mathfrak{s}(T_{\glnidx+1}(F))$. We have
$C(\cover{T}_{\glnidx+1}(\Adele),\mathfrak{s}(T_{\glnidx+1}(F)))=\mathfrak{s}(T_{\glnidx+1}(F))C_{\cover{T}_{\glnidx+1}(\Adele)}$.
\end{claim}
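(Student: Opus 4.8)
\textbf{Proof proposal for Claim~\ref{claim:convenient global subgroup}.}

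The plan is to follow the local template established in Claim~\ref{claim:constructing maximal abelian subgroup unramified case} and Claim~\ref{claim:constructing a maximal abelian subgroup of the torus in general}, transported to the adelic setting exactly as in \cite{KP} (Lemma~II.1.1). First I would prove the identity $C(\cover{T}_{\glnidx+1}(\Adele),\mathfrak{s}(T_{\glnidx+1}(F)))=\mathfrak{s}(T_{\glnidx+1}(F))C_{\cover{T}_{\glnidx+1}(\Adele)}$. The inclusion ``$\supseteq$'' is immediate: $C_{\cover{T}_{\glnidx+1}(\Adele)}$ centralizes everything, and $\mathfrak{s}(T_{\glnidx+1}(F))$ is abelian by Claim~\ref{claim:good ol section splits G(F) and N(A)}, so it centralizes itself. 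For ``$\subseteq$'', take $b\in\cover{T}_{\glnidx+1}(\Adele)$ centralizing $\mathfrak{s}(T_{\glnidx+1}(F))$; writing $p(b)=\prod_i\eta_i^{\vee}(b_{i,\nu})\beta_1^{\vee}(t_{1,\nu})$ componentwise and using the commutator formula \eqref{eq:general commutator on torus} at each place, the condition $[b,\mathfrak{s}(\eta_j^{\vee}(x))]=1$ for all $x\in F^*$ forces, for each $j$, $c(\prod_i b_i\cdot b_j^{-1},x)=1$ for all $x\in F^*$ (here $c$ is the global Hilbert symbol); since $c$ is non-degenerate on $F^*\times F^*$ this gives $\prod_i b_i\cdot b_j^{-1}\in F^{*2}$. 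As in the proof of Claim~\ref{claim:center of the torus}, this means $b_j=a_j^2 d\cdot(\text{something in }F^*)$ after correcting by an element of $\mathfrak{s}(T_{\glnidx+1}(F))$, i.e.\ $b$ differs from an element of $\mathfrak{s}(T_{\glnidx+1}(F))$ by an element whose $p$-image lies in the set described in Claim~\ref{claim:center of the torus}, hence by an element of $C_{\cover{T}_{\glnidx+1}(\Adele)}$. This yields the factorization.

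Next I would deduce that $C(\cover{T}_{\glnidx+1}(\Adele),\mathfrak{s}(T_{\glnidx+1}(F)))$ is abelian. Given the factorization, it suffices to check that $\mathfrak{s}(T_{\glnidx+1}(F))$ and $C_{\cover{T}_{\glnidx+1}(\Adele)}$ commute with each other and that $\mathfrak{s}(T_{\glnidx+1}(F))$ is abelian; the latter is Claim~\ref{claim:good ol section splits G(F) and N(A)}, and the former holds because $C_{\cover{T}_{\glnidx+1}(\Adele)}$ is central in $\cover{T}_{\glnidx+1}(\Adele)$. Then I would show maximality: if $b\in\cover{T}_{\glnidx+1}(\Adele)$ lies in an abelian subgroup containing $C(\cover{T}_{\glnidx+1}(\Adele),\mathfrak{s}(T_{\glnidx+1}(F)))$, then in particular $b$ centralizes $\mathfrak{s}(T_{\glnidx+1}(F))$, hence $b\in C(\cover{T}_{\glnidx+1}(\Adele),\mathfrak{s}(T_{\glnidx+1}(F)))$ by definition. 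That the subgroup contains $\mathfrak{s}(T_{\glnidx+1}(F))$ is clear from the factorization (or directly from abelianness of $\mathfrak{s}(T_{\glnidx+1}(F))$).

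The only genuinely delicate point, and the step I expect to be the main obstacle, is the adelic non-degeneracy argument: one must be careful that the condition ``$c(y,x)=1$ for all $x\in F^*$'' with $y\in\Adele^*$ forces $y\in F^{*2}\cdot(\text{something controllable})$ rather than merely $y_\nu\in F_\nu^{*2}$ locally at each $\nu$. This is exactly the subtlety \cite{KP} handle by invoking $F^*$-rationality of the diagonal torus elements $\mathfrak{s}(T_{\glnidx+1}(F))$ together with the Hasse principle for squares (an element of $F^*$ that is a square at every completion is a square in $F^*$); I would cite \cite{KP} (Lemma~II.1.1) for the structure of this argument and adapt the commutator computation using \eqref{eq:general commutator on torus}, noting that the global cocycle restricted to $T_{\glnidx+1}(\Adele)$ is the product of the local block-compatible cocycles (Remark~\ref{remark:defining sigma globally whenever possible}), so the local formulas assemble correctly. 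Everything else is routine bookkeeping parallel to the local claims already proved.
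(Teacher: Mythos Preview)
Your overall strategy matches the paper's: establish the factorization by the commutator computation \eqref{eq:general commutator on torus}, then deduce abelianness and maximality. The paper also cites \cite{KP} (Lemma~II.1.1) as the model and invokes Remark~\ref{remark:defining sigma globally whenever possible} to justify using the local formulas adelically.

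However, the step you flag as ``the main obstacle'' is not handled correctly in your sketch. You write that ``$c$ is non-degenerate on $F^*\times F^*$'' gives $\prod_i b_i\cdot b_j^{-1}\in F^{*2}$, but the quantity $\prod_i b_i\cdot b_j$ lives in $\Adele^*$, not $F^*$, so non-degeneracy on $F^*\times F^*$ is irrelevant. Your proposed fix, the Hasse principle for squares (an element of $F^*$ that is a square in every completion is a square), is also the wrong tool: it concerns rational elements, whereas here you need to characterize adelic elements orthogonal to $F^*$ under the global Hilbert symbol. The correct input, which the paper cites explicitly, is Weil \cite{We2} (Section~XIII.5, Propositions~7 and 8): for $y\in\Adele^*$, one has $c(y,x)=1$ for all $x\in F^*$ if and only if $y\in F^*(\Adele^*)^2$. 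With this in hand, the commutator condition gives $b_j\prod_i b_i\in F^*(\Adele^*)^2$ for each $j$, and then the argument of Claim~\ref{claim:center of the torus} (run with $F^*(\Adele^*)^2$ in place of $F_\nu^{*2}$) shows that $p(b)$ differs from an element of $T_{\glnidx+1}(F)$ by an element of $p(C_{\cover{T}_{\glnidx+1}(\Adele)})$, exactly as you intend. So the architecture is right; only the reference for the duality statement needs correcting.
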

\begin{proof}
The containment from right to left is clear, because by Claim~\ref{claim:good ol section splits G(F) and N(A)} the section $\mathfrak{s}$
splits $G_{\glnidx}(F)$, hence the elements of $\mathfrak{s}(T_{\glnidx+1}(F))$ commute. Now we proceed as in
the proof of Claim~\ref{claim:constructing maximal abelian subgroup unramified case} and note that by
Weil \cite{We2} (Section~XIII.5 Propositions~7 and 8), $c(x,y)=1$ for all $y\in \mathcal{\Adele}^*$ if and only if $x\in(\Adele^*)^2$,
and $c(x,y)=1$ for all $y\in F^*$ if and only if $x\in F^*(\Adele^*)^2$
(compare to Section~\ref{subsection:The Hilbert symbol}). Observe that the proof of Claim~\ref{claim:constructing maximal abelian subgroup unramified case} uses
the local cocycle $\sigma_{\nu}$ defined using $\mathfrak{s}_{\nu}$. As explained in Remark~\ref{remark:defining sigma globally whenever possible},
the argument extends to the global setting.
\end{proof}

For $\underline{s}=(s_1,\ldots,s_{\glnidx+1})\in\C^{\glnidx+1}$ define a nongenuine character ${\mathe}_{\underline{s}}$ of $\cover{T}_{\glnidx+1}(\Adele)$
by requiring ${\mathe}_{\underline{s}}(\alpha^{\vee}(x))=|x|^{(\alpha,\underline{s})/\mathfrak{l}(\alpha)}$ for all $\alpha\in\Delta_{G'_{\glnidx+1}}$ and $x\in\Adele^*$. E.g., ${\mathe}_{\underline{s}}(\alpha_1^{\vee}(x))=|x|^{(s_1-s_2)/2}$, ${\mathe}_{\underline{s}}(\alpha_{\glnidx+1}^{\vee}(x))=|x|^{s_{\glnidx+1}}$. The character ${\mathe}_{\underline{s}}$ is trivial on $\mathfrak{s}(T_{\glnidx+1}(F))$.

If $d\in\C$, denote $\underline{d}=(d,\ldots,d)$ (the length of $\underline{d}$ will
be clear from the context).

Let $\chi$ be a genuine character of $C_{\cover{T}_{\glnidx+1}(\Adele)}$, which is trivial on $C_{\cover{T}_{\glnidx+1}(\Adele)}\cap\mathfrak{s}(T_{\glnidx+1}(F))$. Denote $\chi_{\underline{s}}={\mathe}_{\underline{s}}\cdot\chi$.
Assume we are given an abelian subgroup $C_{\cover{T}_{\glnidx+1}(\Adele)}<H<\cover{T}_{\glnidx+1}(\Adele)$ and an extension $\chi'$ of $\chi$ to $H$.
We will always assume $(\chi')_{\underline{s}}=(\chi_{\underline{s}})'$. Therefore we continue to denote the extension by $\chi$ (instead of $\chi'$) and
the notation $\chi_{\underline{s}}$ is not ambiguous.

Let $V_H(\chi_{\underline{s}})$ be the space of the representation $\Induced{HN_{\glnidx}(\Adele)}{\cover{G}_{\glnidx}(\Adele)}{\chi_{\underline{s}}}$. Specifically, this is the space of functions $f:\cover{G}_{\glnidx}(\Adele)\rightarrow\C$ which
are $\cover{K}$-smooth on the right and $f(h\mathfrak{s}(u)g)=\delta_{B_{\glnidx}(\Adele)}^{1/2}(h)\chi_{\underline{s}}(h)f(g)$ for all $h\in H$, $u\in N_{\glnidx}(\Adele)$ and
$g\in\cover{G}_{\glnidx}(\Adele)$. Set $V_H(\chi)=V_H(\chi_{\underline{0}})$. A standard section $f_{\underline{s}}$ is a function such that $f_{\underline{s}}|_{\cover{K}}$ is independent of $\underline{s}$ and
for all $\underline{s}$, $f_{\underline{s}}\in V_H(\chi_{\underline{s}})$. For any $f\in V_H(\chi)$ there is a standard section
$f_{\underline{s}}$ with $f_{\underline{0}}=f$.


We extend $\chi$ to $C(\cover{T}_{\glnidx+1}(\Adele),\mathfrak{s}(T_{\glnidx+1}(F)))$ and to $X$. The extension to
$C(\cover{T}_{\glnidx+1}(\Adele),\mathfrak{s}(T_{\glnidx+1}(F)))$ is obtained
by acting trivially on $\mathfrak{s}(T_{\glnidx+1}(F))$. The extension to $X$ is arbitrary, as long as both extensions agree on $C(\cover{T}_{\glnidx+1}(\Adele),\mathfrak{s}(T_{\glnidx+1}(F)))\cap X$ (a suitable extension to $X$ with this property always exists,
see \cite{KP} p.~108).

For brevity, put $\Xi=C(\cover{T}_{\glnidx+1}(\Adele),\mathfrak{s}(T_{\glnidx+1}(F)))$.

The space $V_X(\chi_{\underline{s}})$ decomposes as the restricted tensor product $\otimes'_{\nu}V(\chi_{\underline{s},\nu})$ with respect
to $\{V(\chi_{\underline{s},\nu})^{K^*_{\nu}}\}_{\nu\notin S_0}$, where $V(\chi_{\underline{s},\nu})$ is the space of
$\induced{\cover{B}_{\glnidx}(F_{\nu})}{\cover{G}_{\glnidx}(F_{\nu})}{\rho_{\nu}(\chi_{\underline{s},\nu})}$, defined in Section~\ref{subsubsection:representations of the torus}.

The spaces $V_X(\chi)$ and $V_{\Xi}(\chi)$ are isomorphic (see \cite{KP} p.~109). Specifically, for
$f\in V_X(\chi)$ define $f'\in V_{\Xi}(\chi)$ via
\begin{align}\label{eq:isomorphism X and Xi}
f'(g)=\sum_{\delta'\in\ \lmodulo{(C_{\cover{T}_{\glnidx+1}(\Adele)}\cap\mathfrak{s}(T_{\glnidx+1}(F))) }{T_{\glnidx+1}(F)}}f(\mathfrak{s}(\delta') g),\qquad g\in\cover{G}_{\glnidx}(\Adele).
\end{align}

For $\mathbf{w}\in W_{\glnidx}$, let $M'(w,\chi_{\underline{s}}):V_{\Xi}(\chi_{\underline{s}})\rightarrow
V_{\Xi}(\rconj{\mathbf{w}}\chi_{\underline{s}})$ be the standard intertwining operator defined by (the meromorphic continuation of)
\begin{align}\label{eq:global intertwining def}
M'(w,\chi_{\underline{s}})f'(g)=\int_{\lmodulo{N_{\glnidx}^w(\Adele)}{N_{\glnidx}(\Adele)}}f'_{\underline{s}}
(\mathfrak{s}(w)^{-1}\mathfrak{s}(u)g)du.
\end{align}
Note that $\mathfrak{W}_{\glnidx}\subset G_{\glnidx}(F)$. Using the isomorphism $V_{\Xi}\isomorphic V_{X}$ we can define the corresponding intertwining operator $M(w,\chi_{\underline{s}}):V_{X}(\chi_{\underline{s}})\rightarrow
V_{X}(\rconj{\mathbf{w}}\chi_{\underline{s}})$. The poles and zeros (with multiplicities) of these operators coincide.


Now we are ready to define the Eisenstein series. For $f\in V_X(\chi)$ let
\begin{align*}
E_{B_{\glnidx}}(g;f,\underline{s})=\sum_{\delta\in\lmodulo{B_{\glnidx}(F)}{G_{\glnidx}(F)}}f'_{\underline{s}}(\mathfrak{s}(\delta)g).
\end{align*}
The sum is absolutely convergent in a suitable cone and has a meromorphic continuation. Analogously to
\cite{KP} (Proposition~II.1.2), or using the general formulation of M{\oe}glin and Waldspurger \cite{MW2}
(Section~II.1.7), if
\begin{align*}
E_{B_{\glnidx}}^{N_{\glnidx}}(g;f,\underline{s})=\int_{\lmodulo{N_{\glnidx}(F)}{N_{\glnidx}(\Adele)}}
E_{B_{\glnidx}}(\mathfrak{s}(u)g;f,\underline{s})du
\end{align*}
is the constant term of $E_{B_{\glnidx}}(g;f,\underline{s})$ along $N_{\glnidx}$,
\begin{align*}
E_{B_{\glnidx}}^{N_{\glnidx}}(g;f,\underline{s})=\sum_{\mathbf{w}\in W_{\glnidx}}M'(w,\chi_{\underline{s}})f'_{\underline{s}}(g).
\end{align*}

\subsection{Induction from $M_{\glnidx}$}\label{subsubsection:global induction transitivity}
Throughout this section, the groups $\GL{\glnidx}$ and $G_0$ are regarded as subgroups of $M_{\glnidx}$. Since the local subgroups $\CGLF{\glnidx}{F_{\nu}}$ and $\cover{G}_0(F_{\nu})$ commute,
$\CGLF{\glnidx}{\Adele}$ and $\cover{G}_0(\Adele)$ commute. Therefore, genuine automorphic representations of $\cover{M}_{\glnidx}(\Adele)$ can be described using the usual tensor product.

Let $\chi$ be a genuine character of $C_{\cover{T}_{\glnidx+1}(\Adele)}$ which is trivial on
$C_{\cover{T}_{\glnidx+1}(\Adele)}\cap\mathfrak{s}(T_{\glnidx+1}(F))$. According to the results of Section~\ref{subsection:Subgroups of the torus},
\begin{align*}
C_{\cover{T}_{\glnidx+1}(\Adele)}=\lmodulo{\setof{(\zeta,\zeta)}{\zeta\in\mu_2}}{(C_{\cover{T}_{\GL{\glnidx}}(\Adele)}\times\cover{G}_0(\Adele))}.
\end{align*}
Therefore we can write uniquely $\chi=\chi^{(1)}\otimes\chi^{(2)}$, where $\chi^{(1)}$ and $\chi^{(2)}$ are genuine characters of $C_{\cover{T}_{\GL{\glnidx}}(\Adele)}$ and
$\cover{G}_0(\Adele)$. The character $\chi^{(1)}$ is trivial on $C_{\cover{T}_{\GL{\glnidx}}(\Adele)}\cap\mathfrak{s}(T_{\GL{\glnidx}}(F))$ and $\chi^{(2)}$ is
trivial on $\mathfrak{s}(G_0(F))$.

The description of Section~\ref{subsubsection:global reps} was adapted from the exposition in 
\cite{KP} (Section~II.1), which we briefly recall. For $\underline{s}\in\C^{\glnidx}$ define a nongenuine character $\mathe_{\underline{s}}$ of $\cover{T}_{\GL{\glnidx}}(\Adele)$ by requiring ${\mathe}_{\underline{s}}(\alpha_i^{\vee}(x))=|x|^{(\alpha_i,\underline{s})/2}$ for all $2\leq i\leq\glnidx$ and $x\in\Adele^*$. Let $\sigma$ be a genuine character of $C_{\cover{T}_{\GL{\glnidx}}(\Adele)}$.
For $C_{\cover{T}_{\GL{\glnidx}}(\Adele)}<H<\cover{T}_{\GL{\glnidx}}(\Adele)$, the space $V_H(\sigma_{\underline{s}})$ consists of functions on $\CGLF{\glnidx}{\Adele}$.

Denote $K_{\GLF{\glnidx}{F_{\nu}}}=K_{\nu}\cap \GLF{\glnidx}{F_{\nu}}$ and for $\nu\notin S_0$, $K_{\GLF{\glnidx}{F_{\nu}}}^*=\kappa_{\nu}(K_{\GLF{\glnidx}{F_{\nu}}})$. We have the subgroup  $X^{\GL{\glnidx}}=\lmodulo{\mu_2^{\times}}{\prod'_{\nu}X_{\nu}^{\GL{\glnidx}}}$, where for almost all $\nu$, $X_{\nu}^{\GL{\glnidx}}=C(\cover{T}_{\GL{\glnidx}}(F_{\nu}),\cover{T}_{\GL{\glnidx}}(F_{\nu})\cap K_{\GLF{\glnidx}{F_{\nu}}}^*)$.
Also let $\Xi^{\GL{\glnidx}}=C(\cover{T}_{\GL{\glnidx}}(\Adele),\mathfrak{s}(T_{\GL{\glnidx}}(F)))$.
We alter between $V_{X^{\GL{\glnidx}}}(\sigma)$ and $V_{\Xi^{\GL{\glnidx}}}(\sigma)$, where
$f\in V_{X^{\GL{\glnidx}}}(\sigma)$ is mapped to
$f'\in V_{\Xi^{\GL{\glnidx}}}(\sigma)$ using a summation over $\lmodulo{(C_{\cover{T}_{\GL{\glnidx}}(\Adele)}\cap\mathfrak{s}(T_{\GL{\glnidx}}(F)))}
{T_{\GL{\glnidx}}(F)}$.

\begin{claim}\label{claim:global transitivity of induction}
Let $\chi$ be as above and denote by $V_{Q_{\glnidx}}(V_{\Xi^{\GL{\glnidx}}}(\chi^{(1)})\otimes \chi^{(2)})$ the subspace of right $\cover{K}$-smooth functions in $\Induced{\cover{Q}_{\glnidx}(\Adele)}{\cover{G}_{\glnidx}(\Adele)}{\Induced{\Xi^{\GL{\glnidx}}N_{\GL{\glnidx}}(\Adele)}{\CGLF{\glnidx}{\Adele}}
{\chi^{(1)}}\otimes\chi^{(2)}}$. Then
\begin{align*}
V_{\Xi}(\chi)\isomorphic V_{Q_{\glnidx}}(V_{\Xi^{\GL{\glnidx}}}(\chi^{(1)})\otimes \chi^{(2)}).
\end{align*}
\end{claim}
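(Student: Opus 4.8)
The plan is to prove the isomorphism $V_{\Xi}(\chi)\isomorphic V_{Q_{\glnidx}}(V_{\Xi^{\GL{\glnidx}}}(\chi^{(1)})\otimes \chi^{(2)})$ by transitivity of induction, being careful about the two subtleties specific to the cover: the different choices of abelian subgroups of the torus used globally (the $\Xi$-type subgroups containing $\mathfrak{s}$ of the $F$-points) versus the $X$-type subgroups used for local decomposition, and the fact that $\cover{M}_{\glnidx}(\Adele)$ is the direct product of $\CGLF{\glnidx}{\Adele}$ and $\cover{G}_0(\Adele)$ with amalgamated $\mu_2$ (so the tensor product $\chi^{(1)}\otimes\chi^{(2)}$ really makes sense, as already noted in the text and in \eqref{eq:cocycle convenient coordinates r=2}).

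First I would set up the formal transitivity of induction inside the cover. Writing $\cover{B}_{\glnidx}(\Adele)=\cover{T}_{\glnidx+1}(\Adele)\ltimes\mathfrak{s}(N_{\glnidx}(\Adele))$ and observing that $\cover{B}_{\glnidx}\subset\cover{Q}_{\glnidx}$ with $\cover{B}_{\glnidx}\cap\cover{M}_{\glnidx}=\cover{B}_{M_{\glnidx}}$ (the preimage of the Borel of $M_{\glnidx}$), ordinary transitivity of normalized parabolic induction gives $\Induced{\cover{B}_{\glnidx}(\Adele)}{\cover{G}_{\glnidx}(\Adele)}{\xi}\isomorphic\Induced{\cover{Q}_{\glnidx}(\Adele)}{\cover{G}_{\glnidx}(\Adele)}{\Induced{\cover{B}_{M_{\glnidx}}(\Adele)}{\cover{M}_{\glnidx}(\Adele)}{\xi}}$ for a genuine representation $\xi$ of $\cover{B}_{M_{\glnidx}}$, the modulus characters matching up because $\delta_{B_{\glnidx}}=\delta_{Q_{\glnidx}}\cdot\delta_{B_{M_{\glnidx}}}$ on $M_{\glnidx}$. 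The content is then to identify the inner induced representation $\Induced{\cover{B}_{M_{\glnidx}}(\Adele)}{\cover{M}_{\glnidx}(\Adele)}{\xi}$, taken with $\xi$ the extension of $\chi$ to $\Xi N_{\glnidx}(\Adele)$, with $\Induced{\Xi^{\GL{\glnidx}}N_{\GL{\glnidx}}(\Adele)}{\CGLF{\glnidx}{\Adele}}{\chi^{(1)}}\otimes\chi^{(2)}$. For this I would first decompose $\Xi$: using Claim~\ref{claim:convenient global subgroup}, $\Xi=\mathfrak{s}(T_{\glnidx+1}(F))C_{\cover{T}_{\glnidx+1}(\Adele)}$, and using the global analog of Claim~\ref{claim:center of the torus} together with the splitting of $\cover{G}_0(\Adele)$ and the commuting of $\CGLF{\glnidx}{\Adele}$ with $\cover{G}_0(\Adele)$ inside $\cover{M}_{\glnidx}(\Adele)$, one gets $\Xi=\lmodulo{\setof{(\zeta,\zeta)}{\zeta\in\mu_2}}{(\Xi^{\GL{\glnidx}}\times\cover{G}_0(\Adele))}$ in a way compatible with the decomposition $\chi=\chi^{(1)}\otimes\chi^{(2)}$. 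Since $N_{\glnidx}=N_{\GL{\glnidx}}\ltimes U_{\glnidx}$ and $U_{\glnidx}$ is already quotiented out on the $\cover{M}_{\glnidx}$ level, $\cover{B}_{M_{\glnidx}}(\Adele)=\Xi^{\GL{\glnidx}}N_{\GL{\glnidx}}(\Adele)\times\cover{G}_0(\Adele)$ up to amalgamated $\mu_2$, and the induced representation factors as the external tensor product $\Induced{\Xi^{\GL{\glnidx}}N_{\GL{\glnidx}}(\Adele)}{\CGLF{\glnidx}{\Adele}}{\chi^{(1)}}\otimes\chi^{(2)}$, exactly the inner layer of $V_{Q_{\glnidx}}(\cdots)$.

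Next I would match the function-space models: an element of $V_{\Xi}(\chi)$ is a $\cover{K}$-smooth function $f$ on $\cover{G}_{\glnidx}(\Adele)$ transforming on the left under $\Xi N_{\glnidx}(\Adele)$ by $\delta_{B_{\glnidx}}^{1/2}\chi$, while an element of $V_{Q_{\glnidx}}(V_{\Xi^{\GL{\glnidx}}}(\chi^{(1)})\otimes\chi^{(2)})$ is a $\cover{K}$-smooth function $g\mapsto \Phi(g)$ valued in $\Induced{\Xi^{\GL{\glnidx}}N_{\GL{\glnidx}}(\Adele)}{\CGLF{\glnidx}{\Adele}}{\chi^{(1)}}\otimes\chi^{(2)}$, transforming on the left under $\cover{Q}_{\glnidx}(\Adele)$ by $\delta_{Q_{\glnidx}}^{1/2}$ times the $\cover{M}_{\glnidx}$-action. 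The map $f\mapsto\Phi$ with $\Phi(g)(m)=\delta_{Q_{\glnidx}(\Adele)}^{-1/2}(m)f(mg)$ (for $g\in\cover{G}_{\glnidx}(\Adele)$, $m\in\cover{M}_{\glnidx}(\Adele)$) is the desired isomorphism; one checks it is well-defined using the factorization of $\delta_{B_{\glnidx}}$ and the decomposition of $\Xi$ above, and that it is bijective with inverse $\Phi\mapsto f$, $f(g)=\Phi(g)(1)$, the $\cover{K}$-smoothness being preserved because all the groups involved are open-mod-compact. Throughout I would use Claim~\ref{claim:good ol section splits G(F) and N(A)} (so $\mathfrak{s}$ splits $G_{\glnidx}(F)$ and $N_{\glnidx}(\Adele)$, making the transformation laws under $\mathfrak{s}(N_{\glnidx}(\Adele))$ unambiguous) and Remark~\ref{remark:defining sigma globally whenever possible} (so the cocycle restricted to $T_{\glnidx+1}(\Adele)$, $\GLF{\glnidx}{\Adele}$, $\cover{G}_0(\Adele)$ is the product of the block-compatible local cocycles, which is what makes the $\chi^{(1)}\otimes\chi^{(2)}$ factorization valid). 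The main obstacle I anticipate is purely bookkeeping rather than conceptual: verifying that the amalgamated-$\mu_2$ structures on $\Xi$, on $\cover{B}_{M_{\glnidx}}(\Adele)$, and on $\cover{M}_{\glnidx}(\Adele)$ all line up so that the external tensor product $\chi^{(1)}\otimes\chi^{(2)}$ is a genuine character in the correct sense and the induced representations on the two sides are literally the same space of functions — in particular checking that the summation/averaging over $T_{\glnidx+1}(F)$-type cosets implicit in the $\Xi$-model (cf. \eqref{eq:isomorphism X and Xi}) is compatible with the corresponding summation in the $\GL{\glnidx}$-model. This is exactly the kind of argument carried out in \cite{KP} (around Sections~II.1 and the proof of their transitivity statement), and I would cite that for the routine parts.
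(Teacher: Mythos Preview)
Your approach is correct but organized differently from the paper's. You work directly with the global subgroup $\Xi$: you decompose $\Xi=\Xi^{\GL{\glnidx}}\cdot\cover{G}_0(\Adele)$ (amalgamated $\mu_2$) using Claim~\ref{claim:convenient global subgroup} and the commuting of $\CGLF{\glnidx}{\Adele}$ with $\cover{G}_0(\Adele)$, then invoke formal transitivity of induction on the cover and write down the explicit function-space isomorphism $f\mapsto\Phi$. The paper instead passes through the $X$-model: it fixes explicit local maximal abelian subgroups $X_{\nu}$ (equal to $C(\cover{T}_{\glnidx+1}(F_{\nu}),\cover{T}_{\glnidx+1}(F_{\nu})\cap K_{\nu}^*)$ for $\nu\notin S_0$ and $\cover{T}_{\glnidx+1}(F_{\nu})^{\mathm}$ otherwise), checks that each $X_{\nu}$ factors as $X_{\nu}^{\GL{\glnidx}}\times\cover{G}_0(F_{\nu})$ via \eqref{eq:maximal abelian subgroup of the torus split into GLn and G0}--\eqref{eq:unramified maximal abelian subgroup in an unramified place}, obtains the local isomorphism \eqref{eq:local isomorphism}, takes the restricted tensor product to get $V_X(\chi)\isomorphic V_{Q_{\glnidx}}(V_{X^{\GL{\glnidx}}}(\chi^{(1)})\otimes\chi^{(2)})$, and only then transfers to the $\Xi$-side by noting that the averaging maps \eqref{eq:isomorphism X and Xi} for $G_{\glnidx}$ and $\GL{\glnidx}$ run over isomorphic coset spaces $\lmodulo{(C_{\cover{T}_{\glnidx+1}(\Adele)}\cap\mathfrak{s}(T_{\glnidx+1}(F)))}{T_{\glnidx+1}(F)}\isomorphic\lmodulo{(C_{\cover{T}_{\GL{\glnidx}}(\Adele)}\cap\mathfrak{s}(T_{\GL{\glnidx}}(F)))}{T_{\GL{\glnidx}}(F)}$. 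Your route is more direct and avoids the auxiliary choice of local subgroups; the paper's route has the advantage of exhibiting the local decomposition explicitly, which is what one actually uses when manipulating decomposable data and intertwining operators later (e.g.\ in the proof of Theorem~\ref{theorem:the constant term}). The ``bookkeeping obstacle'' you flag---compatibility of the averaging over $T_{\glnidx+1}(F)$ with that over $T_{\GL{\glnidx}}(F)$---is exactly the coset-space identification the paper records as its final line.
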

\begin{proof}
According to Section~\ref{subsection:Subgroups of the torus} (and \cite{KP} p.~59),
\begin{align}\label{eq:center of the torus split into GLn and G0}
&C_{\cover{T}_{\glnidx+1}(F_{\nu})}=\lmodulo{\setof{(\zeta,\zeta)}{\zeta\in\mu_2}}{(C_{\cover{T}_{\GL{\glnidx}}(F_{\nu})}
\times\cover{G}_0(F_{\nu}))},\\\label{eq:maximal abelian subgroup of the torus split into GLn and G0}
&T_{\glnidx+1}(F_{\nu})^{\mathm}=
\lmodulo{\setof{(\zeta,\zeta)}{\zeta\in\mu_2}}{(T_{\GL{\glnidx}}(F_{\nu})^{\mathm}\times\cover{G}_0(F_{\nu}))},
\end{align}
and for $\nu\notin S_0$,
\begin{align}\label{eq:unramified maximal abelian subgroup in an unramified place}
&C(\cover{T}_{\glnidx+1}(F_{\nu}),\cover{T}_{\glnidx+1}(F_{\nu})\cap K_{\nu}^*)\\\nonumber
&=\lmodulo{\setof{(\zeta,\zeta)}{\zeta\in\mu_2}}{(C(\cover{T}_{\GL{\glnidx}}(F_{\nu}),\cover{T}_{\GL{\glnidx}}(F_{\nu})\cap K_{\GLF{\glnidx}{F_{\nu}}}^*)\times \cover{G}_0(F_{\nu}))}.
\end{align}
In general let $\sigma$ be a genuine character of $C_{\cover{T}_{\glnidx+1}(F_{\nu})}$. Using \eqref{eq:center of the torus split into GLn and G0} we can write $\sigma_{\nu}=\sigma_{\nu}^{(1)}\otimes\sigma_{\nu}^{(2)}$ for unique genuine characters
$\sigma_{\nu}^{(1)}$ and $\sigma_{\nu}^{(2)}$ of $C_{\cover{T}_{\GL{\glnidx}}(F_{\nu})}$ and $\cover{G}_0(F_{\nu})$. Then
\begin{align}\label{eq:local isomorphism}
\induced{\cover{B}_{\glnidx}(F_{\nu})}{\cover{G}_{\glnidx}(F_{\nu})}{\rho_{\nu}(\sigma_{\nu})}
\isomorphic\induced{\cover{Q}_{\glnidx}(F_{\nu})}{\cover{G}_{\glnidx}(F_{\nu})}{
\induced{\cover{B}_{\GL{\glnidx}}(F_{\nu})}{\CGLF{\glnidx}{F_{\nu}}}{\rho_{\nu}(\sigma_{\nu}^{(1)})}\otimes
\sigma_{\nu}^{(2)}}.
\end{align}

Assume
\begin{align*}
X_{\nu}=\begin{cases}
C(\cover{T}_{\glnidx+1}(F_{\nu}),\cover{T}_{\glnidx+1}(F_{\nu})\cap K_{\nu}^*)&\nu\notin S_0,\\
\cover{T}_{\glnidx+1}(F_{\nu})^{\mathm}&\text{otherwise.}
\end{cases}
\end{align*}
Define $X^{\GL{\glnidx}}=\lmodulo{\mu_2^{\times}}{\prod'_{\nu}X_{\nu}^{\GL{\glnidx}}}$ where
\begin{align*}
X_{\nu}^{\GL{\glnidx}}=\begin{cases}
C(\cover{T}_{\GL{\glnidx}}(F_{\nu}),\cover{T}_{\GL{\glnidx}}(F_{\nu})\cap K_{\GLF{\glnidx}{F_{\nu}}}^*)&\nu\notin S_0,\\
\cover{T}_{\GL{\glnidx}}(F_{\nu})^{\mathm}&\text{otherwise.}
\end{cases}
\end{align*}
Now \eqref{eq:maximal abelian subgroup of the torus split into GLn and G0}-\eqref{eq:local isomorphism} yield
\begin{align*}
\Induced{XN_{\glnidx}(\Adele)}{\cover{G}_{\glnidx}(\Adele)}{\chi}\isomorphic
\Induced{\cover{Q}_{\glnidx}(\Adele)}{\cover{G}_{\glnidx}(\Adele)}{\Induced{X^{\GL{\glnidx}}N_{\GL{\glnidx}}(\Adele)}{\CGLF{\glnidx}{\Adele}}
{\chi^{(1)}}\otimes\chi^{(2)}}
\end{align*}
and the isomorphism between the spaces follows because
\[
\lmodulo{(C_{\cover{T}_{\glnidx+1}(\Adele)}\cap\mathfrak{s}(T_{\glnidx+1}(F)))}{T_{\glnidx+1}(F)}\isomorphic
\lmodulo{(C_{\cover{T}_{\GL{\glnidx}}(\Adele)}\cap\mathfrak{s}(T_{\GL{\glnidx}}(F)))}{T_{\GL{\glnidx}}(F)}.\qedhere
\]
\end{proof}

\subsection{Global exceptional (small) representations}\label{subsection:global small representations}

\subsubsection{Definition and basic properties}\label{subsubsection:global defs and basic properties}
Let $\chi$ be a genuine character of $C_{\cover{T}_{\glnidx+1}(\Adele)}$, which is trivial on $C_{\cover{T}_{\glnidx+1}(\Adele)}\cap\mathfrak{s}(T_{\glnidx+1}(F))$.
Analogously to the local case, call $\chi$ exceptional if for all
$\alpha\in\Delta_{G_{\glnidx}}$, $\chi({{\alpha^{\vee}}^*(x^{\mathfrak{l}(\alpha)})})=|x|$ for all $x\in \Adele^*$. 
We have $\chi=\otimes'\chi_{\nu}$ where $\chi_{\nu}$ is a local exceptional character of $C_{\cover{T}_{\glnidx+1}(F_{\nu})}$.

Put
$\Res_{\underline{s}=\underline{0}}=\lim_{\underline{s}\rightarrow\underline{0}}\prod_{i=2}^{\glnidx}(s_i-s_{i+1})s_{\glnidx+1}$.
For $f\in V_X(\chi)$, define
\begin{align*}
\theta_f(g)=\Res_{\underline{s}=\underline{0}}E_{B_{\glnidx}}^{N_{\glnidx}}(g;f,\underline{s})
,\qquad g\in\cover{G}_{\glnidx}(\Adele).
\end{align*}
\begin{proposition}\label{proposition:global small rep}
Let $\chi$ be an exceptional character and let $f\in V_X(\chi)$.
\begin{enumerate}[leftmargin=*]
\item The residue $\Res_{\underline{s}=\underline{0}}M'(w_0,\chi_{\underline{s}})f'_{\underline{s}}(g)$ is finite and nonzero.
If $w\ne w_0$, $\Res_{\underline{s}=\underline{0}}M'(w,\chi_{\underline{s}})f'_{\underline{s}}=0$.
\item The constant term $\theta_f^{N_{\glnidx}}$ of $\theta_f$ along $N_{\glnidx}$ is the image of $M'(w_0,\chi)$:
\begin{align*}
\int_{\lmodulo{N_{\glnidx}(F)}{N_{\glnidx}(\Adele)}}\theta_f(\mathfrak{s}(u)g)du=M'(w_0,\chi)f'(g).
\end{align*}
Here we normalize the measure by requiring the volume of $\lmodulo{F}{\Adele}$ to be $1$.
\item The representation $\Theta_{G_{\glnidx},\chi}$ spanned by 
$\theta_f$ as $f$ varies in $V_X(\chi)$ is irreducible, automorphic, genuine and belongs to
$L^2(\lmodulo{\mathfrak{s}(G_{\glnidx}(F))}{\cover{G}_{\glnidx}(\Adele)})$. It is isomorphic to $\otimes'_{\nu}\Theta_{G_{\glnidx},\chi_{\nu}}$.
\end{enumerate}
\end{proposition}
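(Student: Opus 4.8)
\textbf{Proof proposal for Proposition~\ref{proposition:global small rep}.}

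The plan is to follow the blueprint of \cite{KP} (Proposition~II.1.2 and Theorem~II.2.9) and its adaptation in \cite{BFG} (Sections~2 and~4), reducing every global assertion to the local periodicity theorem (Proposition~\ref{proposition:periodicity theorem}), the Gindikin--Karpelevich computation (Claim~\ref{claim:props of intertwining and GK formula} and Claim~\ref{claim:values of exceptional unramified character on a alpha}), and the factorization of the constant term $E_{B_{\glnidx}}^{N_{\glnidx}}=\sum_{\mathbf{w}\in W_{\glnidx}}M'(w,\chi_{\underline{s}})f'_{\underline{s}}$ recorded in Section~\ref{subsubsection:global reps}. First I would address part~(1): since $M'(w_0,\chi_{\underline{s}})$ factors through a product of rank-one intertwining operators along a reduced decomposition of $w_0$, and at the unramified places the normalized spherical vector is sent to the scalar $c(\mathbf{w},\chi_{\underline{s},\nu})$ of Claim~\ref{claim:props of intertwining and GK formula}, the global operator is, up to holomorphic and non-vanishing factors, a product of completed Dedekind zeta functions $\xi_F(\cdot)$ evaluated along the line determined by the exceptional condition $\chi({\alpha^{\vee}}^*(x^{\mathfrak{l}(\alpha)}))=|x|$. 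Exactly as in \cite{KP} one checks that for $\mathbf{w}=\mathbf{w_0}$ the order of the pole of this product at $\underline{s}=\underline{0}$ matches the order of the differential operator $\Res_{\underline{s}=\underline{0}}$, so the residue is finite and nonzero; for $\mathbf{w}\ne\mathbf{w_0}$ the pole is of strictly lower order (fewer positive roots are sent to negative ones) and the residue vanishes. This uses the local computation of $c(\mathbf{w_0},\chi)$ in Claim~\ref{claim:values of exceptional unramified character on a alpha} to identify precisely which $\xi_F$-factors occur.

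Part~(2) is then immediate: applying $\Res_{\underline{s}=\underline{0}}$ to $E_{B_{\glnidx}}^{N_{\glnidx}}(g;f,\underline{s})=\sum_{\mathbf{w}}M'(w,\chi_{\underline{s}})f'_{\underline{s}}(g)$ and invoking part~(1) kills every term except $\mathbf{w}=\mathbf{w_0}$, so $\theta_f^{N_{\glnidx}}(g)=\Res_{\underline{s}=\underline{0}}M'(w_0,\chi_{\underline{s}})f'_{\underline{s}}(g)=M'(w_0,\chi)f'(g)$, the last equality being the statement that the residue is taken along the line where $M'(w_0,\cdot)$ has its leading term; the measure normalization $\mathrm{vol}(\lmodulo{F}{\Adele})=1$ fixes the constant. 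For part~(3), the image of $M'(w_0,\chi)$ spans a genuine automorphic representation $\Theta_{G_{\glnidx},\chi}$; its local components are, by the factorization of the intertwining operator and Proposition~\ref{proposition:periodicity theorem}, exactly the local $\Theta_{G_{\glnidx},\chi_{\nu}}$ (the image of the local $M(w_0,\chi_{\nu})$ on $V(\chi_{\nu})$), whence $\Theta_{G_{\glnidx},\chi}\isomorphic\otimes'_{\nu}\Theta_{G_{\glnidx},\chi_{\nu}}$. Irreducibility follows from local irreducibility at every place together with the fact that a restricted tensor product of irreducibles is irreducible; square-integrability follows from a Langlands-type criterion on the exponents of the constant term $\theta_f^{N_{\glnidx}}=M'(w_0,\chi)f'$, namely that $\rconj{\mathbf{w_0}}\chi$ is, after the residue, strictly dominant in the appropriate negative sense --- this is the standard argument of \cite{MW2} and \cite{KP} applied to the exceptional character.

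The main obstacle I expect is part~(1), specifically the bookkeeping that identifies the exact order of the pole of $\prod M'$ at $\underline{s}=\underline{0}$ and shows it equals the order of $\Res_{\underline{s}=\underline{0}}$, while simultaneously verifying the residue is \emph{nonzero}. This requires careful tracking of which rank-one factors contribute a genuine pole of $\xi_F$ at $s=1$ versus which contribute a zero or a finite nonzero value, along the one-dimensional family of characters $\chi_{\underline{s}}$ degenerating to an exceptional $\chi$; the presence of both long and short roots in $B_{\glnidx}$ (reflected in $\mathfrak{l}(\alpha)$ and in the two types of factors $\chi(a_{\alpha})$ computed in Claim~\ref{claim:values of exceptional unramified character on a alpha}) means the combinatorics is genuinely more delicate than in the $\GL{\glnidx}$ case of \cite{KP}, though structurally identical to the $SO_{2\glnidx+1}$ case of \cite{BFG}. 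A secondary technical point is ensuring that the isomorphism $V_X(\chi)\isomorphic V_{\Xi}(\chi)$ of~\eqref{eq:isomorphism X and Xi} is compatible with the residue and the intertwining operators, so that the local--global passage $\Theta_{G_{\glnidx},\chi}\isomorphic\otimes'_{\nu}\Theta_{G_{\glnidx},\chi_{\nu}}$ is legitimate; this is routine but must be stated.
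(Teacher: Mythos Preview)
Your proposal is correct and follows essentially the same route as the paper: factorize the intertwining operator into local pieces, use that each local operator is holomorphic near $\underline{s}=\underline{0}$ (since $\chi_{\underline{s},\nu}$ lies in the positive Weyl chamber), and locate all poles in the infinite unramified product $\prod_{\nu\notin S}c(\mathbf{w},\chi_{\underline{s},\nu})$, which by Claim~\ref{claim:values of exceptional unramified character on a alpha} is a ratio of partial Dedekind zeta functions with exactly $\glnidx$ simple poles for $\mathbf{w}=\mathbf{w_0}$ and strictly fewer for $\mathbf{w}\ne\mathbf{w_0}$. One small correction: the paper works with the \emph{partial} zeta function $\zeta=\zeta^S$ rather than the completed $\xi_F$, since the finitely many places in $S$ are already accounted for by the holomorphic local operators; this does not affect your argument but is worth stating precisely.
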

\begin{proof}
The proof follows exactly as in 
\cite{BFG} (Proposition~3.1 and Theorem~3.2) and 
\cite{KP} (Proposition~II.1.2 and
Theorems~II.1.4 and II.2.1), we briefly reproduce the argument. The assertions follow from the calculation of the poles of $M'(w,\chi_{\underline{s}})f'_{\underline{s}}(g)$. When we take a pure tensor $f$, $M(w,\chi_{\underline{s}})f_{\underline{s}}=(\otimes_{\nu\in S}\varphi_{\nu})\otimes(\otimes'_{\nu\notin S}\varphi_{\nu})$ where $S\supset S_0$ is a finite set of places depending on $\chi$ and $f$.
The infinite unramified part is $\otimes'_{\nu\notin S}\varphi_{\nu}$. For any fixed place $\nu$, the local intertwining operator is holomorphic since $\chi_{\underline{s},\nu}$ belongs to the positive Weyl chamber when $\underline{s}$ tends to $\underline{0}$. Thus the poles are located in the infinite unramified part, they are determined by the product $\prod_{\nu\notin S}c(\mathbf{w},\chi_{\underline{s},\nu})$.
If $\mathbf{w}=\mathbf{w_0}$, according to Claim~\ref{claim:values of exceptional unramified character on a alpha} the poles belong to
\begin{align*}
\frac{\prod_{2\leq i<j\leq\glnidx+1}\zeta(j-i+s_j-s_i)\zeta(-j-i+2(\glnidx+2)+s_i+s_j)\prod_{i=2}^{\glnidx+1}\zeta(\glnidx+2-i+s_i)}
{\prod_{2\leq i<j\leq\glnidx+1}\zeta(1+j-i+s_j-s_i)\zeta(1-j-i+2(\glnidx+2)+s_i+s_j)\prod_{i=2}^{\glnidx+1}\zeta(\glnidx+3-i+s_i)}.
\end{align*}
Here $\zeta$ denotes the partial Dedekind zeta function of $F$ with respect to $S$.
Recall that $\zeta$ is holomorphic except at $s=1$, where it has a simple pole, and is nonzero on the right half-plane $\Re(s)\geq1$. When $\underline{s}\rightarrow\underline{0}$, precisely $\glnidx$ zeta functions contribute a pole. If $\mathbf{w}\ne\mathbf{w_0}$, one of these functions will be omitted. Finally, the image $M'(w_0,\chi)V_{\Xi}(\chi)$ is isomorphic
to $\otimes'_{\nu}M_{\nu}(w_0,\chi_{\nu})V(\chi_{\nu})$ which is $\otimes'_{\nu}\Theta_{G_{\glnidx},\chi_{\nu}}$.
\end{proof}
The representation $\Theta_{G_{\glnidx},\chi}$ is the global exceptional representation.

\subsubsection{Explicit construction of exceptional characters}\label{subsubsection:Explicit construction of an exceptional representation global}
Let $\eta$ be a Hecke character, i.e., a character of $\lmodulo{F^*}{\Adele^*}$,
and let $\psi$ be a nontrivial character of $\lmodulo{F}{\Adele}$. Let $\gamma_{\psi}$  be the corresponding global Weil factor. One can mimic the local construction of Section~\ref{subsubsection:Explicit construction of an exceptional representation} and construct a global exceptional character. Define for
$t=\prod_{i=1}^{\glnidx}\eta_i^{\vee}(a_i^2)\beta_1^{\vee}(t_1)\in T_{\glnidx+1}(\Adele)^2$, $d\in(\Adele^*)^{2/\gcd(2,\glnidx+1)}$, $z=\prod_{i=1}^{\glnidx}\eta_i^{\vee}(d)$ and $\zeta\in\mu_2$,
\begin{align}\label{eq:global form of exceptional character}
\chi(\zeta\mathfrak{s}(t)\mathfrak{s}(z))=\zeta
\prod_{i=1}^{\glnidx}|a_i|^{\glnidx-i+1}\eta(\Upsilon(t))|d|^{\glnidx(\glnidx+1)/4}\eta(d)^{\glnidx}
\gamma_{\psi}^{\lceil\glnidx/2\rceil}(d).
\end{align}
This is a global exceptional character and it decomposes as $\otimes'\chi_{\nu}$, where
$\chi_{\nu}$ is given by \eqref{eq:explicit exceptional character}. 
For almost all $\nu$, $\chi_{\nu}$ is unramified.

If $\glnidx$ is odd, in contrast with the local case, this description does not exhaust all exceptional characters, as one may replace $\gamma_{\psi}$ with a more general class of functions. However, $\chi|_{\mathfrak{s}(T_{\glnidx+1}(\Adele)^2)}$ is uniquely described by $\eta$ and \eqref{eq:global form of exceptional character}.

If we write $\chi=\chi^{(1)}\otimes \chi^{(2)}$ in the notation of
Section~\ref{subsubsection:global induction transitivity},
\begin{align}\label{eq:global form of exceptional character GLn part}
&\chi^{(1)}(\zeta\mathfrak{s}(\prod_{i=1}^{\glnidx}\eta_i^{\vee}(a_i^2))\mathfrak{s}(\prod_{i=1}^{\glnidx}\eta_i^{\vee}(d)))=\zeta
\prod_{i=1}^{\glnidx}|a_i|^{\glnidx-i+1}\eta(d^{\glnidx}\prod_{i=1}^{\glnidx}a_i^2)|d|^{\glnidx(\glnidx+1)/4}
\gamma_{\psi}^{\lceil\glnidx/2\rceil}(d),\\\nonumber
&\chi^{(2)}(\zeta\mathfrak{s}(\beta_1^{\vee}(t_1)))=\zeta\eta(t^{-2}).
\end{align}


\subsubsection{The constant term}\label{subsubsection:the constant term}
Let $\chi$ be an exceptional character and let $\theta$ be an automorphic form in the space of
$\Theta_{G_{\glnidx},\chi}$. For any $1\leq k\leq\glnidx$ define the constant term of $\theta$ along $U_{k}$ by
\begin{align}\label{int:the constant term of theta along a unipotent subgroup}
\theta^{U_{k}}(g)=\int_{\lmodulo{U_{k}(F)}{U_{k}(\Adele)}}
\theta(\mathfrak{s}(u)g)du,\qquad g\in\cover{G}_{\glnidx}(\Adele).
\end{align}
%
Let $\chi$ be an exceptional character. We prove Theorem~\ref{theorem:the constant term}. Namely, the function $m\mapsto\theta^{U_{\glnidx}}(m)$ ($m\in\cover{M}_{\glnidx}(\Adele)$) belongs to the space of $\Theta_{\GL{\glnidx},\absdet{}^{-1/2}\chi^{(1)}}\otimes\Theta_{G_{0},\chi^{(2)}}$, where $\chi^{(1)}$ and $\chi^{(2)}$ are exceptional characters such that $\chi=\chi^{(1)}\otimes\chi^{(2)}$.
\begin{proof}[Proof of Theorem~\ref{theorem:the constant term}]
Write $\theta=\theta_{f}$ for some $f\in V_{X}(\chi)$, $\theta_f(g)=\Res_{\underline{s}=\underline{0}}E_{B_{\glnidx}}^{N_{\glnidx}}(g;f,\underline{s})$
(see Section~\ref{subsubsection:global defs and basic properties}). Let $\mathcal{W}\subset W_{\glnidx}$ be a set satisfying
$G_{\glnidx}=\coprod_{\mathbf{w}\in\mathcal{W}}B_{\glnidx}w^{-1}Q_{\glnidx}$. For $X<Q_{\glnidx}$ and $\mathbf{w}\in\mathcal{W}$, set $X^w=\rconj{w}B_{\glnidx}
\cap X$. 
According to M{\oe}glin and Waldspurger \cite{MW2} (Section~II.1.7),
\begin{align*}
\int_{\lmodulo{U_{\glnidx}(F)}{U_{\glnidx}(\Adele)}}\sum_{\delta\in\lmodulo{B_{\glnidx}(F)}{G_{\glnidx}(F)}}
f'_{\underline{s}}(\mathfrak{s}(\delta)\mathfrak{s}(u)g)du=
\sum_{\mathbf{w}\in\mathcal{W}}\ \sum_{m\in\lmodulo{M_{\glnidx}^w(F)}{M_{\glnidx}(F)}}M'(w,\chi_{\underline{s}})f'_{\underline{s}}(\mathfrak{s}(m)g).
\end{align*}
Here $M'(w,\chi_{\underline{s}})$ is defined by meromorphic continuation of an integral over $\lmodulo{U_{\glnidx}^w(F)}{U_{\glnidx}(\Adele)}$. As we argue below, the domain of integration can be changed and $M'(w,\chi_{\underline{s}})$ is actually given by \eqref{eq:global intertwining def}.

We describe a choice of the set $\mathcal{W}$. There are $2^{\glnidx}$ representatives to consider. For $\underline{b}\in\{0,1\}^{\glnidx}$, set $\mathbf{w}_{\underline{b}}=\prod_{i=1}^{\glnidx}\mathbf{w_{\epsilon_{i+1}}}^{b_i}$ (a product of commuting reflections). The elements $\{\mathbf{w}_{\underline{b}}\}_{\underline{b}}$ are a set of representatives, but
$w_{\underline{b}}N_{\glnidx}w_{\underline{b}}^{-1}$ might not contain $N_{\GL{\glnidx}}$. It will be convenient to select elements $\mathbf{w}$ which satisfy
\begin{align}\label{containment:convenient representative condition}
N_{\GL{\glnidx}}<\rconj{w}N_{\glnidx}
\end{align}
(see below).
Write $\underline{b}=(1^{m_1}0^{l_1}\ldots1^{m_k}0^{l_k})$, where $m_i,l_i\geq0$,
$m=\sum_{i=1}^km_i$, $l=\sum_{i=1}^kl_i$, $m+l=\glnidx$ and we assume $k$ is minimal such that $\underline{b}$ can be written in this form (e.g., if $m_i,m_{i+1}>0$, $l_i>0$). For any $a,b,c\geq0$ such that $a+b+c\leq\glnidx$, put
\begin{align*}
&\omega_{a,b,c}=diag(I_a,\left(\begin{array}{cc}0&I_{\glnidx-a-b-c}\\J_b&0\end{array}\right),I_c)\in\GLF{\glnidx}{F},\\
&\alpha_{\underline{b},j}=\omega_{\sum_{i=1}^{j-1}{l_i},m_j,\sum_{i=1}^{j-1}{m_i}}\qquad \forall 1\leq j\leq k
\end{align*}
(if $m_j=0$, $\alpha_{\underline{b},j}=I_{\glnidx}$), $\mathbf{z}_{\underline{b}}=\alpha_{\underline{b},k}\ldots\alpha_{\underline{b},1}$.  Our set of representatives is
$\mathcal{W}=\setof{\mathbf{z}_{\underline{b}}\mathbf{w}_{\underline{b}}}{\underline{b}\in\{0,1\}^{\glnidx}}$.
The elements $\mathbf{w}\in \mathcal{W}$ satisfy \eqref{containment:convenient representative condition}.

Since $U_{\glnidx}^w<N_{\glnidx}$, we may write the $du$-integration of $M'(w,\chi_{\underline{s}})$ over
$\lmodulo{U_{\glnidx}^w(\Adele)}{U_{\glnidx}(\Adele)}$. The condition \eqref{containment:convenient representative condition} implies $\rconj{w}N_{\glnidx}\cap N_{\glnidx}=N_{\GL{\glnidx}}(\rconj{w}N_{\glnidx}\cap U_{\glnidx})$,
whence
$\lmodulo{U_{\glnidx}^w}{U_{\glnidx}}=
\lmodulo{(\rconj{w}N_{\glnidx}\cap N_{\glnidx})}{N_{\glnidx}}$.
Therefore $M'(w,\chi_{\underline{s}})f'_{\underline{s}}\in V_{\Xi}(\rconj{\mathbf{w}}\chi)$.

We use the notation of Section~\ref{subsubsection:global induction transitivity}. Write
$\rconj{\mathbf{w}}\chi=\chi_{\mathbf{w}}^{(1)}\otimes\chi_{\mathbf{w}}^{(2)}$, where $\chi_{\mathbf{w}}^{(1)}$ and $\chi_{\mathbf{w}}^{(2)}$ are genuine characters of $C_{\cover{T}_{\GL{\glnidx}}(\Adele)}$ and $\cover{G}_0(\Adele)$. According to Claim~\ref{claim:global transitivity of induction} we can regard $M'(w,\chi_{\underline{s}})f'_{\underline{s}}$ as an element of $V_{Q_{\glnidx}}(V_{\Xi^{\GL{\glnidx}}}(\chi_{\mathbf{w}}^{(1)})_{\underline{z}}\otimes (\chi_{\mathbf{w}}^{(2)})_{s_{\glnidx+1}})$,
where $\underline{z}\in\C^{\glnidx}$ satisfies $\mathe_{\underline{z}}=\rconj{\mathbf{w}}\mathe_{\underline{s}}|_{T_{\GL{\glnidx}}(\Adele)}$. Put $\underline{s}^{(\glnidx)}=(s_1,\ldots,s_{\glnidx})$.

The mapping
\begin{align*}
\{f,w,g,s_{\glnidx+1}\}(\underline{s}^{(\glnidx)},b)=M'(w,\chi_{\underline{s}})f'_{\underline{s}}(bg)
\qquad (b\in \CGLF{\glnidx}{\Adele})
\end{align*}
belongs to $V_{\Xi^{\GL{\glnidx}}}(\absdet{}^{\glnidx/2}(\chi_{\mathbf{w}}^{(1)})_{\underline{z}})$ (we used $\delta_{Q_{\glnidx}(\Adele)}^{1/2}=\absdet{}^{\glnidx/2}$).

By \eqref{containment:convenient representative condition}, $\GLF{\glnidx}{F}^w=B_{\GL{\glnidx}}(F)$ hence $\lmodulo{M_{\glnidx}^w(F)}{M_{\glnidx}(F)}\isomorphic
\lmodulo{\GLF{\glnidx}{F}^w}{\GLF{\glnidx}{F}}$ and the summation is an Eisenstein series with respect to $B_{\GL{\glnidx}}$, applied to $\{f,w,g,s_{\glnidx+1}\}$.
Thus
\begin{align*}
\theta_f(g)=\sum_{\mathbf{w}\in\mathcal{W}}\Res_{\underline{s}=\underline{0}}
E_{B_{\GL{\glnidx}}}(I_{\glnidx};\{f,w,g,s_{\glnidx+1}\},\underline{s}^{(\glnidx)}).
\end{align*}

Fix $\mathbf{w}=\mathbf{z}_{\underline{b}}\mathbf{w}_{\underline{b}}$ and let $m_i,l_i,m,l$ be as above.
Let $\mathcal{U}_{\alpha_{\glnidx+1}}$ be the unipotent subgroup of $N_{\glnidx}(\Adele)$ generated by $n_{\alpha_{\glnidx+1}}$. Then $\mathcal{U}_{\alpha_{\glnidx+1}}<(\rconj{w}N_{\glnidx}\cap N_{\glnidx})(\Adele)$ if and only if $m=0$.
\begin{claim}\label{claim:constant term pole of intertwiner}
The function $M'(w,\chi_{\underline{s}})f'_{\underline{s}}$ is holomorphic at $\underline{s}$ except perhaps for a simple pole at $s_{\glnidx+1}=0$. The pole exists for some data $f'$ if and only if $m>0$.
\end{claim}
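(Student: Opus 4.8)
\textbf{Proof plan for Claim~\ref{claim:constant term pole of intertwiner}.}
The plan is to factor the global intertwining operator $M'(w,\chi_{\underline{s}})$ into local operators and to track the poles and zeros through the Gindikin--Karpelevich formula of Claim~\ref{claim:props of intertwining and GK formula}, exactly as in the analysis of $\theta_f$ in the proof of Proposition~\ref{proposition:global small rep}. First I would write $\mathbf{w}=\mathbf{z}_{\underline{b}}\mathbf{w}_{\underline{b}}$ and observe that $\mathbf{z}_{\underline{b}}$ is a Weyl element of $\GL{\glnidx}$: since $\mathbf{z}_{\underline{b}}$ sends $\Sigma_{G_{\glnidx}}^+\cap\Sigma_{\GL{\glnidx}}$ permuting it, the associated intertwining operator over $N_{\GL{\glnidx}}$ contributes only a ratio of partial zeta functions of the form $\zeta(a+\cdots)/\zeta(a+1+\cdots)$ with $a\geq1$, which is holomorphic and nonzero as $\underline{s}\to\underline{0}$. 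So the poles come entirely from $M'(w_{\underline{b}},\chi_{\underline{s}})$, which factors as a product of the rank-one operators $M(w_{\epsilon_{i+1}},\cdot)$ over the indices $i$ with $b_i=1$, by the cocycle relation for intertwining operators in Claim~\ref{claim:props of intertwining and GK formula} (together with \eqref{eq:props of BLS section separate W if length is additive} so that lengths add). At each fixed place the local operator is holomorphic near $\underline{s}=\underline{0}$ because $\chi_{\underline{s},\nu}$ lies in the positive Weyl chamber there, so the poles are located in the infinite unramified product $\prod_{\nu\notin S}c(\mathbf{w}_{\underline{b}},\chi_{\underline{s},\nu})$.

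Next I would evaluate that unramified product using Claim~\ref{claim:values of exceptional unramified character on a alpha}, or rather the intermediate computation of $\chi(a_{\alpha})$ given in its proof. For $\alpha=\epsilon_{i}$ (the roots sent to negative by some $\mathbf{w_{\epsilon_{j+1}}}$ with $b_j=1$) one gets $\chi(a_{\alpha})=q^{-\glnidx-2+i}$, which after multiplying the local factors $c(\mathbf{w}_{\epsilon_{i+1}},\cdot)$ over $\nu\notin S$ produces a ratio $\zeta(\glnidx+2-i+s_{i})/\zeta(\glnidx+3-i+s_{i})$; for the roots $\epsilon_{i}\pm\epsilon_{j}$ that are also made negative one gets the analogous ratios with numerators $\zeta(j-i+s_j-s_i)$ and $\zeta(-j-i+2(\glnidx+2)+s_i+s_j)$. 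Among these numerator zeta functions, precisely those whose argument tends to $1$ as $\underline{s}\to\underline{0}$ produce a pole; inspection shows that the single factor $\zeta(s_{\glnidx+1}+1)$ (coming from the root $\epsilon_{\glnidx+1}$, i.e.\ $i=\glnidx+1$, via $\chi(a_{\epsilon_{\glnidx+1}})=q^{-1}$) is the only one whose argument is $1+s_{\glnidx+1}$ and hence contributes a pole in the variable $s_{\glnidx+1}$ after the residue $\Res_{\underline{s}=\underline{0}}$ in the other variables is taken. The root $\epsilon_{\glnidx+1}$ is made negative by $\mathbf{w}_{\underline{b}}$ precisely when $b_{\glnidx}=1$ appears among the reflections --- more precisely, when at least one $\mathbf{w}_{\epsilon_{i+1}}$ with $i$ large enough occurs, which is exactly the condition $m>0$ in the notation of the claim (when $m=0$, $\underline{b}=(0^{l_1}1^{m_1}\cdots)$ has no leading block of $1$'s and $\mathcal{U}_{\alpha_{\glnidx+1}}<(\rconj{w}N_{\glnidx}\cap N_{\glnidx})(\Adele)$, so this root is not integrated over and no pole arises). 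This gives both the holomorphy away from $s_{\glnidx+1}=0$, the simplicity of the pole, and the ``if and only if $m>0$'' dichotomy.

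The main obstacle I anticipate is the bookkeeping in the second step: one must pin down exactly which positive roots $\alpha$ satisfy $\mathbf{w}_{\underline{b}}\alpha<0$ as a function of the pattern $\underline{b}$, and then verify that among the corresponding zeta numerators only one has argument $\to1$, while all others have argument $\geq2$ (hence are holomorphic and nonzero) or argument that, combined with a denominator, stays away from the poles of $\zeta$. This is the same combinatorial check that underlies the proof of Proposition~\ref{proposition:global small rep}, where it was shown that for $\mathbf{w}\neq\mathbf{w_0}$ one of the $\glnidx$ potential pole-contributing zeta functions drops out; here the roles are reversed and we need that for $\mathbf{w}_{\underline{b}}$ with $m>0$ exactly one pole (in $s_{\glnidx+1}$) survives. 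Once the root combinatorics is settled, the holomorphy of the $\mathbf{z}_{\underline{b}}$-part, the local holomorphy, and the explicit zeta ratios from Claim~\ref{claim:values of exceptional unramified character on a alpha} assemble into the statement with no further difficulty.
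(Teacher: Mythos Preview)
Your overall strategy—local holomorphy at the ramified places plus analysis of the unramified Gindikin--Karpelevich product, exactly as in Proposition~\ref{proposition:global small rep}—is the paper's strategy as well. The paper, however, does \emph{not} factor $M'(w,\chi_{\underline{s}})$ through $\mathbf{z}_{\underline{b}}$ and $\mathbf{w}_{\underline{b}}$. Instead it uses directly that the representatives were chosen to satisfy $N_{\GL{\glnidx}}<\rconj{w}N_{\glnidx}$: this forces every root in the integration domain $\lmodulo{N_{\glnidx}^w}{N_{\glnidx}}$ to be of type $\epsilon_i+\epsilon_j$ or $\epsilon_i$ (never $\epsilon_i-\epsilon_j$), so the full product is bounded above by the product over \emph{all} such roots, namely
\[
\frac{\prod_{2\leq i<j\leq\glnidx+1}\zeta(-j-i+2(\glnidx+2)+s_i+s_j)\prod_{i=2}^{\glnidx+1}\zeta(\glnidx+2-i+s_i)}
{\prod_{2\leq i<j\leq\glnidx+1}\zeta(1-j-i+2(\glnidx+2)+s_i+s_j)\prod_{i=2}^{\glnidx+1}\zeta(\glnidx+3-i+s_i)},
\]
which visibly has only a simple pole at $s_{\glnidx+1}=0$. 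For $m=0$ the operator is the identity; for $m>0$ the paper invokes the observation established just before the claim, that $\mathcal{U}_{\alpha_{\glnidx+1}}\not\subset\rconj{w}N_{\glnidx}\cap N_{\glnidx}$, so the factor $\zeta(1+s_{\glnidx+1})$ actually occurs.

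Your decomposition route introduces two genuine gaps. First, the assertion that the $\mathbf{z}_{\underline{b}}$-intertwiner contributes only ratios $\zeta(a+\cdots)/\zeta(a+1+\cdots)$ with $a\geq1$, ``holomorphic and nonzero as $\underline{s}\to\underline{0}$,'' is wrong as stated: $a=1$ gives $\zeta(1+\cdots)$, which has a pole. Moreover this operator acts on $V(\rconj{\mathbf{w}_{\underline{b}}}\chi_{\underline{s}})$, not $V(\chi_{\underline{s}})$, so its GK factors are $c_{\alpha}(\rconj{\mathbf{w}_{\underline{b}}}\chi_{\underline{s}})$; after the twist by $\mathbf{w}_{\underline{b}}$ the root $\epsilon_i-\epsilon_j$ may be sent to $\pm\epsilon_i\pm\epsilon_j$, and the exponents change accordingly—you cannot read them off from the untwisted values in Claim~\ref{claim:values of exceptional unramified character on a alpha}. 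Second, your identification at the end is inaccurate: $\mathbf{w}_{\underline{b}}$ flips $\epsilon_{\glnidx+1}$ precisely when $b_{\glnidx}=1$, which is \emph{not} the same as $m>0$; the correct equivalence $m>0\Leftrightarrow\mathcal{U}_{\alpha_{\glnidx+1}}\not\subset\rconj{w}N_{\glnidx}\cap N_{\glnidx}$ involves the full $\mathbf{w}=\mathbf{z}_{\underline{b}}\mathbf{w}_{\underline{b}}$ and was established separately in the text. The paper's direct bound avoids both issues.
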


\begin{claim}\label{claim:constant term pole of series}
The series $E_{B_{\GL{\glnidx}}}(I_{\glnidx};\{f,w,g,s_{\glnidx+1}\},\underline{s}^{(\glnidx)})$ is holomorphic at $\underline{s}^{(\glnidx)}$ except perhaps for simple poles at $s_i-s_{i+1}$ for $1\leq i<\glnidx$. If $l>0$ and $m>0$, it has at most $\glnidx-2$ poles. If $l=0$, the function $y\mapsto E_{B_{\GL{\glnidx}}}(I_{\glnidx};\{f,w,yg,s_{\glnidx+1}\},\underline{s}^{(\glnidx)})$ on $\CGLF{\glnidx}{\Adele}$ belongs to the space of $\Theta_{\GL{\glnidx},\absdet{}^{-1/2}\chi^{(1)}}$.
\end{claim}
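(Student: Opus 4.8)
\textbf{Proof proposal for Claim~\ref{claim:constant term pole of series}.}
The plan is to analyze the Eisenstein series $E_{B_{\GL{\glnidx}}}(I_{\glnidx};\{f,w,g,s_{\glnidx+1}\},\underline{s}^{(\glnidx)})$ on $\CGLF{\glnidx}{\Adele}$ exactly as one analyzes a degenerate principal series Eisenstein series, using the inducing data computed just before the claim: by Claim~\ref{claim:global transitivity of induction} the section $\{f,w,g,s_{\glnidx+1}\}(\underline{s}^{(\glnidx)},\cdot)$ lies in $V_{\Xi^{\GL{\glnidx}}}(\absdet{}^{\glnidx/2}(\chi_{\mathbf{w}}^{(1)})_{\underline{z}})$, and by Claim~\ref{claim:constant term pole of intertwiner} the only possible pole in the $s_{\glnidx+1}$ variable is a simple one at $s_{\glnidx+1}=0$, which we now hold fixed. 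First I would compute the constant term of $E_{B_{\GL{\glnidx}}}$ along $N_{\GL{\glnidx}}$: by the standard formula (\cite{MW2} Section~II.1.7, as already used for $G_{\glnidx}$) it is a sum over the Weyl group of $\GL{\glnidx}$ of intertwining operators $M'(\tau,\cdots)$ applied to the section, each of which factors as a product over $\nu$, and the poles are governed by the unramified factors, i.e.\ by ratios of partial Dedekind zeta functions $\zeta$ of $F$. The character $\chi_{\mathbf{w}}^{(1)}$ is $\rconj{\mathbf{w}}\chi$ restricted to the $\GL{\glnidx}$-part, so its restriction to $\cover{T}_{\GL{\glnidx}}(\Adele)^2$ is determined by \eqref{eq:global form of exceptional character GLn part} after applying $\mathbf{w}$; the exponents $l_i$ in the decomposition $\underline{b}=(1^{m_1}0^{l_1}\cdots1^{m_k}0^{l_k})$ control which simple-root coroots are sent to negative ones and hence which zeta factors survive in the limit $\underline{s}^{(\glnidx)}\to\underline{0}$.

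Concretely, I would argue as follows. Each potential pole of $E_{B_{\GL{\glnidx}}}$ at $s_i-s_{i+1}=0$ arises from a factor $\zeta(s_i-s_{i+1}+c)$ in the numerator with $c$ such that $\zeta$ has its pole at $s_i-s_{i+1}=0$; this is the usual mechanism for the simple poles of a $\GL{\glnidx}$ Eisenstein series built from an exceptional-type character, as in \cite{KP} Section~II.1 and \cite{BFG} Section~3. Counting: for a generic $\mathbf{w}$ all $\glnidx-1$ such poles appear, but the presence of a block $0^{l_i}$ with $l_i>0$ together with at least one block $1^{m_j}$ with $m>0$ forces the coroot crossing the $m$/$l$ interface to behave differently, removing at least two of the $\glnidx-1$ candidate poles — giving the bound ``at most $\glnidx-2$ poles'' when $l>0$ and $m>0$. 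When $l=0$ (so $\mathbf{w}_{\underline{b}}=\mathbf{w_0}$ on all the $\epsilon$-coordinates and $\mathbf{z}_{\underline{b}}$ is a single block reversal), all $\glnidx-1$ poles are present, the residue survives, and the inducing character on $\cover{T}_{\GL{\glnidx}}(\Adele)^2$ becomes precisely that of $\absdet{}^{-1/2}\chi^{(1)}$: here the shift by $\absdet{}^{\glnidx/2}$ (from $\delta_{Q_{\glnidx}(\Adele)}^{1/2}$) together with the action of $\mathbf{w_0}$ on \eqref{eq:global form of exceptional character GLn part} must be matched against the exceptional character $\absdet{}^{-1/2}\chi^{(1)}$ of $C_{\cover{T}_{\GL{\glnidx}}(\Adele)}$. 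That the resulting residual automorphic form lies in the space of $\Theta_{\GL{\glnidx},\absdet{}^{-1/2}\chi^{(1)}}$ then follows from the $\GL{\glnidx}$ version of Proposition~\ref{proposition:global small rep}, i.e.\ from \cite{KP} (Proposition~II.1.2 and Theorems~II.1.4, II.2.1): the multi-residue at $\underline{s}^{(\glnidx)}=\underline{0}$ of the $B_{\GL{\glnidx}}$-Eisenstein series attached to an exceptional character spans exactly the global exceptional representation of $\CGLF{\glnidx}{\Adele}$.

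The main obstacle I expect is the bookkeeping in the non-extremal cases ($l>0$, $m>0$): one has to track, through the explicit Weyl element $\mathbf{w}=\mathbf{z}_{\underline{b}}\mathbf{w}_{\underline{b}}$ and the block-reversal matrices $\omega_{a,b,c}$, exactly which simple reflections of $\GL{\glnidx}$ act on which zeta factors, and verify that the interaction between a $1^{m_j}$ block (coming from a genuine sign change $\epsilon_{i+1}\mapsto -\epsilon_{i+1}$, contributing the ``$\epsilon_i+\epsilon_j$'' and ``$\epsilon_i$'' type zeta factors of Claim~\ref{claim:values of exceptional unramified character on a alpha}) and an adjacent $0^{l_i}$ block genuinely costs two poles rather than one. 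This is a finite but delicate combinatorial computation with the root data of $B_{\glnidx}$; I would organize it by reducing to the case $k=1$ (a single $1^m0^l$ pattern) first, where the relevant subgroup is a maximal parabolic of $\GL{\glnidx}$ with Levi $\GL{m}\times\GL{l}$, and then handle general $k$ by an evident induction, since the blocks only interact with their immediate neighbors. The holomorphy statement away from the listed hyperplanes, and the reduction of the $M'(w,\chi_{\underline{s}})$-integral to the form \eqref{eq:global intertwining def} via \eqref{containment:convenient representative condition}, are routine and already essentially carried out in the surrounding text.
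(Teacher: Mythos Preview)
Your overall strategy --- compute the constant term along $N_{\GL{\glnidx}}$, reduce to intertwining operators and their unramified zeta factors, count poles, and in the case $l=0$ invoke \cite{KP} to identify the residue with the exceptional representation --- is exactly the paper's. Two points, however, are genuinely missing or confused.

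First, you assert that ``the poles are governed by the unramified factors,'' but this requires knowing that the local intertwining operators at \emph{every} place (including the bad ones) are holomorphic near $\underline{s}^{(\glnidx)}=\underline{0}$. For that you must show that $\chi_{\mathbf{w}}^{(1)}$ lies in the positive Weyl chamber of $\GL{\glnidx}$, which is not automatic and is the heart of the paper's argument. The paper does it by an explicit description of $\mathbf{w}=\mathbf{z}_{\underline{b}}\mathbf{w}_{\underline{b}}$: writing $\mathbf{z}_{\underline{b}}$ as a permutation $\tau$ and setting $c_i=1-2b_i$, one gets
\[
(\chi_{\mathbf{w}}^{(1)})_{\underline{z}}\bigl(\mathfrak{s}(\textstyle\prod_i\eta_i^{\vee}(a_i^2))\bigr)=\prod_i|a_i|^{\,c_{\tau(i)}(\glnidx-\tau(i)+1+s_{\tau(i)})},
\]
and then checks directly, case by case on whether $i,j$ lie in the ``$l$-part'' ($i\leq l$) or the ``$m$-part'' ($i>l$), that the exponent $C_{(i,j)}$ for the positive root $(i,j)$ is strictly positive (equations~\eqref{eq:poles inside the l part}--\eqref{eq:pole between l and m part}). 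Without this step you cannot pass from ``poles of the Eisenstein series'' to ``poles of a product of partial zeta functions.''

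Second, your pole-counting is off. From the $\glnidx-1$ candidate hyperplanes $s_i-s_{i+1}=0$ you need to remove at least \emph{one}, not two, to reach ``at most $\glnidx-2$.'' The paper's mechanism is not that the $m/l$ interface ``costs two poles,'' but that among the exponents $C_{(i,j)}$ the value $1$ can occur only for $(i,j)$ with $j=i+1$ and $i\ne l$; at the crossing $i=l$ one has $C_{(l,l+1)}=2\glnidx+2-\tau(l)-\tau(l+1)\geq 2$, so no pole there. This is a single direct computation with the explicit formula for $\tau$ (your concatenated description $(\tau^{-1}(1),\ldots,\tau^{-1}(\glnidx))={}^J[m_1]\sqcup[l_1]\sqcup\cdots$), not an induction on the number of blocks. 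Your proposed reduction to $k=1$ and induction would work in principle but is unnecessary and obscures the simple reason the pole at $i=l$ disappears. The $l=0$ endgame you sketch is correct and matches the paper; note that the paper first twists away the Hecke character $\eta$ (replacing $\chi$ by $(\eta^{-1}\Upsilon)\chi$) to simplify the comparison $\absdet{}^{\glnidx/2}\chi_{\mathbf{w}}^{(1)}=\absdet{}^{-1/2}\chi^{(1)}$.
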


Before proving these claims, we use them to complete the proof of the theorem. According to Claim~\ref{claim:constant term pole of intertwiner}, $s_{\glnidx+1}\cdot \{f,w,g,s_{\glnidx+1}\}$ is holomorphic and vanishes at $s_{\glnidx+1}=0$ if $m=0$. Hence we may assume $m>0$.  Then by Claim~\ref{claim:constant term pole of intertwiner} the series $E_{B_{\GL{\glnidx}}}(I_{\glnidx};\{f,w,g,s_{\glnidx+1}\},\underline{s}^{(\glnidx)})$ multiplied by $\prod_{i=1}^{\glnidx-1}(s_i-s_{i+1})$ vanishes for $\underline{s}\rightarrow\underline{0}$, unless $l=0$ and then $\mathbf{w}=\mathbf{z}_{\underline{1}}\mathbf{w}_{\underline{1}}$. Therefore
\begin{align*}
\theta_f(g)=\Res_{\underline{s}=\underline{0}}
E_{B_{\GL{\glnidx}}}(I_{\glnidx};\{f,w,g,s_{\glnidx+1}\},\underline{s}^{(\glnidx)}).
\end{align*}
Now Claim~\ref{claim:constant term pole of intertwiner} shows that $y\mapsto\theta_f(yg)$ belongs to the space of $\Theta_{\GL{\glnidx},\absdet{}^{-1/2}\chi^{(1)}}$. Clearly
$h\mapsto\theta_f(hg)$ ($h\in\cover{G}_0(\Adele)$) lies in the space of $\Theta_{G_0,\chi^{(2)}}$. The theorem is proved.

\begin{proof}[Proof of Claim~\ref{claim:constant term pole of intertwiner}]
As in the proof of Proposition~\ref{proposition:global small rep} and with the same notation, we analyze the poles of
$M'(w,\chi_{\underline{s}})f'_{\underline{s}}$ by looking at $M(w,\chi_{\underline{s}})f_{\underline{s}}$ for a pure tensor
$f\in V_{X}(\chi)$. Denote
$M(w,\chi_{\underline{s}})f_{\underline{s}}=(\otimes_{\nu\in S}\varphi_{\nu})\otimes(\otimes'_{\nu\notin S}\varphi_{\nu})$.
At each $\nu\in S$, the local intertwining operator is holomorphic
around $\underline{0}$, because when $\underline{s}\rightarrow\underline{0}$, $\chi_{\underline{s},\nu}$ belongs to the positive Weyl chamber. The poles of the intertwining operator coincide with the poles of the infinite product. If $m=0$, the integration is trivial hence clearly there is no pole. Otherwise, the poles are contained in the set of poles of
\begin{align*}
\frac{\prod_{2\leq i<j\leq\glnidx+1}\zeta(-j-i+2(\glnidx+2)+s_i+s_j)\prod_{i=2}^{\glnidx+1}\zeta(\glnidx+2-i+s_i)}
{\prod_{2\leq i<j\leq\glnidx+1}\zeta(1-j-i+2(\glnidx+2)+s_i+s_j)\prod_{i=2}^{\glnidx+1}\zeta(\glnidx+3-i+s_i)}.
\end{align*}
This product is holomorphic except for a simple pole at $s_{\glnidx+1}=0$. Furthermore, since
$\mathcal{U}_{\alpha_{\glnidx+1}}$ is not a subgroup of $(\rconj{w}N_{\glnidx}\cap N_{\glnidx})(\Adele)$, the factor $\zeta(\glnidx+2-i+s_i)$ with $i=\glnidx+1$ contributes to the poles of the intertwining operator. Hence there is a pole at $s_{\glnidx+1}=0$.
\end{proof}

\begin{proof}[Proof of Claim~\ref{claim:constant term pole of series}]
To compute the poles of $E_{B_{\GL{\glnidx}}}(I_{\glnidx};\{f,w,g,s_{\glnidx+1}\},\underline{s}^{(\glnidx)})$ we consider the constant term of $\{f,w,g,s_{\glnidx+1}\}$ along $N_{\GL{\glnidx}}$. According to Kazhdan and Patterson \cite{KP} (Proposition~II.1.2),
\begin{align*}
&\int_{\lmodulo{N_{\GL{\glnidx}}(F)}{N_{\GL{\glnidx}}(\Adele)}}
E_{B_{\GL{\glnidx}}}(\mathfrak{s}(u);\{f,w,g,s_{\glnidx+1}\},\underline{s}^{(\glnidx)})du\\&
=\sum_{\omega\in W_{\GL{\glnidx}}}(M^{\GL{\glnidx}})'(\omega,\underline{s}^{(\glnidx)})
\{f,w,g,s_{\glnidx+1}\}'_{\underline{s}^{(\glnidx)}}(I_{\glnidx}).
\end{align*}
Here $W_{\GL{\glnidx}}$ is the Weyl subgroup of $\GL{\glnidx}$ and
\begin{align*}
(M^{\GL{\glnidx}})'(\omega,\underline{s}^{(\glnidx)}):V_{\Xi^{\GL{\glnidx}}}(\absdet{}^{\glnidx/2}(\chi_{\mathbf{w}}^{(1)})_{\underline{z}})\rightarrow V_{\Xi^{\GL{\glnidx}}}(\absdet{}^{\glnidx/2}\ \rconj{\mathfrak{s}(\omega)}((\chi_{\mathbf{w}}^{(1)})_{\underline{z}}))
\end{align*}
is the corresponding intertwining operator.

We may assume that $\chi|_{\mathfrak{s}(T_{\glnidx+1}(\Adele)^2)}$ is defined with respect to a Hecke character $\eta$ according to \eqref{eq:global form of exceptional character}.
Since 
$g\mapsto\eta(\Upsilon(g))$ is an automorphic character of $G_{\glnidx}(\Adele)$ which extends to a nongenuine automorphic character of $\cover{G}_{\glnidx}(\Adele)$, and $(\eta\Upsilon\cdot\chi)_{\mathbf{w}}^{(1)}=\eta\Upsilon\cdot(\chi_{\mathbf{w}}^{(1)})$, we may prove the claim for $(\eta^{-1}\Upsilon)\cdot\chi$. This means that we assume $\eta=1$, then
for $t=\prod_{i=1}^{\glnidx}\eta_i^{\vee}(a_i^2)\beta_1^{\vee}(t_1)\in T_{\glnidx+1}(\Adele)^2$, $d\in(\Adele^*)^{2/\gcd(2,\glnidx+1)}$, $z=\prod_{i=1}^{\glnidx}\eta_i^{\vee}(d)$ and $\zeta\in\mu_2$, 
\begin{align}\label{eq:simplifying assumption constant term chi1 on squares}
&\chi(\zeta\mathfrak{s}(t)\mathfrak{s}(z))=\zeta
\prod_{i=1}^{\glnidx}|a_i|^{\glnidx-i+1}|d|^{\glnidx(\glnidx+1)/4}
\gamma(d).
\end{align}
Here $\gamma$ is a complex-valued function on the quotient of $(\Adele^*)^{2/\gcd(2,\glnidx+1)}$ by
$((\Adele^*)^{2/\gcd(2,\glnidx+1)}\cap F^*)\Adele^{*2}$,
which satisfies $\gamma(dd')=c(d,d')^{\lceil\glnidx/2\rceil}\gamma(d)\gamma(d')$ and is nonzero.
Additionally, by Claim~\ref{claim:constant term pole of intertwiner} we may assume $s_{\glnidx+1}=0$.

The element $\mathbf{z}_{\underline{b}}$ acts on the cocharacters $\eta_i^{\vee}$. Let $\tau$ be the permutation such that
\begin{align*}
\rconj{\mathbf{z}_{\underline{b}}^{-1}}(\prod_{i=1}^{\glnidx}\eta_i^{\vee}(a_i))=
\prod_{i=1}^{\glnidx}\eta_i^{\vee}(a_{\tau^{-1}(i)}).
\end{align*}
Also define $\underline{c}\in\{-1,1\}^{\glnidx}$ by $c_i=1-2b_i$.
Then
\begin{align*}
(\chi_{\mathbf{w}}^{(1)})_{\underline{z}}(\mathfrak{s}(\prod_{i=1}^{\glnidx}\eta_i^{\vee}(a_i^2)))=\prod_{i=1}^{\glnidx}|a_i|^{c_{\tau(i)}(\glnidx-\tau(i)+1+s_{\tau(i)})}.
\end{align*}
Note that here we used the fact that $\chi(\mathfrak{s}(\beta_1^{\vee}(t_1)))=1$, because (see e.g. \eqref{eq:formula for conjugation by w_0 with convenient coordinates})
\begin{align*}
\rconj{\mathbf{w}_{\underline{b}}}(\prod_{i=1}^{\glnidx}\eta_i^{\vee}(a_i))=\prod_{i=1}^{\glnidx}\eta_i^{\vee}(a_i^{\mp1})\beta_1^{\vee}(\cdots).
\end{align*}

We show that $\chi_{\mathbf{w}}^{(1)}$ belongs to the positive Weyl chamber. 
Denote by $\sqcup$ the concatenation of tuples, e.g., $(2,5,1)\sqcup(2,8,-1,0)=(2,5,1,2,8,-1,0)$.
For $r$ integers $i_1,\ldots,i_r$, set $\rconj{J}(i_1,i_2,\ldots,i_r)=(i_r,\ldots,i_2,i_1)$. Let
$(1,\ldots,\glnidx)=[l_1]\sqcup\ldots\sqcup[l_k]\sqcup[m_k]\sqcup\ldots\sqcup[m_1]$, where for $r\geq0$, $[r]$ is an ascending sequence of $r$ integers. Then
\begin{align*}
(\tau^{-1}(1),\ldots,\tau^{-1}(\glnidx))=\rconj{J}[m_1]\sqcup[l_1]\sqcup\ldots\sqcup\rconj{J}[m_k]\sqcup[l_k].
\end{align*}
It follows that if $1\leq i<j<l$, $\tau(i)<\tau(j)$ and $c_{\tau(i)}=c_{\tau(j)}=1$. Thus for $x\in\Adele^*$,
\begin{align}\label{eq:poles inside the l part}
\chi_{\mathbf{w}}^{(1)}(\mathfrak{s}(\eta_i^{\vee}(x^2)\eta_j^{\vee}(x^{-2})))=|x|^{\glnidx-\tau(i)+1-(\glnidx-\tau(j)+1)}=|x|^{\tau(j)-\tau(i)}.
\end{align}
For $l+1\leq i<j<\glnidx$, we have $\tau(i)>\tau(j)$ and $c_{\tau(i)}=c_{\tau(j)}=-1$, whence
\begin{align}\label{eq:poles inside the m part}
\chi_{\mathbf{w}}^{(1)}(\mathfrak{s}(\eta_i^{\vee}(x^2)\eta_j^{\vee}(x^{-2})))=|x|^{\tau(i)-\tau(j)}.
\end{align}
If $l\leq i\leq l$ and $l+1\leq j\leq\glnidx$, then $c_{\tau(i)}=1$ and $c_{\tau(j)}=-1$,
\begin{align}\label{eq:pole between l and m part}
\chi_{\mathbf{w}}^{(1)}(\mathfrak{s}(\eta_l^{\vee}(x^2)\eta_{l+1}^{\vee}(x^{-2})))=|x|^{2\glnidx+2-\tau(i)-\tau(j)}.
\end{align}
We conclude that $\chi_{\mathbf{w}}^{(1)}$ belongs to the positive Weyl chamber. Therefore, for decomposable data, at any place $\nu$ the local intertwining operator is holomorphic. Then the poles of the intertwining operator $(M^{\GL{\glnidx}})'(\omega,\underline{s}^{(\glnidx)})$ appear only in the infinite unramified part.

Consider such poles. For the purpose of a bound, we may assume $\omega=J_{\glnidx}$. Let $S\supset S_0$ be a finite set. For a positive root $(i,j)$, where $1\leq i<j\leq\glnidx$, and $\nu\notin S$,
let
\begin{align*}
d_{(i,j),\nu}=(\chi_{\mathbf{w}}^{(1)})_{\nu}(\mathfrak{s}_{\nu}(\eta_i^{\vee}(\varpi_{\nu}^2)\eta_j^{\vee}(\varpi_{\nu}^{-2})))=q_{\nu}^{-C_{(i,j)}}.
\end{align*}
Because
$(\chi_{\mathbf{w}}^{(1)})_{\nu}$ belongs to the positive Weyl chamber, $C_{(i,j)}>0$. Hence the poles appear (with multiplicities) in the product
\begin{align*}
\prod_{1\leq i<j\leq\glnidx}\zeta(C_{(i,j)}+s_{\tau(j)}-s_{\tau(i)}).
\end{align*}
For a pair $(i,j)$ appearing in this product, there is a pole at $\underline{s}^{(\glnidx)}=\underline{0}$ if and only if
$C_{(i,j)}=1$. For any $i$, there is at most one $j$ such that $\tau(j)-\tau(i)=1$ and similarly, at most one $j$ with $\tau(i)-\tau(j)=1$. Looking at
\eqref{eq:poles inside the l part}-\eqref{eq:pole between l and m part} we see that the only possible poles are simple poles
at $s_i-s_{i+1}=0$ for each
$1\leq i<\glnidx$ such that $i\ne l$. If $l=0$ or $m=0$ (in which case
$l=\glnidx$), these are $\glnidx-1$ poles. Otherwise we have at most $\glnidx-2$ poles.

Now assume $l=0$. Then
\begin{align*}
&\absdet{}^{\glnidx/2}\chi_{\mathbf{w}}^{(1)}(\mathfrak{s}(\prod_{i=1}^{\glnidx}\eta_i^{\vee}(a_i^2))\mathfrak{s}(\prod_{i=1}^{\glnidx}\eta_i^{\vee}(d)))=\prod_{i=1}^{\glnidx}|a_i|^{\glnidx-i}|d|^{\glnidx(\glnidx-1)/4}\gamma(d^{-1}).
\end{align*}
This is an exceptional character of $C_{\cover{T}_{\GL{\glnidx}}(\Adele)}$ and therefore according to the results of Kazhdan and Patterson \cite{KP} (Proposition~II.1.2 and
Theorems~II.1.4 and II.2.1), the mapping
\begin{align*}
y\mapsto \lim_{\underline{s}^{(\glnidx)}\rightarrow\underline{0}}\prod_{i=1}^{\glnidx-1}(s_i-s_{i+1})E_{B_{\GL{\glnidx}}}(y;\{f,w,g,0
\},\underline{s}^{(\glnidx)})
\end{align*}
(we assumed $s_{\glnidx+1}=0$)
belongs to the space of $\Theta_{\GL{\glnidx},\absdet{}^{\glnidx/2}\chi_{\mathbf{w}}^{(1)}}$. Since
\begin{align*}
E_{B_{\GL{\glnidx}}}(y;\{f,w,g,s_{\glnidx+1}\},\underline{s}^{(\glnidx)})
=E_{B_{\GL{\glnidx}}}(I_{\glnidx};\{f,w,yg,0
,\underline{s}^{(\glnidx)}),
\end{align*}
it is left to show
$\absdet{}^{\glnidx/2}\chi_{\mathbf{w}}^{(1)}=\absdet{}^{-1/2}\chi^{(1)}$.
Equality~\eqref{eq:simplifying assumption constant term chi1 on squares} implies
\begin{align*}
\chi^{(1)}(\mathfrak{s}(\prod_{i=1}^{\glnidx}\eta_i^{\vee}(a_i^2))\mathfrak{s}(z))=\prod_{i=1}^{\glnidx}|a_i|^{\glnidx-i+1}
|d|^{\glnidx(\glnidx+1)/4}\gamma(d).
\end{align*}
Because $\gamma(d^{-1})=\gamma(d)$, the result follows. 
\end{proof}
\end{proof}

\begin{remark}\label{remark:constant term method for SO(2n+1) fails here}
The analog result for $SO_{2\glnidx+1}$ was proved in \cite{BFG,me7} by induction, using a parabolic subgroup with a Levi part isomorphic to $\GL{1}\times SO_{2(\glnidx-1)+1}$. This approach does not seem to work, because the representation of $M_1(\Adele)$ cannot be described using the tensor product. Note that the twist of $\chi^{(1)}$ by $\absdet{}^{-1/2}$ is compatible with the result on $SO_{2\glnidx+1}$.
\end{remark}

\subsubsection{Vanishing results}\label{subsubsection:global vanishing results}
Let $\psi$ be a nontrivial character of $\lmodulo{F}{\Adele}$. We use the notation of Section~\ref{subsubsection:vanishing and results}.
The global counterpart of the twisted Jacquet modules
is a class of Fourier coefficients, for generic characters.
Any character of $V_{\UnipotentOrbit}(\Adele)$ which is trivial on
$V_{\UnipotentOrbit}(F)$ is the lift of a character of $V_{\UnipotentOrbit}^{\matha}(\Adele)$, trivial on $V_{\UnipotentOrbit}^{\matha}(F)$. Such a character can be identified with a point $b\in V_{\UnipotentOrbit}^{\matha}(F)$. The character $\psi_b$ is generic if $b$ belongs to the open orbit with respect to the action of $C(G_{\glnidx}(\overline{F}),h_{\UnipotentOrbit}(\overline{F}^*))$ on $V_{\UnipotentOrbit}^{\matha}(\overline{F})$. The Fourier coefficient of an automorphic function $\varphi$ on $\cover{G}_{\glnidx}(\Adele)$ or $G_{\glnidx}(\Adele)$, with respect to $\UnipotentOrbit$ and $\psi_b$, is given by
\begin{align*}
\int_{\lmodulo{V_{\UnipotentOrbit}(F)}{V_{\UnipotentOrbit}(\Adele)}}\varphi(\mathfrak{s}(v))\psi_b(v)dv.
\end{align*}
This is the Fourier coefficient of Theorem~\ref{theorem:global vanishing result}.
This theorem follows immediately from Theorem~\ref{theorem:local vanishing result} using a local-global principle (see e.g. \cite{JR} Propositon~1). In particular, as in \cite{BFG} (Proposition~4.3), we have the following result (directly implied by Lemma~\ref{lemma:twisted Jacquet modules vanish on small representations}):
\begin{proposition}\label{proposition:global basic Fourier vanishing result}
Let $\chi$ be an exceptional character and $b\in F^{2\glnidx-1}$ be with $\ell(b)\ne0$. Then for any $\theta$ in the space of $\Theta_{G_{\glnidx},\chi}$,
$\int_{\lmodulo{U_1(F)}{U_1(\Adele)}}\theta(\mathfrak{s}(u))\psi(r(u)b)du=0$.
\end{proposition}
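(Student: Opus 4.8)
The plan is to deduce this global vanishing statement from the local result of Lemma~\ref{lemma:twisted Jacquet modules vanish on small representations} via the standard local-global principle for Fourier coefficients of automorphic forms on covering groups. First I would fix an exceptional character $\chi$ and recall from Proposition~\ref{proposition:global small rep}(3) that $\Theta_{G_{\glnidx},\chi}\isomorphic\otimes'_{\nu}\Theta_{G_{\glnidx},\chi_{\nu}}$, where each $\chi_{\nu}$ is a local exceptional character of $C_{\cover{T}_{\glnidx+1}(F_{\nu})}$. Observe that $V_{\UnipotentOrbit_1}=U_1$ is abelian (as noted in Section~\ref{subsubsection:vanishing and results}), so the functional $\theta\mapsto\int_{\lmodulo{U_1(F)}{U_1(\Adele)}}\theta(\mathfrak{s}(u))\psi(r(u)b)\,du$ is $\mathfrak{s}(U_1(\Adele))$-equivariant against the character $\psi_b:u\mapsto\psi(r(u)b)$, which is trivial on $U_1(F)$ precisely because $\psi$ is trivial on $\lmodulo{F}{\Adele}$. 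Since $\ell(b)\ne0$, the character $\psi_b$ is generic in the sense of Section~\ref{subsubsection:vanishing and results}; this is exactly the condition needed to invoke the local lemma.

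Next I would invoke the local-global principle (\cite{JR} Proposition~1, cited in the paragraph preceding Theorem~\ref{theorem:global vanishing result}): a nonzero $\mathfrak{s}(U_1(\Adele))$-equivariant functional on the space of $\otimes'_{\nu}\Theta_{G_{\glnidx},\chi_{\nu}}$ against $\psi_b=\otimes'_{\nu}\psi_{b,\nu}$ would force, for every place $\nu$, the existence of a nonzero $\mathfrak{s}(U_1(F_{\nu}))$-equivariant functional on $\Theta_{G_{\glnidx},\chi_{\nu}}$ against $\psi_{b,\nu}$, i.e. a nonzero element of $j_{V_{\UnipotentOrbit_1},\psi_{b,\nu}}(\Theta_{G_{\glnidx},\chi_{\nu}})$. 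But at a \nonarchimedean\ place $\nu$ with odd residual characteristic, Lemma~\ref{lemma:twisted Jacquet modules vanish on small representations} gives $j_{V_{\UnipotentOrbit_1},\psi_{b,\nu}}(\Theta_{G_{\glnidx},\chi_{\nu}})=0$, provided the local character $\psi_{b,\nu}$ is still generic. The restriction on residual characteristic is harmless here: $F$ being a number field, at least one finite place of odd residual characteristic exists, and one only needs the local obstruction at a single place to kill the global functional. Genericity of $\psi_{b,\nu}$ at such a place follows from $\ell(b)\ne0$ in $F\subset F_{\nu}$, since the open orbit condition in Section~\ref{subsubsection:vanishing and results} is defined over $\overline{F}$ and $\ell$ does not vanish under the inclusion $F\hookrightarrow F_\nu$.

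The main obstacle—really the only point requiring care—is confirming that the $\ell(b)\ne0$ condition is preserved and remains an open-orbit condition at the chosen local place, and that the local-global principle applies verbatim to the metaplectic cover (the functional space in question is the appropriate twisted coinvariants, and one needs that automorphic forms in $\Theta_{G_{\glnidx},\chi}$ decompose compatibly with the restricted tensor product so that a global Fourier coefficient factors through local ones). Both are routine: the former because $\ell$ is a fixed polynomial with $F$-coefficients and $F\hookrightarrow F_\nu$ is injective, so $\ell(b)\ne0$ persists; the latter because Proposition~\ref{proposition:global small rep}(3) provides exactly the needed factorization $\Theta_{G_{\glnidx},\chi}\isomorphic\otimes'_\nu\Theta_{G_{\glnidx},\chi_\nu}$, and the cover splits over $U_1(\Adele)$ by Claim~\ref{claim:good ol section splits G(F) and N(A)}, so $\mathfrak{s}(U_1(\Adele))$-equivariant functionals behave as in the linear case. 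I would then conclude that the global functional vanishes, which is the assertion of Proposition~\ref{proposition:global basic Fourier vanishing result}.
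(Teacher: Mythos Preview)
Your proposal is correct and follows exactly the approach indicated in the paper: the proposition is stated there as being ``directly implied by Lemma~\ref{lemma:twisted Jacquet modules vanish on small representations}'' via the local-global principle of \cite{JR} (Proposition~1), together with the tensor decomposition $\Theta_{G_{\glnidx},\chi}\isomorphic\otimes'_{\nu}\Theta_{G_{\glnidx},\chi_{\nu}}$ from Proposition~\ref{proposition:global small rep}(3). Your additional care in noting that a single \nonarchimedean\ place of odd residual characteristic suffices, and that $\ell(b)\ne0$ persists under $F\hookrightarrow F_{\nu}$, fills in precisely the details the paper leaves implicit.
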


\section{Application - the theta period}\label{section:application 1 coperiod}
In this section we prove Theorem~\ref{theorem:co period GSpin}.
Let $F$ be a number field with a ring of ad\`{e}les $\Adele$. Denote by $A^+$ the subgroup of id\`{e}les of $F$ whose finite components are trivial and \archimedean\ components are equal, real and positive, it can be identified with $\R_{>0}$.

Regard $\GL{\glnidx}$ and $G_0$ as subgroups of $M_{\glnidx}$. Let $\GLF{\glnidx}{\Adele}^1<\GLF{\glnidx}{\Adele}$ be the subgroup of matrices $b$ such that $\absdet{b}=1$.
If $g=bhuk\in G_{\glnidx}(\Adele)$ with $b\in\GLF{\glnidx}{\Adele}$, $h\in G_0(\Adele)$,
$u\in U_{\glnidx}(\Adele)$ and $k\in K$, define $H(g)=|\det{b}|$.

Let $\cover{G}_{\glnidx}(\Adele)$ be the double cover of $G_{\glnidx}(\Adele)$, defined in Section~\ref{subsection:the double cover of G_n Adele}.
We identify $G_{\glnidx}(F)$ and $N_{\glnidx}(\Adele)$ with their images in $\cover{G}_{\glnidx}(\Adele)$ under $\mathfrak{s}$ (see Claim~\ref{claim:good ol section splits G(F) and N(A)}). Furthermore, regard $G_{\glnidx}(\Adele)$ as a subgroup of $\cover{G}_{\glnidx}(\Adele)$ by fixing a section $G_{\glnidx}(\Adele)\rightarrow\cover{G}_{\glnidx}(\Adele)$. E.g., one can take the section $\kappa$ defined in Section~\ref{subsection:the double cover of G_n Adele}. We suppress the actual section from the notation.

Let $\glnrep$ be an irreducible unitary automorphic cuspidal representation of $\GLF{\glnidx}{\Adele}$ with a central character $\omega_{\tau}$ and let $\eta$ be a unitary Hecke character. 
As in the statement of the theorem, assume
$\omega_{\tau}^2(t)\eta^{\glnidx}(t)=1$ for all $t\in A^+$.

Let $\rho$ be a smooth complex-valued function on $G_{\glnidx}(\Adele)$, satisfying the following properties: it is $K$ finite; for any $b^1\in\GLF{\glnidx}{\Adele}^1$, $h\in G_0(\Adele)$, $u\in U_{\glnidx}(\Adele)$, $t\in A^+$, $a=t\cdot I_{\glnidx}$ and $g\in G_{\glnidx}(\Adele)$,
$\rho(b^1ahug)=\delta_{Q_{\glnidx}(\Adele)}^{\half}(a)\omega_{\tau}(a)\eta(h)\rho(b^1g)$; and for any $k\in K$, the mapping $b^1\mapsto\rho(b^1k)$ is a $K\cap \GLF{\glnidx}{\Adele}^1$-finite vector in the space of $\glnrep$. The standard section corresponding to $\rho$ is defined by $\rho_s(g)=H(g)^s\rho(g)$, for any $s\in\C$.

We have the following Eisenstein series,
\begin{align*}
E_{Q_{\glnidx}}(g;\rho,s)=\sum_{\delta\in\lmodulo{Q_{\glnidx}(F)}{G_{\glnidx}(F)}}\rho_{s}(\delta g),\qquad g\in G_{\glnidx}(\Adele).
\end{align*}

According to Hundley and Sayag \cite{HS} (
Proposition~18.0.4), 
the series $E_{Q_{\glnidx}}(g;\rho,s)$ is holomorphic when $\Re(s)>0$ except perhaps at $s=1/2$, where it may have a simple pole. This pole exists (for some data $\rho,g$)
if and only if 
$L(s,\tau,Sym^2\otimes\eta)$ has a pole at $s=1$. Equivalently, one can use the partial $L$-function
$L^S(s,\tau,Sym^2\otimes\eta)$, where $S$ is any finite set of places, see \cite{HS} (Remark~2.2.2 and Section~19.3).

\begin{remark}\label{remark:eta inverse in HS}
In \cite{HS} the $L$-function was actually $L^S(s,\tau,Sym^2\otimes\eta^{-1})$.
This is because their embedding of $G_0$ in $G_{\glnidx}$ was different. In the notation of Section~\ref{subsection:properties of GSpin}, they sent $\theta_1^{\vee}$ to $-\epsilon_1^{\vee}$ (compare \eqref{eq:formula for conjugation by w_0 with convenient coordinates} to
their formula for $wmw^{-1}$ in \cite{HS} Lemma~13.2.4).
\end{remark}


Put $E_{1/2}(g;\rho)=\Res_{s=1/2}E_{Q_{\glnidx}}(g;\rho,s)$, where $\Res_{s=1/2}=\lim_{s\rightarrow1/2}(s-1/2)$.

In general if $H<G_{\glnidx}(\Adele)$ and we are given two complex-valued genuine functions $\varphi$ and $\varphi'$ on $\cover{H}$, the function $h\mapsto\varphi(h)\varphi'(h)$ is independent of the actual choice of section $H\rightarrow\cover{G}_{\glnidx}(\Adele)$. Therefore it can be regarded as a function on $H$.

Let $\chi$ and $\chi'$ be global exceptional characters, constructed as explained in Section~\ref{subsubsection:Explicit construction of an exceptional representation global}, and form the exceptional representations $\Theta_{G_{\glnidx},\chi}$ and $\Theta_{G_{\glnidx},\chi'}$. We take $\chi$ and $\chi'$ such that $\chi\cdot\chi'\cdot\eta=1$ on $C_{G_{\glnidx}(\Adele)}$, which means
\begin{align}\label{eq:condition for chi and chi' and eta}
(\chi\cdot\chi')(\beta^{\vee}(t))\eta(t)=1,\qquad\forall t\in\Adele^*.
\end{align}
This is possible (see Section~\ref{subsubsection:Explicit construction of an exceptional representation global}).
Let $\theta$ (resp. $\theta'$) be an automorphic form in the space of $\Theta_{G_{\glnidx},\chi}$ (resp. $\Theta_{G_{\glnidx},\chi'}$). 

For any complex-valued function $\Xi$ on $\lmodulo{G_{\glnidx}(F)}{G_{\glnidx}(\Adele)}$, satisfying
$\Xi(\beta^{\vee}(t)g)=\eta(t)\Xi(g)$ for all $g\in G_{\glnidx}(\Adele)$ and $t\in\Adele^*$, the co-period of $\Xi$, $\theta$ and $\theta'$ is the following integral
\begin{align*}
\mathcal{I}(\Xi,\theta,\theta')=\int_{\lmodulo{C_{G_{\glnidx}(\Adele)}G_{\glnidx}(F)}{G_{\glnidx}(\Adele)}}\Xi(g)\theta(g)\theta'(g)dg,
\end{align*}
provided it is absolutely convergent. Note that $H(\beta^{\vee}(t)g)=H(g)$, hence $\rho_s, E_{Q_{\glnidx}}(\cdot;\rho,s)$ and $E_{1/2}(\cdot;\rho)$ are, at least formally, possible candidates for $\Xi$. 

Let $dt$ be the standard Lebesgue measure on $\R$. We fix measures such that the following formulas hold.
\begin{align}\nonumber
&\int_{\lmodulo{Q_{\glnidx}(F)}{G_{\glnidx}(\Adele)}}\varphi(g)dg=
\int_{K}\int_{\lmodulo{M_{\glnidx}(F)}{M_{\glnidx}(\Adele)}}\int_{\lmodulo{U_{\glnidx}(F)}{U_{\glnidx}(\Adele)}}
\varphi(umk)du\delta_{Q_{\glnidx}(\Adele)}^{-1}(m)dmdk,\\
\label{eq:equality defining the constant}
&\int_{\lmodulo{\GLF{\glnidx}{F}}{\GLF{\glnidx}{\Adele}}}\varphi(b)db=\glnidx^{-1}
\int_{\lmodulo{\GLF{\glnidx}{F}}{\GLF{\glnidx}{\Adele}^1}}\int_{\R_{>0}}\varphi((t\cdot I_{\glnidx})b)t^{-1}
dtdb.
\end{align}

The co-period $\mathcal{I}(E_{1/2}(\cdot;\rho),\theta,\theta')$ is absolutely convergent (see \cite{me7} Claim~3.1).
In order to compute it we apply the truncation operator of Arthur \cite{A1,A2} to $E_{Q_{\glnidx}}(g;\rho,s)$ as in, e.g.,
\cite{JR,Jng,GRS7,GJR2}. For the complete argument see \cite{me7}, we provide a sketch. For a real number $d>1$, let $ch_{>d}:\R_{>0}\rightarrow\{0,1\}$ be the characteristic function of $\R_{>d}$ and set $ch_{\leq d}=1-ch_{>d}$. Denote
\begin{align*}
&\mathcal{E}_1^d(g;s)=\sum_{\delta\in\lmodulo{Q_{\glnidx}(F)}{G_{\glnidx}(F)}}\rho_s(\delta g)ch_{\leq d}(H(\delta g)),\\
&\mathcal{E}_2^d(g;s)=\sum_{\delta\in\lmodulo{Q_{\glnidx}(F)}{G_{\glnidx}(F)}}M(w,s)\rho_s(\delta g)ch_{>d}(H(\delta g)).
\end{align*}
Here $M(w,s)$ is the global intertwining operator corresponding to a representative $w$ such that $wU_{\glnidx}w^{-1}=U_{\glnidx}^-$. The co-periods $\mathcal{I}(\mathcal{E}_1^d(\cdot;s),\theta,\theta')$ and $\mathcal{I}(\mathcal{E}_2^d(\cdot;s),\theta,\theta')$ are absolutely convergent for $\Re(s)\gg0$ and
\begin{align}\label{eq:gspin truncation formula 1}
\mathcal{I}(\Lambda_dE_{Q_{\glnidx}}(\cdot;\rho,s),\theta,\theta')=\mathcal{I}(\mathcal{E}_1^d(\cdot;s)-\mathcal{E}_2^d(\cdot;s),\theta,\theta'),
\end{align}
where $\Lambda_dE_{Q_{\glnidx}}(\cdot;\rho,s)$ is the application of the truncation operator to
$E_{Q_{\glnidx}}(\cdot;\rho,s)$.

The following proposition describes 
$\mathcal{I}(\mathcal{E}_i^d(\cdot;s),\theta,\theta')$ and essentially completes the proof.
\begin{proposition}\label{proposition:gspin computation of the co-periods}
The co-periods $\mathcal{I}(\mathcal{E}_1^d(\cdot;s),\theta,\theta')$ and $\mathcal{I}(\mathcal{E}_2^d(\cdot;s),\theta,\theta')$ have meromorphic continuation to the whole plane. Moreover, 
\begin{align}\label{int:gspin co period 1 meromorphic continuation formula}
&\mathcal{I}(\mathcal{E}_1^d(\cdot;s),\theta,\theta')=
\frac{d^{s-1/2}}{s-1/2}
\int_{K}\int_{\lmodulo{\GLF{\glnidx}{F}}{\GLF{\glnidx}{\Adele}^1}}\rho(bk)\theta^{U_{\glnidx}}(bk){\theta'}^{U_{\glnidx}}(bk)dbdk,\\
\label{int:gspin co period 2 meromorphic continuation formula}
&\mathcal{I}(\mathcal{E}_2^d(\cdot;s),\theta,\theta')=
\frac{d^{-s-1/2}}{s+1/2}
\int_{K}\int_{\lmodulo{\GLF{\glnidx}{F}}{\GLF{\glnidx}{\Adele}^1}}M(w,s)\rho_s(bk)\theta^{U_{\glnidx}}(bk){\theta'}^{U_{\glnidx}}(bk)dbdk.
\end{align}
Here $\theta^{U_{\glnidx}}$ and ${\theta'}^{U_{\glnidx}}$ are given by \eqref{int:the constant term of theta along a unipotent subgroup}.
In particular, $\mathcal{I}(\mathcal{E}_1^d(\cdot;s),\theta,\theta')$ is holomorphic except perhaps at $s=1/2$, where it may have a simple pole.
\end{proposition}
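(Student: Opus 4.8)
The plan is to adapt, step by step, the truncation-and-unfolding argument used for $SO_{2\glnidx+1}$ in \cite{me7}; the only new inputs are the constant-term identity of Theorem~\ref{theorem:the constant term} and the vanishing of Fourier coefficients of Theorem~\ref{theorem:global vanishing result}. Both $\mathcal{E}_1^d(\cdot;s)$ and $\mathcal{E}_2^d(\cdot;s)$ are sums over $\lmodulo{Q_{\glnidx}(F)}{G_{\glnidx}(F)}$ of functions $g\mapsto\varrho_s(g)\,ch_{\bullet}(H(g))$, with $\varrho_s\in\{\rho_s,\,M(w,s)\rho_s\}$, where $H$ and $ch_{\bullet}$ are left $U_{\glnidx}(\Adele)$- and $C_{G_{\glnidx}(\Adele)}$-invariant, $\varrho_s$ is left $U_{\glnidx}(\Adele)$-invariant, and $\varrho_s$ and $\theta\,\theta'$ transform under $C_{G_{\glnidx}(\Adele)}$ by mutually inverse characters (this is precisely \eqref{eq:condition for chi and chi' and eta}), so that $\varrho_s\,ch_{\bullet}(H(\cdot))\,\theta\,\theta'$ is $C_{G_{\glnidx}(\Adele)}$-invariant. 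Hence, in a right half-plane $\Re(s)\gg0$ where all integrals converge absolutely, I would first unfold the $Q_{\glnidx}(F)$-sum against the quotient by $G_{\glnidx}(F)$, obtaining $\int_{\lmodulo{C_{G_{\glnidx}(\Adele)}Q_{\glnidx}(F)}{G_{\glnidx}(\Adele)}}\varrho_s(g)\,ch_{\bullet}(H(g))\,\theta(g)\,\theta'(g)\,dg$. Using the Iwasawa decomposition, the factorisation $M_{\glnidx}=\GL{\glnidx}\times G_0$ and the measure normalisations \eqref{eq:equality defining the constant}, I would fibre this over $K$ and over $\lmodulo{M_{\glnidx}(F)U_{\glnidx}(\Adele)}{G_{\glnidx}(\Adele)}$, the fibre being $\lmodulo{U_{\glnidx}(F)}{U_{\glnidx}(\Adele)}$, along which only $\theta\,\theta'$ is integrated since $\varrho_s$ and $ch_{\bullet}(H(\cdot))$ are $U_{\glnidx}$-invariant.

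The crucial step is the collapse $\int_{\lmodulo{U_{\glnidx}(F)}{U_{\glnidx}(\Adele)}}\theta(\mathfrak{s}(u)g)\,\theta'(\mathfrak{s}(u)g)\,du=\theta^{U_{\glnidx}}(g)\,{\theta'}^{U_{\glnidx}}(g)$, with $\theta^{U_{\glnidx}}$ as in \eqref{int:the constant term of theta along a unipotent subgroup}. As in \cite{BFG,me7}, I would expand $\theta$ along the two-step unipotent $U_{\glnidx}$ (whose commutator subgroup is $V_{\UnipotentOrbit_0}$) and rewrite each non-trivial Fourier mode, by root exchange and the Heisenberg structure, as a Fourier coefficient of $\theta$ along a unipotent $V_{\UnipotentOrbit}$ with $\UnipotentOrbit\gtorncw\UnipotentOrbit_0$; these vanish on $\Theta_{G_{\glnidx},\chi}$ by Theorem~\ref{theorem:global vanishing result}, and likewise for $\theta'$, so only the constant-term contribution survives. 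Next I would invoke Theorem~\ref{theorem:the constant term}: $\theta^{U_{\glnidx}}$ lies in the space of $\Theta_{\GL{\glnidx},\absdet{}^{-1/2}\chi^{(1)}}\otimes\Theta_{G_{0},\chi^{(2)}}$ and ${\theta'}^{U_{\glnidx}}$ in that of $\Theta_{\GL{\glnidx},\absdet{}^{-1/2}{\chi'}^{(1)}}\otimes\Theta_{G_{0},{\chi'}^{(2)}}$, the $G_0$-components being the characters $\chi^{(2)}$, ${\chi'}^{(2)}$. Since $\chi\chi'\eta=1$ on $C_{G_{\glnidx}(\Adele)}$ by \eqref{eq:condition for chi and chi' and eta}, the full integrand is genuinely left $C_{G_{\glnidx}(\Adele)}$-invariant, so the $G_0$-integration runs over the compact quotient $\lmodulo{C_{G_{\glnidx}(\Adele)}G_0(F)}{G_0(\Adele)}$ and contributes a finite constant, which I absorb into the normalisation of measures stated in the proposition.

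It remains to treat the $\GLF{\glnidx}{\Adele}$-integration. Writing $b=b^1\cdot(tI_{\glnidx})$ with $b^1\in\GLF{\glnidx}{\Adele}^1$, $t\in\R_{>0}$ as in \eqref{eq:equality defining the constant}, and noting $H(b)=t^{\glnidx}$, the exponents coming from $H(\cdot)^s$, from the Jacobian $\delta_{Q_{\glnidx}}^{-1}$, from $\omega_{\tau}$, and from the central characters of $\Theta_{\GL{\glnidx},\absdet{}^{-1/2}\chi^{(1)}}$, $\Theta_{\GL{\glnidx},\absdet{}^{-1/2}{\chi'}^{(1)}}$ collapse to a single power of $t$, while the unitary part of the resulting character of $A^+\cong\R_{>0}$ has square equal to $\omega_{\tau}^2\eta^{\glnidx}$ restricted to $A^+$ (using \eqref{eq:condition for chi and chi' and eta} and $\gamma_{\psi}|_{A^+}=1$), hence is trivial on $A^+$ because $\omega_{\tau}^2\eta^{\glnidx}$ is. Thus the $A^+$-integrand is a pure power of $t$ times $ch_{\bullet}(t^{\glnidx})$, and the $A^+$-integral equals $\int_0^{d^{1/\glnidx}}t^{\glnidx(s-1/2)-1}\,dt=\frac{d^{\,s-1/2}}{\glnidx(s-1/2)}$ for $\Re(s)>1/2$ in the $\mathcal{E}_1^d$ case, and $\int_{d^{1/\glnidx}}^{\infty}t^{-\glnidx(s+1/2)-1}\,dt=\frac{d^{-s-1/2}}{\glnidx(s+1/2)}$ for $\Re(s)>-1/2$ in the $\mathcal{E}_2^d$ case; both extend meromorphically to all of $\C$ with a single simple pole, at $s=1/2$ and $s=-1/2$ respectively. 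What remains is precisely $\int_K\int_{\lmodulo{\GLF{\glnidx}{F}}{\GLF{\glnidx}{\Adele}^1}}\rho(b^1k)\,\theta^{U_{\glnidx}}(b^1k)\,{\theta'}^{U_{\glnidx}}(b^1k)\,db^1\,dk$ (resp. with $M(w,s)\rho_s$ in place of $\rho$), which is the inner double integral on the right of \eqref{int:gspin co period 1 meromorphic continuation formula} (resp. \eqref{int:gspin co period 2 meromorphic continuation formula}), once one recalls that $\rho|_{\GLF{\glnidx}{\Adele}^1}$ ranges over $\tau$ and that $\theta^{U_{\glnidx}},{\theta'}^{U_{\glnidx}}$ restrict on $\GL{\glnidx}$ to the exceptional representations attached to $\absdet{}^{-1/2}\chi^{(1)}$, $\absdet{}^{-1/2}{\chi'}^{(1)}$; the final assertion of the proposition follows from \eqref{int:gspin co period 1 meromorphic continuation formula}. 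Feeding \eqref{int:gspin co period 1 meromorphic continuation formula}--\eqref{int:gspin co period 2 meromorphic continuation formula} into \eqref{eq:gspin truncation formula 1} and using the standard properties of Arthur's truncation \cite{A1,A2} then yields Theorem~\ref{theorem:co period GSpin}.

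The \emph{main obstacle}, and the step I would spend the most care on, is the justification of the unfolding together with the collapse of the $U_{\glnidx}$-integral: one must prove absolute convergence in a right half-plane -- the analogue of \cite{me7} Claim~3.1, exploiting the moderate growth of the $L^2$-forms $\theta,\theta'$ against that of $\varrho_s$ along $\GLF{\glnidx}{\Adele}$ and the rapid decay of $\rho$ on $\GLF{\glnidx}{\Adele}^1$ coming from the cuspidality of $\tau$ -- and then carry out the root-exchange and Heisenberg reduction of the non-trivial Fourier modes of $\theta$ along $U_{\glnidx}$ to coefficients covered by Theorem~\ref{theorem:global vanishing result}, verifying that the orbits that arise are indeed $\gtorncw\UnipotentOrbit_0$. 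A secondary, genuinely $G_{\glnidx}$-specific nuisance, absent over $SO_{2\glnidx+1}$, is the bookkeeping of the centre: $C_{G_{\glnidx}}$ is not one of the Iwasawa blocks and sits across the $G_0$-factor, so one must track \eqref{eq:condition for chi and chi' and eta} and the $G_0$-components of $\theta^{U_{\glnidx}},{\theta'}^{U_{\glnidx}}$ carefully in order to see the $C_{G_{\glnidx}(\Adele)}$-invariance of the integrand and to pin down the measure on $\lmodulo{C_{G_{\glnidx}(\Adele)}G_{\glnidx}(F)}{G_{\glnidx}(\Adele)}$.
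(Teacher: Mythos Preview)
Your overall architecture (unfold, Iwasawa, separate the $A^+$-integral) is correct and matches the paper. The serious gap is the ``collapse'' identity
\[
\int_{\lmodulo{U_{\glnidx}(F)}{U_{\glnidx}(\Adele)}}\theta(\mathfrak{s}(u)g)\,\theta'(\mathfrak{s}(u)g)\,du\;=\;\theta^{U_{\glnidx}}(g)\,{\theta'}^{U_{\glnidx}}(g),
\]
which you propose to prove using only Theorem~\ref{theorem:global vanishing result}. This does not work, and the paper does \emph{not} prove such an identity. The commutator subgroup $C_{U_{\glnidx}}$ is indeed $V_{\UnipotentOrbit_0}$ (for $\glnidx$ even), but $\Theta_{G_{\glnidx},\chi}$ is precisely supported on $\UnipotentOrbit_0$: the Fourier coefficients of $\theta$ and $\theta'$ along $C_{U_{\glnidx}}$ for \emph{generic} characters of $V_{\UnipotentOrbit_0}$ are nonzero, so root exchange cannot convert them into coefficients for orbits $\gtorncw\UnipotentOrbit_0$. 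Theorem~\ref{theorem:global vanishing result} only kills orbits strictly above or incomparable to $\UnipotentOrbit_0$, not $\UnipotentOrbit_0$ itself.

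What the paper actually does is expand $\theta'$ along $C_{U_{\glnidx}}$, producing $\lfloor\glnidx/2\rfloor+1$ orbits under $M_{\glnidx}(F)$ indexed by $0\le j\le\lfloor\glnidx/2\rfloor$, with stabilizers of the shape $\GL{\glnidx-2j}\times Sp_j$ inside $\GL{\glnidx}$. The resulting integrals $\mathcal I_j$ for $j>0$ are then shown to vanish \emph{using the integrand as a whole}, not just the smallness of $\theta,\theta'$: for $0<j<\glnidx/2$ one manufactures, via further Fourier expansions and root exchange (with Proposition~\ref{proposition:global basic Fourier vanishing result}, Claim~\ref{claim:twisted Jacquet modules vanish on small representations claim 1}, Proposition~\ref{proposition:vanishing along GLn of the Whittaker Jacquet for small representations} as input), an inner integral of $\rho_s$ along a unipotent radical of $\GL{\glnidx}$, which vanishes by the \emph{cuspidality of $\tau$}; for $j=\glnidx/2$ (when $\glnidx$ is even) one produces an inner integral $\int_{\lmodulo{Sp_{\glnidx/2}(F)}{Sp_{\glnidx/2}(\Adele)}}\rho_s(y\,\cdot)\,dy$, which vanishes by the Jacquet--Rallis symplectic period theorem \cite{JR}. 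Only then does $\mathcal I_0$ survive and yield the product of constant terms. In short, the non-constant contributions are killed by properties of $\rho$, not of $\theta,\theta'$; your proposed route misses both the cuspidality input and the Jacquet--Rallis step. The remainder of your outline (the $A^+$-integral, the exponent bookkeeping, the center) is fine and agrees with the paper.
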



Now taking the residue in \eqref{eq:gspin truncation formula 1} yields
\begin{align*}
\mathcal{I}(\Lambda_dE_{1/2}(\cdot;\rho),\theta,\theta')=\Res_{s=1/2}\mathcal{I}(\mathcal{E}_1^d(\cdot;s),\theta,\theta')-
\Res_{s=1/2}\mathcal{I}(\mathcal{E}_2^d(\cdot;s),\theta,\theta').
\end{align*}
Here $\Lambda_dE_{1/2}(\cdot;\rho)$ is the application of the truncation operator to $E_{1/2}(\cdot;\rho)$.
Taking $d\rightarrow\infty$ (and using \cite{LW} Proposition~4.3.3), we obtain
\begin{align*}
\mathcal{I}(E_{1/2}(\cdot;\rho),\theta,\theta')=
\int_{K}\int_{\lmodulo{\GLF{\glnidx}{F}}{\GLF{\glnidx}{\Adele}^1}}\rho(bk)\theta^{U_{\glnidx}}(bk){\theta'}^{U_{\glnidx}}(bk)dbdk.
\end{align*}
This proves part~\eqref{thm:gspin part 2} of the theorem. Part~\eqref{thm:gspin part 3} follows from Theorem~3.2 of \cite{GJR2} (see also \cite{JR} Proposition~2 and \cite{Jng}).

\begin{proof}[Proof of Proposition~\ref{proposition:gspin computation of the co-periods}]
The proof is a simple modification of the proof of Proposition~3.2 of \cite{me7}. We briefly present the argument. Consider $\mathcal{I}(\mathcal{E}_1^d(\cdot;s),\theta,\theta')$. Collapsing the summation into the integral gives
\begin{align}\label{int:gspin co period proposition proof 1}
\int_{\lmodulo{C_{G_{\glnidx}(\Adele)}Q_{\glnidx}(F)}{G_{\glnidx}(\Adele)}}\rho_s(g)ch_{\leq d}(H(g))\theta(g)\theta'(g)dg.
\end{align}
We write the Fourier expansion of $\theta'$ along the derived group $C_{U_{\glnidx}}$ of $U_{\glnidx}$.
Assume $\glnidx>1$, the case $\glnidx=1$ is trivial because the cover of $G_1(\Adele)$ splits (by Claim~\ref{claim:cover minimal cases}). The group $M_{\glnidx}(F)$ acts on the set of characters of $\lmodulo{C_{U_{\glnidx}}(F)}{C_{U_{\glnidx}}(\Adele)}$ with $\lfloor\glnidx/2\rfloor+1$ orbits.
For $0\leq j\leq\lfloor\glnidx/2\rfloor$, define a character $\psi_{j}$ of $\lmodulo{C_{U_{\glnidx}}(F)}{C_{U_{\glnidx}}(\Adele)}$ by
$\psi_{j}(c)=\psi(\sum_{i=1}^jc_{\glnidx-2i+1,\glnidx+2i})$, 
where $\psi$ is a nontrivial character of $\lmodulo{F}{\Adele}$. The stabilizer $St_{\psi_j}(F)$ of $\psi_{j}$ in $M_{\glnidx}(F)$ is equal to
$St'_{\psi_j}(F)\times G_0(F)$, where $St'_{\psi_j}(F)$ is the stabilizer computed in \cite{me7},
\begin{align*}
St'_{\psi_j}(F)=\left\{\left(\begin{array}{cc}a&z\\0&b\end{array}\right):a\in\GLF{\glnidx-2j}{F},b\in Sp_{j}(F),\text{$z$ is any $(\glnidx-2j)\times2j$ matrix}\right\}<\GLF{\glnidx}{F}
\end{align*}
and $Sp_{j}$ is the symplectic group in $2j$ variables. Then
\begin{align*}
\theta'(g)=\sum_{j=0}^{\lfloor\glnidx/2\rfloor}\sum_{\lambda\in\lmodulo{St_{\psi_j}(F)}{\GLF{\glnidx}{F}}}\int_{\lmodulo{C_{U_{\glnidx}}(F)}{C_{U_{\glnidx}}(\Adele)}}\theta'(c\lambda g)\psi_{j}(c)dc.
\end{align*}

Plugging this 
into \eqref{int:gspin co period proposition proof 1} gives a sum of integrals $\sum_{j=0}^{\lfloor\glnidx/2\rfloor}\mathcal{I}_j$
with
\begin{align*}
\mathcal{I}_j=&
\int_{K}\int_{\lmodulo{C_{G_{\glnidx}(\Adele)}St_{\psi_j}(F)}{M_{\glnidx}(\Adele)}}\rho_s(mk)ch_{\leq d}(H(m))\\\nonumber&
\int_{\lmodulo{U_{\glnidx}(F)}{U_{\glnidx}(\Adele)}}\theta(umk)
\left(\int_{\lmodulo{C_{U_{\glnidx}}(F)}{C_{U_{\glnidx}}(\Adele)}}\theta'(cumk)\psi_{j}(c)dc\right)du\delta_{Q_{\glnidx}(\Adele)}^{-1}(m)dmdk.
\end{align*}
Set $V_{j}=\lmodulo{(C_{U_{\glnidx}}\cap U_{\glnidx}\cap U_{\glnidx-2j})}{(U_{\glnidx}\cap U_{\glnidx-2j})}$ and
\begin{align*}
X_j(F)=\left\{\left(\begin{array}{ccccc}I_{\glnidx-2j}&0&v_1&c_1&c_2\\&I_{2j}&0&c_3&c_1'\\&&1&0&v_1'\\&&&I_{2j}&0\\&&&&I_{\glnidx-2j}\end{array}\right)\in U_{\glnidx}(F)\right\}.
\end{align*}
The constant term of $\theta'$ along $C_{U_{\glnidx}}$ is a function on
$\lmodulo{V_j(F)}{V_j(\Adele)}\isomorphic(\lmodulo{F}{\Adele})^{\glnidx-2j}$ and by considering a
Fourier expansion of this function and using Proposition~\ref{proposition:global basic Fourier vanishing result}, one sees that \begin{align*}
\int_{\lmodulo{C_{U_{\glnidx}}(F)}{C_{U_{\glnidx}}(\Adele)}}\theta'(c)\psi_{j}(c)dc
=\int_{\lmodulo{X_j(F)}{X_j(\Adele)}}\theta'(c)\psi_{j}(c)dc.
\end{align*}
Here $\psi_j$ was extended to a character of $X_j(\Adele)$, trivially on the coordinates of $v_1$ (and $v_1'$). Applying this to $\mathcal{I}_j$ and factoring the $du$-integration through $X_j$ yields
\begin{align*}
\mathcal{I}_j=&
\int_{K}\int_{\lmodulo{C_{G_{\glnidx}(\Adele)}St_{\psi_j}(F)}{M_{\glnidx}(\Adele)}}\rho_s(mk)ch_{\leq d}(H(m))
\int_{\lmodulo{X_j(\Adele)}{(\lmodulo{U_{\glnidx}(F)}{U_{\glnidx}(\Adele)})}}\\\nonumber&\left(\int_{\lmodulo{X_j(F)}{X_j(\Adele)}}
\theta(xu_1mk)\psi_{j}^{-1}(x)dx\right)
\left(\int_{\lmodulo{X_j(F)}{X_j(\Adele)}}\theta'(xu_1mk)\psi_{j}(x)dx\right)du_1\delta_{Q_{\glnidx}(\Adele)}^{-1}(m)dmdk.
\end{align*}

The following lemma is the heart of the unfolding argument.
\begin{lemma}\label{lemma:I_k vanishes for k > 0}
For any $j>0$, $\mathcal{I}_j=0$.
\end{lemma}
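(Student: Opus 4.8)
The plan is to show that for $j>0$ the inner double integral of $\theta$ against $\psi_j^{-1}$ over $\lmodulo{X_j(F)}{X_j(\Adele)}$ vanishes identically, which immediately forces $\mathcal{I}_j=0$. First I would observe that the character $\psi_j$ restricted to $X_j$ is, after extending it trivially along the $v_1$-coordinates, exactly the kind of Fourier coefficient that one can analyze via the unipotent orbit theory of Section~\ref{subsubsection:vanishing and results}. The subgroup $X_j$ together with the character $\psi_j$ defines (possibly after conjugation inside $M_{\glnidx}(F)$ and after further Fourier expansion along the complementary unipotent directions) a Fourier coefficient of $\Theta_{G_{\glnidx},\chi}$ with respect to some unipotent class $\UnipotentOrbit$. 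The key arithmetic point is that for $j>0$ this class satisfies $\UnipotentOrbit\gtorncw\UnipotentOrbit_0$ (equivalently $\UnipotentOrbit\geq\UnipotentOrbit_1=(31^{2\glnidx-2})$), because the presence of a nontrivial symplectic block forces the coefficient to ``see'' a part of $U_1$ on which a generic-length character appears.

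Concretely, I would mimic the exchange-of-roots and root-elimination arguments already used in the proof of Lemma~\ref{lemma:twisted Jacquet modules vanish on small representations} (Claims~\ref{claim:twisted Jacquet modules vanish on small representations claim 1} and \ref{claim:twisted Jacquet modules is supercuspidal}) and in \cite{me7}. The steps would be: (i) expand $\theta$ along the remaining abelian quotient of $U_{\glnidx}$ not yet integrated, picking up a sum over characters indexed by points of an affine space; (ii) for each such character, either it is degenerate in a direction that lets us integrate a unipotent subgroup against a nontrivial character of $\theta$ restricted to a copy of $U_r$ with $2\le r\le\glnidx$, whence the coefficient vanishes by Claim~\ref{claim:twisted Jacquet modules vanish on small representations claim 1}, or after conjugating by a Weyl element of the $\GL{\glnidx}$-part we land on a character $\psi_c$ of $U_1(\Adele)$ with $\ell(c)\ne0$, whence it vanishes by Proposition~\ref{proposition:global basic Fourier vanishing result}; (iii) conclude that every term in the Fourier expansion is zero, so the whole inner integral is zero. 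Since $\mathcal{I}_j$ is a convergent integral whose integrand contains this vanishing factor, $\mathcal{I}_j=0$.

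The main obstacle I anticipate is bookkeeping: correctly identifying, for each $j>0$, the unipotent subgroup and character that emerge after the successive Fourier expansions, and verifying that the relevant length form is nonzero so that either Claim~\ref{claim:twisted Jacquet modules vanish on small representations claim 1} or Proposition~\ref{proposition:global basic Fourier vanishing result} applies. This is exactly the filtration/exchange-of-roots argument of \cite{BFG} transplanted to $G_{\glnidx}$, and the substitution $\Theta_{SO_{2\glnidx+1}}\leadsto\Theta_{G_{\glnidx},\chi}$ is harmless here because all the manipulations take place on unipotent subgroups, which are in bijection with those of $SO_{2\glnidx+1}$ and on which the cover is split (so $\mathfrak{s}$ is a genuine homomorphism and $\psi_j$, $\psi_j^{-1}$ make sense as characters of $\mathfrak{s}(X_j(\Adele))$). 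The only genuinely new input relative to \cite{me7} is that the $G_0$-factor of $St_{\psi_j}(F)$ is carried along inertly, which does not affect the vanishing since the problematic directions all lie in the $\GL{\glnidx}$-part; I would remark on this explicitly. I would then refer to \cite{me7} (proof of Lemma~3.3 there, or the corresponding step) for the details of the elimination, noting only the places where the class $\UnipotentOrbit_0$ and the bound $\UnipotentOrbit\geq\UnipotentOrbit_1$ are used.
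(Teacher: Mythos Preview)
Your plan has a genuine gap. You propose to kill $\mathcal{I}_j$ by showing that the $(X_j,\psi_j^{-1})$-Fourier coefficient of $\theta$ vanishes identically for every $j>0$, but this fails in the extreme case $j=\glnidx/2$ (with $\glnidx$ even). There $\glnidx-2j=0$, so $X_{\glnidx/2}=C_{U_{\glnidx}}$, and a direct computation of $h_{\UnipotentOrbit_0}$ shows that $V_{\UnipotentOrbit_0}=C_{U_{\glnidx}}$ as well. Thus the coefficient you want to annihilate is precisely a Fourier coefficient of $\Theta_{G_{\glnidx},\chi}$ attached to $\UnipotentOrbit_0$ itself. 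None of Theorem~\ref{theorem:local vanishing result}, Proposition~\ref{proposition:global basic Fourier vanishing result}, or Claim~\ref{claim:twisted Jacquet modules vanish on small representations claim 1} applies here, since those concern orbits $\gtorncw\UnipotentOrbit_0$ or configurations with $\ell\ne0$ on a short-root direction; your dichotomy in step~(ii) is simply not exhaustive. The smallness of $\Theta$ alone cannot do the job.

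The paper's argument is different in kind: the vanishing ultimately comes from the $\rho$-side, not the $\theta$-side. The argument splits into two cases. For $j<\glnidx/2$ one uses the vanishing results on $\theta,\theta'$ (Proposition~\ref{proposition:global basic Fourier vanishing result}, Claim~\ref{claim:twisted Jacquet modules vanish on small representations claim 1}, Proposition~\ref{proposition:vanishing along GLn of the Whittaker Jacquet for small representations}) only as devices---Fourier expansions and exchanges of roots---to expose an inner integration of $\rho_s$ along a unipotent radical of $\GLF{\glnidx}{\Adele}$; that integral vanishes because $\tau$ is cuspidal. For $j=\glnidx/2$ the stabilizer is $Sp_{\glnidx/2}\times G_0$, and one invokes Ikeda's result on Jacobi groups together with Proposition~\ref{proposition:global basic Fourier vanishing result} to produce an inner symplectic period $\int_{\lmodulo{Sp_{\glnidx/2}(F)}{Sp_{\glnidx/2}(\Adele)}}\rho_s(ymk)\,dy$, which vanishes by Jacquet--Rallis \cite{JR}. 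In both cases cuspidality of $\tau$ (or its consequence, the vanishing symplectic period) is indispensable; it is not a bookkeeping issue but the actual mechanism.
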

\begin{proof}
The proof of Lemma~3.5 in \cite{me7} is separated into two cases, $j<\glnidx/2$ and $j=\glnidx/2$. In the case $j<\glnidx/2$ one introduces an inner integration along a unipotent radical of $\GL{\glnidx}$, into $\mathcal{I}_j$, then uses the fact that $\rho_s|_{\GLF{\glnidx}{\Adele}}$ is a cusp form to show that $\mathcal{I}_j$ vanishes. The arguments include a Fourier expansion, an ``exchange of roots" and a few vanishing results. The vanishing results we use here are
Proposition~\ref{proposition:global basic Fourier vanishing result}, Claim~\ref{claim:twisted Jacquet modules vanish on small representations claim 1} and Proposition~\ref{proposition:vanishing along GLn of the Whittaker Jacquet for small representations}. Note that in \cite{me7} we also used a result on
a global constant term (with respect to $U_k$ and any $k\geq3$), this can be replaced with a local result - Proposition~\ref{proposition:vanishing along GLn of the Whittaker Jacquet for small representations} (as already observed in \cite{me7}, Remark~3.8).

Assume $j=\glnidx/2$ (in particular, $\glnidx$ is even). In this case $St_{\psi_{\glnidx/2}}=Sp_{\glnidx/2}$. Utilizing a result of Ikeda \cite{Ik3} (Proposition~1.3) on functions on Jacobi groups along with Proposition~\ref{proposition:global basic Fourier vanishing result}, 
one can introduce an inner integration
$\int_{\lmodulo{Sp_{\glnidx/2}(F)}{Sp_{\glnidx/2}(\Adele)}}\rho_s(ymk)dy$ into $\mathcal{I}_j$, which vanishes
according to Jacquet and Rallis \cite{JR} (Proposition~1). 
The arguments in \cite{me7} are applicable. We need to observe that, as in the case of the cover of $SO_{2\glnidx+1}(\Adele$), the restriction of the cover of $G_{\glnidx}(\Adele)$ to $Sp_{\glnidx/2}(\Adele)$ is a nontrivial double cover, which is therefore isomorphic to the usual metaplectic cover of $Sp_{\glnidx/2}(\Adele)$.
\end{proof}

Therefore, Integral~\eqref{int:gspin co period proposition proof 1} is equal to $\mathcal{I}_0$ and since $St_{\psi_0}(F)=M_{\glnidx}(F)$ and \begin{align*}
\lmodulo{(C_{G_{\glnidx}(\Adele)}M_{\glnidx}(F))}{M_{\glnidx}(\Adele)}\isomorphic\lmodulo{\GLF{\glnidx}{F}}{\GLF{\glnidx}{\Adele}},
\end{align*}
Integral $\mathcal{I}_0$ becomes
\begin{align}\label{int:gspin co period proposition proof last before inner dt}
\int_{K}\int_{\lmodulo{\GLF{\glnidx}{F}}{\GLF{\glnidx}{\Adele}}}\rho_s(bk)ch_{\leq d}(H(b))\theta^{U_{\glnidx}}(bk){\theta'}^{U_{\glnidx}}(bk)\delta_{Q_{\glnidx}(\Adele)}^{-1}(b)dbdk.
\end{align}

Regarding the case $\glnidx=1$, Lemma~\ref{lemma:minimal cases vanishing} immediately implies ${\theta'}={\theta'}^{U_1}$ and we also get that \eqref{int:gspin co period proposition proof 1} equals \eqref{int:gspin co period proposition proof last before inner dt}.
Now assume $\glnidx\geq1$.

Finally to extract the dependency on $s$ one uses \eqref{eq:equality defining the constant}. Assume $\chi$ (resp. $\chi'$) is given by \eqref{eq:global form of exceptional character} with respect to $\eta_{\chi},\psi_{\chi}$ (resp. $\eta_{\chi'},\psi_{\chi'}$). Then
\eqref{eq:global form of exceptional character} and \eqref{eq:condition for chi and chi' and eta} imply
\begin{align}\label{eq:application last calc 1}
\eta_{\chi}(t^{-2})\eta_{\chi'}(t^{-2})\eta(t)=1,\qquad\forall t\in\Adele^*.
\end{align}
Let $t\in A^+$. Then $\gamma_{\psi}(t)=1$ and according to \eqref{eq:global form of exceptional character GLn part},
\begin{align}\label{eq:application last calc 2}
\chi^{(1)}(\mathfrak{s}(\prod_{i=1}^{\glnidx}\eta_i^{\vee}(t)))=t^{\glnidx(\glnidx+1)/4}\eta_{\chi}(t)^{\glnidx}.
\end{align}
By Theorem~\ref{theorem:the constant term}, \eqref{eq:application last calc 1} and \eqref{eq:application last calc 2},
\begin{align*}
\theta^{U_{\glnidx}}(t\cdot I_{\glnidx}){\theta'}^{U_{\glnidx}}(t\cdot I_{\glnidx})=t^{\glnidx(\glnidx-1)/2}\eta(t)^{\glnidx/2}\theta^{U_{\glnidx}}(I_{\glnidx}){\theta'}^{U_{\glnidx}}(I_{\glnidx}).
\end{align*}
Since also
\begin{align*}
\rho_s(t\cdot I_{\glnidx})\delta_{Q_{\glnidx}(\Adele)}^{-1}(t\cdot I_{\glnidx})=t^{\glnidx(s-\glnidx/2)}\omega_{\tau}(t)\rho(I_{\glnidx}),
\end{align*}
when we apply
\eqref{eq:equality defining the constant} to \eqref{int:gspin co period proposition proof last before inner dt} we get
\begin{align*}
\glnidx^{-1}\int_{K}\int_{\lmodulo{\GLF{\glnidx}{F}}{\GLF{\glnidx}{\Adele}^1}}\rho(bk)\theta^{U_{\glnidx}}(bk)
{\theta'}^{U_{\glnidx}}(bk)\int_{0<t^{\glnidx}\leq d}\omega_{\tau}(t)\eta(t)^{\glnidx/2}t^{\glnidx(s-3/2)}dtdbdk.
\end{align*}
Because $\omega_{\tau}(t)\eta(t)^{\glnidx/2}=1$,
we reach \eqref{int:gspin co period 1 meromorphic continuation formula}.

Similar arguments apply to $\mathcal{I}(\mathcal{E}_2^d(\cdot;s),\theta,\theta')$ and we obtain \eqref{int:gspin co period 2 meromorphic continuation formula}. Note that
\[
M(w,s)\rho_s(t\cdot I_{\glnidx})\delta_{Q_{\glnidx}(\Adele)}^{-1}(t\cdot I_{\glnidx})=t^{\glnidx(s-\glnidx/2)}\omega_{\tau}^{-1}(t)\eta(t)^{-\glnidx}\rho(I_{\glnidx}).\qedhere
\]
\end{proof}

\bibliographystyle{plain}

\end{document}